\newif\ifdraft
\newcommand{\todo}[1]{\footnote{TODO #1}}
\newcommand{\todo}[1]{}
\title{Gluing and deformations of asymptotically cylindrical special Lagrangians}
\author{Tim Talbot}
\address{DPMMS\\Centre for Mathematical Sciences\\Wilberforce Road\\Cambridge\\CB3 0WB}
\email{T.J.Talbot@maths.cam.ac.uk}
\thanks{This work was supported by the UK Engineering and Physical Sciences Research Council (EPSRC) grant EP/H023348/1 for the University of Cambridge Centre for Doctoral Training, the Cambridge Centre for Analysis.}
\date{}
\theoremstyle{definition}
\newtheorem{defin}{Definition}[section]
\newtheorem{convention}[defin]{Convention}
\newtheorem{hypothesis}[defin]{Hypothesis}
\newtheorem{condition}[defin]{Condition}
\theoremstyle{plain}
\newtheorem{lem}[defin]{Lemma}
\newtheorem{prop}[defin]{Proposition}
\newtheorem{thm}[defin]{Theorem}
\newtheorem{cor}[defin]{Corollary}
\newtheorem*{thma}{Theorem A}
\newtheorem*{thmaone}{Theorem A1}
\newtheorem*{thmb}{Theorem B}
\theoremstyle{remark}
\newtheorem*{rmk}{Remark}
\numberwithin{equation}{section}
\newcommand{\vol}{\mathrm{vol}}
\newcommand{\ddth}{\frac{\partial}{\partial \theta}}
\newcommand{\ddt}{\frac{\partial}{\partial t}}
\newcommand{\dds}{\frac{\partial}{\partial s}}
\newcommand{\ddr}{\frac{\partial}{\partial r}}
\newcommand{\C}{{\mathbb{C}}}
\newcommand{\R}{{\mathbb{R}}}
\newcommand{\xs}{{\mathsf{xs}}}
\newcommand{\tr}{\mathrm{tr}}
\newcommand{\la}{\langle}
\newcommand{\ra}{\rangle}
\newcommand{\rel}{\mathrm{rel}}
\newcommand{\F}{{\mathcal{F}}}
\newcommand{\cO}{{\mathcal{O}}}
\newcommand{\K}{{\mathcal{K}}}
\newcommand{\U}{{\mathcal{U}}}
\newcommand{\cpt}{{\mathrm{cpt}}} 
\newcommand{\SLing}{{\mathrm{SLing}}} 
\newcommand{\two}{\mathrm{I\!I}}
\newcommand{\Spin}{\mathrm{Spin}}
\newcommand{\Gr}{\mathrm{Gr}}
\DeclareMathOperator{\Vol}{Vol}
\DeclareMathOperator{\spn}{span}
\DeclareMathOperator{\harmpt}{harmpt} 
\DeclareMathOperator{\npt}{normpt}
\DeclareMathOperator{\hpt}{harmpt}
\newcommand{\id}{{\mathrm{id}}} 
\newcommand{\Sym}{{\mathrm{Sym}}}
\renewcommand{\Re}{\operatorname{Re}}
\renewcommand{\Im}{\operatorname{Im}}
\newcommand{\SUn}{{\mathcal{SU}_n}}
\newcommand{\ie}{i.\,e.\ }
\newcommand{\eg}{e.\,g.\ }
\newcounter{labelknownref}
\renewcommand*{\thelabelknownref}{\the\value{labelknownref}}
\providecommand\string\LabelKnown[2]{}%
\newcommand*{\LabelKnown}[2]{%
  \expandafter\xdef\csname lkr@#2\endcsname{%
    \@ifundefined{r@#1}{0}{1}%
  }%
}
  \LetLtxMacro\myorgref\ref
  \DeclareRobustCommand*{\ref}[1]{%
    \begingroup
      \stepcounter{labelknownref}%
      \if@filesw
        \protected@write\@auxout{}{%
          \string\LabelKnown{#1}{\thelabelknownref}%
        }%
      \fi 
      \if\csname lkr@\thelabelknownref\endcsname 1%
        \hypersetup{linkcolor=blue}%
        \myorgref{#1}\textsuperscript{\textcolor{blue}{(ok)}}%
      \else
        \if\csname lkr@\thelabelknownref\endcsname 0%
          \hypersetup{linkcolor=red}%
          \myorgref{#1}\textsuperscript{\textcolor{red}{(forward reference!)}}%
        \else
          \hypersetup{linkcolor=orange}%
          \myorgref{#1}\textsuperscript{\textcolor{orange}{(unknown)}}%
        \fi
      \fi  
    \endgroup
  }%
\begin{document}
\begin{abstract}
We study gluings of asymptotically cylindrical special Lagrangian submanifolds in asymptotically cylindrical Calabi--Yau manifolds. We prove both that there is a well-defined gluing map, and, after reviewing the deformation theory for special Lagrangians, prove that this gluing map defines a local diffeomorphism from matching pairs of deformations of asymptotically cylindrical special Lagrangians to deformations of special Lagrangians. We also give some examples of asymptotically cylindrical special Lagrangian submanifolds to which these results apply.
\end{abstract}
\maketitle
\section{Introduction}
Calabi--Yau manifolds have various families of distinguished submanifolds. The most obvious family is the complex submanifolds, since Calabi--Yau manifolds are complex. There are other families that are less well understood. In this paper, we are concerned with special Lagrangian submanifolds, first introduced by Harvey and Lawson in \cite{harveylawson} in 1982. They used special Lagrangian submanifolds as an example of their notion of a calibrated submanifold. As calibrated submanifolds, special Lagrangians are minimal submanifolds; in fact, they are volume-minimising in their homology class. Rather than working with the calibration, we shall regard special Lagrangians as submanifolds $L$ on which the imaginary part of the holomorphic volume form $\Im \Omega$ and the K\"ahler form $\omega$ vanish. 

We shall prove two theorems concerning asymptotically cylindrical special Lagrangian submanifolds, one of which has a variant extending to a slightly more general setting. First we prove a gluing theorem
\begin{thma}[Theorem \ref{slperturbthm}]
Let $M_1$ and $M_2$ be asymptotically cylindrical Calabi--Yau manifolds, whose limit cylinders can be identified so that we can construct a ``connected sum" $M^T$, and that the limits of the Calabi--Yau structures agree under this identification. Suppose that $L_1$ and $L_2$ are asymptotically cylindrical special Lagrangian submanifolds and their limit cylinders agree under this identification. By cutting off, we construct a pair of closed forms $(\Omega^T, \omega^T)$ and a submanifold $L_0^T$. Suppose that $(\Omega^T, \omega^T)$ can be perturbed to give a Calabi--Yau structure, only changing the cohomology classes up to scaling. Then we may perturb $L_0^T$ to form a special Lagrangian $L^T$ in $M^T$ with this Calabi--Yau structure. 
\end{thma}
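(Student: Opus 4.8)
The plan is to set up the special Lagrangian condition on $L_0^T$ as a nonlinear elliptic PDE and solve it by an implicit function theorem argument, controlling all constants uniformly in the gluing parameter $T$. First I would recall that a Lagrangian submanifold sufficiently $C^1$-close to $L_0^T$ can be written as a graph in a Weinstein neighbourhood, i.e.\ parametrised by a $1$-form $\alpha$ on $L_0^T$ (or rather, since $L_0^T$ need not itself be Lagrangian for $\omega^T$, by a form together with a small correction accounting for the failure of $\omega^T|_{L_0^T}$ and $\Im\Omega^T|_{L_0^T}$ to vanish; these error terms are supported in the neck and decay like $e^{-\lambda T}$ by construction of the cutoff). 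The special Lagrangian equations $\omega^T|_{L_\alpha}=0$ and $\Im\Omega^T|_{L_\alpha}=0$ then become an equation $F(\alpha)=0$ where, at a Lagrangian deformation, the linearisation is $d\alpha \oplus d^*\alpha$ — that is, essentially $(d+d^*)$ acting on $1$-forms — using that on a special Lagrangian $\Im\Omega$ pulls back to a multiple of the Hodge star of $\alpha$. So $F(\alpha) = \cR(\alpha) + L\alpha + Q(\alpha)$ with $\cR$ the (exponentially small in $T$) error from the approximate solution, $L$ the linearised operator, and $Q$ the quadratic-and-higher remainder.

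The key steps, in order, would be: (1) identify the right Banach spaces — weighted Hölder or Sobolev spaces $C^{k,\alpha}_\mu$ on $L_0^T$ with a small negative exponential weight on the neck, exactly as in the asymptotically cylindrical deformation theory reviewed earlier in the paper — and check that $L = d + d^*$ on $1$-forms is an isomorphism onto its image with a right inverse $P$ whose operator norm is bounded \emph{independently of $T$}; this uniform invertibility is obtained by the standard patching argument, gluing the right inverses on the two pieces $L_1$, $L_2$ (which exist by the acyl deformation theory, possibly after quotienting by the finite-dimensional space of bounded harmonic forms / matching conditions) using cutoff functions, and absorbing the resulting commutator error — which is again $O(e^{-\lambda T})$ — back into $L$. (2) Estimate $\|\cR(0)\| \le C e^{-\lambda T}$ directly from the cutoff construction, since away from the neck $L_0^T$ agrees with genuine special Lagrangians $L_i$ in genuine Calabi--Yau manifolds $M_i$ and in the neck the discrepancy is governed by the (exponential) rate at which the acyl structures and submanifolds converge to their cylindrical limits. (3) Verify the quadratic estimate $\|Q(\alpha)-Q(\beta)\| \le C(\|\alpha\|+\|\beta\|)\|\alpha-\beta\|$ with $C$ uniform in $T$ — this needs the Sobolev/Hölder multiplication to be $T$-uniform, which holds for these weighted spaces — on a ball of radius independent of $T$. (4) Apply the standard fixed-point lemma (contraction mapping on $\alpha \mapsto -P(\cR(0)+Q(\alpha))$): for $T$ large enough, $C e^{-\lambda T}$ beats the quadratic threshold, giving a unique small solution $\alpha_T$, hence $L^T := L_{\alpha_T}$. (5) Finally check that $L^T$ is genuinely special Lagrangian for the \emph{perturbed} Calabi--Yau structure — either fold the perturbation of $(\Omega^T,\omega^T)$ into the data before running the argument (its effect on $\cR(0)$ and on $L$ is controlled by how much the structure had to be moved, which we may assume small), or run the argument for the perturbed structure throughout; the scaling of cohomology classes is harmless because the special Lagrangian conditions are scale-invariant and only the cohomology class of $\omega^T$ (through the flux/Lagrangian obstruction) and of $\Im\Omega^T$ enter, and these are exactly what we are allowed to adjust.

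The main obstacle I expect is step (1): proving that the linearised operator has a right inverse bounded uniformly in the neck length $T$. The subtlety is that $L_1$ and $L_2$ each carry a space of bounded (translation-invariant) harmonic $1$-forms on their cylindrical ends — the infinitesimal deformations that do not decay — and the connected sum $L_0^T$ only inherits those combinations that \emph{match} across the neck; the mismatched directions produce small eigenvalues of $L$ on $L_0^T$ of size $O(1/T)$ or $O(e^{-\lambda T})$, so $L$ is not uniformly invertible on the full space. The fix, standard in gluing theory, is to work orthogonally to (or project out) this small-eigenvalue subspace, identify it explicitly with the matching data appearing in the statement of Theorem A, and show the nonlinear equation can still be solved after this finite-dimensional adjustment — equivalently, to set up the problem on the correct slice so that the genuine obstruction space is only the global $H^1$ of $M^T$ and the matching conditions, not the individual ends' harmonic forms. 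Handling the interaction of this finite-dimensional reduction with the exponentially small error terms, and doing so in a way that will later be recognisable as the ``local diffeomorphism from matching pairs'' of Theorem B, is where the real work lies; everything else is a fairly routine, if lengthy, adaptation of the Joyce--style special Lagrangian deformation estimates to the asymptotically cylindrical, $T$-dependent setting.
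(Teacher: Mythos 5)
Your overall strategy coincides with the paper's: both set up $v\mapsto(\exp_v^*\Im\Omega^T|_{L_0^T},\exp_v^*\omega^T|_{L_0^T})$ as a nonlinear equation whose linearisation is essentially $(d,d^*)$ on one-forms, bound the error term exponentially in $T$, bound the quadratic remainder uniformly in $T$, and close with a contraction mapping. However, two concrete points in your step (1) would fail as written. First, the Weinstein/Lagrangian neighbourhood parametrisation is not available: the perturbation from $\gamma_T(\omega_1,\omega_2)$ to the genuine K\"ahler form $\omega^T$ required by Hypothesis \ref{hyp:ambientgluing} destroys the Lagrangian condition on $L_0^T$, and there is no Lagrangian neighbourhood theorem for a non-Lagrangian submanifold. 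Your parenthetical ``small correction'' is exactly where the work lies. The paper's substitute is to stay with normal vector fields throughout: Lemma \ref{Jbehaves} shows $v\mapsto\iota_v\omega^T|_{L}$ is still a quantitatively controlled isomorphism onto one-forms when $\|\omega^T|_L\|_{C^0}<1$, and Proposition \ref{parallelgluingprop}, applied to auxiliary \emph{local} $SU(n)$ structures on a three-set cover of $L_0^T$ for which $L_0^T$ is genuinely special Lagrangian, shows the true linearisation $(d\iota_v\omega^T|_L, d\iota_v\Im\Omega^T|_L)$ is exponentially close to $(d,d^*)\circ(\iota_\cdot\,\omega^T|_L)$. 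A consequence you would also need to absorb is that the one-forms corresponding to deformations are no longer closed, so the finite-dimensional reduction is by $L^2$-orthogonality to harmonic forms rather than by exactness of the potential.

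Second, the assertion that the patched right inverse is bounded \emph{independently of} $T$ is stronger than what is true and than what the standard gluing of parametrices delivers: even after projecting out the harmonic forms (your ``small-eigenvalue subspace''), the lower bound for $d+d^*$ on the glued manifold degenerates polynomially in the neck length, which is exactly the content of Theorem \ref{laplacelowerbound} (via Chanillo--Treves); the paper's constant $C_2$ in Condition \ref{slingcondition}(ii) therefore grows polynomially in $T$. This is harmless — the error term $\|(\Im\Omega^T|_{L_0^T},\omega^T|_{L_0^T})\|$ decays exponentially by Proposition \ref{ambientimplications}, so the product still tends to zero and the contraction closes — but a proof along your lines must either establish and use the polynomial bound or supply an argument for uniformity that the standard patching does not give. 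Finally, note that solvability requires the error to lie in the image of the linearisation, i.e.\ $[\Im\Omega^T|_{L_0^T}]=0=[\omega^T|_{L_0^T}]$; this is where the ``cohomology classes change only up to scaling'' clause of the hypothesis enters, through Lemma \ref{gluingconsistency}, and it deserves an explicit check rather than the passing mention in your step (5).
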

We also sketch an additional argument that extends Theorem A to
\begin{thmaone}[Theorem \ref{thm:finalslperturbthm}]
Let $M$ be a Calabi--Yau manifold and let $L$ be a closed submanifold. If $\Im \Omega|_L$ and $\omega|_L$ are sufficiently small depending on $M$, $L$ and the inclusion, and are exact, then $L$ can be perturbed to a special Lagrangian $L'$. 
\end{thmaone}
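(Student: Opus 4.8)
The plan is to reduce the problem to a nonlinear elliptic equation on $L$ whose ``error term'' is exactly what the hypothesis controls, and then solve it by a quantitative form of McLean's deformation theory together with a contraction mapping argument. Fix a diffeomorphism $\Psi$ from a neighbourhood $U$ of the zero section of $T^{*}L$ onto a tubular neighbourhood of $\iota(L)$ in $M$, restricting to $\iota$ on the zero section and whose differential there sends the vertical vector $\mu\in T^{*}_{x}L$ to $-J\mu^{\sharp}$, with $\sharp$ the metric dual for $g|_{L}$; this is possible once $\|\omega|_{L}\|_{C^{0}}<1$, since then $J(TL)$ is transverse to $TL$, and all subsequent constants may depend on $M$, $L$ and $\iota$ as the statement allows. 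For a small $1$-form $\alpha$ on $L$ the graph $\Gamma_{\alpha}\subset U$ is a submanifold and $\Psi(\Gamma_{\alpha})$ is special Lagrangian precisely when
\[
\F(\alpha):=\bigl(\Gamma_{\alpha}^{*}\Psi^{*}\omega,\ \Gamma_{\alpha}^{*}\Psi^{*}\Im\Omega\bigr)=0\in\Omega^{2}(L)\oplus\Omega^{n}(L).
\]
Here $\F(0)=(\omega|_{L},\,\Im\Omega|_{L})$, which by hypothesis is exact and as small as we please in a suitable H\"older norm $C^{k,\gamma}$. Moreover $\F$ automatically takes values in $Y:=\{\text{exact }2\text{-forms}\}\oplus\{n\text{-forms of zero integral}\}$, because the graphs $\Gamma_{t\alpha}$, $t\in[0,1]$, give a homotopy within $U$ from the zero section to $\Gamma_{\alpha}$, so $[\Gamma_{\alpha}^{*}\Psi^{*}\omega]=[\omega|_{L}]=0$ in $H^{2}(L)$ and $\int_{L}\Gamma_{\alpha}^{*}\Psi^{*}\Im\Omega=\int_{L}\Im\Omega|_{L}=0$.

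The choice of $\Psi$ makes the linearisation clean. Using the variation formula $\tfrac{d}{dt}\big|_{0}\Gamma_{t\alpha}^{*}\eta=d(\iota_{V_{\alpha}}\eta|_{\text{zero section}})$ for closed $\eta$, one computes $\iota_{V_{\alpha}}\Psi^{*}\omega|_{\text{zero section}}=\alpha$ exactly, so that $d\F_{0}(\alpha)=(d\alpha,\ d\!*\!\alpha+dE(\alpha))$, where $*$ is the Hodge star of $g|_{L}$ and $E(\alpha)$ is algebraic in $\alpha$ with coefficients built from $\omega|_{L}$ and $\Im\Omega|_{L}$; thus the correction $(0,dE)$ is small in operator norm once those forms are small (when $L$ is special Lagrangian, $E=0$ -- this is McLean's lemma). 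On the subspace $X\subset C^{k+1,\gamma}$ of $1$-forms $L^{2}$-orthogonal to the harmonic forms $\H^{1}(L)$, the model operator $\alpha\mapsto(d\alpha,d\!*\!\alpha)$ is an isomorphism onto $Y$ with bounded inverse: writing $\alpha=d\phi+c$ with $c$ coexact, the operator decouples, since $d\alpha=dc$ and $d\!*\!\alpha=-(\Delta\phi)\vol_{L}$, and each block is invertible by Hodge theory (Green's operator). Hence $d\F_{0}\colon X\to Y$ is also an isomorphism with uniformly bounded inverse $(d\F_{0})^{-1}$ once $\omega|_{L}$ and $\Im\Omega|_{L}$ are small.

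It then remains to solve $\F(\alpha)=0$ with $\alpha\in X$. Writing $\F(\alpha)=\F(0)+d\F_{0}(\alpha)+Q(\alpha)$, where $Q$ collects the higher-order terms and satisfies a quadratic estimate $\|Q(\alpha)-Q(\alpha')\|\le C(\|\alpha\|+\|\alpha'\|)\|\alpha-\alpha'\|$ in the relevant H\"older norms (this uses only that $\alpha\mapsto\Gamma_{\alpha}^{*}\Psi^{*}\omega$ and $\alpha\mapsto\Gamma_{\alpha}^{*}\Psi^{*}\Im\Omega$ depend smoothly on $\alpha$), the equation becomes the fixed point problem $\alpha=-(d\F_{0})^{-1}(\F(0)+Q(\alpha))=:\mathcal{N}(\alpha)$. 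On the ball of radius $\rho$ in $X$, with $\rho$ chosen small depending on $M$, $L$ and $\iota$, the map $\mathcal{N}$ is a contraction; and $\|\mathcal{N}(0)\|=\|(d\F_{0})^{-1}\F(0)\|$ can be made as small as we wish by taking $\|\omega|_{L}\|+\|\Im\Omega|_{L}\|$ small enough depending on $\rho$, so $\mathcal{N}$ maps the ball into itself. Its unique fixed point $\alpha$ gives the special Lagrangian $L'=\Psi(\Gamma_{\alpha})$.

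I expect the real obstacle to lie in the first two paragraphs: since $L$ is neither Lagrangian nor special Lagrangian, McLean's normal form is not directly available, so one must build $\Psi$ by hand and then verify \emph{quantitatively} that $d\F_{0}$ differs from the model operator $\alpha\mapsto(d\alpha,d\!*\!\alpha)$ only by a term controlled by $\|\omega|_{L}\|$ and $\|\Im\Omega|_{L}\|$ in the relevant norms -- the smallness hypothesis exists precisely to make this perturbation small, and keeping track of which derivatives of these forms enter fixes the meaning of ``sufficiently small''. The homotopy argument placing $\F(\alpha)$ in $Y$, the Hodge-theoretic invertibility of the model operator on $X$, and the contraction estimate are then routine. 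Finally, this argument is essentially the special Lagrangian perturbation step already carried out in the proof of Theorem~A for the glued submanifold $L_{0}^{T}$; one obtains Theorem~A1 by isolating that step and running it for an arbitrary approximately special Lagrangian $L$ inside a genuine Calabi--Yau manifold.
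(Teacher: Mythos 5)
Your proposal is correct, and its overall architecture coincides with the paper's (Condition \ref{slingcondition} plus the contraction mapping of Proposition \ref{carefulift}): parametrise nearby submanifolds by $1$-forms, show the linearisation is a small perturbation of $\alpha \mapsto (d\alpha, d*\alpha)$, invert on the complement of the harmonic forms, and contract. Where you genuinely diverge is in \emph{how} the comparison with the model operator is made. The paper first builds, pointwise and then globally (Lemma \ref{newsunpointwise}, Proposition \ref{newsunconstruction}), an auxiliary $SU(n)$ structure $(\Omega',\omega')$ around $L$ for which $L$ is \emph{exactly} special Lagrangian, and then invokes the continuity statement of Proposition \ref{parallelgluingprop} to conclude that the linearisation for $(\Omega,\omega)$ differs from McLean's operator by $O(\|\Omega-\Omega'\|+\|\omega-\omega'\|)$; this is packaged as Proposition \ref{slperturbgeneralised}. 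You instead engineer the tubular-neighbourhood map $\Psi$ (vertical $\mu \mapsto -J\mu^{\sharp}$) so that the first component of $d\F_0$ is exactly $d\alpha$, and compute the error in the second component directly as an algebraic tensor $E$ controlled pointwise by the failure of $T_pL$ to be a special Lagrangian plane. Both work; your route is more self-contained for this one statement, while the paper's auxiliary-structure method is built to be reused verbatim in the gluing theorem, where the same comparison must be made with constants uniform in the neck length $T$. Two small points to be aware of: (a) the pointwise deviation of $T_pL$ from a special Lagrangian plane is controlled by $|\omega|_{T_pL}|$ and $|\Im\Omega|_{T_pL}|$ only after fixing the sign of $\Re\Omega|_{T_pL}$ relative to $\vol_L$ (this is why the paper's Proposition \ref{newsunconstruction} carries the extra hypothesis on $\Re\Omega|_L - \vol_L$; for small $\omega|_L$ and $\Im\Omega|_L$ the sign is locally constant and can be normalised by a choice of orientation, so your statement survives); and (b) the norm in which $E$ must be small involves one more derivative of $\omega|_L$ and $\Im\Omega|_L$ than the norm in which you solve, plus the second fundamental form of $L$ --- which is why the paper's hypothesis in Theorem \ref{thm:finalslperturbthm} is phrased in $C^{k+2}$ and is allowed to depend on $L$ and the inclusion, exactly as you anticipate.
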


In the second part of the paper, we prove 
\begin{thmb}[Theorem \ref{speclaggluinglocaldiffeo}]
Let $M_1$, $M_2$, $L_1$, $L_2$, $M^T$ and $L^T$ be as in Theorem A. For a deformation of $(L_1, L_2)$ as a pair of special Lagrangians whose limits are identified, we may define a gluing map. Moreover, the space of all such pairs is a manifold around $(L_1, L_2)$, and this gluing map defines a smooth map from this manifold to the special Lagrangian deformations of $L^T$. This is a local diffeomorphism; in particular, it constructs an open subset of the special Lagrangians on $M^T$ around $L^T$.
\end{thmb}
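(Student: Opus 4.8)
The plan is to realise both the domain and the codomain of the gluing map as finite-dimensional smooth manifolds cut out by the special Lagrangian condition, to identify their tangent spaces with spaces of harmonic $1$-forms via the deformation theory reviewed above, to run the gluing construction of Theorem~A in families with smooth dependence on the pair, and then to show that the linearisation of the resulting smooth map at $(L_1,L_2)$ is an isomorphism. The inverse function theorem then gives a local diffeomorphism, and its image is automatically an open neighbourhood of $L^T$.

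First I would describe the two moduli spaces. For the compact $L^T\subset M^T$, McLean's theorem gives that the special Lagrangian deformations form a smooth manifold $\mathcal{M}(L^T)$ of dimension $b^1(L^T)$, with tangent space $\mathcal{H}^1(L^T)$. For each $L_i$, the reviewed asymptotically cylindrical deformation theory realises $\mathcal{M}(L_i)$ as a smooth manifold whose tangent space is a space of asymptotically translation-invariant harmonic $1$-forms, equipped with a limit map $\partial_i\colon\mathcal{M}(L_i)\to\mathcal{M}(N)$ to the moduli space of the common limiting special Lagrangian cross-section $N$; the derivative of $\partial_i$ sends a harmonic form to its translation-invariant limit. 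The domain of the gluing map is the fibre product $\mathcal{M}_{\match}(L_1,L_2)=\{(L_1',L_2'): \partial_1 L_1'=\partial_2 L_2'\}$. To show this is a manifold near $(L_1,L_2)$ I would check that $\partial_1$ and $\partial_2$ are transverse; by the above description of their derivatives this reduces to a Mayer--Vietoris-type comparison of harmonic $1$-forms on $L^T$, on $L_1$, on $L_2$ and on $N$ coming from the decomposition $L^T\simeq L_1\cup_N L_2$, which at the same time produces the dimension count $\dim\mathcal{M}_{\match}(L_1,L_2)=b^1(L^T)=\dim\mathcal{M}(L^T)$.

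Next I would construct the gluing map and check smoothness. For a matching pair $(L_1',L_2')$ near $(L_1,L_2)$ the limits agree, so cutting off exactly as in Theorem~A gives closed forms and a cut-off submanifold; because $\Im\Omega$ and $\omega$ vanish identically on $L_i'$ the relevant error is supported near the neck, and the cohomological and smallness hypotheses of Theorem~A are open and locally constant in the pair, hence still hold. Theorem~A then perturbs the cut-off submanifold to a special Lagrangian $\Lambda^T$ in $M^T$, and $(L_1',L_2')\mapsto\Lambda^T$ is the gluing map. It is smooth because the perturbation supplied by Theorem~A comes from a contraction mapping (equivalently, an implicit function) argument whose data depend smoothly on $(L_1',L_2')$.

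Finally, the local diffeomorphism property. The derivative of the gluing map at $(L_1,L_2)$ sends a matching pair of harmonic $1$-forms $(v_1,v_2)$ to the $\mathcal{H}^1(L^T)$-component of the form obtained by cutting off and gluing $v_1$ and $v_2$. Since the domain and the codomain are now known to be manifolds of the same finite dimension, it is enough to prove this linear map is injective, and I expect this to be the main obstacle. If the harmonic part of the glued form vanishes, then restricting to the two pieces and using that each $v_i$ is harmonic and asymptotically translation-invariant with the prescribed (matching) limit, a long-neck estimate --- built on the spectral gap for the relevant operator on the cross-section and the exponential decay of the non-invariant modes --- forces $(v_1,v_2)$ to vanish once the neck is long enough; this is precisely the statement that a nontrivial matching pair of harmonic forms cannot glue to a form with zero harmonic part. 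Injectivity, the equality of dimensions and smoothness then give, via the inverse function theorem, that the gluing map is a local diffeomorphism, and hence that its image is an open neighbourhood of $L^T$ in the special Lagrangian deformations of $L^T$.
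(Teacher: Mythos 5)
Your outline follows the same overall strategy as the paper: realise both sides as finite-dimensional moduli spaces, factor the gluing map as (approximate gluing) followed by (perturbation to special Lagrangian), identify the linearisation with the harmonic gluing map $\Gamma_T$ of Proposition \ref{harmonicgluinglowerbound}, and conclude by the inverse function theorem. However, there are two places where the proposal asserts what is actually the substance of the proof.

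First, smoothness and the formula for the derivative. The claim that the gluing map is smooth ``because the perturbation comes from a contraction mapping argument whose data depend smoothly on the pair'' does not engage with the real difficulty: the map is between moduli of \emph{submanifolds}, and to differentiate it one must consistently identify normal vector fields on nearby, distinct submanifolds. This forces the introduction of the transfer maps $T_1$, $T_2$, $T_3$ of Definitions \ref{defin:sectiontransfers} and \ref{defin:t3} and the computation of Proposition \ref{combinedtransfer}, and it changes the answer: the derivative of $\SLing$ at the approximately glued $L_0(T)$ is \emph{not} the harmonic projection but the map implicitly defined by \eqref{eq:DSLing}, involving the correction terms $T_3v$ and $\Delta'_v x$ (the latter coming from the variation of the Laplacian with the submanifold, Proposition \ref{harmptderivative}). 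A separate estimate (Proposition \ref{dslingisnearlyharmpt}, resting on Propositions \ref{transfermapsoneforms} and \ref{deltaprimeestimate}) is needed to show these corrections are exponentially small in $T$. Your proposal writes down the ``expected'' derivative without justifying that these terms are absent or negligible. Relatedly, your description of the domain as a transverse fibre product over the moduli of the cross-section glosses over the fact that not every special Lagrangian deformation of the cross-section arises as a limit of a deformation of $L_i$ (this is the content of Proposition \ref{speclaglimitdeformationsubmanifold}, and the point on which \cite[Theorem 1.2]{salurtodd} is corrected); the manifold structure on matching pairs requires intersecting the constrained limit submanifolds $\K_1\cap\K_2$ as in Proposition \ref{manifoldofgluablesl}.

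Second, the quantitative bookkeeping in $T$. Your injectivity argument (``a long-neck estimate forces $(v_1,v_2)$ to vanish'') addresses only the model map $\Gamma_T$, which is Nordstr\"om's theorem. The actual derivative differs from $\Gamma_T$ by error terms, and whether these can be absorbed depends on a competition between rates: the errors decay exponentially in $T$, but the inverse of $d+d^*$ on the glued manifold degenerates polynomially in $T$ (Theorem \ref{laplacelowerbound}), so the harmonic projection and the inverse linearisation are only polynomially bounded. One therefore needs both the uniform-in-$T$ lower bound $\|\Gamma_T(\alpha_1,\alpha_2)\|\geq C(\|\alpha_1\|+\|\alpha_2\|)$ of Proposition \ref{harmonicgluinglowerbound} and a proof that every correction term is genuinely exponentially small, with the polynomial losses tracked throughout (Lemma \ref{opennessdecaylemma} through Proposition \ref{dslingisnearlyharmpt}). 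Without this comparison the inverse function theorem gives a local diffeomorphism only for each fixed $T$ after unquantified smallness assumptions, and one cannot conclude that a single $T_0$ works for all larger $T$.
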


We shall suppose throughout that our ambient Calabi--Yau manifolds are connected, but that the special Lagrangian submanifolds need not be, to allow for the possibility of gluing finitely many special Lagrangians with the same total collection of cross-sections. 
On physical grounds, it is conjectured that there is a duality between Calabi--Yau threefolds known as ``mirror symmetry" (see for example the review of Gross \cite{grosssyz}). One of the formulations of this given in \cite{grosssyz} is that there should be an isomorphism between the special Lagrangian submanifolds of a Calabi--Yau manifold equipped with a flat $U(1)$ bundle and the complex submanifolds of its mirror pair equipped with a holomorphic line bundle. Specifically, a conjecture of Strominger, Yau and Zaslow \cite{syzconjecture} says that a Calabi--Yau manifold and its mirror pair both admit special Lagrangian torus fibrations and the generic tori are dual (in the sense that one is an appropriately generalised first cohomology group of the other). This gives another reason to study special Lagrangian submanifolds. It is obvious that a torus would be obtained from Theorem A if the special Lagrangians we glue are topologically $\R \times T^{n-1}$, and then Theorem B would yield an interaction between deformations of these tori and deformations of the original $\R \times T^{n-1}$ special Lagrangians. 

The deformation theory of calibrated submanifolds in general and special Lagrangian submanifolds in particular was initiated by McLean \cite{mclean}. One of the key ideas in the special Lagrangian case is the use of the K\"ahler form to translate normal vector fields on $L$ into one-forms on $L$, though this of course works more generally for Lagrangian submanifolds of symplectic manifolds. 

If $M$ is an asymptotically cylindrical Calabi--Yau manifold, and $L$ is an asymptotically cylindrical special Lagrangian submanifold, then the induced metric on $L$ is itself asymptotically cylindrical. Hence, deformation results follow by combining the asymptotically cylindrical Laplace--Beltrami theory of Lockhart \cite{lockhart} with the McLean results. This was carried out by Salur and Todd \cite{salurtodd}. A slightly different deformation result for minimal Lagrangian submanifolds with boundary constrained to lie in an appropriate symplectic submanifold has also been obtained by Butscher \cite{butscherdeformations}. 

Joyce has written a series of papers \cite{joycecones1, joycecones2, joycecones3, joycecones4, joycecones5} on special Lagrangians with conical singularities in a fixed compact (generalised) Calabi--Yau manifold. For instance, he provides desingularisations of such singularities by gluing in a sufficiently small asymptotically conical submanifold of $\C^n$. This gluing argument uses the Lagrangian Neighbourhood Theorem to ensure that all submanifolds we deal with are Lagrangian. However, once we have a compact Lagrangian submanifold that is close to special Lagrangian he gives a general result on perturbing it to become special Lagrangian \cite[Theorem 5.3]{joycecones3}. Pacini extended this result to the case of asymptotically conical Lagrangian submanifolds in $\C^n$ given by patching together special Lagrangians with conical singularities and asymptotically conical ends (see \cite[Theorem 6.3]{pacini}). In the asymptotically cylindrical setting, where there is a change of Calabi--Yau structure arising from the Calabi--Yau gluing, it is much harder to remain Lagrangian and so the analysis is rather different. 

There are also other gluing constructions of submanifolds satisfying appropriate partial differential equations: for instance, Lotay has considered similar desingularisation problems for coassociative submanifolds of $G_2$ manifolds (see for example \cite{lotay}) and Butscher constructs contact stationary Legendrian submanifolds by a gluing method in \cite{butscherequivariantgluing}. 

We now outline the content of this paper. As a preliminary, in section \ref{sec:basicanalyticsetup}, we will first describe asymptotically cylindrical submanifolds and discuss the restriction map of forms in section \ref{sec:basicanalyticsetup}. We then discuss patching in the purely Riemannian case. We also briefly describe patching methods for asymptotically cylindrical submanifolds. 

We give our definitions and some examples and then review (and correct slightly, altering the dimension of the asymptotically cylindrical deformation space in Theorem \ref{acyldeformationtheorem}) the deformation theory of McLean and Salur--Todd in subsection \ref{sec:defsandefs}. We then turn to gluing. In Hypothesis \ref{hyp:ambientgluing}, we assume a result on the gluing of Calabi--Yau manifolds that generalises to higher $n$ the one obtained by the author in \cite{su3g2story}, and then prove in Theorem \ref{slperturbthm} (Theorem A) that asymptotically cylindrical special Lagrangian submanifolds can be glued. This follows by considering the same argument as in the deformation case. The main difficulty is to find a bound on the inverse of the linearisation, and this is done by showing that the inverse of the linearisation depends continuously on the Calabi--Yau structure and so by comparing with the original structures, we can assume we are just working with $d + d*$ as in the deformation case. Extending this slightly, by finding an auxiliary Calabi--Yau structure around any given nearly special Lagrangian submanifold, leads to Theorem \ref{thm:finalslperturbthm} (Theorem A1), that any nearly special Lagrangian submanifold can be perturbed to a special Lagrangian. 

In section \ref{sec:dsling}, we find the derivative of the map perturbing submanifolds to special Lagrangians that we have just constructed, subject to an analytic condition. In order to do this, we first give a careful discussion of how to identify normal vector fields on nearby pairs of submanifolds and how these give derivatives  for various natural maps on the ``manifold of submanifolds". We then briefly discuss how corresponding analysis applies to taking the harmonic parts of normal vector fields (that is, the part corresponding to a harmonic one-form), and hence find the derivative of the map making things special Lagrangian from the previous section. For instance, we show that given two curves of submanifolds $L_s$ and $L'_s$, the curve of normal vector fields such that $\exp_{v_s}(L_s) = L'_s$ is smooth, and give an expression for the tangent to $v_s$ at zero in terms of the tangents to the curves $L_s$ and $L'_s$ (for this expression, see Proposition \ref{combinedtransfer}). This material is inspired by Palais \cite{palaisfoundations} and Hamilton \cite{hamiltonnashmoser}, but we could not find it in the literature. 

In section \ref{sec:dpatching}, we proceed to the same kind of analysis of the patching of submanifolds. This is analytically more complicated, but because there is no need to worry about harmonic parts, it is conceptually simpler. 

Finally, we show in Theorem \ref{speclaggluinglocaldiffeo} (Theorem B) as the final part of section \ref{sec:opennessthm} that the gluing map of special Lagrangians so defined is a local diffeomorphism of moduli spaces. Intuitively, this is obvious, because the gluing map is just ``patch and become special Lagrangian" so its derivative should just be ``patch and become harmonic", and Nordstr\"om (in \cite[Theorem 3.1]{nordstromgluing}) says that this is an isomorphism, at least for long enough necks. 
 
\textbf{Acknowledgement.} The work in this paper is a slight extension of part of the author's PhD thesis \cite{mythesis}. He is indebted to his supervisor, Alexei Kovalev, for much useful advice. He would also like to thank the examiners, especially Dominic Joyce, for helpful comments. 
\section{Asymptotically cylindrical geometry}
\label{sec:basicanalyticsetup}
In this section, we make the basic Riemannian geometry and analytic definitions required for the rest of the paper, with particular emphasis on asymptotically cylindrical submanifolds. Throughout, all manifolds will be oriented. We make these definitions before defining special Lagrangians, as some of this material will be needed for our review of basic special Lagrangian theory in section \ref{sec:defsandefs}. The section falls into three parts. First, in subsection \ref{ssec:normsonsubmanifolds}, we define norms for objects on manifolds and submanifolds and briefly explain how these norms interact with the inclusion. Specifically, we state Theorem \ref{localrestrictionthm}, that (locally) if the second fundamental form of $L$ in $M$ is bounded in $C^{k-1}$, then the restriction maps are $C^k$ bounded. In subsection \ref{ssec:basicacylmanifolds}, we give definitions of asymptotically cylindrical manifolds and define some appropriate approximate gluing or patching maps. We briefly discuss the Laplace--Beltrami operator on the resulting manifolds. We then proceed in subsection \ref{ssec:basicacylsubmanifolds} to define asymptotically cylindrical submanifolds (Definition \ref{defin:acylsubmfd}) and give their basic properties. Finally, we describe an approximate gluing or patching map of asymptotically cylindrical submanifolds in Definition \ref{defin:newsubmanifoldapproxgluing}, and describe how this map interacts with the restriction maps of forms. 
\subsection{Norms}
To define norms, we shall use the notion of a jet bundle. We shall use the following basic results. 
\begin{prop}[\hspace{1sp}{\cite[ch. 2]{palaisfoundations}}, {\cite[Lemma 2.1]{jafarpourlewis}}]
\label{newjetbundles}
Let $M$ be a Riemannian manifold and let $E$ be a vector bundle over $M$. There exists a vector bundle $J^s(E)$, called the \emph{jet bundle}, such that at every point $p \in M$, the fibre $J^s(E)_p$ is given by the quotient of smooth sections of $E$ by the sections vanishing to order $s$ at $p$, and whenever $\sigma$ is a smooth section of $E$, we can write $j^k(\sigma)$ a smooth section of $J^s(E)$ given by the equivalence class of $\sigma$ at every point. We call the operation of getting $j^s(\sigma)$ from $\sigma$ \emph{jet prolongation}. 

Moreover, suppose we have a linear connection on $E$. Then using the Levi-Civita connection we have induced connections on tensor products. Hence, we may define a bundle map
\begin{equation}
\label{eq:jetbundlestructure}
\begin{tikzcd}[row sep=0pt]
J^s M \ar{r}& \oplus_{l=0}^s S^l (T^*M) \otimes E, \\
{[\sigma]} \ar[mapsto]{r}& (\sigma_p, \Sym (\nabla \sigma)_p, \ldots, \Sym (\nabla^k \sigma)_p).
\end{tikzcd}
\end{equation}
where $S^l$ is the symmetric product and $\Sym$ are the symmetrisation operators. \eqref{eq:jetbundlestructure} is a vector bundle isomorphism.
\end{prop}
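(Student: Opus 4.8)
The plan is to construct $J^s(E)$ by the standard local patching argument and then verify that the trivialisation \eqref{eq:jetbundlestructure} is a fibrewise isomorphism. Over a coordinate chart $U \subseteq M$ with a local trivialisation of $E$, I would identify a smooth section with a smooth $\R^{\mathrm{rk}\,E}$-valued function $f$ on $U$ and declare its $s$-jet at $p\in U$ to be $(\partial^\alpha f(p))_{|\alpha|\le s}$. Two sections agree under this assignment at $p$ exactly when their difference vanishes to order $s$ at $p$, a chart-independent condition, so the fibre is the quotient described in the statement. The change-of-chart and change-of-frame maps are smooth, and by the chain and Leibniz rules the induced transition maps on these jet data are polynomial in finitely many of their derivatives with smooth coefficients, hence smooth; so $J^s(E)\to M$ is a vector bundle and $\sigma\mapsto j^s(\sigma)$ is smooth. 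This is precisely the content I would quote from \cite[ch.~2]{palaisfoundations}.

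Next I would address \eqref{eq:jetbundlestructure}. It is plainly linear in $\sigma$, so the first point is that it depends only on $j^s(\sigma)$. Working in a local frame for $E$ and local coordinates on $M$, a short induction on $l$ shows that $\nabla^l\sigma$ equals $\partial^l\sigma$ plus a linear combination, with smooth coefficients assembled from the connection forms of $\nabla$ and the Christoffel symbols of the Levi-Civita connection, of partial derivatives of $\sigma$ of order strictly less than $l$ — each application of $\nabla$ raises the top derivative order by exactly one. Hence if $\sigma$ vanishes to order $s$ at $p$ then $\nabla^l\sigma(p)=0$, so $\Sym(\nabla^l\sigma)(p)=0$, for all $l\le s$, and \eqref{eq:jetbundlestructure} descends to $J^s(E)$.

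To see that the resulting bundle map is an isomorphism it suffices, as it covers the identity, to check bijectivity on fibres. Since $\dim S^l(T_p^*M)=\binom{n+l-1}{l}$ with $n=\dim M$, the target has fibre rank $\mathrm{rk}\,E\cdot\sum_{l=0}^s\binom{n+l-1}{l}=\mathrm{rk}\,E\cdot\binom{n+s}{s}$ by the hockey-stick identity, which matches the rank of $J^s(E)_p$ (the space of $E_p$-valued polynomials of degree $\le s$ in $n$ variables); so I only need injectivity. Fixing $p$ and a local frame, suppose $\Sym(\nabla^l\sigma)(p)=0$ for $l=0,\dots,s$; I claim $\partial^\alpha\sigma(p)=0$ for all $|\alpha|\le s$, by induction on $l=|\alpha|$. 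The case $l=0$ is $\sigma(p)=0$. Assuming $\partial^\beta\sigma(p)=0$ for $|\beta|<l$, the expansion above gives $\nabla^l\sigma(p)=\partial^l\sigma(p)$; iterated partial derivatives are symmetric in their indices, so this $E_p$-valued $l$-tensor equals its own symmetrisation, whence $\partial^l\sigma(p)=\Sym(\nabla^l\sigma)(p)=0$. This closes the induction and establishes injectivity, hence the isomorphism.

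The only real obstacle is the bookkeeping in the expansion $\nabla^l\sigma=\partial^l\sigma+(\text{terms of order}<l)$: one must check carefully — by a secondary induction on $l$, keeping track of which tensor bundle the intermediate objects $\nabla^{l-1}\sigma$ live on and of the induced connections there — that the correction terms genuinely involve only partial derivatives of $\sigma$ of order strictly below $l$, so that the induction in both the well-definedness and the injectivity arguments closes. Beyond this there is nothing deep; all the rest is formal, and I would cross-reference \cite[Lemma~2.1]{jafarpourlewis} for the same identification.
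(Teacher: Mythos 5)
Your argument is correct and is the standard proof of this result; the paper itself offers no proof, quoting the statement directly from Palais and Jafarpour--Lewis, and your construction (local jets as arrays of partial derivatives, the expansion $\nabla^l\sigma = \partial^l\sigma + (\text{lower order})$, the rank count via the hockey-stick identity, and injectivity by induction using symmetry of iterated partials) is exactly what those references carry out. No gaps.
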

A metric and connection on $J^sE$ follow immediately. We note also that if $F$ is a smooth fibre bundle, Palais \cite[ch. 15]{palaisfoundations} shows that we can also define a jet bundle $J^s(F)$. Moreover, this construction is functorial, so that if $F$ is a subbundle of a bundle associated to the tangent bundle, $J^s(F)$ is a smooth subbundle of $J^s(E)$. This means we have a metric and connection on $J^s(F)$ also. 

We may then define
\begin{defin}
\label{defin:holdernorms}
Let $(M, g)$ be a Riemannian manifold. Let $s$ be a non-negative integer, and $\mu \in [0, 1)$. Let $E$ be a bundle associated to the tangent bundle, so that the Riemannian metric and Levi-Civita connection define a metric and connection on the bundle $E$. Given a section $\sigma$ of $E$, define
\begin{equation}
\|\sigma\|_{C^{0, \mu}} = \sup_{ x \in M}  \left(g(\sigma_x, \sigma_x)\right)^{\frac12} + \sup_{\substack{x, y \in M \\ d(x, y) < \delta}} \frac{|\sigma_x - \sigma_y| } {d(x, y)^\mu},
\end{equation}
where $\delta$ is the injectivity radius and $|\sigma_x - \sigma_y|$ is given by parallel transporting $\sigma_x$ with $\nabla$ from the fibre $E_x$ to the fibre $E_y$ along the geodesic from $x$ to $y$ and then measuring the difference. Note that since parallel transport is an isometry, this is symmetric in $x$ and $y$. 

Since $J^s(E)$ is also a bundle associated to the tangent bundle, we then define
\begin{equation}
\|\sigma\|_{C^{s, \mu}} = \|j^s \sigma\|_{C^{0, \mu}},
\end{equation}
where $j^s \sigma$ is the jet prolongation from Proposition \ref{newjetbundles}.  

If $\mu = 0$, we just write $C^s$. 
\end{defin}
This definition is taken from Joyce \cite[p.5]{joycebook}. Note that if $M$ is not compact, smooth forms need not have finite $C^{s, \mu}$ norm for any $s$ and $\mu$. As we will mostly be working with forms on compact manifolds and asymptotically translation invariant forms on asymptotically cylindrical manifolds, this will not be a major issue. 

The following easy proposition explains why $C^s$ norms are convenient to work with. 
\begin{prop}
\label{bundlemapsckbounded}
Let $f: E \to F$ be a smooth bundle map between fibre bundles over a manifold $M$ that are subbundles of vector bundles associated to the tangent bundle. For each open subset $U$ in $M$ with compact closure, open subset $V$ of $E|_U$ with compact closure in $E$, constant $K$, and positive integer $k$,  we have a constant $C_{k,  K, V}$ such that whenever the sections $\sigma_1$ and $\sigma_2$ of $E|_U$ lie in $V$, and have derivatives up to order $s$ bounded by $K$, 
\begin{equation}
\|f(\sigma_1)|_U - f(\sigma_2)|_U\|_{C^s} \leq C_{s, K, V} \|\sigma_1|_U - \sigma_2|_U\|_{C^s}.
\end{equation}
\end{prop}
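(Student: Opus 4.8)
My plan is to reduce the claim to a single fibrewise Lipschitz bound for the jet prolongation of $f$, and then to obtain that bound by a compactness argument. First I would recall, from the functoriality of the jet construction noted after Proposition~\ref{newjetbundles} (Palais \cite[ch. 15]{palaisfoundations}), that the bundle map $f$ prolongs to a smooth bundle map $J^sf\colon J^sE\to J^sF$ over the identity, characterised by $j^s(f\circ\sigma)=(J^sf)\circ j^s\sigma$ for every section $\sigma$ of $E$. Writing $\widetilde E,\widetilde F$ for the ambient vector bundles (associated to $TM$) and using that $j^s$ is linear on sections of $\widetilde F$, Definition~\ref{defin:holdernorms} then gives
\begin{align*}
\|f(\sigma_1)|_U-f(\sigma_2)|_U\|_{C^s}&=\sup_{x\in U}\bigl|(J^sf)(j^s\sigma_1(x))-(J^sf)(j^s\sigma_2(x))\bigr|,\\
\|\sigma_1|_U-\sigma_2|_U\|_{C^s}&=\sup_{x\in U}\bigl|j^s\sigma_1(x)-j^s\sigma_2(x)\bigr|,
\end{align*}
and in each supremum the two jets lie in one and the same fibre of $J^sE$. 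So it suffices to produce a constant $C$ with $|(J^sf)(\xi)-(J^sf)(\zeta)|\le C\,|\xi-\zeta|$ for all $\xi,\zeta$ in a single fibre of $J^sE$ over a point of $U$, provided both lie in a suitable set determined by $V$ and $K$.

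Next I would confine the relevant jets to a compact set. Under the isomorphism \eqref{eq:jetbundlestructure}, $j^s\sigma(x)$ is identified with $(\sigma(x),\Sym(\nabla\sigma)(x),\dots,\Sym(\nabla^s\sigma)(x))$; for any section $\sigma$ of $E|_U$ with values in $V$ and with all covariant derivatives up to order $s$ bounded by $K$, the zeroth component lies in the compact set $\overline V$ and the remaining components have norm at most $K$. Hence all such jets, over all $x\in U$, lie in a fixed set $W$ with $\overline W$ compact and $\overline W\subset J^sE$; this uses that $\overline V$ is compact in $E$ and that $\overline U$ is compact, together with the fact that $J^sE$ is open in $J^s\widetilde E$, being the preimage of the open set $E$ under the smooth $0$-jet evaluation $J^s\widetilde E\to\widetilde E$.

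Finally I would prove the fibrewise Lipschitz bound over $\overline W$. Pick a compact $W^\sharp$ with $\overline W\subset\operatorname{int}W^\sharp\subset W^\sharp\subset J^sE$, and $\varepsilon>0$ so small that the $\varepsilon$-ball in $J^s\widetilde E_x$ about any point of $\overline W$ lies in $W^\sharp$. For $\xi,\zeta$ in one fibre with $\xi\in\overline W$ and $|\xi-\zeta|<\varepsilon$, the segment from $\xi$ to $\zeta$ stays in $W^\sharp\subset J^sE$, so the mean value theorem applied to $J^sf$ restricted to that fibre gives $|(J^sf)(\xi)-(J^sf)(\zeta)|\le A_1|\xi-\zeta|$ with $A_1:=\sup_{W^\sharp}\|D(J^sf|_{\text{fibre}})\|<\infty$; for $\xi,\zeta\in\overline W$ in one fibre with $|\xi-\zeta|\ge\varepsilon$, crudely $|(J^sf)(\xi)-(J^sf)(\zeta)|\le 2\sup_{\overline W}|J^sf|\le(2/\varepsilon)\sup_{\overline W}|J^sf|\cdot|\xi-\zeta|$. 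Taking $C_{s,K,V}=\max\{A_1,(2/\varepsilon)\sup_{\overline W}|J^sf|\}$ and combining with the reduction of the first paragraph finishes the proof; the constant depends on $s$, and on $K$ and $V$ through $W$ and $\varepsilon$ (and implicitly on $f$, $M$ and $U$), as stated. The only genuine point — the hard part, such as it is — is getting one Lipschitz constant valid across all fibres at once: the fibres of the fibre bundle $E$ need not be convex, which is why the estimate is split into the two regimes above instead of following from a single application of the mean value theorem. Equivalently one could cover $\overline U$ by finitely many trivialising charts, use that over each relatively compact chart the coordinate $C^s$ norm is uniformly equivalent to the one of Definition~\ref{defin:holdernorms}, and apply the chain rule and the mean value theorem in coordinates; the jet-bundle phrasing merely keeps the bookkeeping invariant.
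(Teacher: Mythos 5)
Your overall strategy --- prolong $f$ to a smooth bundle map $J^sf$ of jet bundles, observe that the $C^s$ norm is the supremum of the pointwise jet norm, confine the relevant jets to a compact set using $\overline V$ and the bound $K$, and extract a uniform fibrewise Lipschitz constant by compactness --- is the natural one and matches what the paper intends (it gives no proof here, but the proof of Proposition \ref{parallelgluingprop} alludes to exactly this: ``extending to a smooth, hence Lipschitz continuous, map of jet bundles'').

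There is, however, one step that fails as written. You assert that $J^sE$ is open in $J^s\widetilde E$, ``being the preimage of the open set $E$ under the smooth $0$-jet evaluation'', and you then apply the mean value theorem along the straight segment from $\xi$ to $\zeta$ inside a fibre of $J^s\widetilde E$. But $E$ is only assumed to be a fibre subbundle of the vector bundle $\widetilde E$: its fibres are embedded submanifolds, typically of positive codimension. For instance $\SUn(M)$ from Definition \ref{defin:cystructurebundle}, to which this proposition is applied in Proposition \ref{parallelgluingprop}, has fibres cut out by the closed conditions (i), (iii), (iv) of Definition \ref{defin:sunstructure}. So $E$ need not be open in $\widetilde E$; moreover $J^sE$ is not the preimage of $E$ under $0$-jet evaluation (the higher jet components of a section of $E$ are constrained to be tangent to $E$), and is a proper submanifold of $J^s\widetilde E$ rather than an open subset. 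The straight segment from $\xi$ to $\zeta$ then leaves $J^sE$, where $J^sf$ is undefined, so your set $W^\sharp$ and the constant $A_1$ do not exist; the split into the regimes $|\xi-\zeta|<\varepsilon$ and $|\xi-\zeta|\ge\varepsilon$ deals with non-convexity of $\overline W$ but not with this. The repair is standard and short: either extend $J^sf$ (or $f$ itself, before prolonging) to a smooth map on an ambient open neighbourhood of the compact set, using a tubular-neighbourhood retraction onto the embedded submanifold $J^sE$ (respectively $E$), after which your argument applies verbatim to the extension; or note that on a compact subset of an embedded submanifold the intrinsic distance is Lipschitz-equivalent to the ambient one and apply the mean-value inequality along paths in $J^sE$. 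With that repair the proof is correct.
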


\label{ssec:normsonsubmanifolds}
We now pass to a submanifold $L$ of the Riemannian manifold $(M, g)$. $(L, g|_L)$ is itself Riemannian: write $\nabla^L$ for the induced Levi-Civita connection. It follows that we have natural $C^{k, \mu}$ norms on sections of bundles associated to the tangent bundle $TL$ just as in Definition \ref{defin:holdernorms}. We will also need norms on sections of the normal bundle $\nu_L$. The metric $g$ defines a metric on this bundle: in order to define a $C^k$ norm on its sections, we need a connection. 

The natural connection on $\nu_L$ is given by taking the normal part after applying the Levi-Civita connection on $M$. We again call this connection $\nabla^L$. Combining it with the Levi-Civita connection $\nabla^L$ on $T^*L$ yields connections on tensor products by the Leibniz rule, so we may immediately extend the $C^s$ norms from Definition \ref{defin:holdernorms} to normal vector fields. Similarly, we may use $\nabla^L$ to define a $C^s$ norm on the second fundamental form $\two$ of $L$ in $M$, as this is a section of $T^*L \otimes T^*L \otimes \nu_L$. 

We now pass to the relation between norms on $L$ and $M$. Suppose $\alpha$ is a differential form on $M$ with $\|\alpha\|_{C^{k}}$ small. We would like to know that $\|\alpha|_L\|_{C^{k}}$ is also small. It can be shown that this holds locally given a local bound on the second fundamental form. More precisely, we have
\begin{thm}
\label{localrestrictionthm}
Let $L$ be an immersed submanifold of $(\R^n, g)$, where the metric $g$ need not be Euclidean. If the second fundamental form $\two$ has finite $C^{k-1}(L)$ norm , then for each $p$ there exists $C_p$ such that
\begin{equation}
\|\alpha|_L\|_{C^k(L)} \leq C_p \|\alpha\|_{C^k(M)} 
\end{equation}
for every $p$-form $\alpha$ on $M$; moreover, $C_p$ depends only on $p$ and the $C^{k-1}(L)$ bound on $\two$. 
\end{thm}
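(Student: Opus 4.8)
The plan is to estimate the iterated covariant derivatives $(\nabla^L)^l(\alpha|_L)$ pointwise for $0\le l\le k$ and then take suprema over $L$. It is convenient to regard $\alpha|_L$ not as the pullback $\iota^*\alpha$ but as the section over $L$ of $\Lambda^p T^*M|_L$ obtained by restricting only the base point; this is a section of a bundle associated to $TM|_L=TL\oplus\nu_L$, on which $\nabla^L$ acts (as $\nabla^L$ on $TL$, as the normal connection on $\nu_L$, extended by the Leibniz rule) and on which the $C^l$ norms of Definition \ref{defin:holdernorms} make sense. Since the inclusion $TL\hookrightarrow TM|_L$ is isometric and $\nabla^L$-parallel, one has $(\nabla^L)^l(\iota^*\alpha)=\rho\big((\nabla^L)^l(\alpha|_L)\big)$ for $\rho$ the pointwise orthogonal projection onto the tangential part, so it suffices to estimate $(\nabla^L)^l(\alpha|_L)$. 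The one structural input is the Gauss--Weingarten identity $\nabla|_L=\nabla^L+A$, where $A\in\Gamma\big(T^*L\otimes\mathrm{End}(TM|_L)\big)$ is built algebraically from $\two$ and $g$ and is linear in $\two$ (the Gauss term $\two$ on the tangential summand, the shape operator $-S_\bullet$ on the normal summand); equivalently, for any tensor $\gamma$ on $M$ one has $\nabla^L(\gamma|_L)=(\nabla\gamma)|_L-A\star\gamma|_L$, where $\star$ denotes a metric contraction.

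Applying this with $\gamma=\alpha$, then $\gamma=\nabla\alpha$, and so on, and differentiating the emerging $A\star(\cdots)$ terms by the Leibniz rule (using that $\star$, the splitting $TM|_L=TL\oplus\nu_L$, and $A$ as a function of $\two$ are all $\nabla^L$-parallel), an induction on $l$ yields a universal formula
\begin{equation*}
(\nabla^L)^l(\alpha|_L)=\sum c_{j,i_1,\dots,i_m}\,(\nabla^j\alpha)|_L\star(\nabla^L)^{i_1}\two\star\cdots\star(\nabla^L)^{i_m}\two,
\end{equation*}
with universal constants $c$ (depending only on $n$, $p$, $l$) and the sum over $j\ge0$, $m\ge0$, $i_1,\dots,i_m\ge0$ with $j+\sum_{t=1}^m(i_t+1)=l$. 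Two bookkeeping points, verified by the same induction, make this useful: first, one never needs to commute covariant derivatives, so no curvature of $g$ enters the formula; second, the ``weight'' relation $j+\sum(i_t+1)=l$ forces $j\le l\le k$ and every $i_t\le l-1\le k-1$, with at most $k$ factors of $\two$-type. Thus only $\nabla$-derivatives of $\alpha$ up to order $k$ and $\nabla^L$-derivatives of $\two$ up to order $k-1$ occur.

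It then remains to estimate the right-hand side pointwise. At $x\in L$ pick a $g$-orthonormal frame of $T_xM$ adapted to $T_xL\oplus\nu_x$; in it the contraction $\star$ has structure constants $0$ or $1$, so $|u\star v|\le C(n,p)\,|u|\,|v|$. Since $|(\nabla^j\alpha)_x|\le\|\alpha\|_{C^k(M)}$ for $j\le k$ and $|((\nabla^L)^{i}\two)_x|\le\|\two\|_{C^{k-1}(L)}$ for $i\le k-1$, the displayed identity gives $|(\nabla^L)^l(\alpha|_L)_x|\le P\big(\|\two\|_{C^{k-1}(L)}\big)\,\|\alpha\|_{C^k(M)}$ for a polynomial $P$ of degree $\le k$ whose coefficients depend only on $n$, $p$, $k$; summing over $l\le k$ and taking the supremum over $x\in L$ yields $\|\alpha|_L\|_{C^k(L)}\le C_p\|\alpha\|_{C^k(M)}$ with $C_p$ of the asserted form (with $n$ and $k$ regarded as fixed). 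Here I read the $C^k$ norms through iterated covariant derivatives; converting to the jet-bundle norms of Definition \ref{defin:holdernorms} via the isomorphism \eqref{eq:jetbundlestructure} alters the constants only through the ambient geometry of the fixed $(\R^n,g)$, and alternatively one may recast the whole computation as an identity of bundle maps between jet bundles and invoke Proposition \ref{bundlemapsckbounded}.

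I expect the only genuine work to be the induction behind the universal formula: organising the Leibniz expansion so that the normal slots are treated correctly through the Weingarten term, confirming that no curvature of $g$ is ever produced, and verifying the weight count that fixes the order of $\two$ at $k-1$. The adapted-frame estimate and the passage to suprema afterwards are routine.
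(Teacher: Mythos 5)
Your proposal is correct and follows essentially the same route the paper sketches: identify $\alpha|_L$ with the tangential projection of a section of $\bigwedge^p T^*M|_L$, use the Gauss--Weingarten identity $\nabla|_L=\nabla^L+A$ with $A$ algebraic in $\two$, and induct on the order of differentiation so that only $\nabla^j\alpha$ for $j\le k$ and $(\nabla^L)^i\two$ for $i\le k-1$ appear. The paper gives only this outline (explicitly deferring the "somewhat complicated" general case), and your weight-counting induction and adapted-frame estimate supply exactly the missing bookkeeping.
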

This result is not immediately apparent in the literature, and proving it in full generality is somewhat complicated. The idea, however, is easy. Since everything is multilinear, it suffices to work with one-forms, and since we have a Riemannian metric, we can regard $T^*L$ as a subbundle of $T^*M|_L$. The restriction map corresponds to orthogonal projection to this subbundle. The case $k=0$ follows immediately. For higher $k$, we know essentially by the Gauss-Weingarten formulae that derivatives on $L$ are just the $T^*L$-components of derivatives on $M$. Hence, for $k=1$ for instance, we obtain that the first derivative of $\alpha|_L$ is the $T^*L$-component of the first derivative of the $T^*L$-component of $\alpha$. But the difference between this and the $T^*L$-component of the first derivative $\alpha$ is the tangent part of the derivative of a normal part, and so can be controlled essentially by the second fundamental form. For $k>1$, there are simply more terms to deal with. 

Moreover, this also shows that the norms defined on forms on $L$ by using the connection on $M$ rather than $\nabla^L$ in the isomorphism of Proposition \ref{newjetbundles} are Lipschitz equivalent to the standard $C^k$ norms. The Lipschitz constant depends only on the $C^{k-1}$ norm of the second fundamental form of $L$ in $M$.
\subsection{Asymptotically cylindrical manifolds and their patching}
\label{ssec:basicacylmanifolds}
In this subsection, we briefly review the definition of asymptotically cylindrical manifolds and asymptotically translation invariant objects, appropriate norms, and patching maps for such manifolds and closed forms on them. This is well-known; for instance, the ideas are present in Kovalev \cite{kovalevtwistedconnected} and appear more definitely in Nordstr\"om \cite{nordstromgluing}. We also discuss the Laplace--Beltrami operator on the result of a patching in Theorem \ref{laplacelowerbound}, and explain that it can, up to harmonic forms, be bounded below in terms of the length of the neck created by the patching. Finally, we extend Nordstr\"om's work in \cite{nordstromgluing} to give a slightly more concrete statement on the gluing of harmonic forms in Proposition \ref{harmonicgluinglowerbound}. 

We begin by defining an asymptotically cylindrical manifold. 
\begin{defin}
\label{defin:acylmanifold}
A Riemannian manifold $(M, g)$ is said to be \emph{asymptotically cylindrical with rate $\delta>0$} if there exists a compact manifold with boundary $M^\cpt$, whose boundary is the compact manifold  $N$, a metric $g_N$ on $N$, and constants $C_r$ satisfying the following. Firstly, $M$ is the union of $M^\cpt$ and $N \times [0, \infty)$, where these two parts are identified by the obvious identification between $N = \partial M^\cpt$ and $N \times \{0\}$. Secondly, we have the estimate 
\begin{equation}
\label{eq:exponentialdecayestimate}
|\nabla^r (g|_{N \times [0, \infty)} - g_N - dt^2)| < C_re^{-\delta t}
\end{equation}
for every $r = 0, 1, \ldots$, where $t$ is the coordinate on $[0, \infty)$ extended to a global smooth function on $M$, $\nabla$ is the Levi-Civita connection induced by $g$, and $|\cdot|$ is the metric induced by $g$ on the appropriate space of tensors. $(M,g)$ is said to be \emph{asymptotically cylindrical} if it is asymptotically cylindrical with rate $\delta$ for some $\delta >0$. 

We refer to the cylindrical part $N \times [0, \infty)$ as the \emph{end(s)} of $M$, and we write $\tilde g$ for the cylindrical metric $g_N + dt^2$; $(M, g)$ is said to be \emph{(eventually) cylindrical} if $g = \tilde g$ for $t$ large enough. 
\end{defin}
The analogous sections of vector bundles are said to be asymptotically translation invariant
\begin{defin}
Suppose that $M$ is an asymptotically cylindrical manifold. Given a bundle $E$ associated to the tangent bundle over $M$, a section $\tilde \alpha$ of $E|_N$ extends to a section of $E|_{N \times [0, \infty)}$ by extending parallel in $t$. A section $\alpha$ of $E$ is then said to be \emph{asymptotically translation-invariant (with rate $0<\delta'<\delta$)} if there is a section $\tilde \alpha$ of $E|_N$ and \eqref{eq:exponentialdecayestimate} holds (with $\delta'$) for $|\nabla^r (\alpha -\tilde\alpha)|$ for $t>T$. In general, given an asymptotically translation invariant section $\alpha$, we will write $\tilde \alpha$ for its limit in this sense. 
\end{defin}
If $\alpha$ is asymptotically translation invariant with $\tilde \alpha = 0$, we say $\alpha$ is exponentially decaying; this just means that \eqref{eq:exponentialdecayestimate} holds for $|\nabla^r \alpha|$ itself. We will need norms adapted to asymptotically translation invariant forms. Specifically, we define a $C^s_{\delta}$ norm on a subset of asymptotically translation invariant sections $\alpha$ of a bundle $E$ associated to the tangent bundle by setting
\begin{defin}
\label{defin:holderextendedweighted}
\begin{equation}
\label{eq:ckdnorm}
\|\alpha\|_{C^s_\delta} = \|(1-\psi)\alpha + \psi e^{\delta t}(\alpha -\tilde \alpha)\|_{C^s} + \|\tilde \alpha\|_{C^s},
\end{equation}
where the cutoff function $\psi(t)$ has $\psi(t) = 1$ for $t>2$, and $\psi(t) = 0$ for $t<1$. 
\end{defin}
The topology induced by \eqref{eq:ckdnorm} is called the extended weighted topology with weight $\delta$. If we restrict to exponentially decaying forms (that is, the closed subset with $\tilde \alpha = 0$), we obtain an ordinary weighted topology. In the same way we have a $C^{s, \mu}_\delta$ topology, and by taking the inverse limit we also have a $C^\infty_\delta$ topology. See also the discussion of Sobolev extended weighted topologies in, for example, \cite[section 3]{kovalevdeformations} and \cite[p.58ff.]{aps1}.

We now consider patching. We make the following definitions.
\begin{defin}
\label{defin:newgluingmanifolds}
Suppose that $M_1$ and $M_2$ are asymptotically cylindrical manifolds with ends, with corresponding cross-sections $N_1$ and $N_2$ and metrics $g_1$ and $g_2$ with limits $\tilde g_1$ and $\tilde g_2$. Suppose given an orientation-reversing diffeomorphism $F: N_1 \to N_2$. Combining $F$ with $t \mapsto 1-t$ induces an orientation-preserving map $F: N_1 \times  (0, 1) \to N_2 \times (0, 1)$. 

$(M_1, g_1)$ and $(M_2, g_2)$ \emph{match} if there exists $F$ as above such that $F^* g_2 = g_1$. We may then fix $T>1$, a ``gluing parameter", and let
\begin{equation}
M_i \supset M_i^{\tr (T+1)} := M_i^{\cpt} \cup N_i \times (0, T+1),
\end{equation}
Moreover, we may define additional metrics on $M_i$ by $\hat g_i = g_i + \psi_T(\tilde g_i - g_i)$, and restrict these to $M_i^{\tr (T+1)}$, where $\psi_T$ is a cutoff function with $\psi_T(t) = 1$ for $t\geq T$ and $\psi_T(t) = 0$ for $t< T-1$. 

Now $F$ defines an orientation-preserving diffeomorphism between $N_1 \times (T, T+1)$ and $N_2 \times (T, T+1)$, so we may use it to join together $M_1^{\tr (T+1)}$ with $M_2^{\tr (T+1)}$ and obtain the closed manifold $M^T$. Note that a subset of $M^T$ is parametrised by $(-T-\frac12, T+\frac12) \times N$, with $(-\frac12, \frac12) \times N$ corresponding to the identification region; we shall call this subset the \emph{neck} of $M^T$. On this identification region, $\hat g_1$ and $\hat g_2$ are identified, and so we can combine them to form the metric $g$ on $M^T$.
\end{defin}
We shall use the subsets $M_i^{\tr T'}$ for varying $T'$ in our analysis. Evidently, $M_i^{\tr T'}$ is always a subset of $M_i$ and defines a subset of $M^T$ if $T' \leq T+1$. 

With respect to the metrics $g$ on $M^T$, we have a family of Riemannian manifolds with the same local geometry but global geometry becoming increasingly singular. We nevertheless have
\begin{thm}
\label{laplacelowerbound}
Let $(M^T, g)$ be a compact manifold obtained by approximately gluing two asymptotically cylindrical manifolds with ends $M_1$ and $M_2$ as in Definition \ref{defin:newgluingmanifolds}. 

Let $k$ be a positive integer and $\mu \in (0, 1)$. Then if $\alpha$ is orthogonal to harmonic forms on $M^T$, then 
\begin{equation}
\label{eq:laplacelowerbound}
\|d\alpha\|_{C^{k, \mu}} + \|d^* \alpha\|_{C^{k, \mu}} \geq CT^l \|\alpha\|_{C^{k+1, \mu}},
\end{equation}
for some positive constants $C$ and $l$, which may both depend on the geometry of $M_1$ and $M_2$; $C$ may also depend on the regularity $k+\mu$. Moreover, the same holds if $g$ is only uniformly close (with all derivatives) to the glued metric. 
\end{thm}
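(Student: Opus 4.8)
The plan is to reduce \eqref{eq:laplacelowerbound} to a spectral gap for the Hodge Laplacian in $L^2$, and to transfer that gap to H\"older spaces using the fact that the local geometry of $(M^T, g)$ is uniformly bounded in $T$. Write $D = d + d^*$ acting on forms of mixed degree, so that $\Delta = D^2$ is the Hodge Laplacian and the harmonic forms are exactly $\ker D = \ker \Delta$. Since $d\alpha$ and $d^*\alpha$ have different degrees, $D\alpha = d\alpha + d^*\alpha$ in $\Omega^*(M^T)$ and the triangle inequality gives $\|d\alpha\|_{C^{k,\mu}} + \|d^*\alpha\|_{C^{k,\mu}} \ge \|D\alpha\|_{C^{k,\mu}}$; so it suffices to bound the inverse of $D$ on the orthogonal complement of its kernel, establishing $\|\alpha\|_{C^{k+1,\mu}} \le C\,T^{l}\,\|D\alpha\|_{C^{k,\mu}}$ for $\alpha$ orthogonal in $L^2$ to the harmonic forms and a fixed power $l$ of the gluing parameter.

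The heart of the matter is the $L^2$ version. For $\alpha \perp_{L^2} \ker D$ one has $\|\alpha\|_{L^2}^2 \le \lambda(T)^{-1}\langle \Delta\alpha, \alpha\rangle = \lambda(T)^{-1}\|D\alpha\|_{L^2}^2$, where $\lambda(T) > 0$ is the smallest non-zero eigenvalue of $\Delta$ on $M^T$, and what is needed is that $\lambda(T) \ge c\,T^{-a}$ for constants $c, a > 0$ depending only on $M_1$ and $M_2$; equivalently, no eigenvalue of $\Delta$ decays faster than a fixed power of the neck length. This is in substance Nordstr\"om's analysis in \cite{nordstromgluing}: one decomposes a low eigenform along the neck $\cong N \times (-T', T')$ (with $T'$ of order $T$) into the part valued in the harmonic forms of $(N, g_N)$ and its orthogonal complement. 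On the complement, $\Delta$ is $-\partial_t^2 + \Delta_{g_N}$ up to exponentially small errors, and $\Delta_{g_N}$ is bounded below by a positive constant independent of $T$, so this part decays exponentially towards the middle of the neck and is effectively supported in a region of bounded size near the non-cylindrical ends. The cross-section-harmonic part is governed by $-\partial_t^2$ on an interval of length of order $T$, whose first eigenvalue above the constants is of order $T^{-2}$; and orthogonality to the genuine harmonic forms of $M^T$ --- whose description in terms of matched limits of bounded harmonic forms on $M_1$ and $M_2$ is the content of Proposition \ref{harmonicgluinglowerbound} --- rules out the constant-in-$t$ mode. I would either quote this estimate or reprove it by the cutoff-and-compactness argument of \cite{nordstromgluing}. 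This is the step I expect to be the main obstacle; the rest is soft.

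Granting the gap, the passage to H\"older norms uses only that $(M^T, g)$ has bounded geometry --- bounded curvature with all derivatives, injectivity radius bounded below --- uniformly in $T$: as observed just before the statement the local geometry is in fact the same for every $T$, because on any chart of a fixed radius $g$ agrees with one of the metrics $\hat g_i$, which are uniformly $C^\infty$-close to the fixed metrics $g_i$. Hence $D$ is uniformly elliptic and interior Schauder estimates hold with constants independent of $T$; absorbing a lower-order $C^0$ term by a $T$-uniform interpolation inequality on manifolds of bounded geometry yields $\|\alpha\|_{C^{k+1,\mu}} \le C_1\big(\|D\alpha\|_{C^{k,\mu}} + \|\alpha\|_{L^2}\big)$. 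Since the neck has length of order $2T$ while the compact parts have bounded volume, $\Vol(M^T) \le c_0(T+1)$, so $\|D\alpha\|_{L^2} \le (c_0(T+1))^{1/2}\|D\alpha\|_{C^{k,\mu}}$; combined with the $L^2$ gap this gives $\|\alpha\|_{L^2} \le C_2\,T^{(a+1)/2}\|D\alpha\|_{C^{k,\mu}}$, whence $\|\alpha\|_{C^{k+1,\mu}} \le C_3\,T^{(a+1)/2}\|D\alpha\|_{C^{k,\mu}}$ and therefore $\|d\alpha\|_{C^{k,\mu}} + \|d^*\alpha\|_{C^{k,\mu}} \ge C_3^{-1}\,T^{-l}\,\|\alpha\|_{C^{k+1,\mu}}$ with $l = (a+1)/2$, which is the stated lower bound. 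Finally, every constant that enters --- the Schauder and interpolation constants, the volume bound $c_0$, and, crucially, the spectral-gap constant $c$ --- is stable under a perturbation of $g$ that is uniformly $C^m$-small for $m$ large enough (for the gap because the Rayleigh quotient of $\Delta$ is then multiplied by a factor close to $1$, uniformly in $T$), so the same estimate holds when $g$ is only uniformly close with all derivatives to the glued metric.
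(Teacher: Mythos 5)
Your outline is sound, but it reaches the key $L^2$ estimate by a different route from the paper, which in fact offers no proof at all: it only remarks that the result "follows by combining standard elliptic regularity results with the $L^2$ lower bound on exterior derivative given by Chanillo--Treves \cite[Theorems 1.1 and 1.2]{chanillotreves}". That is, the paper's intended $L^2$ input is a general geometric eigenvalue bound (controlling the first positive eigenvalue of $d$ and $d^*$ on forms in terms of quantities such as the function-Laplacian eigenvalue, volume and curvature bounds, all of which degenerate only polynomially in $T$ on the neck), whereas you obtain the gap $\lambda(T)\geq cT^{-a}$ directly from the cylindrical structure by separating variables on $N\times(-T',T')$ and invoking Nordstr\"om's identification of the approximate kernel. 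Your route is more self-contained and makes the power $l$ in \eqref{eq:laplacelowerbound} traceable; the paper's route is shorter because Chanillo--Treves applies off the shelf once one checks their geometric hypotheses are uniform in $T$. Your Schauder-plus-interpolation upgrade from $L^2$ to $C^{k,\mu}$, using that the local geometry of $(M^T,g)$ is $T$-independent and that $\Vol(M^T)=O(T)$, is exactly the "standard elliptic regularity" the paper has in mind, and your stability argument for metrics uniformly close to the glued one is fine.

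One caution: do not cite Proposition \ref{harmonicgluinglowerbound} for the description of the harmonic forms of $M^T$, as you do when ruling out the constant-in-$t$ mode. The proof of that proposition in the paper invokes Theorem \ref{laplacelowerbound}, so you would be arguing in a circle. The fact you actually need --- that the glued harmonic forms exhaust $\ker\Delta$ on $M^T$, so that no exponentially small eigenvalues survive orthogonality to the harmonic forms --- is Nordstr\"om's \cite[Theorem 3.1]{nordstromgluing} and should be quoted from there (or re-proved by the cutoff-and-transplant compactness argument); with that substitution the spectral-gap step, which you rightly identify as the only real content, closes.
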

We will not give a proof of this; fundamentally it follows by combining standard elliptic regularity results with the $L^2$ lower bound on exterior derivative given by Chanillo--Treves in \cite[Theorem 1.1]{chanillotreves} and its extension to $d^*$ in \cite[Theorem 1.2]{chanillotreves}. A similar argument using \cite[Theorem 1.3]{chanillotreves} gives an extension of Theorem \ref{laplacelowerbound} to the Laplacian.

We also require a method of patching closed differential forms, and make 
\begin{defin}[cf. Nordstr\"om {\cite[p.490]{nordstromgluing}}]
\label{defin:newformgluing}
Let $(M_1, g_1)$ and $(M_2, g_2)$ be matching asymptotically cylindrical manifolds as in Definition \ref{defin:newgluingmanifolds}. Suppose that $\alpha_1$ and $\alpha_2$ are asymptotically translation invariant closed $p$-forms on $M_1$ and $M_2$ respectively. The diffeomorphism $F$ (extended as in Definition \ref{defin:newgluingmanifolds}) induces a pullback map $F^*$ from the limiting bundle $\bigwedge^p T^*M_2 |_{N_2}$ to $\bigwedge^p T^*M_1|_{N_1}$. $\alpha_1$ and $\alpha_2$ are said to \emph{match} if $F^*\tilde \alpha_2 = \tilde \alpha_1$. 

Because we have convergence with all derivatives, the limits $\tilde \alpha_i$ are closed on $N_i$, and hence, treated as constants, on the end of $M_i$. As a decaying closed form, as in for instance Kovalev \cite[Proposition 1.1]{kovalevdeformations} or Melrose \cite[Proposition 6.13]{melrose}, we may choose $\eta_i$ with $\alpha_i - \tilde \alpha_i = d\eta_i$ on the end. Let $\psi_T$ be a cutoff function as in Definition \ref{defin:newgluingmanifolds} and define
\begin{equation}
\hat \alpha_i = \alpha_i - d(\psi_T \eta_i)
\end{equation}
on the end, and $\alpha_i$ off the end. Note that $\hat \alpha_i$ are closed. 
On the overlap of $M_1^{\tr (T+1)}$ and $M_2^{\tr (T+1)}$, $\hat\alpha_i = \tilde \alpha_i$, and so the two forms are identified by $F$. Thus they define a closed form $\gamma_T(\alpha_1, \alpha_2)$ on $M^T$.
\end{defin}

Finally, we note the following variant of the result of Nordstr\"om \cite[Theorem 3.1]{nordstromgluing}, giving a more quantified estimate.
\begin{prop}
\label{harmonicgluinglowerbound}
Let $M_1$ and $M_2$ be matching asymptotically cylindrical Riemannian manifolds. Suppose we have a metric $g$ on $M^T$ and constants $\epsilon$ and $C_k$ such that 
\begin{equation}
\label{eq:hgexpdecay}
\|g - g^T\|_{C^k} \leq C_k e^{-\epsilon T} 
\end{equation}
where $g^T$ is the metric on $M^T$ given by Definition \ref{defin:newgluingmanifolds}, and this norm is taken with respect to either metric. Given a pair of harmonic forms $\alpha_1$ and $\alpha_2$ on $M_1$ and $M_2$, matching as in Definition \ref{defin:newformgluing}, they are closed, and so we may define $\gamma_T(\alpha_1, \alpha_2)$. We may then obtain a harmonic form $\Gamma_T(\alpha_1, \alpha_2)$ on $M^T$ by taking the harmonic part. For $T$ sufficiently large, $\Gamma_T$ is an isomorphism. Moreover, we have an estimate
\begin{equation}
\label{eq:gluinglowerbound}
\|\Gamma_T(\alpha_1, \alpha_2)\|_{C^k} \geq C (\|\alpha_1\|_{C^k_\delta} + \|\alpha_2\|_{C^k_\delta}),
\end{equation}
where $C$ is independent of $T$ and the norms on the right hand side are the extended weighted norms of Definition \ref{defin:holderextendedweighted}. 
\end{prop}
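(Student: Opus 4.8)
That $\Gamma_T$ is an isomorphism for $T$ sufficiently large is Nordstr\"om's \cite[Theorem 3.1]{nordstromgluing}, with the obvious modification --- parallel to the last sentence of Theorem \ref{laplacelowerbound} --- that the argument only uses uniform closeness of $g$ to $g^T$, which \eqref{eq:hgexpdecay} certainly provides; indeed, once the estimate \eqref{eq:gluinglowerbound} is known it gives injectivity directly, and surjectivity then follows because the space of matching pairs of harmonic forms has the same dimension as $\H^*(M^T) = H^*(M^T; \R)$. So the substance of the proposition is the estimate \eqref{eq:gluinglowerbound}, and the plan is as follows.

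First I would record the structure of the correction. Since $\gamma_T(\alpha_1, \alpha_2)$ is a closed form, Hodge theory on $(M^T, g)$ gives, with $G_T$ the Green's operator,
\[
\Gamma_T(\alpha_1, \alpha_2) = \gamma_T(\alpha_1, \alpha_2) - d d^* G_T \gamma_T(\alpha_1, \alpha_2) = \gamma_T(\alpha_1, \alpha_2) - d G_T d^* \gamma_T(\alpha_1, \alpha_2),
\]
using $d\gamma_T(\alpha_1, \alpha_2) = 0$ and $d^* G_T = G_T d^*$. Thus the correction $r_T := \Gamma_T(\alpha_1, \alpha_2) - \gamma_T(\alpha_1, \alpha_2)$ is exact and is controlled by $d^* \gamma_T(\alpha_1, \alpha_2)$, which I claim is exponentially small: by Definition \ref{defin:newformgluing} the form $\gamma_T(\alpha_1, \alpha_2)$ differs from $\alpha_i$ only where $t \geq T - 1$, and there $\eta_i$ and $\alpha_i - \tilde\alpha_i$ are $O(e^{-\delta' t})$, while by \eqref{eq:hgexpdecay} the metric $g$ is exponentially close to the metrics $g_i$ with respect to which the $\alpha_i$ are coclosed; so (fixing some auxiliary $\mu \in (0,1)$, and using that all the $C^s_\delta$-norms agree up to constants on the finite-dimensional space of harmonic forms) one gets $\|d^* \gamma_T(\alpha_1, \alpha_2)\|_{C^{k-1, \mu}} \leq C e^{-\epsilon T}(\|\alpha_1\|_{C^k_\delta} + \|\alpha_2\|_{C^k_\delta})$ for some $\epsilon > 0$. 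On the other hand, Theorem \ref{laplacelowerbound} together with Schauder estimates bounds $G_T$, as a map from $C^{k-1, \mu}$-forms orthogonal to $\H^*(M^T)$ into $C^{k+1, \mu}$, with operator norm growing at most polynomially in $T$. Combining these and absorbing the polynomial factor, $\|r_T\|_{C^k(M^T)} \leq C e^{-\epsilon' T}(\|\alpha_1\|_{C^k_\delta} + \|\alpha_2\|_{C^k_\delta})$ for some $\epsilon' > 0$, which tends to zero relative to the bracketed quantity as $T \to \infty$.

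It remains to bound $\gamma_T(\alpha_1, \alpha_2)$ from below, and the point is simply that its restriction to the fixed-geometry piece $M_i^{\tr (T - 1)} \subset M^T$ is exactly $\alpha_i$, and that these pieces exhaust $M_i$ as $T \to \infty$. I would argue by contradiction: if no $T$-independent $C$ worked, there would be $T_j \to \infty$ and matching pairs $(\alpha_1^j, \alpha_2^j)$ with $\|\alpha_1^j\|_{C^k_\delta} + \|\alpha_2^j\|_{C^k_\delta} = 1$ but $\|\Gamma_{T_j}(\alpha_1^j, \alpha_2^j)\|_{C^k} \to 0$. These pairs lie in the fixed, finite-dimensional space of matching harmonic pairs --- finite-dimensional because bounded harmonic forms on an asymptotically cylindrical manifold are asymptotically translation invariant with exponential convergence to their limits, so that for $\delta$ small the $C^k_\delta$-norm is finite there --- so after passing to a subsequence $(\alpha_1^j, \alpha_2^j) \to (\alpha_1, \alpha_2)$ with $\|\alpha_1\|_{C^k_\delta} + \|\alpha_2\|_{C^k_\delta} = 1$, in particular some $\alpha_i$ is not identically zero. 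But for any compact $K_i \subset M_i$ and $j$ large one has $K_i \subset M_i^{\tr (T_j - 1)}$ and hence $\Gamma_{T_j}(\alpha_1^j, \alpha_2^j)|_{K_i} = \alpha_i^j|_{K_i} + r_{T_j}|_{K_i}$; since $\|\Gamma_{T_j}(\alpha_1^j, \alpha_2^j)\|_{C^k(K_i)} \to 0$ and $\|r_{T_j}\|_{C^k(K_i)} \to 0$ by the previous paragraph, $\alpha_i^j \to 0$ in $C^k(K_i)$ and so $\alpha_i|_{K_i} = 0$; as $K_i$ is arbitrary this forces $\alpha_i \equiv 0$ on $M_i$, a contradiction.

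The main obstacle is the estimate on $r_T$: one must check that the cutoff modifications defining $\gamma_T$, and the discrepancy between $g$ and the $g_i$, are genuinely concentrated where the $\alpha_i$ are already exponentially close to their translation-invariant limits, so that the exponential smallness of $d^* \gamma_T$ defeats the merely polynomial blow-up of $G_T$ that Theorem \ref{laplacelowerbound} permits on a long neck, and that all the constants are uniform over the finite-dimensional space of matching pairs. Given that, the dimension count behind the isomorphism statement, the stability under replacing $g^T$ by a uniformly close $g$, and the compactness argument of the last paragraph are all routine.
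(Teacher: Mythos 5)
Your proposal is correct and its overall skeleton matches the paper's: both arguments reduce to (a) a $T$-independent lower bound for $\gamma_T(\alpha_1,\alpha_2)$ and (b) showing the correction $\Gamma_T-\gamma_T$ is exponentially small in $T$, the latter via the exponential smallness of $d^*\gamma_T(\alpha_1,\alpha_2)$ (supported only where $t>T-1$, where everything is exponentially close to the translation-invariant harmonic limit) beating the merely polynomial growth of the inverse of $d+d^*$ on the orthogonal complement of the harmonic forms from Theorem \ref{laplacelowerbound}. Your Green's-operator packaging of step (b) is the same computation the paper does directly.

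Where you genuinely diverge is step (a). The paper proves the uniform lower bound by a fixed linear-algebra argument: the map $\alpha_i\mapsto(\alpha_i|_{M_i^{\tr 2}},\tilde\alpha_i)$ on the finite-dimensional space of bounded harmonic forms is injective, hence bounded below, and $\|\hat\alpha_i\|_{C^k}$ controls both components for every $T>3$. The injectivity itself costs something: the paper must show that a bounded harmonic form vanishing on $M_i^{\tr 2}$ with zero limit is exact as the differential of an asymptotically translation invariant form, hence represents the trivial class and vanishes by Nordstr\"om's Hodge theory. Your compactness/contradiction argument, sending $T_j\to\infty$ and extracting a convergent subsequence in the fixed finite-dimensional space of matching pairs, sidesteps that cohomological step entirely, since the limiting form vanishes on arbitrarily large compact exhausting sets and is therefore identically zero with no unique-continuation input. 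The trade-off is that your constant is non-constructive, whereas the paper's comes from a single fixed injective linear map; both rest ultimately on finite-dimensionality, and both are sound.
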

\begin{proof}
Nordstr\"om shows in \cite[Theorem 3.1]{nordstromgluing} that this gluing map is an isomorphism except for a finite exceptional set of values of $T$. In particular, the map is a linear map between vector spaces of the same dimension; this can also be proved a little more directly. It will thus suffice to demonstrate \eqref{eq:gluinglowerbound}, which shows the map is injective. 

That this gluing map is an isomorphism follows from \cite[Theorem 3.1]{nordstromgluing}, using \cite[Corollary 5.13]{nordstromacyldeformations} to reduce to the simpler one-form version, so we only have to show the lower bound \eqref{eq:gluinglowerbound}. 

We show first that \eqref{eq:gluinglowerbound} holds when we replace $\Gamma_T$ with $\gamma_T$. Since $\gamma_T(\alpha_1, \alpha_2)$ is just given by identifying the cutoffs $\hat \alpha_1$ and $\hat \alpha_2$ it suffices to show that there exists $C$ independent of $T$ such that 
\begin{equation}
\|\hat \alpha_i\|_{C^k(M)} \geq C \|\alpha_i\|_{C^k_\delta}.
\end{equation}
We argue using the restriction to $\hat \alpha_i$ to the neighbourhood $M_i^{\tr 2}$ of $M_i^\cpt$ and the limit $\tilde \alpha_i$ of $\hat \alpha_i$. Note that this limit has two parts, a part with and a part without $dt$. If $T>3$, then $\hat \alpha_i|_{M_i^{\tr 2}} = \alpha_i|_{M_i^{\tr 2}}$; for any $T$, $\tilde \alpha_i$ is again the limit of $\hat \alpha_i$. We thus obtain for every $T>3$
\begin{equation}
\|\alpha_i|_{M_i^{\tr 2}}\|_{C^k(M_i^{\tr 2})} + \|\tilde \alpha_i\|_{C^k(N)} \leq 2\|\hat \alpha_i\|_{C^k(M)}.
\end{equation}
Hence, it suffices to prove that there is a constant $C$ such that 
\begin{equation}
\|\alpha_i\|_{C^k_\delta} \leq C(\|\alpha_i|_{M_i^{\tr 2}}\|_{C^k(M_i^{\tr 2})} + \|\tilde \alpha_i\|_{C^k(N)}).
\end{equation}
Since the space of harmonic one-forms is finite-dimensional and the map $\alpha_i \mapsto (\alpha_i|_{M_i^{\tr 2}}, \tilde \alpha_i)$ is linear, such a constant must exist provided that this map is injective. Suppose then that $\alpha_i|_{M_i^{\tr 2}} = 0$ and $\tilde \alpha_i = 0$. As in Definition \ref{defin:newformgluing}, we see that, working over $(0, \infty) \times N$, $\alpha_i = d\eta$ for some decaying $\eta$. Moreover, $\eta$ is closed on $(0, 2) \times N$, and as $\alpha_i$ is zero there, we may choose $\eta$ to be translation invariant there as well. Thus $\eta|_{(0, 2) \times N}$ defines a closed translation invariant form on the whole end. Subtracting this translation invariant form from $\eta$ and extending over $M^\cpt$ by zero, we find that $\alpha_i$ is the differential of an asymptotically translation invariant form on $M$. In particular, $\alpha_i$ must define the trivial class in $H^1(M_i)$. It follows from Nordstr\"om \cite[Theorem 5.9]{nordstromacyldeformations} that $\alpha_i$ is zero.  This proves that \eqref{eq:gluinglowerbound} holds when we replace $\Gamma_T$ by $\gamma_T$. 

It only remains to prove that $\Gamma_T(\alpha_1, \alpha_2)$ can be bounded below in terms of $\gamma_T(\alpha_1, \alpha_2)$; it will suffice to show that the part of $\gamma_T(\alpha_1, \alpha_2)$ orthogonal to harmonic forms satisfies an estimate like \eqref{eq:hgexpdecay}. We will use Theorem \ref{laplacelowerbound}: that in our setting, a form orthogonal to the harmonic forms satisfies \eqref{eq:laplacelowerbound}. It follows that if $d^*\gamma_T(\alpha_1, \alpha_2)$ satisfies an estimate like \eqref{eq:hgexpdecay}, so too does the part orthogonal to harmonic forms. We consider $d^* \hat\alpha_1$. We know that this is zero for $t< T-1$, as then $\hat \alpha_1$ coincides with $\alpha_1$ and the metric $g$ on $M^T$ coincides with $g_1$. It remains to consider the neck. Clearly, $\psi_{T}$ and its derivatives are bounded above independent of $T$; this implies that the glued metric $g^T$ and $\hat \alpha_1$ are exponentially close to $\tilde g$ and $\tilde \alpha$ here. We know that $\tilde \alpha$ is $\tilde g$-harmonic, and so we see that $d^* \hat \alpha_1$ satisfies \eqref{eq:hgexpdecay}. Hence $\Gamma_T(\alpha_1, \alpha_2) - \gamma_T(\alpha_1, \alpha_2)$ satisfies \eqref{eq:hgexpdecay} and the result follows. 
\end{proof}
\subsection{Asymptotically cylindrical submanifolds and their patching}
\label{ssec:basicacylsubmanifolds}
In this subsection, we describe what it will mean for a submanifold to be asymptotically cylindrical (Definition \ref{defin:acylsubmfd}), following Joyce--Salur \cite[Definition 2.9]{joycesalur} and Salur--Todd \cite[Definition 2.6]{salurtodd}. We show that an asymptotically cylindrical submanifold of an asymptotically cylindrical manifold is itself an asymptotically cylindrical manifold (globalising a variant of Theorem \ref{localrestrictionthm}) when equipped with the restricted metric. We then extend the patching or approximate gluing of subsection \ref{ssec:basicacylmanifolds} and give such a patching of matching submanifolds, and explain how this relates to the other patchings in that subsection. For instance, in cohomology, the patching map of forms commutes with the restriction map to a submanifold. 
\begin{defin}
\label{defin:acylsubmfd}
Let $(M, g)$ be an asymptotically cylindrical Riemannian manifold, with cross-section $N$. Let $L$ be a submanifold of $M$. $L$ is called \emph{asymptotically cylindrical (with cross-section $K$)} if there exists $R \in \R$ and an exponentially decaying normal vector field $v$ on $K \times (R, \infty)$ (in the sense that its inclusion and derivatives are exponentially decaying in the metric on $M$) such that $L$ is the union of a compact part with boundary $\exp_v(K \times \{R\})$ and $\exp_v(K \times (R, \infty)$. 

If $v \equiv 0$ far enough along $K \times (R, \infty)$, we shall call $L$ \emph{cylindrical}. 
\end{defin}
For notational clarity, we shall write the coordinate on $(R, \infty)$ as $r$. We note that $K$ need not be connected, even if $L$ is. 

We note the following
\begin{prop}[cf. Joyce--Salur{\cite[top of p.1135]{joycesalur}}]
\label{acylacyl}
Suppose that $(M, g)$ is an asymptotically cylindrical manifold with cross-section $N$, and that $L$ is an asymptotically cylindrical submanifold in the sense of Definition \ref{defin:acylsubmfd}. Then $L$, with its restricted metric, is itself an asymptotically cylindrical manifold.
\end{prop}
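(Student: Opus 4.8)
The plan is to exhibit an explicit product structure on the end of $L$ and check that the restricted metric converges exponentially, with all derivatives, to a cylindrical metric there. The candidate limit is forced: the cross-section $K$ is a fixed compact submanifold of $N$ (it is $K\times\{r\}$ viewed inside $N\times\{r\}$), so $g_N$ restricts to a metric $g_K$ on $K$, and the claim is that $L$ is asymptotically cylindrical with cross-section $K$ and limit metric $g_K+dr^2$. Write $\iota\colon K\times(R,\infty)\hookrightarrow M$ for the inclusion of the straight cylinder inside the end of $M$, and $\Phi(p)=\exp_{v(p)}(p)$ (using the exponential map of $M$) for the map whose image is, by Definition \ref{defin:acylsubmfd}, the end of $L$; possibly after increasing $R$ we may take $\Phi$ to be an embedding, so that the end of $L$ is identified via $\Phi$ with $K\times(R,\infty)$ and $g|_L$ with $\Phi^*g$. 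Taking $L^\cpt$ to be $L$ with the image of $K\times(R',\infty)$ removed, for $R'$ large, and reparametrising $r\mapsto r-R'$, it then suffices to show that $\Phi^*g\to g_K+dr^2$ exponentially with all derivatives on $K\times(R,\infty)$.

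First I would restrict $g$ to the straight cylinder. On $N\times(R,\infty)$ we have $g=\tilde g+(g-\tilde g)$ with $g-\tilde g$ decaying exponentially with all derivatives by Definition \ref{defin:acylmanifold}, while $\tilde g|_{K\times(R,\infty)}=g_N|_K+dr^2=g_K+dr^2$ exactly. The submanifold $K\times(R,\infty)$ has uniformly $C^\infty$-bounded second fundamental form in $(M,g)$: with respect to $\tilde g$ it is the translation-invariant extension of the second fundamental form of the fixed compact submanifold $K\subset(N,g_N)$, and passing from $\tilde g$ to $g$ alters it by something exponentially decaying with all derivatives. Hence by the globalised variant of Theorem \ref{localrestrictionthm} --- whose proof, being multilinear, applies equally to the symmetric $2$-tensor $g-\tilde g$ and not just to forms --- the restriction $\iota^*(g-\tilde g)$ decays exponentially with all derivatives, i.e.\ $\iota^*g=g|_{K\times(R,\infty)}\to g_K+dr^2$ exponentially with all derivatives.

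Next I would pass from $\iota$ to $\Phi$. Since $g$ has bounded geometry on the end of $M$, the map $(p,w)\mapsto\exp_w(p)$ is smooth and $C^\infty$-bounded on a neighbourhood of the zero section of the normal bundle over the end; so $\Phi=\exp\circ(\iota,v)$ differs from $\iota=\exp\circ(\iota,0)$ by a term controlled, with all derivatives, by $v$ and its derivatives, and hence $\Phi$ is $C^\infty$-close to $\iota$ with the difference decaying exponentially with all derivatives, using that $v$ is exponentially decaying with all derivatives by Definition \ref{defin:acylsubmfd}. Applying Proposition \ref{bundlemapsckbounded} to the bundle map ``pull $g$ back'' --- equivalently, tracking how $\Phi^*g$ at a point depends on the $\infty$-jet of $\Phi$ there and on the (uniformly $C^\infty$-bounded) metric $g$ --- shows that $\Phi^*g-\iota^*g$ decays exponentially with all derivatives.

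Combining, $\Phi^*g=(g_K+dr^2)+(\iota^*g-g_K-dr^2)+(\Phi^*g-\iota^*g)$ with both correction terms decaying exponentially with all derivatives, at the rate which is the smaller of the decay rates of $g-\tilde g$ and of $v$ (still positive), so $\Phi^*g$ satisfies \eqref{eq:exponentialdecayestimate}; this is exactly the product structure required of the end of an asymptotically cylindrical manifold, so $(L,g|_L)$ is asymptotically cylindrical with cross-section $K$. I expect the main obstacle to be the bookkeeping in the two middle paragraphs: making precise that restriction to a submanifold, and pullback by a map $C^\infty$-close to an inclusion with bounded second fundamental form, each preserve exponential decay \emph{with all derivatives}. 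The geometric input --- uniform $C^\infty$ bounds on the ambient metric and on all the relevant second fundamental forms, and decay with all derivatives of $v$ and of $g-\tilde g$ --- is immediate; the effort goes into propagating the estimate through arbitrarily many derivatives, which is precisely what the globalised Theorem \ref{localrestrictionthm} together with Proposition \ref{bundlemapsckbounded} are for.
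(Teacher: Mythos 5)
Your argument is correct and is essentially the paper's own (sketch) proof, just written out in the reverse order and in more detail: the paper first uses the exponential decay of $v$ to reduce to a cylindrical $L$ and then invokes an analogue of Theorem \ref{localrestrictionthm} (with the uniform second fundamental form bound) to reduce to a cylindrical ambient metric, whereas you first restrict $g-\tilde g$ to the straight cylinder and then compare $\Phi^*g$ with $\iota^*g$. The two ingredients — restriction preserving exponential decay via the second fundamental form bound, and pullback along $\exp_v$ preserving it via the decay of $v$ — are the same in both.
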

\begin{proof}[Sketch proof]
It suffices to work on each end. Since $v$ is exponentially decaying, $\exp_v^* g$ also defines an asymptotically cylindrical metric, so it suffices to suppose that $L$ is cylindrical. On the other hand, by an analogue to Theorem \ref{localrestrictionthm} and the fact the second fundamental form depends continuously on the inclusion and metric which both converge as $t \to \infty$, exponentially decaying tensors remain exponentially decaying on restriction, so it suffices to suppose the metric on $M$ is cylindrical. The result follows. 
\end{proof}
We may thus define extended weighted spaces of forms on asymptotically cylindrical submanifolds. 

Since a similar argument applies to the metric on the bundle $TM|_L$, we may then define spaces of exponentially decaying normal vector fields on asymptotically cylindrical submanifolds just as in subsection \ref{ssec:basicacylmanifolds}. Note that to define extended weighted spaces we need a notion of translation invariant normal vector fields, which we do not yet have. See Definition \ref{defin:newatinvf} below. 

We now extend the notion of approximate gluing to submanifolds. 
\begin{defin}
\label{defin:newsubmanifoldapproxgluing}
Let $M_1$ and $M_2$ be matching asymptotically cylindrical manifolds in the sense of Definition \ref{defin:newgluingmanifolds}, so that we have orientation-reversing $F: N_1 \to N_2$. Let $L_1$ and $L_2$ be asymptotically cylindrical submanifolds of $M_1$ and $M_2$; by definition, they have limits $K_1 \times (R_1, \infty)$ and $K_2 \times (R_2, \infty)$. Say that they \emph{match} if $F(K_1) = K_2$. 

Given such a pair, by definition 
\begin{equation}
L_i = L_i^\cpt \cup (\exp_v((R_i, \infty) \times K_i)).
\end{equation}
Consider the cutoff function $\varphi_{T}$, with $\varphi_T(t) = 0$ for $t\geq T$, and $\varphi_T(t) = 1$ for $t< T-1$. Note that $\varphi_{T}\circ\tilde\iota$ is a well-defined function on $K \times (R, \infty)$ with $\varphi_T \circ\iota(k, r) = 1$ for $r<T-1$ as $r = t\circ\tilde \iota$ for the cylindrical inclusion $\tilde \iota$. Hence for $T>R_i+1$
\begin{equation}
\hat L_i = L_i^\cpt \cup (\exp_{(\varphi_{T}\circ\tilde \iota) v}((R_i, \infty) \times K_i))
\end{equation}
is a submanifold. 

We want to be able to identify $\hat L_1$ and $\hat L_2$ over the identified regions $(T, T+1) \times N_i$ of $M_i$; that is, we want to show that $\hat L_i \cap (N_i \times (T, T+1))$ are identified by $F$. We show that $\hat L_i \cap (N_i \times (T, T+1)) = K \times (T, T+1)$, and so clearly is identified. For $r>T$, we have $\varphi_{T}\tilde\iota(k, r) = 0$, so that $\hat\iota(k, r) = \tilde\iota(k, r)$, and so $t\circ\hat\iota = r$. In particular, $t\circ\hat\iota(k, T) = T$ for all $k \in K_i$. 

Furthermore, note that by choosing $T$ sufficiently large $\hat \iota_* \ddr$ and $\tilde \iota_* \ddr$ can be chosen as close as we like. Since $\tilde \iota_* \ddr = \ddt$, it follows that $dt(\hat \iota_* \ddr)>0$ throughout $\hat L_i$. In particular, $t\circ\hat\iota(k, r)$ is always a strictly increasing function of $r$ for fixed $k$. It then follows that for $r<T$, $t \circ \hat \iota(k, r) <T$, and for $r >T+1$, $t\circ\hat\iota(k, r)>T+1$, so that $\hat L_i \cap (N \times (T, T+1)) = K_i \times (T, T+1)$. Since $L_i$ match, these are identified by $F$ and we can form the gluing of $\hat L_1$ and $\hat L_2$, a submanifold $L^T$ of $M^T$. 
\end{defin}

We may also consider $L_1$, $L_2$ and $L^T$ constructed in Definition \ref{defin:newsubmanifoldapproxgluing} as ambient manifolds. $L_1$ and $L_2$ match in the sense of Definition \ref{defin:newgluingmanifolds} and $L^T$ is their gluing, though not necessarily with parameter $T$. Suppose that a matching pair of closed forms on $L_1$ and $L_2$ is induced from a matching pair of closed forms on $M_1$ and $M_2$, \ie the pair of forms is $\alpha_1|_{L_1}$ and $\alpha_2|_{L_2}$. Then both $\gamma_T(\alpha_1, \alpha_2)|_{L^T}$ and $\gamma_T(\alpha_1|_{L_1}, \alpha_2|_{L_2})$ define forms on $L^T$. These forms need not be equal in general. 

Nevertheless, slightly weaker results will hold. To state these, we shall assume that the ends of $M_i$ and $L_i$ have the same parametrisation, so that $\gamma_T$ is the gluing map on $L_1$ and $L_2$ corresponding to their gluing as submanifolds with gluing parameter $T$ and similarly for the cutoff functions; by reparametrising $M_i$ this may assumed without loss of generality. 
\begin{lem}
\label{gluingconsistency}
Let $M_1$ and $M_2$ be a matching pair of asymptotically cylindrical manifolds, and let $L_1$ and $L_2$ be matching asymptotically cylindrical submanifolds. Let $L^T$ be the glued submanifold of Definition \ref{defin:newsubmanifoldapproxgluing}, and $\alpha_1$ and $\alpha_2$ be a matching pair of asymptotically translation invariant closed forms on $M_1$ and $M_2$. Then as cohomology classes
\begin{equation}
\label{eq:gluingconsistencycohom}
[\gamma_T(\alpha_1|_{L_1}, \alpha_2|_{L_2})] = [\gamma_T(\alpha_1, \alpha_2)]|_{L^T},
\end{equation}
and there exist constants $C_k$ and $\epsilon$ such that for all $k$
\begin{equation}
\label{eq:gluingconsistencynorm}
\|\gamma_T(\alpha_1|_{L_1}, \alpha_2|_{L_2}) - \gamma_T(\alpha_1, \alpha_2)|_{L^T} \|_{C^k} \leq C_ke^{-\epsilon T}.
\end{equation}
\end{lem}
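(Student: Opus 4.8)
The plan is to unwind both gluings explicitly over the neck, recognise their difference as a globally defined closed form $\beta$ on $L^T$, and then show separately that $\beta$ is exponentially small and that it is exact. First I would write both forms on the part of $L^T$ coming from $\hat L_i$: with $\hat\iota_i = \exp_{(\varphi_T\circ\tilde\iota)v}$ the embedding of $\hat L_i$ from Definition \ref{defin:newsubmanifoldapproxgluing} and $\iota_i = \exp_v$ that of $L_i$, the form $\gamma_T(\alpha_1,\alpha_2)|_{L^T}$ equals $\hat\iota_i^*(\alpha_i - d(\psi_T\eta_i))$, while $\gamma_T(\alpha_1|_{L_1},\alpha_2|_{L_2})$ equals $\iota_i^*\alpha_i - d(\psi_T\eta_i^L)$, where for convenience I take the primitive on the end of $L_i$ to be $\eta_i^L = \iota_i^*\eta_i + \zeta_i$ with $\zeta_i$ an exponentially decaying primitive of the closed decaying form $\iota_i^*\tilde\alpha_i - \tilde\iota_i^*\tilde\alpha_i$ (all primitives here exist by the decaying-primitive result cited in Definition \ref{defin:newformgluing}). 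Since $\hat\iota_i = \iota_i$ and $\psi_T$ vanishes for $r < T-1$, the difference $\beta_i$ of the two forms is supported in $\{r \ge T-1\}$; and on $\{T < r < T+1\}$, where $\hat\iota_i$ is cylindrical and $\psi_T \equiv 1$, both forms reduce to $\tilde\alpha_i|_{K_i}$, so $\beta_i$ vanishes identically there. Hence the $\beta_i$ patch to a closed form $\beta$ on $L^T$ supported in two collars of the neck, each $\cong K \times (\text{interval})$, and vanishing on the central slab.

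For the estimate \eqref{eq:gluingconsistencynorm} I would expand $\beta_i = (\iota_i^*\alpha_i - \hat\iota_i^*\alpha_i) + d\big((\psi_T\circ t\circ\hat\iota_i)\hat\iota_i^*\eta_i - (\psi_T\circ r)\eta_i^L\big)$ and bound each term on $\{r\ge T-1\}$. The two embeddings differ by $\exp$ of vector fields differing by $(1-\varphi_T\circ\tilde\iota)v$; as $v$ decays exponentially and $\psi_T,\varphi_T$ have all derivatives bounded uniformly in $T$, this has $C^{k+1}$-norm $\le C_{k+1}e^{-\delta(T-1)}$ there, and then Proposition \ref{bundlemapsckbounded} together with the restriction Theorem \ref{localrestrictionthm} (applied to the bounded smooth form $\alpha_i$) gives $\|\iota_i^*\alpha_i - \hat\iota_i^*\alpha_i\|_{C^k} \le C_k e^{-\epsilon T}$. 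Similarly $\|\eta_i^L - \hat\iota_i^*\eta_i\|_{C^{k+1}} \le C_{k+1}e^{-\delta(T-1)}$ there (as $\eta_i,\zeta_i$ decay) and $\|\psi_T\circ r - \psi_T\circ t\circ\hat\iota_i\|_{C^{k+1}} \le C_{k+1}e^{-\delta(T-1)}$ (as $t\circ\hat\iota_i - r$ is exponentially small, and zero once $\hat\iota_i$ is cylindrical), so the $d(\cdot)$-term is also $\le C_k e^{-\epsilon T}$; since all the decay rates involved exceed a fixed $\delta' > 0$, $\epsilon$ may be taken independent of $k$.

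For the cohomology statement \eqref{eq:gluingconsistencycohom} it suffices to show $[\beta] = 0$ in $H^p(L^T)$. Both $[\gamma_T(\alpha_1,\alpha_2)]|_{L^T}$ and $[\gamma_T(\alpha_1|_{L_1},\alpha_2|_{L_2})]$ restrict, on each $\hat L_i \simeq L_i$, to $[\alpha_i|_{L_i}]$ — for the first by functoriality of restriction and the fact that $[\gamma_T(\alpha_1,\alpha_2)]$ restricts to $[\alpha_i]$ on $M_i^{\tr(T+1)}$, for the second by the analogue on $L_i$ — so $[\beta]$ restricts to zero on both $\hat L_i$, and Mayer--Vietoris for $L^T = A\cup B$ (with $A\simeq\hat L_1$, $B\simeq\hat L_2$, $A\cap B\simeq K$) places $[\beta]$ in the image of the connecting map $H^{p-1}(K)\to H^p(L^T)$, a preimage being the class $[\Theta_1 - \Theta_2]|_{A\cap B}$ for any primitives $\Theta_i$ of $\beta$ on the two pieces. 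I would take $\Theta_i = \theta_i + (\psi_T\circ t\circ\hat\iota_i)\hat\iota_i^*\eta_i - (\psi_T\circ r)\eta_i^L$, with $\theta_i$ the primitive of $\iota_i^*\alpha_i - \hat\iota_i^*\alpha_i$ furnished by the homotopy $s\mapsto\exp_{((1-s)+s\varphi_T\circ\tilde\iota)v}$ from $\iota_i$ to $\hat\iota_i$ (a homotopy of embeddings for $T$ large, stationary for $r < T-1$); on $\{T < r < T+1\}$, where this homotopy coincides with $\exp_{(1-s)v}$ from $\iota_i$ to $\tilde\iota_i$, one checks that $\Theta_i$ equals the restriction of a closed, exponentially decaying form $\nu_i$ on the cylindrical end $K\times(R,\infty)$ of $L_i$. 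A closed exponentially decaying form on such a cylinder has all periods over cycles of $K$ equal to zero (they are slice-independent and tend to $0$), so $[\nu_i] = 0$ in $H^{p-1}(K)$; hence $[\Theta_1 - \Theta_2]|_{A\cap B} = 0$ and $[\beta] = 0$, giving \eqref{eq:gluingconsistencycohom}.

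Apart from the routine estimation in the second paragraph, the step I expect to be the main obstacle is the bookkeeping in the last one: choosing the primitives $\eta_i$, $\eta_i^L$, $\zeta_i$, the two families of cutoffs and the homotopy primitive $\theta_i$ compatibly enough that the discrepancy on the central slab is manifestly the restriction of a decaying closed form on the cylinder — that identification is exactly what makes the Mayer--Vietoris obstruction vanish, and the many cancellations required are easy to get wrong.
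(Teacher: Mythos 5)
Your argument is correct, and it supplies considerably more detail than the paper does: the paper's justification is a two-line sketch ("reduce to the cylindrical case with compatible cutoffs, where one gets equality; the general case differs by exact and exponentially decaying terms"), whereas you work directly in the asymptotically cylindrical setting. The estimate \eqref{eq:gluingconsistencynorm} is handled the same way in both — the discrepancy is supported where $r \geq T-1$ and every ingredient ($v$, $\eta_i$, $\zeta_i$, $t\circ\hat\iota_i - r$) decays exponentially there while the cutoffs are bounded uniformly in $T$. Where you genuinely diverge is in the cohomological identity \eqref{eq:gluingconsistencycohom}: the paper's intended route is to choose the primitives compatibly so that the difference is \emph{manifestly} a sum of exact terms, while you instead show the difference $\beta$ is exact a posteriori, via Mayer--Vietoris together with the observation that your explicit primitives $\Theta_i$ restrict on the central slab to a closed exponentially decaying form on the cylinder $K\times(R,\infty)$, whose periods over cycles of $K$ vanish because they are slice-independent and tend to zero. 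This costs you the bookkeeping you flag at the end (in particular the verification that $\theta_i + \tilde\iota_i^*\eta_i - \iota_i^*\eta_i - \zeta_i$ is the restriction of a globally defined closed decaying form $\nu_i$ on the end, which does check out since $d\nu_i = (\iota_i^*\alpha_i - \tilde\iota_i^*\alpha_i) - (\iota_i^* - \tilde\iota_i^*)(\alpha_i - \tilde\alpha_i) - (\iota_i^*\tilde\alpha_i - \tilde\iota_i^*\tilde\alpha_i) = 0$), but it buys a self-contained topological argument that does not depend on any particular compatibility between the choices of $\eta_i$ and $\eta_i^L$ — any primitives would do, since the connecting map applied to $[(\Theta_1-\Theta_2)|_{A\cap B}]$ always returns $[\beta]$.
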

This is easy to prove: indeed, if $L_1$ and $L_2$ are cylindrical and the cutoff functions are chosen appropriately, we get equality, and it is easy to see that changing cutoff functions and passing to the asymptotically cylindrical case introduces exact and exponentially decaying in $T$ terms. 

A similar argument proves
\begin{prop}
\label{metricgluingconsistency}
Let $M^T$ and $L^T$ be as in Definition \ref{defin:newsubmanifoldapproxgluing}. Consider the metric given on $L^T$ by direct gluing of the metrics on $L_1$ and $L_2$ by cutting them off and identifying them over an appropriate region; consider also the metric given on $L^T$ by restricting the metric on $M^T$ defined similarly to $L^T$. The difference of these two metrics satisfies an estimate like \eqref{eq:gluingconsistencynorm}, with respect to either of them. 
\end{prop}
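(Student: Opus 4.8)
The plan is to mirror the argument behind Lemma~\ref{gluingconsistency}, replacing the matching closed forms by the induced metrics: off the gluing region the two metrics on $L^T$ agree exactly, while on the gluing region both are exponentially close (in $T$) to the product model, so their difference is $O(e^{-\epsilon T})$ in every $C^k$.

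First I would localise. Fix $T$ large. On the part $\{t<T-1\}$ of $M^T$ we have $\hat g_i = g_i$ by the choice of $\psi_T$, and the normal vector field $(\varphi_T\circ\tilde\iota)v$ defining $\hat\iota_i$ equals $v$ by the choice of $\varphi_T$, so $\hat\iota_i=\iota_i$; hence the restriction of the glued metric $g^T$ to $L^T$ is just the induced metric $\iota_i^*g_i$ there. With the analogous cutoff used to glue $\iota_1^*g_1$ and $\iota_2^*g_2$ also trivial on this region, the directly glued metric is likewise $\iota_i^*g_i$ there, so the two metrics coincide and their difference vanishes. It therefore suffices to estimate on the gluing region, which lies within $\{T-1<t<T+1\}$ on each side --- in particular within bounded distance of $\{t=T\}$ --- and correspondingly on the part of $L^T$ where the $L_i$-coordinate $r$ is close to $T$ (using $t\circ\hat\iota_i=r$ for $r\ge T$ and $t\circ\hat\iota_i$ increasing and $C^{k}$-close to $r$, as established in Definition~\ref{defin:newsubmanifoldapproxgluing}).

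On this region I would compare both metrics with the product metric $h_i^{\mathrm{cyl}}=g_{K_i}+dr^2$, the common limit of the induced metrics (Proposition~\ref{acylacyl}). For the directly glued metric this is immediate: it interpolates, via a $T$-independent cutoff, between $\iota_i^*g_i$ and $h_i^{\mathrm{cyl}}$, and $\|\iota_i^*g_i-h_i^{\mathrm{cyl}}\|_{C^k}=O(e^{-\delta T})$ there since $\iota_i^*g_i$ is asymptotically cylindrical with some rate $\delta>0$ and $r>T-C$. For the restricted metric $\hat\iota_i^*\hat g_i$ I would use the splitting
\begin{equation}
\hat\iota_i^*\hat g_i - h_i^{\mathrm{cyl}} = \hat\iota_i^*(\hat g_i-\tilde g_i) + (\hat\iota_i^*-\tilde\iota_i^*)\tilde g_i .
\end{equation}
On the gluing region we have $\hat g_i-\tilde g_i=(1-\psi_T)(g_i-\tilde g_i)$, which is $O(e^{-\delta T})$ in every $C^k$ (here $t>T-1$, $g_i$ is asymptotically cylindrical, and the derivatives of $\psi_T$ are bounded independently of $T$), so by Theorem~\ref{localrestrictionthm} --- the neck having uniformly bounded geometry, in particular a uniform $C^{k-1}$ bound on the second fundamental form of $\hat L_i$ --- the first term is $O(e^{-\delta T})$ in $C^k$; and the vector field $(\varphi_T\circ\tilde\iota)v$ is $O(e^{-\delta T})$ in every $C^k$ (as $v$ is exponentially decaying and $r>T-C$), so $\hat\iota_i$ is $C^{k+1}$-close to $\tilde\iota_i$ by $O(e^{-\delta T})$ via smoothness of $\exp$, whence the second term is $O(e^{-\delta T})$ by Proposition~\ref{bundlemapsckbounded}, pullback of the fixed tensor $\tilde g_i$ depending smoothly on the $1$-jet of the map. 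Thus both metrics differ from $h_i^{\mathrm{cyl}}$ --- hence from each other --- by $O(e^{-\delta T})$ on the gluing region, giving \eqref{eq:gluingconsistencynorm} with $\epsilon=\delta$. Finally, since both metrics are uniformly equivalent to $h_i^{\mathrm{cyl}}$ on the gluing region and equal off it, with constants independent of $T$, the $C^k$ norms with respect to the two metrics are comparable uniformly in $T$, so the estimate holds with respect to either.

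The main obstacle is the comparison in the third step of the two different cutoff procedures --- one cutting off the metric towards its cylindrical limit, the other cutting off the normal vector field inside the exponential map --- and checking that they agree up to exponentially small errors; this is exactly where one needs the restriction estimate of Theorem~\ref{localrestrictionthm} (globalised as in Proposition~\ref{acylacyl}) together with the smoothness of $\exp$ and the bounded geometry of the neck. Everything else is the bookkeeping already carried out for Lemma~\ref{gluingconsistency}, with the simplification that, there being no cohomology in play, only the exponentially decaying error terms arise and not the exact ones.
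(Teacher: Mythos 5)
Your argument is correct and takes essentially the same route the paper intends: the paper offers no proof beyond ``a similar argument [to Lemma~\ref{gluingconsistency}] proves'', namely that the two metrics agree exactly in the cylindrical model and that the cutoffs and the asymptotically cylindrical corrections contribute only errors exponentially small in $T$. Your write-up simply supplies the details of that sketch (localisation to the neck, comparison with the product metric, and the restriction/pullback estimates via Theorem~\ref{localrestrictionthm} and Proposition~\ref{bundlemapsckbounded}).
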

We also note at this point that the second fundamental form of $L^T$ in $M^T$ can be bounded independently of $T$; this follows by looking at the compact parts and the neck separately, and noting that things converge either to the behaviour of $L_i$ in $M_i$ or to the cylindrical behaviour. 
\section{Special Lagrangians}
\label{sec:defsandefs}
In this section, we first define special Lagrangian submanifolds and make some elementary remarks on the structure of the ends of asymptotically cylindrical such submanifolds. Then, in subsection \ref{ssec:speclagexamples}, we show that there are asymptotically cylindrical special Lagrangian submanifolds in some asymptotically cylindrical Calabi--Yau manifolds. Finally, in subsection \ref{ssec:speclagdeformation} we summarise the deformation theory due to McLean \cite{mclean} in the compact case and extended by Salur and Todd \cite{salurtodd} to the asymptotically cylindrical case (where the limit may alter). We give fuller details in the asymptotically cylindrical case, carefully stating the result (Theorem \ref{acyldeformationtheorem}) that the deformations form a manifold with specified dimension, as the argument in \cite{salurtodd} is somewhat unclear and the dimension found in \cite[Theorem 1.2]{salurtodd} is not quite correct. 

\subsection{Definitions}
In this subsection we make the necessary definitions of Calabi--Yau structures, their generalisations $SU(n)$ structures, and define special Lagrangian submanifolds. 
We begin with a Calabi--Yau structure on the ambient manifold. This definition is adapted from Hitchin \cite{hitchinmodulispace}. 
\begin{defin}
\label{defin:sunstructure}
Let $M$ be a $2n$-dimensional manifold. An \emph{Calabi--Yau structure on $M$} is (induced by) a pair of closed forms $(\Omega, \omega)$ where $\Omega$ is a smooth complex $n$-form on $M$ and $\omega$ is a smooth real $2$-form on $M$ such that at every point $p$ of $M$:
\begin{enumerate}[i)]
\item$\Omega_p = \beta_1 \wedge \cdots \wedge\beta_n$ for some $\beta_i \in T^*_p M \otimes \C$,
\item$\Omega_p \wedge \bar \Omega_p \neq 0$,
\item$\Omega_p \wedge\bar\Omega_p  = \frac{(-2)^ni^{n^2}}{n!} \omega_p^n$,
\item$\omega_p \wedge \Omega_p = 0$,
\item$\omega_p(v, J_pv) > 0$ for every $v \in T_pM$ where $J$ is the unique complex structure such that $\Omega$ is a holomorphic volume form. 
\end{enumerate}
We shall call $(\Omega_p, \omega_p)$ satisfying (i)-(v) above an \emph{$SU(n)$ structure on the vector space $T_pM$}. 
\end{defin}
A Calabi--Yau structure on a manifold induces a complex structure and a Ricci-flat K\"ahler metric; as indicated earlier, we shall assume our manifolds with Calabi--Yau structures are connected.

Apart from the condition that the forms are closed, a Calabi--Yau structure is a pair of forms that satisfy certain pointwise conditions. Therefore, it is a section of a subbundle of the bundle of forms. We make
\begin{defin}
\label{defin:cystructurebundle}
Let $\SUn(M)$ be the subset of the bundle $\left({\bigwedge}^{\!n} T^*M \otimes_\R \C\right) \oplus {\bigwedge}^{\!2} T^*M$
\begin{equation}
\bigcup_{p \in M} \{(\Omega_p, \omega_p) \in  \left({\bigwedge}^{\!n} T_p^*M \otimes_\R \C\right) \oplus {\bigwedge}^{\!2} T_p^*M: (\Omega_p, \omega_p) \text{ is an $SU(n)$ structure on $T_pM$}\}.
\end{equation}
A section of $\SUn(M)$ will be called an \emph{$SU(n)$ structure on $M$}. Similarly, if $L$ is a submanifold of $M$, a section of $\SUn(M)|_L$ will be called an \emph{$SU(n)$ structure around $L$}.
\end{defin}
Of course, a Calabi--Yau structure on $M$ is in particular an $SU(n)$ structure on $M$, and any $SU(n)$ structure on $M$  or a tubular neighbourhood of $L$ in $M$ restricts to an $SU(n)$ structure around $L$. 

We now have to combine Definition \ref{defin:sunstructure} with Definition \ref{defin:acylmanifold} to define an asymptotically cylindrical Calabi--Yau structure. In so doing, we make a further restriction. 
\begin{defin}
\label{defin:acylsunstructure}
A Calabi--Yau structure $(\Omega, \omega)$ on a manifold $M$ with an end is said to be \emph{asymptotically cylindrical} if the cross-section $N$ of $M$ is of the form $S^1 \times X$, the induced metric $g$ is asymptotically cylindrical and, with respect to $g$, $\Omega$ and $\omega$ have limits $(dt + i d\theta) \wedge \Omega_\xs$ and $dt \wedge d\theta + \omega_\xs$, where $(\Omega_\xs, \omega_\xs)$ is a Calabi--Yau structure on $X$. 
\end{defin}
\begin{rmk}\label{rmk:restrictedcyreallyis}
In their study of asymptotically cylindrical Ricci-flat K\"ahler manifolds, Haskins--Hein--Nordstr\"om\cite{haskinsheinnordstrom} show that for a somewhat more general definition of asymptotically cylindrical Calabi--Yau manifolds, and provided $n>3$, simply connected irreducible asymptotically cylindrical Calabi-Yau manifolds must have $N = (S^1 \times X)/{\la \Phi \ra}$ for a certain isometry $\Phi$ of finite order (which may be greater than one: see \cite[Example 1.4]{haskinsheinnordstrom}). We restrict to the case where $\Phi$ is simply the identity for simplicity; similar arguments should work in the more general setting. If $n=2$, or for instance there is a torus factor that can be split so that a factor can be taken with $n=2$, then there are further examples of worse limit behaviour; again, we are ignoring these. This restriction follows Salur and Todd \cite{salurtodd}, as we will use the deformation theory of that paper. 
\end{rmk}
Note that it follows from the Cheeger-Gromoll splitting theorem that if $M$ is connected and $N$ (or equivalently $X$) is not, then the manifold reduces to a product cylinder, and so we may freely assume $N$ and $X$ are connected.

We now define special Lagrangian submanifolds and discuss asymptotically cylindrical special Lagrangian submanifolds. 
\begin{defin}
\label{defin:SL}
Let $M$ be a Calabi--Yau $n$-fold and $L$ an $n$-submanifold. $L$ is \emph{special Lagrangian} if and only if $\Im \Omega|_L = 0$ and $\omega|_L = 0$. 
\end{defin}
We now turn to the idea of an asymptotically cylindrical special Lagrangian submanifold of an asymptotically cylindrical Calabi--Yau.  We make the obvious definition that an asymptotically cylindrical special Lagrangian submanifold of an asymptotically cylindrical Calabi--Yau manifold is a special Lagrangian submanifold in the sense of Definition \ref{defin:SL} which is asymptotically cylindrical in the sense of Definition \ref{defin:acylsubmfd}. Note that although the end $N \times (0, \infty) = X \times S^1 \times (0, \infty)$ of the Calabi--Yau manifold $M$ must be connected, asymptotically cylindrical special Lagrangians can have multiple ends contained in this one end.

The following result gives the structure of the cross-section $K$ of $L$. It is straightforward to prove by considering the components of the restrictions of each form.
\begin{prop}
\label{acylsl}
Let $M$ be an asymptotically cylindrical Calabi--Yau manifold with cross-section $N = X \times S^1$ as in Definition \ref{defin:acylsunstructure}, and $L$ be an asymptotically cylindrical special Lagrangian submanifold with cross-section $K$, so that $L = \exp_v(K \times (R, \infty))$ far enough along the end, where $v$ decays exponentially with all derivatives. For each connected component $K'$ of $K$, there is a special Lagrangian $Y$ in $X$, and a point $p \in S^1$, such that $K' = Y \times \{p\} \subset X \times S^1 = N$. 
\end{prop}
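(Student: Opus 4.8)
The plan is to pass to the limiting cylinder of $L$ and read off the structure of $K$ by decomposing the limiting forms $\tilde\Omega$ and $\tilde\omega$ along the product $N \times \R = X \times S^1 \times \R$.

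First I would show that the limiting forms pull back to zero on the cylinder $K \times \R$. Write $\tilde\iota \colon K \times (R, \infty) \to M$ for the cylindrical inclusion, whose image lies in the end $N \times (0, \infty)$ with $t \circ \tilde\iota = r$, and recall that far along the end $L = \exp_v(K \times (R, \infty))$ with $v$ exponentially decaying with all derivatives. Since $\Im \Omega|_L = 0$ and $\omega|_L = 0$, the pullbacks $\exp_v^* \Im \Omega$ and $\exp_v^* \omega$ vanish identically on $K \times (R, \infty)$. As $r \to \infty$ we have $\exp_v \to \tilde\iota$ with all derivatives (because $v$ decays), while $\Omega \to \tilde\Omega = (dt + i\, d\theta) \wedge \Omega_\xs$ and $\omega \to \tilde\omega = dt \wedge d\theta + \omega_\xs$ with all derivatives by Definition \ref{defin:acylsunstructure}; hence the restrictions of $\tilde\iota^* \Im \tilde\Omega$ and $\tilde\iota^* \tilde\omega$ to each slice $K \times \{r\}$ differ from $0$ by an amount tending to $0$ as $r \to \infty$. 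But $\tilde\iota^* \tilde\Omega$ and $\tilde\iota^* \tilde\omega$ are invariant under translation in $r$, so we conclude that $\tilde\iota^* \Im \tilde\Omega = 0$ and $\tilde\iota^* \tilde\omega = 0$ on all of $K \times \R$. Writing $\theta$, $\Omega_\xs$, $\omega_\xs$ also for their pullbacks to $K$, this says that $(dr + i\, d\theta) \wedge \Omega_\xs$ has vanishing imaginary part and $dr \wedge d\theta + \omega_\xs$ vanishes on $K \times \R$.

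Next I would decompose these conditions along $T(K \times \R) = TK \oplus \R \ddr$. Since $\omega_\xs$ contains no $dr$, the vanishing of $dr \wedge d\theta + \omega_\xs$ is equivalent to $d\theta|_K = 0$ (pairing against $\ddr$) together with $\omega_\xs|_K = 0$ (pairing within $TK$). The first makes $\theta$ locally constant on $K$, hence equal to a fixed $p \in S^1$ on each connected component $K'$; thus $K' \subset X \times \{p\}$, and identifying $K'$ with $Y := \pi_X(K') \subset X$, which has dimension $n - 1 = \tfrac12 \dim X$, the second gives $\omega_\xs|_Y = 0$, so $Y$ is Lagrangian in $X$. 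For the phase, on $K' \times \R$ we have $d\theta = 0$, so $(dr + i\, d\theta)$ restricts to $dr$ and $\tilde\iota^* \Im \tilde\Omega$ restricts to $dr \wedge \Im(\Omega_\xs|_Y)$; its vanishing gives $\Im \Omega_\xs|_Y = 0$. Together with $\omega_\xs|_Y = 0$ this is exactly the statement that $Y$ is special Lagrangian in $X$, which is the claim.

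The component bookkeeping in the last paragraph is routine; the one step needing genuine care is the passage to the limit in the second paragraph, where one must check that the exponential convergence of both $L$ and $(\Omega, \omega)$, combined with the exact identities $\Im \Omega|_L = 0$ and $\omega|_L = 0$, forces the \emph{limiting} forms to vanish on the whole cylinder rather than merely asymptotically, which is precisely where the translation-invariance of $(\tilde\Omega, \tilde\omega)$ is used.
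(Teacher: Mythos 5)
Your argument is correct and is exactly the elaboration of what the paper intends: the paper offers no written proof beyond the remark that the result "is straightforward to prove by considering the components of the restrictions of each form," and your decomposition of $\tilde\iota^*\Im\tilde\Omega$ and $\tilde\iota^*\tilde\omega$ along $TK\oplus\R\ddr$ is precisely that computation. The passage to the limit via translation invariance, which you correctly flag as the one step needing care, is handled properly.
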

\begin{rmk}
Definition \ref{defin:acylsubmfd} and Proposition \ref{acylsl} together form a version of the definition of asymptotically cylindrical special Lagrangian found in \cite[Definition 2.6]{salurtodd}. Our definition, which appears simpler, makes the identifications required, given concretely by Salur and Todd, tacitly and implicitly.
\end{rmk}
\subsection{Examples}
\label{ssec:speclagexamples}
We now give some examples of asymptotically cylindrical special Lagrangian submanifolds in asymptotically cylindrical Calabi--Yau threefolds to which our results will apply. We show (Proposition \ref{everythinginducedisacyl}) that for an asymptotically cylindrical Calabi--Yau threefold obtained as the complement of an anticanonical divisor in a K\"ahler manifold, an antiholomorphic involutive isometry acting on the divisor will yield an antiholomorphic involutive isometry of the asymptotically cylindrical manifold, and hence essentially a special Lagrangian submanifold.  

To explain the construction in detail, we first outline the construction of asymptotically cylindrical Calabi--Yau threefolds. We quote the following from Haskins--Hein--Nordstr\"om \cite{haskinsheinnordstrom}. 

\begin{thm}[\cite{haskinsheinnordstrom}, Theorems D and E, weakened]
\label{tianyautheorem}
Let $\bar M$ be a smooth compact K\"ahler manifold of complex dimension $n \geq 2$, and $D$ be a smooth anticanonical divisor in it which has holomorphically trivial normal bundle. Then for each K\"ahler class $[\omega]$ on $\bar M$, $M = \bar M \setminus D$ admits an asymptotically cylindrical Ricci-flat K\"ahler metric $g_i$, where asymptotically cylindrical means with respect to a diffeomorphism around $D$
\begin{equation}
\label{eq:acylconstructiondiffeo}
U \setminus D \to D \times \{0 < |z| < 1\} \cong D \times S^1 \times (0, \infty),
\end{equation}
with the first map constructed using the trivial normal bundle of $D$ to satisfy \eqref{eq:suitablediffeo} below, and the second given by writing $z = e^{-t+i\theta}$. 

The K\"ahler form is in the cohomology class $[\omega|_M]$, and its limit is $d\theta^2 \otimes g_D$ where $g_D$ is the Calabi--Yau metric on $D$ in the K\"ahler class $[\omega|_D]$. Moreover, the metric is unique subject to the diffeomorphism \eqref{eq:acylconstructiondiffeo} of the tubular neighbourhood to $D \times S^1 \times (0, \infty)$ and these properties. 
\end{thm}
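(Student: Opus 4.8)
This is precisely \cite[Theorems D and E]{haskinsheinnordstrom}, specialised to the situation at hand; it refines Tian--Yau's construction of complete Ricci-flat \Kahler{} metrics on complements of anticanonical divisors and Kovalev's analysis of their asymptotics, and I only indicate the main ingredients. The plan is to build an explicit eventually cylindrical model structure near $D$, glue it to a fixed \Kahler{} metric on $\bar M$ to obtain an asymptotically cylindrical, approximately Ricci-flat metric, and then correct this by solving a complex Monge--Amp\`ere equation with exponential decay control.

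First I would construct the cylindrical model. Since $D$ is anticanonical in $\bar M$, adjunction gives $K_D = (K_{\bar M}+D)|_D = \mathcal{O}_D$, so $D$ is a compact \Kahler{} manifold with trivial canonical bundle; by Yau's theorem it carries a unique Ricci-flat \Kahler{} metric $g_D$ in the class $[\omega|_D]$, with \Kahler{} form $\omega_D$ and a compatible holomorphic volume form $\Omega_D$. The anticanonical hypothesis also gives a meromorphic $n$-form on $\bar M$ with a simple pole along $D$ and no zeros; using the coordinate $z$ transverse to $D$ coming from the trivial normal bundle and setting $z = e^{-t+i\theta}$, this $n$-form takes the model shape $(dt+i\,d\theta)\wedge\Omega_D$ near the end, while the model \Kahler{} form is $dt\wedge d\theta + \omega_D$. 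Together these form an eventually cylindrical $SU(n)$ structure on $D\times S^1\times(0,\infty)$ with underlying metric $dt^2 + d\theta^2 + g_D$, which will be the claimed cylindrical limit.

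Second I would fix the diffeomorphism $U\setminus D\to D\times\{0<|z|<1\}$ of \eqref{eq:suitablediffeo} using the trivial normal bundle, transport the model structure to the end of $M = \bar M\setminus D$, and splice it by a cutoff to a fixed \Kahler{} form in $[\omega]$ on $\bar M$. This gives an asymptotically cylindrical \Kahler{} metric $\omega_0\in[\omega|_M]$ whose Ricci form decays exponentially with all derivatives, so that $\omega_0^n = e^f\,\tfrac{(-2)^n i^{n^2}}{n!}\,\Omega_0\wedge\bar\Omega_0$ with $f\in C^\infty_{\delta'}$ exponentially decaying for some $\delta'>0$. The Ricci-flat metric is sought as $\omega = \omega_0 + i\partial\bar\partial u$, solving the complex Monge--Amp\`ere equation $(\omega_0 + i\partial\bar\partial u)^n = e^f\,\omega_0^n$. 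I would solve this in the exponentially weighted spaces of Definition \ref{defin:holderextendedweighted}: its linearisation is, up to lower-order exponentially small terms, the Laplacian of the product cylinder $dt^2 + d\theta^2 + g_D$, which by the Fredholm theory for cylindrical-end operators (see Lockhart \cite{lockhart} or Melrose \cite{melrose}) has trivial kernel on $C^{k+2,\mu}_\delta$ and surjects, for all small $\delta>0$, onto the codimension-one subspace of $C^{k,\mu}_\delta$ cut out by the integral normalisation that the cohomological condition forces on $f$. A continuity method in $sf$, $s\in[0,1]$, together with the a priori $C^0$ and higher-order estimates for the complex Monge--Amp\`ere equation adapted to weighted norms (equivalently, a contraction-mapping argument for the quadratic remainder), then produces a solution $u\in C^\infty_\delta$.

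Then $\omega_0 + i\partial\bar\partial u$ is the required asymptotically cylindrical Ricci-flat \Kahler{} metric: it lies in $[\omega|_M]$ because $i\partial\bar\partial u$ is exact, and its cylindrical limit is $dt^2 + d\theta^2 + g_D$ as claimed. Uniqueness subject to the diffeomorphism and normalisation follows since two solutions differ by a function that is pluriharmonic for a convex combination of the two Ricci-flat metrics and decays at infinity, hence is constant, hence zero. The step I expect to be the main obstacle is this last construction: obtaining $u$ with genuine \emph{exponential} decay of all derivatives (rather than merely completeness, as in the original Tian--Yau argument) needs the full weighted Fredholm package on the cylinder, estimates for the nonlinearity in weighted norms, and a bootstrap that is uniform along the end --- precisely the content of the Kovalev and Haskins--Hein--Nordstr\"om refinements being invoked.
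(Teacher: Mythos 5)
The paper offers no proof of this statement at all: it is quoted verbatim (and deliberately weakened) from Haskins--Hein--Nordstr\"om, so there is no internal argument to compare yours against. Your sketch is nonetheless an accurate account of how the cited result is actually proved in the literature --- the cylindrical $SU(n)$ model near $D$ built from adjunction and the meromorphic volume form, the splicing to a fixed \Kahler{} form in the class, and the complex Monge--Amp\`ere equation solved in exponentially weighted spaces with the weighted Fredholm theory supplying decay --- and you correctly identify the exponential-decay bootstrap as the genuinely hard step beyond Tian--Yau. For the purposes of this paper you could (and the author does) simply cite the result; if you do want to keep the sketch, the only point I would tighten is the uniqueness paragraph, where "pluriharmonic for a convex combination of the two metrics" should be replaced by the standard maximum-principle argument for the difference of Monge--Amp\`ere solutions decaying along the end, since pluriharmonicity is not literally what one obtains.
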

We need such a $\bar M$. There are many possible options; see, for instance, the discussion by Haskins--Hein--Nordstr\"om \cite[top of p.6]{haskinsheinnordstrom}. The simplest example, however, is to take $\bar M$ as complex projective space $\C P^{n+1}$. Anticanonical divisors $D$ are then the zero sets of homogeneous polynomials of degree $n+1$. Provided none of the zeros of such a polynomial are also zeros of its first partial derivatives, the zero set $D$ is smooth by the implicit function theorem. $D$ might nevertheless have nontrivial self-intersection. We thus blow up the self-intersection as in Kovalev \cite{kovalevtwistedconnected}, and as in that paper the resulting submanifold has trivial normal bundle. 

We can improve the uniqueness result of Theorem \ref{tianyautheorem} slightly, and this will be useful for constructing special Lagrangian submanifolds. \eqref{eq:acylconstructiondiffeo} requires a diffeomorphism $\Phi$ between $\Delta \times D$ and an appropriate neighbourhood of $D$ in $\bar M$. There may not be such a diffeomorphism that is biholomorphic, in general. Haskins--Hein--Nordstr\"om prove, however, that provided the diffeomorphism satisfies \cite[Observation A.2]{haskinsheinnordstrom}, Theorem \ref{tianyautheorem} follows. We shall show that the constructed metric is in fact independent of the choice of diffeomorphism here. By the uniqueness part of Theorem \ref{tianyautheorem}, it suffices to show that if $\Phi_1$ and $\Phi_2$ are two such diffeomorphisms, the metrics we construct with each are asymptotically cylindrical with respect to the structure given by the other, since we are by hypothesis using the same K\"ahler class. 

\begin{prop}
\label{newuniqueness}
Let $\Phi_1$ and $\Phi_2$ be diffeomorphisms from $\Delta \times D$ to their images in $\bar M$, satisfying
\begin{equation}
\label{eq:suitablediffeo}
\Phi_i(0, x) = x \text{ for all } x \in D, \Phi_i^* J - J_0 \text{ along $\{0\} \times D$}, \Phi_i^*J - J_0 = 0 \text{ on all of } T\Delta,
\end{equation}
where $J$ and $J_0$ are the complex structures on $\bar M$ and $\Delta \times D$ respectively. (These are the conditions in \cite[Observations A.1 and A.2]{haskinsheinnordstrom}). Given a fixed K\"ahler class on $\bar M$, let $g_1$ and $g_2$ be the Calabi--Yau metrics on $\bar M \setminus D$ constructed by Theorem \ref{tianyautheorem} with these diffeomorphisms so that it is clear that $g_i$ is asymptotically cylindrical with respect to the diffeomorphism $\Psi_i$ given by composing $\Phi_i$ with $z = e^{-t-i\theta}$. Then $g_1$ is also asymptotically cylindrical with respect to $\Psi_2$, and vice versa; the limits are the same. In particular, $g_1 = g_2$. 
\end{prop}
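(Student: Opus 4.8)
The plan is to exploit the uniqueness clause in Theorem \ref{tianyautheorem}: since $g_1$ and $g_2$ are each the unique asymptotically cylindrical Calabi--Yau metric in the fixed \Kahler\ class with respect to their own diffeomorphism, it suffices to show that $g_1$ is \emph{also} asymptotically cylindrical with respect to $\Psi_2$, with the same cylindrical limit $d\theta^2 \otimes g_D$; then $g_1 = g_2$ follows immediately, and symmetrically. So the whole content is an asymptotic comparison of the two trivialisations near $D$. First I would pass from $\Psi_i$ back to $\Phi_i$ and consider the transition diffeomorphism $\Phi_2^{-1} \circ \Phi_1$ on a neighbourhood of $\{0\} \times D$ in $\Delta \times D$. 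The conditions \eqref{eq:suitablediffeo} say precisely that both $\Phi_i$ fix $D$ pointwise, induce the correct complex structure along $\{0\} \times D$, and are holomorphic in the $\Delta$-directions to infinite order along $D$; hence $\Phi_2^{-1}\circ\Phi_1$ fixes $\{0\}\times D$, is the identity there, and its differential there is the identity on both $TD$ and $T\Delta$. The key step is to show that, translated through $z = e^{-t-i\theta}$, this transition map converges to the identity exponentially fast with all derivatives as $t \to \infty$, i.e. it is an asymptotically translation-invariant diffeomorphism of the cylinder $D \times S^1 \times (0,\infty)$ in the sense implicit in Definition \ref{defin:acylmanifold}.

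To get the exponential decay I would work in the coordinate $z$ on $\Delta \setminus \{0\}$: the map $\Phi_2^{-1}\circ\Phi_1$ is a diffeomorphism of $\Delta \times D$ which equals the identity on $\{0\}\times D$ together with vanishing of the normal derivative of the $D$-component and of the first-order deviation of the $z$-component from holomorphic; since everything is smooth on the \emph{closed} disc factor, Taylor expansion in $z$ gives that the map differs from $(z, x) \mapsto (z \cdot h(x) + O(z^2), x + O(z))$ for a nonvanishing $h$, and the holomorphicity condition on $T\Delta$ forces $h$ to be (to high order) constant, which after absorbing a rotation/scaling we may take to be $1$. Substituting $z = e^{-t-i\theta}$ converts the powers $z^k$, $k\geq 1$, and the $\partial_z, \partial_{\bar z}$ derivatives into terms bounded by $C_r e^{-t}$ in the cylindrical metric, with all $t$-derivatives likewise decaying; this is exactly the statement that $\Phi_2^{-1}\circ\Phi_1$, read on the cylinder, is asymptotic to the identity with some rate $\delta \le 1$. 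I would be slightly careful that ``identity'' here may need to allow an overall isometry of the limit cylinder (a constant rotation in $\theta$ and an automorphism of $(D, g_D)$), but since both $\Psi_i$ are built from the \emph{same} trivial normal bundle and the same divisor $D$ with its canonical structures, that ambiguity is trivial or at worst a fixed isometry that can be composed away.

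Given this, the conclusion is routine: $g_1$ is asymptotically cylindrical for $\Psi_1$ with limit $d\theta^2\otimes g_D$ by Theorem \ref{tianyautheorem}; pulling back by the asymptotically-identity transition diffeomorphism preserves the property of being asymptotically cylindrical and preserves the limit (this is the general fact that an asymptotically translation-invariant change of coordinates sends asymptotically cylindrical metrics to asymptotically cylindrical metrics with the same limit, cf. the argument sketched in Proposition \ref{acylacyl}). Hence $g_1$ is asymptotically cylindrical with respect to $\Psi_2$ with the same limit, and by the uniqueness in Theorem \ref{tianyautheorem} it coincides with $g_2$; the reverse inclusion is identical. The main obstacle I anticipate is purely bookkeeping: making precise the claim that conditions \eqref{eq:suitablediffeo} force the transition map to be holomorphic-to-infinite-order in the disc directions along $D$, so that in the $t$-coordinate \emph{all} derivatives (not just the function itself) decay exponentially --- that is, upgrading the pointwise first-order conditions along $\{0\}\times D$ to the uniform $C^r$ exponential estimates of Definition \ref{defin:acylmanifold}. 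Everything else is a standard application of the uniqueness statement already quoted.
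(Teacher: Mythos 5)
Your overall strategy --- compare the two trivialisations near $D$, show the transition between them is asymptotically an isometry of the cylinder, and then invoke the uniqueness clause of Theorem \ref{tianyautheorem} --- is in spirit the same as the paper's, which phrases the comparison in terms of the model metrics $\tfrac{i}{2}|z_i|^{-2}dz_i\wedge d\bar z_i+\omega$ rather than the transition diffeomorphism $\Phi_2^{-1}\circ\Phi_1$. However, there is a genuine gap at exactly the step you dismiss in one clause: the claim that ``the holomorphicity condition on $T\Delta$ forces $h$ to be (to high order) constant.'' Writing the transition as $(z,x)\mapsto(h(x)z+O(z^2),\,x+O(z))$, the conditions \eqref{eq:suitablediffeo} only yield that $h$ is a nowhere-vanishing \emph{holomorphic} function of $x\in D$ (in the paper it is the ratio $f_1/f_2$ of the leading coefficients in the expansions $w=f_iz_i+z_i^2h_i$ of a local holomorphic defining function $w$); they do not make it locally constant, and ``constant to high order in $z$'' is not the relevant notion since $h$ depends on $x$ alone. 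Constancy is a \emph{global} fact: one must check that the ratio is independent of the choice of local defining function, so that it patches to a holomorphic function on all of $D$, and then use compactness and connectedness of $D$ to conclude it is constant. This is not bookkeeping but the mathematical heart of the proposition: if $h$ were nonconstant, the transition map in cylindrical coordinates would limit to $(t,\theta,x)\mapsto(t-\log|h(x)|,\,\theta-\arg h(x),\,x)$, an $x$-dependent shear of the neck which is not an isometry of the cylinder, and asymptotic cylindricity of $g_1$ with respect to $\Psi_2$ would fail.

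Conversely, the difficulty you do flag --- upgrading \eqref{eq:suitablediffeo} to ``holomorphic to infinite order along $D$'' so that all derivatives decay --- is not actually needed. Once $h$ is known to be constant (and hence absorbed into a fixed translation and rotation of the cylinder, which is an isometry of the limit), the remaining discrepancy comes from the smooth $O(z^2)$ term in the $\Delta$-component and the $O(z)$ term in the $D$-component; after dividing by $z$ these contribute $O(|z|)=O(e^{-t})$ together with all derivatives, by smoothness on the closed disc alone. So the correct division of labour is the opposite of the one you propose: the exponential decay of the higher-order terms is routine, and the constancy of the leading coefficient is where the work lies.
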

\begin{proof}
The construction in \cite[subsection 4.2]{haskinsheinnordstrom} is that if $\Phi_i$ is such a diffeomorphism then an asymptotically cylindrical metric near $D$ can be chosen by taking near $D$
\begin{equation}
\label{eq:cylkahlermetric}
\frac{i}{2}\frac{dz_i \wedge d\bar z_i}{|z_i|^2} + \omega,
\end{equation}
where $\omega$ is the K\"ahler form on $D$ corresponding to the Calabi-Yau metric in the restriction of the K\"ahler class and $z_i$ is the function $\bar M \to \Delta$ given by composing $\Phi_i^{-1}$ with the projection. Because we are using the same K\"ahler class in both cases, $\omega$ does not depend on $i$. 

Much of the work in \cite[subsection 4.2]{haskinsheinnordstrom} is simply in showing that this can be cut off under the differentials without affecting the asymptotics. For instance, on $\Delta \times D$ \eqref{eq:cylkahlermetric} can be expressed as $\frac{i}{2}\partial \bar \partial (\log |z|)^2 + \omega$; the conditions in and arising from \eqref{eq:suitablediffeo} are primarily to ensure that this still has the same asymptotic when $\partial$ and $\bar \partial$ are given by the complex structure on $\bar M$. 

Using the same ideas, we shall show that 
\begin{equation}
\label{eq:targetcylsarecyls}
\frac{dz_1  \wedge d\bar z_1}{|z_1|^2} = \frac{dz_2  \wedge d\bar z_2}{|z_2|^2} + \text{decaying terms},
\end{equation}
where the decaying terms become $O(e^{-t})$ under the identification. This implies $g_1$ has the same limit as $g_2$. 

To do this, we shall primarily work locally in $D$. We know by \cite[proof of Observation A.3]{haskinsheinnordstrom} that for any local holomorphic defining function $w$ for $D$ in $\bar M$, we may write locally
\begin{equation}
 f_1 z_1 + z_1^2 h_1 = w = f_2 z_2 + z_2^2 h_2,
\end{equation}
where $f_1$ and $f_2$ are nonzero holomorphic functions on an open subset of $D$, and $h_1$ and $h_2$ are smooth functions on a neighbourhood of this open subset in $\bar M$. By rearranging, and expanding using the Leibniz rule, it is easy to see that
\begin{equation}
\frac{dz_1}{z_1} = \frac{dz_2}{z_2} + \frac{f_1}{f_2} d\left(\frac{f_2}{f_1}\right) + \text{ decaying terms}.
\end{equation}
It remains to show that $\frac{f_1}{f_2}$ is a constant; then the middle term vanishes and \eqref{eq:targetcylsarecyls} holds. 

Since $f_1$ and $f_2$ are nonzero holomorphic functions, $\frac{f_1}{f_2}$ is holomorphic. We claim that this holomorphic function is independent of the local defining function $w$. Then, by taking a cover of $D$, $\frac{f_1}{f_2}$ extends to a holomorphic function on the compact complex manifold $D$ and so is a constant.

To do this, we note that $f_i = dw(\frac{\partial}{\partial z_i})$. Let $w'$ be another local holomorphic defining function, and let $p \in D$ be in the relevant open subset. $(dw)_p$ and $(dw')_p$ both lie in $(T^{1, 0})^*_p \bar M$, and moreover as $w$ and $w'$ are defining functions they both vanish on $T^{1, 0}_pD$. It follows that $(dw)_p = a(dw')_p$ for some $a$, and $a$ cancels in the quotient $\frac{f_1}{f_2}$. 
\end{proof}

In particular, we may obtain the following
\begin{cor}
\label{finalcycylinderuniqueness}
Suppose that $\bar M_1$ and $\bar M_2$ are smooth compact K\"ahler manifolds of complex dimension $n \geq 2$. Suppose $D_i$ is a smooth anticanonical divisor in $\bar M_i$, for each $i$, with holomorphically trivial normal bundle. Suppose that $f: \bar M_1 \to \bar M_2$ is a biholomorphism with $f(D_1) = D_2$. Suppose $[\bar \omega_2]$ is a K\"ahler class on $\bar M_2$ so that $[\bar \omega_1] = f^*[\bar \omega_2]$ is a K\"ahler class on $\bar M_1$. Theorem \ref{tianyautheorem} constructs asymptotically cylindrical metrics on $\bar M_1 \setminus D_1$ and $\bar M_2 \setminus D_2$. The map $f$ induces an isometry between these. 
\end{cor}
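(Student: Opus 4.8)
The plan is to restrict $f$ to the complements and to recognise $f^*g_2$ as the Calabi--Yau metric $g_1$ by playing it off against the uniqueness statements already in hand. Since $f(D_1)=D_2$, $f$ restricts to a biholomorphism $\bar M_1\setminus D_1\to\bar M_2\setminus D_2$, and biholomorphisms pull back Ricci-flat \Kahler metrics to Ricci-flat \Kahler metrics, so $f^*g_2$ is a Ricci-flat \Kahler metric on $\bar M_1\setminus D_1$; its \Kahler class is $f^*[\bar\omega_2|_{\bar M_2\setminus D_2}]=[\bar\omega_1|_{\bar M_1\setminus D_1}]$ by hypothesis. Hence it will suffice to check that $f^*g_2$ is asymptotically cylindrical with respect to a diffeomorphism of the type \eqref{eq:acylconstructiondiffeo}, with the correct limiting metric over $D_1$: the equality $f^*g_2=g_1$ then follows from the uniqueness part of Theorem \ref{tianyautheorem}, using Proposition \ref{newuniqueness} to remove the need to match any particular choice of diffeomorphism.

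First I would handle the limit on the divisor. The restriction $f|_{D_1}\colon D_1\to D_2$ is a biholomorphism with $(f|_{D_1})^*[\bar\omega_2|_{D_2}]=[(f^*\bar\omega_2)|_{D_1}]=[\bar\omega_1|_{D_1}]$, so $(f|_{D_1})^*g_{D_2}$ is a Ricci-flat \Kahler metric on $D_1$ in the class $[\bar\omega_1|_{D_1}]$; by uniqueness of the Calabi--Yau metric in a \Kahler class on a compact \Kahler manifold, $(f|_{D_1})^*g_{D_2}=g_{D_1}$, \ie $f|_{D_1}$ is an isometry of the divisor Calabi--Yau metrics. Next I would transport the neighbourhood diffeomorphism. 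Let $\Phi_2\colon\Delta\times D_2\to\bar M_2$ satisfy \eqref{eq:suitablediffeo} and exhibit $g_2$ as asymptotically cylindrical via the associated identification $\Psi_2$ of \eqref{eq:acylconstructiondiffeo}. Set $\Phi_1:=f^{-1}\circ\Phi_2\circ(\mathrm{id}_\Delta\times f|_{D_1})$. Since $f$, $f|_{D_1}$ and $\mathrm{id}_\Delta\times f|_{D_1}$ are biholomorphisms, $\Phi_1$ satisfies the three conditions of \eqref{eq:suitablediffeo}: $\Phi_1(0,x)=f^{-1}(\Phi_2(0,f(x)))=x$, and the conditions on $\Phi_1^*J$ along $\{0\}\times D_1$ and on $T\Delta$ follow from those for $\Phi_2$ by pre- and post-composing with biholomorphisms. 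The identification attached to $\Phi_1$ is then $\Psi_1=((f|_{D_1})^{-1}\times\mathrm{id}_{S^1\times(0,\infty)})\circ\Psi_2\circ f$, whence $(\Psi_1^{-1})^*(f^*g_2)=(f|_{D_1}\times\mathrm{id})^*(\Psi_2^{-1})^*g_2$. As $(\Psi_2^{-1})^*g_2$ is asymptotically cylindrical on $D_2\times S^1\times(0,\infty)$ with limit the cylinder over $g_{D_2}$, and $f|_{D_1}\times\mathrm{id}$ is an isometry of these cylinders by the previous step, $f^*g_2$ is asymptotically cylindrical with respect to $\Psi_1$ with limit the cylinder over $g_{D_1}$.

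Putting this together, $f^*g_2$ is an asymptotically cylindrical Ricci-flat \Kahler metric on $\bar M_1\setminus D_1$, asymptotically cylindrical with respect to the diffeomorphism \eqref{eq:acylconstructiondiffeo} built from $\Phi_1$, in the \Kahler class $[\bar\omega_1|_{\bar M_1\setminus D_1}]$ and with limit $g_{D_1}$ over $D_1$; by Proposition \ref{newuniqueness} and the uniqueness in Theorem \ref{tianyautheorem}, $f^*g_2=g_1$, so $f$ induces the asserted isometry. I expect the only point requiring care to be the diffeomorphism-transport step: keeping track of exactly how $\Psi_1$ relates to $\Psi_2$ and confirming that $\Phi_1$ genuinely lies in the class of diffeomorphisms covered by Theorem \ref{tianyautheorem} and Proposition \ref{newuniqueness}. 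Everything else is an immediate consequence of Yau's theorem and the functoriality of pullback under biholomorphisms.
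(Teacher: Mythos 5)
Your argument is correct and is exactly the paper's route: compose a suitable neighbourhood diffeomorphism $\Phi_2$ with $f$ to obtain $\Phi_1$ still satisfying \eqref{eq:suitablediffeo}, then invoke Proposition \ref{newuniqueness} together with the uniqueness in Theorem \ref{tianyautheorem}. The paper states this in one sentence; you have simply supplied the verifications (the Kähler class, the isometry of the divisor metrics via Yau uniqueness, and the transport of the asymptotic identification) that the paper leaves implicit.
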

This follows from Proposition \ref{newuniqueness} by composing $f$ with the required diffeomorphism and noting that it still satisfies \eqref{eq:suitablediffeo}. 

Using Corollary \ref{finalcycylinderuniqueness}, we will now explain a construction of asymptotically cylindrical special Lagrangian submanifolds, that is fairly general in a subset of these asymptotically cylindrical Calabi--Yau manifolds in Proposition \ref{everythinginducedisacyl}; we will then specialise and give a concrete example. 

To do this, we will use antiholomorphic involutive isometries, as mentioned, for instance, by Joyce--Salur \cite[top of p.1118]{joycesalur}. We make two similar definitions, one of which applies for a general complex manifold such as $\bar M$, and one of which is specialised for the Calabi--Yau case. 
\begin{defin}
Let $M$ be a K\"ahler manifold with complex structure $J$ and K\"ahler form $\omega$. A diffeomorphism $\sigma$ of $M$ is called an \emph{antiholomorphic involutive isometry} if 
\begin{equation}
\sigma^2 = \id, \qquad \sigma^* J = -J, \qquad \sigma^* \omega = - \omega,
\end{equation}
If $M$ is in fact a Calabi--Yau manifold with Calabi--Yau structure $(\Omega, \omega)$, the antiholomorphic involutive isometry $\sigma$ has \emph{fixed phase} if also
\begin{equation}
\sigma^* \Omega = \bar \Omega
\end{equation}
\end{defin}
Note that any antiholomorphic involutive isometry $\sigma$ of a Calabi--Yau manifold must satisfy $\sigma^* \Omega = e^{i\alpha} \bar\Omega$ for some $\alpha$. It follows that $\sigma$ is an antiholomorphic involutive isometry with fixed phase for $(e^{-\frac{i\alpha}2} \Omega, \omega)$. Thus, for our purposes it is enough to find antiholomorphic involutive isometries and then adjust the Calabi--Yau structure. 

It is easy to see
\begin{lem}
\label{fixedpointsaresubmfd}
The fixed point set of an antiholomorphic involutive isometry with fixed phase is a special Lagrangian submanifold. 
\end{lem}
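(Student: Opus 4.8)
The plan is to verify, pointwise on the fixed point set $L = \mathrm{Fix}(\sigma)$, that the two defining forms of Definition \ref{defin:SL} vanish, and separately that $L$ has the right dimension to be a special Lagrangian (\ie that it is middle-dimensional). I would not expect any analytic difficulty here; the argument is entirely linear algebra on tangent spaces together with the fact that the fixed point set of a smooth involution is a submanifold.

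First I would recall that since $\sigma$ is a smooth involution, $\sigma^2 = \id$, the fixed point set $L$ is a smooth (closed) submanifold, and at each $p \in L$ the tangent space $T_pL$ is precisely the $+1$-eigenspace of $d\sigma_p$ acting on $T_pM$. Because $\sigma$ is an isometry, $d\sigma_p$ is orthogonal, so $T_pM = T_pL \oplus \nu_p$ splits orthogonally into the $+1$ and $-1$ eigenspaces. The condition $\sigma^*J = -J$ gives $d\sigma_p \circ J_p = -J_p \circ d\sigma_p$, so $J_p$ interchanges the $+1$ and $-1$ eigenspaces; this forces $\dim T_pL = \dim \nu_p = n$, so $L$ is an $n$-submanifold, $J_p(T_pL) = \nu_p$, and in particular $L$ is totally real.

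Next I would check the vanishing of the two forms. For $\omega$: if $u, v \in T_pL$ then, using $\sigma^*\omega = -\omega$ and that $\sigma$ fixes $u, v$, we get $\omega_p(u,v) = (\sigma^*\omega)_p(u,v) = -\omega_p(u,v)$, hence $\omega_p|_{T_pL} = 0$, so $L$ is Lagrangian (a totally real $n$-plane on which $\omega$ vanishes is Lagrangian). For $\Im\Omega$: the fixed-phase condition $\sigma^*\Omega = \bar\Omega$ gives, for any $u_1, \dots, u_n \in T_pL$ fixed by $d\sigma_p$, that $\Omega_p(u_1, \dots, u_n) = (\sigma^*\Omega)_p(u_1, \dots, u_n) = \bar\Omega_p(u_1, \dots, u_n) = \overline{\Omega_p(u_1, \dots, u_n)}$, so $\Omega_p(u_1,\dots,u_n)$ is real, \ie $\Im\Omega_p|_{T_pL} = 0$. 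Combining, $\omega|_L = 0$ and $\Im\Omega|_L = 0$, so by Definition \ref{defin:SL} $L$ is special Lagrangian.

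The only point requiring a little care — and the closest thing to an obstacle — is confirming that the $n$-plane $T_pL$ on which $\omega$ vanishes really is Lagrangian rather than merely isotropic; this is where one uses that $J_p(T_pL) = \nu_p$ is transverse to $T_pL$ (equivalently that $L$ is totally real of dimension $n$), which was established from the anticommutation of $d\sigma_p$ with $J_p$. Once that is in hand the proof is complete, and I would present it in essentially the three short steps above: $L$ is a smooth $n$-submanifold that is totally real; $\omega|_L = 0$; $\Im\Omega|_L = 0$.
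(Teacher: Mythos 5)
Your proof is correct. The paper itself offers no argument for this lemma (it is introduced only by ``It is easy to see''), and what you have written is the standard verification it presumably has in mind: the anticommutation $d\sigma_p \circ J_p = -J_p \circ d\sigma_p$ forces the $+1$-eigenspace $T_pL$ to have dimension $n$, and the conditions $\sigma^*\omega = -\omega$ and $\sigma^*\Omega = \bar\Omega$ evaluated on $d\sigma_p$-fixed vectors give $\omega|_L = 0$ and $\Im\Omega|_L = 0$, which together with $\dim L = n$ is exactly Definition \ref{defin:SL}. Your concern about ``isotropic versus Lagrangian'' is in fact moot for the paper's definition, which asks only for an $n$-submanifold on which the two forms vanish, but the totally real observation you make is harmless and correctly derived.
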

Now for $M$ an asymptotically cylindrical Calabi--Yau, we want to find an antiholomorphic involutive isometry $\sigma$ (with fixed phase) such that this special Lagrangian is asymptotically cylindrical. To do this, we induce $\sigma$ from our construction of $M$. Specifically, we have
\begin{prop}
\label{everythinginducedisacyl}
As in Theorem \ref{tianyautheorem}, let $\bar M$ be a smooth compact K\"ahler manifold of complex dimension $n\geq2$ and $D$ a smooth anticanonical divisor with holomorphically trivial normal bundle. Let $\sigma$ be an antiholomorphic involutive isometry of $\bar M$. Suppose moreover that $\sigma$ acts on $D$, and $D$ contains some fixed points of $\sigma$. 

By Theorem \ref{tianyautheorem}, $M = \bar M \setminus D$ admits an asymptotically cylindrical Ricci-flat K\"ahler metric, in the restriction of this K\"ahler class. Moreover, the restriction of $\sigma$ to $M$ defines an antiholomorphic involutive isometry of $M$ which is asymptotically cylindrical. That is, there exist a diffeomorphism $\tilde \sigma$ of $N$ and a parameter $l \in \R$ such that
\begin{equation}
\sigma(n, t) \to (\tilde \sigma(n), t+l)
\end{equation}
exponentially, meaning that on restriction to $N \times (T, \infty)$ for some large $T$, we have $\sigma = \exp V \circ (\tilde \sigma(n), t+l)$ for some vector field $V$ on $M$ decaying exponentially with all derivatives. The fixed point set of $\sigma$  is an asymptotically cylindrical submanifold in the sense of Definition \ref{defin:acylsubmfd}. 
\end{prop}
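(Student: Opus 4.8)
The plan is to reduce everything to the rigidity of the asymptotically cylindrical Calabi--Yau metric from Theorem~\ref{tianyautheorem} together with the local analysis near $D$ developed for Proposition~\ref{newuniqueness}. First I would show that $\sigma$ restricts to an isometry of the Ricci-flat K\"ahler metric $g$ on $M=\bar M\setminus D$, not merely of the K\"ahler metric on $\bar M$. For this, regard $\sigma$ as a biholomorphism from $\bar M$ to its complex conjugate $\bar M^{-}$ (the same underlying manifold with complex structure $-J$), carrying $D$ --- still a smooth anticanonical divisor with holomorphically trivial normal bundle there --- to itself, and carrying $[\bar\omega]$ to the K\"ahler class $[-\bar\omega]$ of $\bar M^{-}$. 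As a Riemannian metric, $g$ is also Ricci-flat K\"ahler for $-J$ in class $[-\bar\omega|_M]$, and it is asymptotically cylindrical with respect to the reference diffeomorphism for $\bar M^{-}\setminus D$ as well: that one is the conjugate $z\mapsto\bar z$ of the one for $\bar M\setminus D$, and the two differ by an isometry of the product cylindrical model, so the decay conditions are insensitive to the change. Hence Theorem~\ref{tianyautheorem} identifies $g$ with the metric it constructs on each of $\bar M\setminus D$ and $\bar M^{-}\setminus D$, and Corollary~\ref{finalcycylinderuniqueness} gives $\sigma^{*}g=g$. Together with $\sigma^{2}=\id$ and $\sigma^{*}J=-J$ this yields $\sigma^{*}\omega_g=-\omega_g$, so $\sigma|_M$ is an antiholomorphic involutive isometry of $M$; after rescaling $\Omega$ by a constant phase it has fixed phase, so Lemma~\ref{fixedpointsaresubmfd} applies.

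Next I would pin down the behaviour of $\sigma$ on the end $D\times S^{1}\times(0,\infty)$, identified with a punctured tubular neighbourhood of $D$ via the reference diffeomorphism with $z=e^{-t+i\theta}$ and $z|_D=0$; shrinking this neighbourhood so that it is $\sigma$-invariant, $\sigma$ preserves $D$ there. Comparing with local holomorphic defining functions for $D$ exactly as in the proof of Proposition~\ref{newuniqueness}, one finds $\sigma$ has the form $(x,z)\mapsto(\sigma|_D(x)+O(z),\ \phi(x)\bar z+O(z^{2}))$ for a nowhere-zero function $\phi$ on $D$, where $O(z^{k})$ means $O(e^{-kt})$ with all derivatives; in cylindrical coordinates this reads $(x,\theta,t)\mapsto(\sigma|_D(x),\ \arg\phi(x)-\theta,\ t-\log|\phi(x)|)$ up to exponentially decaying corrections. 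Since $\sigma^{*}g=g$ and $g$ converges with all derivatives to the product cylindrical metric, the leading-order map must be an isometry of it; as that metric is block-diagonal in $dt$, $d\theta$ and the $D$-directions, this forces $\phi$ to be a (nonzero) constant and $\sigma|_D$ to be an isometry of the Calabi--Yau metric on $D$, and then $\sigma^{2}=\id$ forces $|\phi|=1$, so $l=0$. (Equivalently, $\phi$ is anti-holomorphic on the compact connected $D$, hence constant.) Setting $\tilde\sigma(x,\theta)=(\sigma|_D(x),\arg\phi(x)-\theta)$, a diffeomorphism of $N=D\times S^{1}$, and absorbing the exponentially small corrections into $\exp V$ for an exponentially decaying $V$ gives $\sigma=\exp V\circ(\tilde\sigma,+0)$ on the end, as required. (One could instead quote the general principle that an isometry of an asymptotically cylindrical manifold is asymptotically translation-invariant and then read off $\tilde\sigma$ and $l$ from $\sigma$ extending over $\bar M$.)

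Finally, $L=\operatorname{Fix}(\sigma)$ is automatically a totally geodesic submanifold of $M$ because $\sigma$ is an isometric involution, so it only remains to check the structure of its end. The fixed set of the model $(\tilde\sigma,+0)$ on $D\times S^{1}\times(0,\infty)$ is $K_0\times(R,\infty)$ with $K_0=\operatorname{Fix}(\sigma|_D)\times\{\theta:2\theta\equiv\arg\phi\}\subset N$ a smooth submanifold, non-empty precisely because $D$ contains fixed points of $\sigma$; since $\sigma=\exp V\circ(\tilde\sigma,+0)$ with $V$ exponentially decaying and the model acts on the normal bundle of its fixed set as $-\id$ (in particular with no eigenvalue $1$), an implicit function theorem argument shows that $L$ agrees on the end with $\exp_v(K_0\times(R',\infty))$ for an exponentially decaying normal vector field $v$. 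Thus $L$ is asymptotically cylindrical with cross-section $K_0$ in the sense of Definition~\ref{defin:acylsubmfd}, and by Lemma~\ref{fixedpointsaresubmfd} (with $\Omega$ normalised to fixed phase) it is a special Lagrangian.

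The hard part is the second step. The first reduces, modulo the conjugate-manifold bookkeeping, to uniqueness already in hand, and the third is a routine perturbation; but the second genuinely needs the Proposition~\ref{newuniqueness}-style comparison with holomorphic defining functions, combined with the convergence of $g$ to the cylindrical model, to produce the precise asymptotic form of $\sigma$ and the constancy of $\phi$.
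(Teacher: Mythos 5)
Your proposal is correct, but it takes a genuinely different route from the paper in the two technical steps. The first step is the same: both you and the paper obtain that $\sigma|_M$ is an isometry of the Ricci-flat metric by viewing $\sigma$ as a biholomorphism onto the conjugate manifold and applying Corollary \ref{finalcycylinderuniqueness}. For the asymptotic form of $\sigma$ on the end, the paper simply quotes \cite[Proposition 6.22]{nordstromacyldeformations} (an isometry of asymptotically cylindrical metrics is an asymptotically cylindrical diffeomorphism), gets $l=0$ from $\sigma^2=\id$, and then determines the structure of $\tilde\sigma$ on $N=D\times S^1$ --- product form over $D$, reflection in $\theta$ --- by a limiting argument with geodesic half-lines and the cylindrical complex structure; you instead extract the normal form $(x,z)\mapsto(\sigma|_D(x),\phi\bar z)+O(e^{-2t})$ directly from the defining-function comparison of Proposition \ref{newuniqueness} and deduce constancy of $\phi$ from anti-holomorphy on the compact connected $D$ (or from the isometry constraint on the cylindrical model). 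Your route is more self-contained and yields the reflection on $S^1$ and the two fixed values of $\theta$ for free, at the cost of redoing some local complex analysis; the paper's is shorter given the external reference, which you also note as an alternative. For the fixed-point set, the paper uses an elementary trick you do not: for $p$ with $\tilde\sigma(p)=p$ one has $\sigma(p)=\exp_v(p)$, the unique short geodesic from $p$ to $\exp_v(p)$ is reversed by the isometric involution $\sigma$, so its midpoint $\exp_{v/2}(p)$ is fixed, giving directly that the fixed set of $\sigma$ is $\exp_{v/2}$ of the fixed set of $\tilde\sigma$ along the end. Your implicit-function-theorem argument, using that $d\tilde\sigma=-\id$ on the normal bundle of the model fixed locus, reaches the same conclusion and would generalise to perturbations that are not exactly isometric, but it needs (and you should state) uniformity of the inverse-function constants in $t$, which follows from the convergence of the metric to the cylindrical one. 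Both approaches are valid; the paper's midpoint argument is the cleaner of the two here precisely because $\sigma$ is an exact isometry.
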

\begin{proof}
Since $\sigma$ acts on $D$, it also acts on $M$. Hence the restriction of $\sigma$ to $M$ is well-defined and squares to the identity. 

Now the complex structure on $M$ is just the restriction of the complex structure on $\bar M$. Hence, we immediately have $\sigma^* J = -J$. It remains to show that $\sigma$ induces an isometry. To do this, we apply Corollary \ref{finalcycylinderuniqueness}. $\sigma$ is precisely a biholomorphism between $\bar M$ and its complex conjugate, preserving the anticanonical divisor. Hence, Corollary \ref{finalcycylinderuniqueness} gives that it induces an isometry. 

Now \cite[Proposition 6.22]{nordstromacyldeformations} implies that $\sigma$ is an asymptotically cylindrical diffeomorphism, as an isometry of asymptotically cylindrical metrics. It only remains to show that the fixed point set, which we know by Lemma \ref{fixedpointsaresubmfd} is a submanifold, is in fact asymptotically cylindrical. To do this, we use that $\sigma$ is an involution again. 

We start by noting that since $\sigma$ is an involution, we must have $l=0$. By abuse of notation, we shall also write $\tilde \sigma$ for the induced diffeomorphism on $(T, \infty) \times N$. 

We now show that the fixed points of $\sigma$, far enough along the end, are the image of the fixed points of $\tilde \sigma$ under $\exp_{\frac v 2}$. The fixed points of $\tilde \sigma$ are certainly a cylindrical submanifold. Since $v$ is exponentially decaying, and the map from a vector field along a submanifold to the normal vector field with the same image preserves exponential decay (this follows by similar arguments to Proposition \ref{revisedtransferofatis} below), this shows that the fixed points of $\sigma$ are an asymptotically cylindrical submanifold, as required. Given $p$ with $\tilde \sigma(p) = p$, we must have $\sigma(p) = \exp_v(p)$. For $t$ large enough, uniformly in the cross-section, there is a unique minimising geodesic from $p$ to $\exp_v(p)$. Since $\sigma$ is an involution, $\sigma(\exp_v(p)) = p$, and so this geodesic is reversed by the isometry $\sigma$. Hence its midpoint $\exp_{\frac v 2}(p)$ is fixed by $\sigma$. This shows that the fixed points of $\sigma$ contain this image (at least far enough along the end); the reverse argument is only slightly more complicated. 

This shows that the fixed points of $\sigma$ are an asymptotically cylindrical submanifold with the same limit as the fixed points of $\tilde \sigma$ (in other words, the limit is the fixed points of $\tilde \sigma$ on $N$). It remains to prove that $\tilde \sigma$ fixes some points of $N = D \times S^1$, otherwise all we have done is exhibited a closed special Lagrangian submanifold. We shall consider $\tilde \sigma(x, \theta)$. By \cite[Proposition 6.22]{nordstromacyldeformations}, this can be given by the limit approached by the image under $\sigma$ of the unique geodesic half-line approaching $(x, \theta)$. Any curve approaching $(x, \theta)$ must, passing to a curve in $\bar M$, approach $(x, 0)$. Hence the image approaches $(\sigma|_D(x), 0)$. This shows that $\tilde \sigma(x, \theta) = (\sigma|_D(x), \theta')$ for some $\theta'$. Now also we know by continuity that $\tilde \sigma$ is antiholomorphic on the Calabi--Yau cylinder $\R \times S^1 \times D$. Since we clearly have $\tilde \sigma_* \ddt = \ddt$, we must have $\tilde \sigma_* \ddth = -\ddth$. Hence also the isometry $\tilde \sigma_*$ preserves the orthogonal complement $TD$ of $\spn \{\ddth\}$. It follows that $\theta'$ depends only on $\theta$. Moreover, the isometry thus given on $S^1$ is a reflection. Hence it has two fixed points. Thus for each fixed point of $\sigma|_D$ we have two fixed points of $\tilde \sigma$ on $N$. Since by hypothesis $\sigma|_D$ has fixed points, $\tilde \sigma$ has fixed points, and the resulting submanifold is indeed asymptotically cylindrical. 
\end{proof}
Hence, if we choose the appropriate phase for the holomorphic volume form, we get an asymptotically cylindrical special Lagrangian submanifold. 

In particular, we shall consider the simplest examples of such $\bar M$, that is the blowups of Fano manifolds in the self-intersection of their canonical divisor, as described below Theorem \ref{tianyautheorem}. There is no need to worry about metrics here, and so we restrict to \emph{antiholomorphic involutions}, that is involutions $\sigma$ with $\sigma^* J = -J$. We state the following
\begin{prop}[cf. Kovalev {\cite[top of p.19]{kovalevspin7}}]
\label{speclagblowup}
Suppose that $\bar M$ is a Fano manifold with $D$ a smooth anticanonical divisor and $\sigma$ is an antiholomorphic involution which acts on $D$. Suppose further that we may find a submanifold representing the self-intersection of $D$ on which $\sigma$ acts. Then if we blow up this self-intersection, the resulting manifold admits an antiholomorphic involution acting on the anticanonical divisor given by the proper transform of $D$. 
\end{prop}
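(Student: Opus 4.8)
The plan is to obtain the involution by lifting $\sigma$ through the blow-up, using the functoriality of blowing up under automorphisms that preserve the centre; the only twist is that $\sigma$ is antiholomorphic rather than holomorphic, which I will handle by passing to the conjugate complex structure.

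First I would fix notation. Let $Z \subset D$ be the chosen smooth submanifold of complex codimension two representing the self-intersection of $D$, on which $\sigma$ acts by hypothesis; let $\pi \colon \widetilde M \to \bar M$ denote the blow-up $\mathrm{Bl}_Z \bar M$, with exceptional divisor $E = \pi^{-1}(Z) = \mathbb{P}(N_{Z/\bar M})$, and let $\widetilde D$ be the proper transform of $D$. Write $\bar M^{-}$ for the manifold $\bar M$ equipped with the conjugate complex structure $-J$; a submanifold is $J$-complex if and only if it is $(-J)$-complex, so $Z$ and $D$ are complex submanifolds of $\bar M^{-}$ too, and the identity of the underlying smooth manifold exhibits $\sigma$ as a \emph{holomorphic} isomorphism $\bar M \to \bar M^{-}$ carrying $Z$ to $Z$ and $D$ to $D$ (it is bijective with holomorphic inverse $\sigma \colon \bar M^{-} \to \bar M$, since $\sigma^2 = \id$).

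Next I would record two standard facts. Blowing up commutes with conjugation of the complex structure: locally the blow-up is the incidence variety cut out by the vanishing of holomorphic functions generating the ideal sheaf of $Z$, so conjugating $J$ on $\bar M$ (and on the $\mathbb{P}^1$ fibre) conjugates the complex structure on $\widetilde M$; hence there is a canonical identification $\mathrm{Bl}_Z(\bar M^{-}) = (\widetilde M)^{-}$ over $\bar M$, under which the proper transform of $D$ is $(\widetilde D)^{-}$. And blowing up is functorial for isomorphisms preserving the centre: the isomorphism $\sigma \colon \bar M \to \bar M^{-}$ with $\sigma(Z) = Z$ lifts to a unique isomorphism $\widetilde\sigma \colon \widetilde M \to \mathrm{Bl}_Z(\bar M^{-})$ over $\sigma$, which on $\widetilde M \setminus E$ is simply $\pi^{-1} \circ \sigma \circ \pi$ and extends across $E$ via the anti-$\C$-linear isomorphism of normal bundles induced by $d\sigma$, descending to the projectivisations $E = \mathbb{P}(N_{Z/\bar M})$. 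Composing with the identification of the previous sentence turns $\widetilde\sigma$ into a biholomorphism $\widetilde M \to (\widetilde M)^{-}$, that is, an antiholomorphic self-map of $\widetilde M$.

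Finally I would check the remaining properties. Since $\widetilde\sigma$ agrees with $\sigma$ (transported by $\pi$) on the dense open set $\widetilde M \setminus E$ and $\sigma^2 = \id$, the map $\widetilde\sigma^2$ is the identity on $\widetilde M \setminus E$, hence on all of $\widetilde M$ by continuity; so $\widetilde\sigma$ is an antiholomorphic involution. Likewise $\sigma(D \setminus Z) = D \setminus Z$ gives $\widetilde\sigma\bigl(\pi^{-1}(D \setminus Z)\bigr) = \pi^{-1}(D \setminus Z)$, and, $\widetilde\sigma$ being a homeomorphism, $\widetilde\sigma$ preserves the closure $\widetilde D$. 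That $\widetilde D$ is anticanonical is the computation recalled above from Kovalev (from $K_{\widetilde M} = \pi^* K_{\bar M} + E$ together with $\pi^* D = \widetilde D + E$, since $D$ is smooth and contains $Z$ with multiplicity one). I expect the only genuine subtlety to be the claim that the lift of $\sigma$ extends over $E$ as an \emph{antiholomorphic} --- not merely continuous --- map; reformulating via the conjugate manifold is exactly what reduces this to the textbook functoriality of blow-ups, and it is the step I would state most carefully.
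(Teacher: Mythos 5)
Your proof is correct and follows essentially the same route as the paper, which disposes of this proposition in a single sentence by appealing to the uniqueness/functoriality of the blow-up with respect to maps preserving the centre. Your elaboration via the conjugate complex structure makes precise exactly the point the paper leaves implicit, namely why the lift of the antiholomorphic $\sigma$ extends antiholomorphically across the exceptional divisor.
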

Essentially this follows because the blowup is unique and determined by the existence of a suitable projection. We can then choose any K\"ahler class $[\omega]$ on the blowup and consider $[\omega] - \sigma^*[\omega]$ to get an antiholomorphic involutive isometry. 

The only explicit example we discussed above was to take 
\begin{equation}
\bar M = \C P^n, D = \{p(x_1, x_2, x_3, x_4, \ldots x_{n+1}) = 0\},
\end{equation}
for a homogeneous polynomial $p$ of degree $n+1$ that is always a submersion at its zeros. If $p$ has real coefficients, $D$ is preserved by the involution of $\C P^n$ given by complex conjugation of $\C^{n+1}$. Since we may perturb $p$ to get a transverse submanifold without losing the real coefficients, we may suppose $\sigma$ acts on the self-intersection, and so apply Proposition \ref{speclagblowup} to find an antiholomorphic involution of the blowup. Choosing an appropriate K\"ahler class, applying Proposition \ref{everythinginducedisacyl}, and then choosing the appropriate phase for the holomorphic volume form, we find an  asymptotically cylindrical Calabi--Yau manifold admitting a special Lagrangian submanifold. 

We know that each end of this special Lagrangian has limit components $\{p\} \times Y$, for some special Lagrangian $Y$ in the proper transform of $D$. It is clear from the construction that $Y$ is a component of the fixed points of the induced map on the blowup restricted to this proper transform. This set of fixed points is identified with the fixed points of $\sigma|_D$, which are given by the product of the solutions to $\{p(x_1, x_2, x_3, x_4, \ldots x_{n+1}) = 0\}$ in the real projective space $\R P^n$ with $\R P^1$ (as we may change the argument). As a basic example, therefore, taking $n=3$ and the divisor $x_1^4 + x_2^4 + x_3^4 + x_4^4 = 0$, this method does not yield an asymptotically cylindrical special Lagrangian. However, $x_1^4 + x_2^4 + x_3^4 - x_4^4$ will; so will other similar examples. Moreover, each component of this set corresponds to two ends of the special Lagrangian, again by the construction. Thus a special Lagrangian constructed by this method always has an even number of ends; since the fixed points of $\sigma|_D$ may not be connected, it follows that there are examples with more than two. Of course, the special Lagrangian constructed in this way may itself be disconnected, so by taking connected components we may be able to get examples with odd numbers of ends. Note also that the two ends are given by antipodal points of $S^1$; we shall show in the next subsection that this condition need not be preserved by deformations, so that there are further examples not constructed in this method. 

Similar arguments will apply more generally to Fano manifolds constructed as complete intersections; we should be able to choose the required polynomials to be real inductively, including the anticanonical divisor and its perturbation. 
\subsection{Deformation theory of special Lagrangians}
\label{ssec:speclagdeformation}
In this subsection, we give a summarised version of McLean's deformation result \cite{mclean} and the extension to asymptotically cylindrical special Lagrangians given by Salur and Todd \cite{salurtodd}. We shall outline the proof in the compact case as our gluing result will rest on applying the same idea to perturb a nearly special Lagrangian submanifold to a special Lagrangian submanifold; in the asymptotically cylindrical case, we shall give some details to improve the results and provide clearer proofs. In particular, note that Theorem \ref{acyldeformationtheorem} is not quite in agreement with \cite[Theorem 1.2]{salurtodd}. 

We begin with the compact case. Let $M$ be a Riemannian manifold. If $L$ and $L'$ are closed submanifolds of $M$ with $L$ and $L'$ close in a $C^1$ sense, then there is a normal vector field $v$ to $L$ such that $L'$ is the image of $v$ under the Riemannian exponential map; in this case, we write $L' = \exp_v(L)$. Conversely, of course, a $C^1$-small normal vector field $v$ defines a $C^1$-close submanifold $L' = \exp_v(L)$. Moreover, for smooth $L$, the regularities of $L'$ and $v$ are the same. 

Thus, to work with submanifolds it suffices to work with normal vector fields. Hence, to understand which submanifolds $L'$ close to $L$ are special Lagrangian, we want to understand the zero set of the nonlinear map 
\begin{equation}
\label{eq:mcleandeformationmap}
F: v \mapsto (\exp_v^* \Im \Omega|_L, \exp_v^* \omega|_L).
\end{equation}
To do this, we restrict to appropriate Banach spaces and apply the implicit function theorem to $F$ to obtain families of special Lagrangian deformations; we then show that the family obtained is independent of the choice of Banach space. 

The derivative $D_0F$ and its surjectivity between appropriate spaces are covered in detail by McLean \cite[Theorem 3-6]{mclean}. A calculation using Cartan's magic formula shows that the linearisation $D_0F$ is 
\begin{equation}
\label{eq:mcleanlinearisation}
D_0F(u) = (d(\iota_u \omega|_L), d* (\iota_u \omega|_L)),
\end{equation}
recalling that since $L$ is Lagrangian, $u \mapsto \iota_u \omega|_L$ is an isomorphism between the normal and cotangent bundles of $L$. We will generalise this isomorphism for our purposes in Lemma \ref{Jbehaves} below. 

Surjectivity is slightly more technical. We need to use some Banach space of forms, say $C^{1, \mu}$. It is easy to see that $F$ maps only to exact forms, and so we consider $F$ as a map from $C^{1, \mu}$ normal vector fields to $C^{0, \mu}$ forms. 

By Baier \cite[Theorem 2.2.15]{baier}, we know that $F$ is then a smooth map of Banach spaces. $D_0F$ then becomes \eqref{eq:mcleanlinearisation} between $C^{1, \mu}$ normal vector fields and exact $C^{0, \mu}$ forms; since $u \mapsto \iota_u \omega|_L$ is an isomorphism between $C^{1, \mu}$ normal vector fields and $C^{1, \mu}$ forms, and $d+d*$ is surjective, it follows that \eqref{eq:mcleanlinearisation} is surjective. 

Finally, to show the result is independent of the Banach space used, the choice of Banach space is a choice of regularity for our solutions of $F$. But these vector fields have the same regularity as the submanifolds concerned. Thus this reduces to smoothness of calibrated (and hence minimal) submanifolds.

We also review an asymptotically cylindrical extension of this theory. This is due to Salur and Todd \cite{salurtodd}. We begin by discussing normal vector fields corresponding to asymptotically cylindrical deformations of an asymptotically cylindrical manifolds. We would like to say that these are asymptotically translation invariant in some sense. We will give a definition of these much later in Definition \ref{defin:newatinvf}.

In particular, if the asymptotically cylindrical deformations have the same limit, we expect to get exponentially decaying normal vector fields. It is clear that by adding an exponentially decaying normal vector field on $K \times (R, \infty)$ to the vector field $v$ of Definition \ref{defin:acylsubmfd} also gives an asymptotically cylindrical submanifold. The setup of the tubular neighbourhood theorem used by Salur and Todd \cite[p. 110-1]{salurtodd} precisely uses this: they define the map $\Xi$ to be applying the exponential map to this sum, and then choose an isomorphism $\zeta$ between the normal bundles of (the end of) $L$ and $K \times (R, \infty)$; this isomorphism is just a pushforward and so it is reasonably obvious that it preserves exponentially decaying normal vector fields. The converse result follows in the same way. 

For the asymptotically translation invariant case Salur and Todd use the rigidity of cylindrical special Lagrangians to reduce the problem (and hence to avoid needing a general definition of asymptotically translation invariant normal vector fields). They say that a cylindrical special Lagrangian deformation of the limit corresponds to a translation-invariant one-form using the isomorphism between one-forms and normal vector fields on a special Lagrangian on $K \times (R, \infty)$, and hence gives a well-defined one-form on the end of $L$, and then they combine these one-forms with exponentially decaying one-forms.

We are proceeding more directly: we have $L$ a well-defined submanifold of $M$, and consider the exponential map. It is simplest to suppose that we take the exponential map corresponding to a cylindrical metric on $M$; our choice of exponential map does not affect the final result. We have to show that there is a uniform lower bound on the injectivity radius of $M$ with the cylindrical metric to ensure our tubular neighbourhood is going to contain all reasonably nearby submanifolds. But this is clear, as with the cylindrical metric the injectivity radius only depends on the geometry of the cylinder and of the compact complement $M^\cpt$. As before, $C^1$-uniformly close $C^{k, \mu}$ submanifolds correspond to $C^1$-uniformly-small $C^{k, \mu}$ normal vector fields. We now have to pass to the subset corresponding to asymptotically cylindrical submanifolds. 

Again using the isomorphism between one-forms and normal vectors, we can assume that the Riemannian exponential map is defined on $T^*L$. If a one-form on an asymptotically cylindrical submanifold is asymptotically translation invariant, then its image under the Riemannian exponential map is also an asymptotically cylindrical submanifold. This result is still needed by Salur and Todd, in the specific cylindrical case. This is much less obvious a priori than the exponentially decaying case: whilst the image certainly decays in a $C^0$ sense to the corresponding cylindrical manifold, it is not clear why the associated normal vector field must decay with all its derivatives. 

If we suppose that the Riemannian metric we use on $M$ is actually cylindrical, this follows because combining one-forms is a smooth operation. Specifically, by the definition of ``asymptotically cylindrical submanifold", we have to show that $\exp_\beta \exp_\alpha (K \times (R, \infty)) = \exp_\gamma(K' \times (R, \infty))$ is asymptotically cylindrical, that is that we can find $K'$ and $\gamma$ so that $\gamma$ is exponentially decaying. $K'$ can be obtained from the limit, and then pointwise, $\gamma$ can be chosen to depend smoothly on $\alpha$ and $\beta$, with a well-defined limit in this map as $t \to \infty$. Indeed, since the metric is cylindrical, we do not need to worry about it in this limit: for everything else, we have smooth dependence and so exponential convergence to this limit. Conversely, given two asymptotically cylindrical submanifolds with limits $\exp_\alpha(K_1 \times (R, \infty))$ and $\exp_\beta(K_2 \times (R, \infty))$ close enough that one is the image under the Riemannian exponential map of a one-form $\gamma$, $\gamma$ must have a well-defined limit (corresponding to the normal vector field between $K_1$ and $K_2$) and again $\gamma$ at each point depends smoothly on $\alpha$ and $\beta$ and continuously on $t$ with a well-defined limit, so decays exponentially to the limit $\tilde \gamma$. For further details of arguments like this, see section \ref{sec:dpatching} below, particularly Proposition \ref{revisedtransferofatis}. 

That is, a sensible definition of asymptotically translation invariant normal vector field is just ``a normal vector field whose corresponding one-form is asymptotically translation invariant". We shall show this is equivalent to the general definition we shall give in Definition \ref{defin:newatinvf} in Proposition \ref{aticonsistency} later. 

In particular, an exponentially decaying normal vector field gives an asymptotically cylindrical submanifold with the same limit. Conversely, given a close asymptotically cylindrical submanifold with the same limit, we can find such an asymptotically translation invariant normal vector field, and this normal vector field must decay exponentially. 

We will now sketch how to prove the following variant of {\cite[Theorem 1.2]{salurtodd}}. 
\begin{thm}
\label{acyldeformationtheorem}
Let $M$ be an asymptotically cylindrical Calabi--Yau manifold, with cross-section $X \times S^1$, where $X$ is a compact connected Calabi--Yau manifold, and let $L$ be an asymptotically cylindrical special Lagrangian in $M$ with cross-section asymptotic to $\bigcup Y_i \times \{p_i\}$ where, for each end $i$, $Y$ is a special Lagrangian submanifold of $X$. 

The space of special Lagrangian deformations of $L$ that are also asymptotically cylindrical with sufficiently small decay rate is a manifold with tangent space at $L$ given by the normal vector fields on $L$ such that $\iota_v \omega|_L$ is a bounded harmonic one-form. 
\end{thm}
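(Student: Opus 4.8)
The plan is to run the compact McLean argument \eqref{eq:mcleandeformationmap}--\eqref{eq:mcleanlinearisation} over the extended weighted H\"older spaces of Definition \ref{defin:holderextendedweighted}, substituting the asymptotically cylindrical elliptic theory of Lockhart \cite{lockhart} for compact Hodge theory. By Proposition \ref{acylacyl}, $(L, g|_L)$ is an asymptotically cylindrical manifold, so for each integer $k$, $\mu \in (0,1)$, and weight $\delta > 0$ below the first positive exceptional rate of $d + d^*$ on the ends of $L$ we have Banach spaces $C^{k,\mu}_\delta$ of forms on $L$, and --- via the bundle isomorphism $v \mapsto \iota_v\omega|_L$, which is itself asymptotically translation invariant because $\omega$ is (Lemma \ref{Jbehaves} below) --- corresponding Banach spaces of asymptotically translation invariant normal vector fields with decay rate $\delta$. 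As recorded in the discussion preceding the statement, $\exp_v$ carries such a $v$ to an asymptotically cylindrical submanifold, and every asymptotically cylindrical submanifold $C^1$-close to $L$ arises this way; so it suffices to describe the zero set near $0$ of the map $F$ of \eqref{eq:mcleandeformationmap} on this Banach manifold.

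First I would check that $F$ is a smooth map of Banach manifolds into an appropriate target of closed forms. Smoothness follows exactly as in the compact case (Baier \cite[Theorem 2.2.15]{baier}), since $F$ is built from pointwise bundle maps --- to which Proposition \ref{bundlemapsckbounded} applies on the compact part and uniformly along the ends --- composed with $\exp$, which preserves the asymptotically cylindrical category. The target needs attention: the limit of $F(v)$ along an end is the restriction of the cylindrical pair $(\Im\Omega, \omega)$ to the cylinder over the limit cross-section of $\exp_v(L)$, and by Proposition \ref{acylsl} the special Lagrangian condition there says exactly that this cross-section is a union of special Lagrangians $Y_i' \subset X$ together with \emph{arbitrary} points $p_i' \in S^1$. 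I would therefore mirror Salur--Todd \cite{salurtodd} and split $v$ into a translation-invariant part on the ends, modelling the change of limit cross-section, plus an exponentially decaying remainder; the translation-invariant limit data then range over a finite-dimensional manifold $B$, namely a product of the (compact) McLean deformation spaces of the $Y_i$ with one circle factor per end, and over $B$ the map $F$ is fibred, each fibre landing in pairs of exponentially decaying exact forms (using the weighted Poincar\'e lemma for decaying closed forms invoked in Definition \ref{defin:newformgluing}).

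By McLean's computation \eqref{eq:mcleanlinearisation}, $D_0 F(u) = (d\alpha, d^*\alpha)$ with $\alpha = \iota_u\omega|_L$, so everything reduces to the mapping properties of $\alpha \mapsto (d\alpha, d^*\alpha)$ on $C^{k+1,\mu}_\delta$ one-forms on $L$. For $\delta$ as chosen, Lockhart's theory (see also \cite[section 3]{kovalevdeformations} and \cite{nordstromacyldeformations}) gives that this operator is Fredholm; that its kernel is precisely the bounded harmonic one-forms on $L$ (a bounded asymptotically translation invariant harmonic form has a translation-invariant harmonic limit, and smallness of $\delta$ rules out intermediate-rate solutions); and that its cokernel, $L^2$-paired with the kernel of $d + d^*$ on functions and two-forms at the conjugate weight, is, once the target is cut down to the affine subspace of form-pairs with special Lagrangian limits described above, exactly accounted for by the closedness constraint and the finite-dimensional freedom $B$. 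Feeding this into the implicit function theorem fibred over $B$ shows the zero set of $F$ near $0$ is a manifold with tangent space $\ker D_0 F = \{v : \iota_v\omega|_L \text{ is a bounded harmonic one-form}\}$. Independence of the auxiliary choices $(k,\mu,\delta)$ then follows from elliptic regularity for the minimal-submanifold equation together with the fact, already used, that the normal vector field relating two nearby asymptotically cylindrical special Lagrangians inherits their asymptotics, so every admissible choice gives the same germ.

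The step I expect to be the main obstacle is the cokernel bookkeeping just sketched: one must pin down exactly which asymptotically translation invariant closed form-pairs lie in the image of $F$ --- in particular, counting correctly both the deformations of the cross-sectional special Lagrangians $Y_i$ and the \emph{independent} $S^1$-positions $p_i$ of the ends --- and match it against the analytic cokernel of $d + d^*$ on the extended weighted spaces. This is precisely where the dimension asserted in \cite[Theorem 1.2]{salurtodd} goes astray: the translation-invariant one-form $dr$ on each cylindrical end of $L$ is bounded and harmonic, and under $\iota_\bullet\omega$ it equals $\mp\,\iota_{\partial_\theta}\omega|_L$, so it corresponds to translating that end's $S^1$-coordinate $p_i$ on its own; these directions genuinely belong to the tangent space, which is also why the ``antipodal $p_i$'' feature of the examples in subsection \ref{ssec:speclagexamples} need not survive deformation. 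Hence the correct tangent space --- and dimension --- is the full space of bounded harmonic one-forms $\mathcal{H}^1_{\mathrm{bdd}}(L)$, which one computes from the asymptotically cylindrical Hodge-theoretic exact sequence relating it to $H^1(L)$, the $L^2$ harmonic forms, and the harmonic one-forms on the cross-section.
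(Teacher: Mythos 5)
Your overall strategy --- extended weighted H\"older spaces, Lockhart's Fredholm theory for $d+d^*$ on the asymptotically cylindrical manifold $L$, a splitting of $v$ into translation-invariant limit data plus an exponentially decaying remainder, and an implicit function theorem fibred over a finite-dimensional manifold of limits --- is the same as the paper's (following Salur--Todd). The gap is in the step you yourself flag as the main obstacle, and it is not a bookkeeping subtlety but the substantive content of the theorem. You assert that $dr$ on a single end of $L$ ``is bounded and harmonic'' and hence that translating one end's $S^1$-position $p_i$ on its own is a genuine tangent direction, so that the limit manifold $B$ contains one independent circle factor per end. That is precisely the Salur--Todd claim the theorem corrects, and it is false: for connected $L$, a translation-invariant one-form with limit $\sum_i c_i\, dr$ is the limit of a bounded harmonic one-form on $L$ if and only if $\sum_i c_i \Vol(Y_i) = 0$, because the achievable $dr$-components of limits are $L^2$-orthogonal to the limits of the bounded harmonic $0$-forms, namely the constants. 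On the nonlinear side the same constraint appears because $[\Im\Omega|_{L'}]$ must vanish in $H^n_{\rel}(L)$, and the coboundary $\partial[\tilde\tau_2|_{K'}]$ computes to $\sum_i ((p^i)'-p^i)[\vol_{Y_i}]$ (Proposition \ref{speclaglimitdeformationsubmanifold}). So your base $B$ is too large by one dimension per connected component of $L$, and the fibred implicit function theorem as you set it up does not close: for those extra directions the decaying part of $F$ cannot reach the required target classes in $H^n_{\rel}(L)$, i.e.\ the cokernel is not ``exactly accounted for'' as you claim.

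Note also the internal inconsistency: your final sentence, that the tangent space is the full space of bounded harmonic one-forms, is the correct statement of the theorem, but it contradicts your claim that the independent $p_i$-translations all belong to it --- the volume constraint above is exactly the statement that they do not. (Your observation that the antipodal-ends feature of the examples need not persist is still true after the correction, but for a different reason: with two ends of equal cross-sectional volume the constraint is $c_1+c_2=0$, which moves $p_1$ and $p_2$ in opposite directions and already destroys antipodality.) A second, minor, omission: you do not address the constraint coming from $\omega$, i.e.\ that $\partial[\tilde\tau_1|_{K'}]$ must vanish in $H^2_{\rel}(L)$; the paper shows this imposes no further condition beyond harmonicity of the $Y_i$-deformations once the volume constraint is in place, but it must be checked.
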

The most important difference between this and \cite[Theorem 1.2]{salurtodd} is that there it is claimed that the tangent space has slightly greater dimension. Salur and Todd claim that we can find a deformation curve with limit corresponding to every normal vector field limits $\sum c_i \ddth$, and this is not the case in general, as we shall see in Proposition \ref{speclaglimitdeformationsubmanifold} below. We also give a slightly clearer explanation of which deformations of the $Y_i$ correspond to deformation curves in Proposition \ref{speclaglimitdeformationsubmanifold}, and we will need this argument later in Proposition \ref{manifoldofgluablesl} where we will apply it to show that the space of matching special Lagrangian deformations is a submanifold. Finally, we drop the connectedness assumption from Salur--Todd, though this has no practical effect on the proof. Hence we give some details. 

By considering each component separately, we may suppose $L$ is connected throughout the proof. Suppose that $L'$ is such a deformation and the corresponding asymptotically translation invariant normal vector field $v$ has limit $\tilde v$. Then in the cylindrical Calabi--Yau manifold $\R \times S^1 \times X$, if we extend $\tilde v$ to be translation invariant then $\exp_{\tilde v}(\R \times Y \times \{p\})$ must also be a translation invariant special Lagrangian.  Our first problem is thus to identify the possible limits $\tilde v$. This can be done for each end separately. 

Again, we apply \eqref{eq:mcleandeformationmap} and \eqref{eq:mcleanlinearisation}; since the only noncompactness is $\R$, and we assume translation-invariance, McLean's result applies and locally the $\tilde v$ such that $\exp_{\tilde v} (\R \times Y \times \{p\})$ is special Lagrangian are diffeomorphic to translation-invariant harmonic normal vector fields, meaning the normal vector fields whose corresponding one-form is harmonic. It is easy to see that these are precisely of the form $u + C\ddth$, where $u$ is a harmonic normal vector field on $Y$, and $C$ is constant on $Y$. 

We further have to consider which of these deformations arises as the limit of an asymptotically cylindrical deformation. 
\begin{prop}
\label{speclaglimitdeformationsubmanifold}
Let $L_0$ be an asymptotically cylindrical special Lagrangian submanifold of the asymptotically cylindrical Calabi--Yau manifold $M$, with cross-sections $K_0 = \{p\} \times Y$ and $N = S^1 \times X$ respectively. We know that the set of cylindrical deformations of the special Lagrangian $\R \times \{p\} \times Y$ in $\R \times S^1 \times X$ is a manifold. 

The set of limits of asymptotically cylindrical deformations of $L$ is a submanifold $\mathcal{K}$ of this submanifold of cylindrical deformations. Its tangent space is precisely those harmonic  normal vector fields on $K$ that arise as limits of bounded harmonic normal vector fields on $L$. 
\end{prop}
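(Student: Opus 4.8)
The plan is to realise $\mathcal{K}$ as the solvability locus of a finite-dimensional obstruction problem obtained by running the McLean argument of \eqref{eq:mcleandeformationmap}--\eqref{eq:mcleanlinearisation} for $L$ on extended weighted H\"older spaces, while keeping track of the limit. Working on each end separately (so that $L$ and $K$ may be assumed connected), I first observe that the deformation map $F$ of \eqref{eq:mcleandeformationmap} extends to a smooth map on asymptotically translation invariant normal vector fields $v$ on $L$: such a $v$ has a limit $\tilde v$, and letting $t \to \infty$ in the equation $F(v) = 0$ forces $\tilde v$ to define a cylindrical special Lagrangian deformation of $\R \times K_0$. Combined with Theorem \ref{acyldeformationtheorem}, this produces a smooth ``limit'' map $\ell$ from the manifold of asymptotically cylindrical special Lagrangian deformations of $L$ to the manifold $\mathcal D$ of cylindrical special Lagrangian deformations of the cross-section, and by construction $\mathcal K = \operatorname{Im} \ell$. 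Using \eqref{eq:mcleanlinearisation} together with the fact that the isomorphism $u \mapsto \iota_u \omega|_L$ (a limiting form of Lemma \ref{Jbehaves}) intertwines limits, the derivative $d\ell$ at $L$ is precisely ``take the limit'', so $\operatorname{Im}(d\ell_L) = W$, the asserted tangent space; it remains to promote this to a genuine submanifold statement.

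To do so --- and this is the argument I would reuse in Proposition \ref{manifoldofgluablesl} --- I would work directly in $\mathcal D$ near $0$. Given a cylindrical special Lagrangian deformation $\tilde v$ of $\R \times K_0$, truncate its translation-invariant extension (as in the construction of asymptotically cylindrical submanifolds, Definition \ref{defin:acylsubmfd}) to obtain an asymptotically cylindrical submanifold $\hat L_{\tilde v}$ that is genuinely special Lagrangian on the ends, with limit $\exp_{\tilde v}(\R \times K_0)$, and whose defect $\mathcal E(\tilde v) = (\Im \Omega|_{\hat L_{\tilde v}}, \omega|_{\hat L_{\tilde v}})$ is exact (since $\hat L_{\tilde v}$ is connected and noncompact, and isotopic to $L$ through submanifolds so that its $\omega$-period vanishes) and exponentially decaying, depending smoothly on $\tilde v$ with $\mathcal E(0) = 0$. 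Then $\tilde v$ lies in $\mathcal K$ exactly when $\hat L_{\tilde v}$ can be corrected to a special Lagrangian by an exponentially decaying normal vector field. The linearisation of this correction problem is a small perturbation of $d + d^*$ on exponentially decaying $1$-forms on $L$; by the asymptotically cylindrical theory of Lockhart \cite{lockhart} (see also Nordstr\"om \cite{nordstromacyldeformations}), for sufficiently small non-critical decay rate this operator is Fredholm with finite-dimensional cokernel, so the implicit function theorem cuts out the extendable $\tilde v$ near $0$ as the zeros of a smooth finite-dimensional map $o$ whose linear part is the $L^2$-projection of $d_0 \mathcal E$ onto that cokernel.

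The content is then in identifying this obstruction. A Stokes-theorem computation --- using that $\mathcal E(\tilde v)$ is exact and exponentially decaying and that the cokernel corresponds to bounded harmonic forms on $L$ --- reduces the pairing of $d_0 \mathcal E(\tilde v)$ with such an element, to first order, to the boundary pairing at $t = \infty$ of $\iota_{\tilde v} \omega$ with the limit of the harmonic form, i.e. to the natural antisymmetric pairing on limit data in $\mathcal{H}^1(K) \oplus \mathcal{H}^0(K)$. Since the limit data of bounded harmonic $1$-forms on $L$ is precisely $W$ and is its own annihilator for this pairing --- the Lagrangian-subspace phenomenon of Lockhart--McOwen, in the form given by \cite{nordstromacyldeformations} --- the obstruction vanishes exactly when $\iota_{\tilde v}\omega \in W$, that is when $\tilde v \in W$. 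Hence $\mathcal K$ is a submanifold of $\mathcal D$ near $L$ with $T_L \mathcal K = W$, as claimed; in particular only a proper subspace of the limits $\sum c_i\, \partial / \partial\theta$ is realised, which is the discrepancy with \cite[Theorem 1.2]{salurtodd}. As a cheaper consistency check, $\mathcal K$ is a submanifold simply because $\ell$ has locally constant rank near $L$: the rank at a deformation $L'$ is the dimension of the space of limits of bounded harmonic $1$-forms on $L'$, a topological invariant of $L'$ and hence constant, so the constant rank theorem applies.

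The main obstacle is the last step: setting up the weighted Fredholm theory for the correction problem, identifying its cokernel with limit data of bounded harmonic forms on $L$, and --- via Stokes and the Lagrangian property of $W$ --- pinning the obstruction down to exactly ``$\tilde v \in W$'' rather than to membership in some a priori larger coisotropic subspace of $T_0\mathcal D$. This is precisely the point at which the dimension count differs from \cite{salurtodd}.
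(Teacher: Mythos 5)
Your route --- realising $\mathcal K$ as the zero set of a finite-dimensional obstruction for a weighted Fredholm correction problem, and computing the obstruction by a Stokes/boundary pairing --- is genuinely different from the paper's proof, which works cohomologically: it chooses asymptotically translation invariant primitives $\tau_1, \tau_2$ of $\omega$ and $\Im\Omega$ vanishing on $L_0$, identifies the obstruction classes as $\partial[\tilde\tau_i|_{K'}]$ under the coboundary $\partial\colon H^p(K)\to H^{p+1}_\rel(L)$ of the relative long exact sequence, and then characterises $\ker\partial$ via Nordstr\"om's duality. Both approaches ultimately rest on the same Lockhart--McOwen/Nordstr\"om input, but two points in your sketch do not survive scrutiny.

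First, there is a circularity. Your limit map $\ell$ is defined on ``the manifold of asymptotically cylindrical special Lagrangian deformations of $L$'' via Theorem \ref{acyldeformationtheorem}, and your constant-rank consistency check also presupposes that theorem; but the paper proves Theorem \ref{acyldeformationtheorem} \emph{using} this proposition (the deformation space is built as ``decaying fields $\oplus$ acceptable limits'', where the acceptable limits are exactly $\mathcal K$). Only your second paragraph, which works directly with the truncated extensions $\hat L_{\tilde v}$ in $\mathcal D$, is free of this dependence, so it must carry the entire proof.

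Second, the step you yourself flag as the main obstacle is where the argument as stated fails. The space $W$ of limit data of bounded harmonic $1$-forms is not in general half-dimensional in $\mathcal{H}^1(K)\oplus\mathcal{H}^0(K)$ (its dimension is $(b^0(K)-b^0(L))+\dim\operatorname{im}(H^1(L)\to H^1(K))$, and the latter summand is only forced to be $\tfrac12 b^1(K)$-like in special situations), so there is no antisymmetric self-pairing on $\mathcal{H}^1(K)\oplus\mathcal{H}^0(K)$ for which $W$ is its own annihilator. What is true, and what the paper uses, is the \emph{cross-degree} duality of \cite[Proposition 5.12]{nordstromacyldeformations}: $\operatorname{im}(H^k(L)\to H^k(K))$ is the $L^2$-orthogonal complement of $\{\beta\in\mathcal{H}^k(K): dr\wedge\beta \text{ is the limit of a bounded harmonic }(k{+}1)\text{-form on }L\}$. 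The obstruction pairing coming from Green's formula pairs the limit data of $\iota_{\tilde v}\omega$ against the boundary data of the \emph{cokernel} --- bounded harmonic $n$-forms and $2$-forms on $L$, not harmonic $1$-forms --- and one must apply the duality twice, in degrees $(n-1,n)$ (pairing against $\vol_L$, which yields the single linear condition $\sum((p^i)'-p^i)\Vol(Y_i)=0$) and in degrees $(1,2)$ (yielding the condition on the $\mathcal{H}^1(K)$-component). Two further checks are needed to make the degrees separate cleanly, and your single-pairing picture hides them: that the $\Im\Omega$-obstruction depends only on the $S^1$-coordinates $(p^i)'$ (because $\Im\tilde\Omega$ restricts to zero on $\{p\}\times X$, so $\tilde\tau_2$ is closed there and Stokes applies), and that the direction removed by the linear condition does not spoil surjectivity of the linearised $\omega$-obstruction (because $dr|_K=0$). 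With the pairing corrected in this way your strategy does close up, and it recovers exactly the discrepancy with \cite[Theorem 1.2]{salurtodd}; but as written the ``Lagrangian subspace phenomenon'' you invoke is not available.
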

\begin{proof}
Let $K'$ be a deformation of the limit $K$, and extend it to $L'$. We have to show that we can choose $L'$ to be special Lagrangian. This can be done if and only if the exact and exponentially decaying forms $\Im \Omega|_{L'}$ and $\omega|_{L'}$ are the differentials of decaying forms for any such extension: see \cite[p.121]{salurtodd}. 

We know that $\Im \Omega|_{L'}$ and $\omega|_{L'}$ are decaying closed forms. The decaying forms form a complex under exterior differentiation. We call the cohomology of this complex $H^*_\rel(L)$; the argument of Melrose \cite[Proposition 6.13]{melrose} says that this the same as the compactly supported cohomology. To ask for $\Im \Omega|_{L'}$ and $\omega|_{L'}$ to be the differential of decaying forms is to ask for their classes in these cohomology groups to be the trivial classes. 

As in \cite{salurtodd}, we know that $\omega$ and $\Im \Omega$ restrict to exact forms on a tubular neighbourhood of $L_0$ (since they restrict to zero on $L_0$). Consequently we may find $\tau_1$ and $\tau_2$ asymptotically translation invariant with limits $\tilde \tau_1$ and $\tilde \tau_2$ such that $\tau_i|_{L_0} = 0$ and $d\tau_1 = \omega$, $d\tau_2 = \Im \Omega$ on a tubular neighbourhood of $L_0$. 

We have a coboundary map $\partial: H^p(K) \to H^{p+1}_\rel(L)$ given by the long exact sequence of relative cohomology. Examining the definition of $\partial$, we see that $[\omega|_{L'}] \in H^{2}_\rel(L)$ is given by $\partial([\tilde \tau_1|_{K'}])$ and $[\Im \Omega|_{L'}] \in H^{n}_\rel(L)$ is $\partial([\tilde \tau_2|_{K'}])$. 

That is, we need to show that the space of deformations $K'$ such that $([\tilde\tau_1|_{K'}], [\tilde\tau_2|_{K'}])$ are both in $\ker \partial$ is a submanifold with the desired tangent space. 

By Proposition \ref{acylsl}, we know each component of the cross-section $K$ is of the form $\{p\} \times Y$ for some point $p \in S^1$ and some special Lagrangian $Y$ in $X$. Thus we may write
\begin{equation}
K = \bigcup \{p^i\} \times Y^i 
\end{equation}
It similarly follows that any special Lagrangian deformation is 
\begin{equation}
K_s = \bigcup \{(p^i)'\} \times (Y^i)' 
\end{equation}
We now note that since $\Im \tilde \Omega = d\theta \wedge \Re \Omega_\xs + dt \wedge \Im \Omega_\xs$ restricts to zero on each space $\{(p^i)'\} \times X$, $\tilde \tau_2$ is closed on this submanifold. Consequently, 
\begin{equation}
[\tilde \tau_2|_{K_s}] = [\tilde \tau_2|_{\bigcup \{(p^i)'\} \times (Y^i)'} 
\end{equation}
$Y^i$ is special Lagrangian so that $\Re \Omega_\xs|_{Y^i} = \vol_{Y^i}$, for a suitable choice of orientation. Calculation shows that this choice can be made so that $dr \wedge \vol_{Y_i}$ is the limit of $\vol_L$ on this end. Applying Stokes' theorem, we find
\begin{equation}
[\tilde \tau_2|_{K_s}] = \sum ((p^i)' - p^i) [\vol_{Y_i}]
\end{equation}
Now the kernel of $\partial$ on $H^{n-1}(K)$ is the image of the restriction map $H^{n-1}(L) \to H^{n-1}(K)$. It is known (\eg Nordstr\"om \cite[Proposition 5.12]{nordstromacyldeformations}) that for any given metric this is equivalent to being $L^2$-orthogonal to those $\beta$ so that there exists a harmonic form on $L$ with limit $dr \wedge \beta$. As $L$ is connected by assumption, there is also one such harmonic form, $\vol_L$, with limit $dr \wedge (\sum \vol_{Y_i})$. It immediately follows that $K_s$ satisfies $\partial ([\tilde \tau_2|_{K_s}]) = 0$ if and only if 
\begin{equation}
\label{eq:relboundarycondition}
\sum ((p^i)' - p^i) \Vol(Y_i) = 0
\end{equation}
As the end is cylindrical, this defines a linear subspace of the potential deformations $K_s$ (the $p^i$ and $Y^i$ components are essentially independent of each other, so this is still a manifold). We now restrict to this subspace, and show that within it the deformations satisfying $\partial([\tilde \tau_1|_{K'}]) = 0$ form a manifold with the desired properties. We consequently consider the nonlinear map 
\begin{equation}
\label{eq:ethmap}
K' \mapsto \partial[\tilde \tau_1|_{K'}].
\end{equation}
This linearises using Cartan's magic formula and the assumption $\tilde \tau_1|_{K} = 0$ to give
\begin{equation}
\label{eq:ethlinearisation}
v \mapsto \partial [\iota_v \tilde \omega|_{K}].
\end{equation}
To prove that the kernel of \eqref{eq:ethmap} is a submanifold, we only have to prove that its linearisation \eqref{eq:ethlinearisation} is surjective to the image of $\partial$ in $H^2_\rel(L)$. We begin by noting that if we allowed all normal vector fields $v$ with $\iota_v \omega|_K$ harmonic, $\iota_v \omega$ would certainly include all of $H^1(K)$, and hence \eqref{eq:ethlinearisation} would be surjective. Thus it suffices to show that the normal vector field we removed by the condition \eqref{eq:relboundarycondition} maps to zero. But this is $\sum \ddth$, and $dr|_{K} = 0$. 

Thus we have shown that the deformations of $K$ that extend to deformations of $L$ form a submanifold $\K$ of all special Lagrangian deformations of $K$. Its tangent space is normal vector fields $\sum c_i \ddth + v_i$ with $v_i$ normal to $Y_i$ such that $\iota_{\sum c_i \ddth + v_i} \omega|_K$ is harmonic, 
\begin{equation}
\sum c_i Vol(Y_i) = 0, \sum \partial (\iota_{v_i} \tilde \omega|_{Y_i}) = 0 
\end{equation}
It remains to check that for every such normal vector field, there is a harmonic normal vector field $v$ on $L$ with these limits, and conversely. In terms of one-forms, the limit becomes
\begin{equation}
\sum c_i dr + \iota_{v_i} \tilde \omega|_{Y^i} 
\end{equation}
We know (compare \eg Nordstr\"om \cite[Definition 5.8]{nordstromacyldeformations}) that we can treat the two parts of this separately by considering exact and coexact harmonic forms. Thus we have to show a harmonic one-form with limit either of the form $\sum c_i dr$ or a one-form on the $Y^i$ exists if and only if it satisfies the relevant condition. The second is obvious from exactness; the first can be dealt with again using $L^2$-orthogonality of the two kinds of limit. $\sum c_i$ is a harmonic $0$-form; there is a harmonic form with limit $\sum c_i dr$ if and only if $\sum {c_i}$ is $L^2$-orthogonal to the limits of harmonic $0$-forms; there is only one of these, namely $1$, so this is 
\begin{equation}
\sum c_i \Vol(Y_i) = 0
\end{equation}
exactly as required. Hence $\K$ has tangent space vector fields corresponding to limits of bounded harmonic one-forms. 
\end{proof}
\begin{rmk}
Proposition \ref{speclaglimitdeformationsubmanifold} is essentially claimed in \cite[Proposition 6.4]{salurtodd}, and we have followed the ideas of their proof. Their proof itself is somewhat unclear and possibly erroneous: we know that the $\tau_i$ decay on $L$, but Salur--Todd seem in that proposition to claim they have zero limit on $L'$ too, so we have expanded on it. On \cite[p. 123]{salurtodd}, Salur and Todd further claim that the normal vector fields $C\ddth$ give further perturbations $L'$ of $L$, which is unfortunately untrue also. 
\end{rmk}
Note that in the examples from subsection \ref{ssec:speclagexamples}, assuming we only have two ends, these ends are $\{p\} \times Y$ and $\{p+\pi\} \times Y$. Thus the condition on the $c_i$ becomes $c_1 + c_2 = 0$, and so this structure is not preserved in general. 

To obtain our final result, we take a space of vector fields $v$ defined as ``decaying vector fields" $\oplus$ ``acceptable limits". In order to do this, we have to take some choice of vector field for each acceptable limit $\tilde v$. Whilst there is no canonical way to do this, if we assume that they are eventually constant any differences can be absorbed into the decaying vector fields. The image of such a vector field under the map $v \mapsto (\exp_v^* \Im \Omega, \exp_v^* \omega)$ is obviously exponentially decaying, and is still exact by homotopy, and so we may still consider a right hand side of decaying exact forms; furthermore, by the above, the right hand side can be considered as differentials of decaying forms. We thus work on the (nonlinear) subspace of all normal vector fields corresponding to asymptotically cylindrical submanifolds that decay to a limit in $\mathcal{K}$; the tangent space to this is normal vector fields decaying to harmonic normal vector fields that are limits of harmonic normal vector fields on $L$, and so the linearisation  comes down to the effect of $(d, d*)$ from forms that decay to harmonic limits that are the limits of harmonic forms to the differentials of decaying forms. This is surjective exactly as in the decaying case, and evidently its kernel is precisely bounded harmonic forms. We get dimension of the moduli space the dimension of the bounded harmonic $1$-forms, which is $b^1(L)$. 

Finally, we have to show that the resulting moduli space is independent of the regularities used. By standard minimal surface regularity as in the compact case, we certainly know that everything involved is smooth. We thus only need to show that the moduli space is independent of the decay rate implicitly chosen in the previous paragraph. It is clear that if we consider things decaying slower, we get the same solutions and maybe some more: we have to show that as we consider decay rate getting faster, the space of deformations does not shrink to $\{L\}$, which would correspond to all the deformations decaying slower than $L$. 

Suppose $L'$ is any such deformation. Then there is a curve of special Lagrangians $L_s$ with $L_0 = L$ and $L_1 = L'$. The tangents to this curve are asymptotically translation invariant harmonic normal vector fields on $L_s$. The limits of the submanifolds $L_s$ lie within an open subset of the limit $K_0$ which we can identify; hence, for all of these we can find a uniform bound on the first nonzero eigenvalue of the Laplacian on the cross-section and it follows that there exists $\delta > 0$ (depending only on $L$) such that all the normal vector fields decay at least at rate $\delta$ (essentially by Nordstr\"om \cite[Proposition 5.7]{nordstromacyldeformations}). Hence, so does the whole curve $L_s$ and in particular $L'$.

This completes the proof of Theorem \ref{acyldeformationtheorem}. 
\section{Constructing an approximate special Lagrangian}
\label{sec:constructingapproxspeclag}
In this section, we apply the approximate gluing definitions given in subsection \ref{ssec:basicacylsubmanifolds} to the asymptotically cylindrical special Lagrangian submanifolds of section \ref{sec:defsandefs}, to construct a submanifold $L^T$. We will primarily be showing that $L^T$ is nearly special Lagrangian. We first discuss the hypothesis we shall assume for Calabi--Yau gluing, which can be obtained from \cite{su3g2story} in the case of $n=3$; then we deduce in Proposition \ref{ambientimplications} that $L^T$ is nearly special Lagrangian. 

We shall work with a pair of matching asymptotically cylindrical Calabi--Yau manifolds, and we shall assume the following gluing result for Calabi--Yau manifolds. 
\begin{hypothesis}[Ambient Gluing]
\label{hyp:ambientgluing}
Let $M_1$ and $M_2$ be a matching pair of asymptotically cylindrical Calabi--Yau manifolds. There exist $T_0>0$, a constant $\epsilon$ and a sequence of constants $C_k$ such that for gluing parameter $T>T_0$ there exists a Calabi--Yau structure $(\Omega^T, \omega^T)$ on the glued manifold $M^T$ constructed in Definition \ref{defin:newgluingmanifolds} with
\begin{gather}
\|\Omega^T - \gamma_T(\Omega_1, \Omega_2)\|_{C^k} + \|\omega^T - \gamma_T(\omega_1, \omega_2)\|_{C^k} \leq C_k e^{-\epsilon T}\label{eq:ambientnorm}, \\
[\Im \Omega^T] = [\gamma_T(\Im \Omega_1, \Im \Omega_2)], \qquad [\omega^T] =  c[\gamma_T(\omega_1, \omega_2)],\label{eq:cohomologyequality}
\end{gather}
for some $c>0$, where $\gamma_T$ is the patching map of closed forms defined in Definition \ref{defin:newformgluing}. 
\end{hypothesis}
This hypothesis says that we can perturb the approximate gluing $(\gamma_T(\Omega_1, \Omega_2), \gamma_T(\omega_1, \omega_2))$ to get a Calabi--Yau structure. \eqref{eq:ambientnorm} says that the perturbation from our approximate gluing to the glued structure is small, and \eqref{eq:cohomologyequality} says that these perturbations are basically by exact forms. 

Hypothesis \ref{hyp:ambientgluing} holds if $n=3$. This follows from the previous work in \cite{su3g2story}, especially Proposition 5.8. Note that that proposition would give \eqref{eq:cohomologyequality} for $\Re \Omega$, rather than $\Im \Omega$ as here: but this is fine as $(i\Omega, \omega)$ is a Calabi--Yau structure whenever $(\Omega, \omega)$ is.

If $n=4$, a gluing result for $SU(4)$ structures is known. For instance, Doi--Yotsutani have given a gluing result by passing through $\Spin(7)$: see \cite{doiyotsutanispin7}. It is not immediate from this work that we can achieve \eqref{eq:ambientnorm} and \eqref{eq:cohomologyequality}. Indeed, the analogue of \eqref{eq:cohomologyequality} for the $\Spin(7)$ structures is very unlikely to be true, as (as with $SU(4)$ structures) applying $\gamma_T$ to $\Spin(7)$ structures yields a four-form that is very unlikely to define a $\Spin(7)$ structure. We would have to show that there was an exact perturbation that was a $\Spin(7)$ structure, rather than merely any perturbation as Doi--Yotsutani use (by taking the nearest $\Spin(7)$ structure pointwise and then perturbing as in \cite[Theorem 13.6.1]{joycebook}). Even then, the $SU(4)$ structure induced from a $\Spin(7)$ structure is not unique, and further work would be required to get \eqref{eq:cohomologyequality}.  

For higher $n$ still, it is not immediately obvious that the complex structure parts of asymptotically cylindrical Calabi--Yau structures can be glued easily (because the set of decomposable complex $n$-forms is highly nonlinear). 

Hence, it seems unlikely that Hypothesis \ref{hyp:ambientgluing} (in particular \eqref{eq:cohomologyequality}) holds in the case of $n>3$. In Proposition \ref{ambientimplications} below, we will obtain the consequences of Hypothesis \ref{hyp:ambientgluing} for our approximately glued submanifold; we will then discuss what weaker conditions than Hypothesis \ref{hyp:ambientgluing} might yield these consequences. 

In Joyce's work on the desingularisation of cones (\eg \cite{joycecones3}), \eqref{eq:cohomologyequality} is not difficult to obtain. The gluing is carried out in such a way that $\omega|_L = 0$. For $[\Im \Omega|_L]$, we observe that $H^n(L)$ is one-dimensional so it suffices to consider the pairing with the homology class of $[L]$: but the desingularised submanifold $L$ is homologous to the original special Lagrangian and so this pairing is zero.

Hypothesis \ref{hyp:ambientgluing} has the following consequence, which may be proved by considering the compact parts and the neck separately, and noting that we have convergence either to the behaviour on $M_i$ or to the cylindrical behaviour. 
\begin{prop}
\label{ambientmetric}
Let $M_1$ and $M_2$ be a matching pair of asymptotically cylindrical Calabi--Yau manifolds. Let $g^T$ be the gluing of the asymptotically cylindrical metrics $g_1$ and $g_2$ on $M^T$ and $g(\Omega^T, \omega^T)$ be the metric given by the Calabi--Yau structures in Hypothesis \ref{hyp:ambientgluing}. Then there exist constants $C_k$ and $\epsilon$ such that 
\begin{equation}
\label{eq:ambientmetricdecay}
\|g^T - g(\Omega^T, \omega^T)\|_{C^k} \leq C_k e^{-\epsilon T},
\end{equation}
where these norms are taken with either metric. 
\end{prop}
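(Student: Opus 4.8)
The plan is to use that the Ricci-flat \Kahler{} metric attached to an $SU(n)$ structure is a pointwise, algebraic — hence smooth — function of the structure, and then to compare $g(\Omega^T,\omega^T)$ with $g^T$ region by region: away from the neck all the patched objects coincide exactly with their counterparts on $M_1$ or $M_2$, while along the neck they are exponentially (in $T$) close to the cylindrical Calabi--Yau structure. This is the ``considering the compact parts and the neck separately'' alluded to in the statement.

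First I would set up the map side. Sending an $SU(n)$ structure $(\Omega_p,\omega_p)$ on a vector space to the complex structure $J$ for which $\Omega$ is a holomorphic volume form, and then to the metric $\omega(\cdot,J\cdot)$, is an algebraic operation, so it defines a smooth bundle map from $\SUn(M^T)$ to the bundle of positive-definite symmetric $2$-tensors, carrying $(\Omega_i,\omega_i)$ to $g_i$ and the cylindrical structure $(\tilde\Omega,\tilde\omega)$ to $\tilde g$. To invoke Proposition \ref{bundlemapsckbounded} with constants independent of $T$ I need $(\Omega^T,\omega^T)$ to take values, for all large $T$, in one fixed relatively compact subset of $\SUn(M^T)$, and to have $C^k$ norms bounded uniformly in $T$. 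Both follow from \eqref{eq:ambientnorm} together with the corresponding uniform statements for $(\gamma_T(\Omega_1,\Omega_2),\gamma_T(\omega_1,\omega_2))$: the latter pair is obtained from $(\Omega_i,\omega_i)$ by subtracting $d(\psi_T\eta_i)$, which is bounded in $C^k$ uniformly in $T$, while $(\Omega_i,\omega_i)$ on $M_i^\cpt$ and $(\tilde\Omega,\tilde\omega)$ on the cylinder are uniformly non-degenerate.

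Next I would split $M^T$ into the two bodies — the copies of $M_1^\cpt$ and $M_2^\cpt$ together with the part of each end on which the cutoff $\psi_T$ (which governs both $\hat g_i$ and $\gamma_T$) vanishes — and the neck, where $\psi_T$ is active and hence $t\geq T-1$. On body $i$ one has $g^T=g_i$ and $\gamma_T(\Omega_1,\Omega_2)=\Omega_i$, $\gamma_T(\omega_1,\omega_2)=\omega_i$ exactly, so \eqref{eq:ambientnorm} gives $\|\Omega^T-\Omega_i\|_{C^k}+\|\omega^T-\omega_i\|_{C^k}\leq C_ke^{-\epsilon T}$ there; Proposition \ref{bundlemapsckbounded} applied to the metric map, with $g(\Omega_i,\omega_i)=g_i$, then bounds $\|g^T-g(\Omega^T,\omega^T)\|_{C^k}$ by $C_ke^{-\epsilon T}$ on body $i$. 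On the neck, since $\psi_T$, $1-\psi_T$ and $\psi_T'$ have $C^k$ norm bounded uniformly in $T$ while $g_i-\tilde g_i$, $\eta_i$ and $d\eta_i$ decay exponentially with all derivatives, the modifications $\psi_T(\tilde g_i-g_i)$ and $d(\psi_T\eta_i)$, and the difference $g_i-\tilde g$ itself, are all $O(e^{-\epsilon' T})$ in $C^k$ for a suitable $\epsilon'>0$ depending only on the geometries; combining with \eqref{eq:ambientnorm} shows $g^T$, $\Omega^T$, $\omega^T$ are $O(e^{-\epsilon' T})$-close in $C^k$ to $\tilde g$, $\tilde\Omega$, $\tilde\omega$ (after possibly shrinking $\epsilon'$), and one more application of Proposition \ref{bundlemapsckbounded} with $g(\tilde\Omega,\tilde\omega)=\tilde g$ bounds $\|g^T-g(\Omega^T,\omega^T)\|_{C^k}$ by $C_ke^{-\epsilon' T}$ on the neck. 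As $M^T$ is the union of the two bodies and the neck, \eqref{eq:ambientmetricdecay} follows with $\epsilon$ taken to be $\min$ of these rates. Finally, $g^T$ and $g(\Omega^T,\omega^T)$ are uniformly $C^0$-close with uniformly controlled local geometry, so the $C^k$ norms with respect to the two metrics are uniformly equivalent and the choice of metric is immaterial.

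The one step that really needs care is the compactness point of the second paragraph: confining $(\Omega^T,\omega^T)$ to a fixed relatively compact region of the $SU(n)$-structure bundle, uniformly in $T$, so that the Lipschitz constants furnished by Proposition \ref{bundlemapsckbounded} do not deteriorate as $T\to\infty$. Everything else is the routine bookkeeping the statement refers to, and the same template — compare the patched object with $g_i$ (resp.\ $\Omega_i$, $\omega_i$) on the bodies and with the cylindrical limit on the neck — is what underlies the earlier comparison results such as Propositions \ref{metricgluingconsistency} and \ref{gluingconsistency}.
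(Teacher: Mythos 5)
Your proposal is correct and follows exactly the strategy the paper indicates for this proposition (which it only sketches): treat the bodies and the neck separately, compare with $(\Omega_i,\omega_i,g_i)$ on the former and with the cylindrical structure on the latter, and transfer the exponential closeness of the forms to the metrics via the smooth pointwise dependence of $g$ on the $SU(n)$ structure as in Proposition \ref{bundlemapsckbounded}. Your identification of the uniform-compactness point as the one step needing care is also apt, since that is what keeps the Lipschitz constants independent of $T$.
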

Combining Proposition \ref{ambientmetric} with Proposition \ref{metricgluingconsistency} and the second fundamental form bound mentioned below it, we obtain also
\begin{cor}
\label{newgluedmetricsallthesame}
Let $M_1$ and $M_2$ be a matching pair of asymptotically cylindrical Calabi--Yau manifolds, and let $L_1$ and $L_2$ be a matching pair of asymptotically cylindrical submanifolds. Let $M^T$ and $L^T$ be the glued manifold and submanifold as in Definition \ref{defin:newsubmanifoldapproxgluing} and suppose Hypothesis \ref{hyp:ambientgluing} holds. 

We have two metrics on $L^T$. Firstly, we have a metric obtained by direct gluing of the metrics on $L_1$ and $L_2$ as in Definition \ref{defin:newgluingmanifolds}. Secondly, we have the metric given on $L^T$ by restricting the metric on $M^T$ induced by $(\Omega^T, \omega^T)$. The difference of these two metrics decays exponentially in $T$ to zero with all derivatives, with respect to either of them. Moreover, with respect to the metric induced from $(\Omega^T, \omega^T)$, the second fundamental form of $L^T$ in $M^T$ is bounded in $C^k$ for all $k$ uniformly in $T$. In particular, the restriction maps of $C^k$ $p$-forms are bounded independently of $T$. 
\end{cor}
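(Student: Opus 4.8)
The plan is to assemble results already established in the paper: the corollary really only combines Proposition \ref{metricgluingconsistency}, Proposition \ref{ambientmetric}, the second fundamental form bound noted below Proposition \ref{metricgluingconsistency}, and Theorem \ref{localrestrictionthm}. The one point that needs genuine attention is that every constant must be taken independent of the gluing parameter $T$, and this will follow from the fact that the local geometry of $M^T$, and of the embedding $L^T \hookrightarrow M^T$, is uniformly bounded in $T$.

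First I would fix notation for the three metrics on $L^T$ in play: $h_1$, obtained by directly gluing the metrics on $L_1$ and $L_2$ as in Definition \ref{defin:newgluingmanifolds}; $h_2 = g^T|_{L^T}$, the restriction of the glued metric $g^T$ on $M^T$; and $h_3 = g(\Omega^T, \omega^T)|_{L^T}$, the restriction of the Calabi--Yau metric. Proposition \ref{metricgluingconsistency} gives $\|h_1 - h_2\|_{C^k} \leq C_k e^{-\epsilon T}$. Proposition \ref{ambientmetric} gives $\|g^T - g(\Omega^T, \omega^T)\|_{C^k(M^T)} \leq C_k e^{-\epsilon T}$; restricting this symmetric $2$-tensor to $L^T$ and invoking the multilinear, tensor-valued form of Theorem \ref{localrestrictionthm} together with the $T$-uniform bound on the second fundamental form of $L^T$ in $(M^T, g^T)$ noted below Proposition \ref{metricgluingconsistency} yields $\|h_2 - h_3\|_{C^k(L^T)} \leq C_k e^{-\epsilon T}$. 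The triangle inequality then gives $\|h_1 - h_3\|_{C^k} \leq C_k e^{-\epsilon T}$. Since $h_1$, $h_2$ and $h_3$ are mutually $C^\infty$-close for $T$ large, the $C^k$ norms they induce are uniformly equivalent, so the estimate holds with respect to any of them, in particular with respect to either of the two metrics named in the statement.

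Next I would treat the second fundamental form. The key point is that $\two$ of $L^T$ in $M^T$ is a universal expression in the ambient metric, its inverse, and its first derivatives, together with the (fixed) second derivatives of the embedding. Since $g(\Omega^T, \omega^T)$ is $C^\infty$-exponentially close to $g^T$ by Proposition \ref{ambientmetric}, and both metrics stay uniformly nondegenerate with uniformly bounded local geometry, an application of Proposition \ref{bundlemapsckbounded} (to jet bundles, to absorb the first derivatives of the metric) shows that the second fundamental forms computed with the two metrics differ by something $C^k$-small uniformly in $T$. As the one computed with $g^T$ is $C^k$-bounded uniformly in $T$, so is the one computed with $g(\Omega^T, \omega^T)$, which is the asserted bound; applying Theorem \ref{localrestrictionthm} once more, now with this bound, gives the final claim that the restriction maps of $C^k$ $p$-forms from $M^T$ to $L^T$ are bounded with constants depending only on $p$ and the second fundamental form, hence independently of $T$.

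The step I expect to be the main obstacle is not any of the estimates individually but the passage from the local statement of Theorem \ref{localrestrictionthm} to a global statement on $M^T$ with constants uniform in $T$: a priori the long neck could make the globalisation constant grow with $T$. I would handle this by choosing a $T$-independent atlas — on the compact pieces of $M^T$ the geometry is literally that of $M_1$ or $M_2$, while on the neck it is uniformly close to the fixed cylindrical model $X \times S^1 \times \R$, so finitely many coordinate charts of uniformly bounded size and distortion cover $M^T$ and the local estimates patch together without the constants degenerating as $T \to \infty$. The same remark applies to globalising the tensor-valued restriction estimate used in the second step.
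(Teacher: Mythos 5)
Your argument is correct and follows exactly the route the paper takes: the corollary is stated there as an immediate consequence of combining Proposition \ref{ambientmetric}, Proposition \ref{metricgluingconsistency} and the $T$-uniform second fundamental form bound noted after the latter, which is precisely your chain $h_1 \to h_2 \to h_3$ plus the restriction estimate of Theorem \ref{localrestrictionthm}. Your additional care about the comparison of second fundamental forms for the two ambient metrics and about keeping the globalisation constants uniform along the neck simply makes explicit what the paper leaves tacit.
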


Combining Hypothesis \ref{hyp:ambientgluing} with Lemma \ref{gluingconsistency} and Corollary \ref{newgluedmetricsallthesame}, we obtain 
\begin{prop}
\label{ambientimplications}
Suppose that $(M_1, M_2)$ is a pair of matching asymptotically cylindrical Calabi--Yau manifolds, and $L_1$ and $L_2$ are asymptotically cylindrical special Lagrangian submanifolds matching in the sense of Definition \ref{defin:newsubmanifoldapproxgluing}. Let $L^T$ be the glued submanifold and suppose that Hypothesis \ref{hyp:ambientgluing} holds. Then
\begin{enumerate}[i)]
\item $[\Im \Omega^T|_{L^T}] = 0 = [\omega^T|_{L^T}]$,
\item There exist constants $C_k$ and $\epsilon$ such that
\begin{equation}
\label{eq:ambientimplicationsnormest}
\|\Im \Omega^T|_{L^T}\|_{C^k} + \|\omega^T|_{L^T}\|_{C^k} \leq C_k e^{-\epsilon T}.
\end{equation}
\end{enumerate}
\end{prop}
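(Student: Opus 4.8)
The plan is to combine the cohomological input from Hypothesis~\ref{hyp:ambientgluing} with the consistency results for restriction under gluing, exploiting that $L_1$ and $L_2$ are genuinely special Lagrangian (so that the forms being restricted vanish identically on them before cutting off). For part~(i), the key observation is that $\Im\Omega_i|_{L_i}=0$ and $\omega_i|_{L_i}=0$ since $L_i$ is special Lagrangian. Hence, as cohomology classes on $L_i$, both $[\Im\Omega_i|_{L_i}]$ and $[\omega_i|_{L_i}]$ vanish, and therefore so do $[\gamma_T(\Im\Omega_1|_{L_1},\Im\Omega_2|_{L_2})]$ and $[\gamma_T(\omega_1|_{L_1},\omega_2|_{L_2})]$ on $L^T$ (the gluing map of forms respects the zero class, as is clear from Definition~\ref{defin:newformgluing}). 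By Lemma~\ref{gluingconsistency}, equation~\eqref{eq:gluingconsistencycohom}, we have $[\gamma_T(\Im\Omega_1|_{L_1},\Im\Omega_2|_{L_2})]=[\gamma_T(\Im\Omega_1,\Im\Omega_2)]|_{L^T}$ and likewise for $\omega$; and by~\eqref{eq:cohomologyequality} of Hypothesis~\ref{hyp:ambientgluing}, $[\Im\Omega^T]=[\gamma_T(\Im\Omega_1,\Im\Omega_2)]$ and $[\omega^T]=c\,[\gamma_T(\omega_1,\omega_2)]$. Restricting to $L^T$ and combining these gives $[\Im\Omega^T|_{L^T}]=0$ and $[\omega^T|_{L^T}]=c\cdot 0=0$.

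For part~(ii), I would estimate via the triangle inequality, inserting the relevant intermediaries. Write
\begin{equation*}
\Im\Omega^T|_{L^T} = \bigl(\Im\Omega^T - \gamma_T(\Im\Omega_1,\Im\Omega_2)\bigr)|_{L^T} + \bigl(\gamma_T(\Im\Omega_1,\Im\Omega_2)|_{L^T} - \gamma_T(\Im\Omega_1|_{L_1},\Im\Omega_2|_{L_2})\bigr) + \gamma_T(\Im\Omega_1|_{L_1},\Im\Omega_2|_{L_2}),
\end{equation*}
and similarly for $\omega^T|_{L^T}$. The last term vanishes identically: since $L_i$ is special Lagrangian, $\Im\Omega_i|_{L_i}=0$ and $\omega_i|_{L_i}=0$, so the cutoffs $\hat\alpha_i$ in Definition~\ref{defin:newformgluing} are zero and $\gamma_T$ of the pair is zero. (One should check the limits $\tilde\tau_i$ entering $\gamma_T$ are also zero, which holds since $\tilde\alpha_i$ is the limit of $\alpha_i|_{L_i}=0$.) The middle term satisfies an estimate of the form $C_k e^{-\epsilon T}$ by~\eqref{eq:gluingconsistencynorm} of Lemma~\ref{gluingconsistency}. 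The first term is controlled by~\eqref{eq:ambientnorm} of Hypothesis~\ref{hyp:ambientgluing}, which gives $\|\Im\Omega^T - \gamma_T(\Im\Omega_1,\Im\Omega_2)\|_{C^k(M^T)}\le C_k e^{-\epsilon T}$, together with Corollary~\ref{newgluedmetricsallthesame}, which guarantees the restriction map $C^k(M^T)\to C^k(L^T)$ is bounded \emph{independently of $T$} (thanks to the uniform second fundamental form bound and Theorem~\ref{localrestrictionthm}). Absorbing constants and taking the worse of the two exponential rates produces~\eqref{eq:ambientimplicationsnormest}.

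The only genuinely delicate point is making sure the last term really is zero rather than merely exponentially small — that is, confirming that $\gamma_T$ applied to a matching pair of \emph{identically zero} restricted forms produces the zero form on $L^T$, using that $L_1$ and $L_2$ are literally special Lagrangian and not just approximately so. This is where the hypothesis that $L_i$ is special Lagrangian (rather than nearly so) is used, and it is what keeps the bound in~\eqref{eq:ambientimplicationsnormest} purely exponential in $T$ with no leftover fixed error. Everything else is a routine application of the already-established consistency lemmas and the $T$-uniform restriction bound; the uniformity in $T$ of the restriction operator is the technical ingredient that makes the first term's estimate survive as $T\to\infty$, and that is precisely what Corollary~\ref{newgluedmetricsallthesame} was set up to provide.
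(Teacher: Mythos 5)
Your proposal is correct and is exactly the argument the paper intends: the paper itself derives this proposition in one line by "combining Hypothesis \ref{hyp:ambientgluing} with Lemma \ref{gluingconsistency} and Corollary \ref{newgluedmetricsallthesame}", which is precisely the decomposition you spell out. Your explicit check that $\gamma_T$ of the identically vanishing restrictions is zero, and that the restriction map is bounded uniformly in $T$, fills in the details correctly.
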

That is, as $T$ becomes larger, $L^T$ becomes close to special Lagrangian. 

We now return to the question of what we may expect when $n>3$. We certainly need to suppose that our Calabi--Yau structures can be glued, and it seems likely that a perturbative argument ought to work. Hence, suppose that we have Calabi--Yau structures $(\Omega^T, \omega^T)$ and that \eqref{eq:ambientimplicationsnormest} holds; the question is how we may obtain something resembling $[\Im \Omega^T|_{L^T}] = 0 = [\omega^T|_{L^T}]$. 

If we have a K\"ahler class $[\omega^T_0]$ whose restriction to $L^T$ is zero, then, rescaling $\Omega^T$ if necessary, by the Calabi conjecture we may find $\omega^T_1$ in this class such that $(\Omega^T, \omega_1^T)$ is a Calabi--Yau structure and $[\omega_1^T|_{L^T}] = 0$. 

In particular, if the holonomy of $(\Omega^T, \omega^T)$ is exactly $SU(n)$, then we know that there are no parallel $(2, 0) + (0, 2)$ forms and hence by a Bochner argument no harmonic such forms; see, for instance, \cite[p.125]{joycebook}. With $J$ the complex structure corresponding to $(\Omega^T, \omega^T)$ we thus know by Hodge theory that  $J \gamma_T(\omega_1, \omega_2)$ lies in the same cohomology class as $\gamma_T(\omega_1, \omega_2)$, since they have the same harmonic representative. Then $\omega^T_0 = \frac12 (J \gamma_T(\omega_1, \omega_2) + \gamma_T(\omega_1, \omega_2))$ is a closed $(1, 1)$ form close to $\gamma_T(\omega_1, \omega_2)$, and in particular defines a K\"ahler metric. $\omega^T_0$ is in the cohomology class $[\gamma_T(\omega_1, \omega_2)]$, and so restricts to an exact form on $L^T$ just as before. We can then find a Ricci-flat metric in this K\"ahler class as above. 

We always have holonomy $SU(n)$ if $n$ is odd and $M^T$ is simply connected; see \cite[Proposition 6.2.3]{joycebook}. If $n$ is even, nevertheless the asymptotically cylindrical Calabi--Yau manifolds $M_1$ and $M_2$ have holonomy exactly $SU(n)$ if they are simply connected and irreducible (see \cite[Theorem B]{haskinsheinnordstrom}). It seems unlikely to be difficult to prove that then $M^T$ must have holonomy exactly $SU(n)$ as well. 

It remains to deal with the holomorphic volume form; we suppose $\omega_1^T$ has been found as above in general. Now, $[\Re \Omega^T]|_{L^T}$ and $[\Im \Omega^T]|_{L^T}$ lie in the one-dimensional space $H^n(L^T)$. Hence, there exists $\alpha$ such that $[\Im (e^{i\alpha} \Omega^T)]|_{L^T}$ is zero. Thus $(e^{i\alpha} \Omega^T, \omega_1^T)$ is a Calabi-Yau structure for which Proposition \ref{ambientimplications} (i) holds. 

It remains to verify (ii) for this structure, that is it remains to show that $\omega_1^T$ and $\alpha$ are close to $\omega^T$ and $0$ respectively. This, again, should not be difficult. 

In particular, assuming the details above can be filled in, the argument we give should extend to a gluing map of minimal Lagrangian submanifolds provided that the holonomy of $M^T$ is exactly $SU(n)$. Alternatively, we can glue special Lagrangians provided our $\omega_1^T$ can be chosen in general: but it may be necessary to choose this form depending on the submanifolds we are interested in. 
\section{The SLing map: existence and well-definition}
\label{sec:slingmap}
Suppose that $M_1$ and $M_2$ are a pair of matching asymptotically cylindrical Calabi--Yau manifolds, and $L_1$ and $L_2$ are a pair of matching asymptotically cylindrical special Lagrangian submanifolds and that Hypothesis \ref{hyp:ambientgluing} applies. Then we know from Proposition \ref{ambientimplications} that the submanifold $L^T$ constructed by gluing $L_1$ and $L_2$ as in Definition \ref{defin:newsubmanifoldapproxgluing} is close to special Lagrangian. To prove Theorem A, we now have to show that $L^T$ can be perturbed into a special Lagrangian submanifold. More generally, we seek to show that any such nearly special Lagrangian submanifold $L$ can be perturbed into a special Lagrangian submanifold, and to define a uniform ``SLing map" giving this perturbation. This is carried out in this section. We first state Condition \ref{slingcondition} on a submanifold $L$ of a Calabi--Yau manifold $M$, such that an inverse function (or contraction mapping) argument always gives that then $L$ can be perturbed into a special Lagrangian. One of the most important conditions is then dealt with in Lemma \ref{Jbehaves}. This will be absolutely essential for all our later work, and extends the isomorphism between one-forms and normal vector fields used by McLean to the nearly special Lagrangian case. We then show that the ``remainder term" can be sensibly bounded, in Proposition \ref{speclaggluingremainderestimate}, and complete the proof of Theorem A in Theorem \ref{slperturbthm}: $L^T$ satisfies Condition \ref{slingcondition} and so can be perturbed into a special Lagrangian. We then give a definition (Definition \ref{defin:slingmap}) of an ``SLing map" applicable whenever Condition \ref{slingcondition} holds. Finally, we deal with Theorem A1 (Theorem \ref{thm:finalslperturbthm}): we explain that Condition \ref{slingcondition} holds whenever $\Im \Omega|_L$ and $\omega|_L$ are sufficiently small and exact, with sufficient smallness depending on $L$, so that any nearly special Lagrangian submanifold can be perturbed to a special Lagrangian. 

\subsection{General setup}
In this first subsection, we identify the inverse function (or contraction mapping) argument that we will use and describe the required hypotheses in Condition \ref{slingcondition}. We shall then prove our first results towards obtaining them, Lemma \ref{Jbehaves} and Proposition \ref{parallelgluingprop}. Lemma \ref{Jbehaves} says firstly that if $L^T$ is nearly special Lagrangian then the map $v \mapsto \iota_v \omega|_L$ gives an isomorphism between normal vector fields and one-forms, generalising the special Lagrangian case, and secondly gives (not particularly explicit) bounds on the $C^{k, \mu}$ norms of this in both directions. Proposition \ref{parallelgluingprop} says that the linearisation of the map to which we apply the inverse function theorem does not change very much if we change the $SU(n)$ structure by a small amount. 

For the inverse function argument, we rely on essentially the same idea as in the deformation theory of subsection \ref{ssec:speclagdeformation}. That is, we consider the nonlinear map of \eqref{eq:mcleandeformationmap}
\begin{equation}
F:\begin{tikzcd}[row sep=0pt]
\{\text{normal vector fields on $L^T$}\} \ar{r} &\{\text{$n$-forms and $2$-forms on $L$}\}, \\[0pt]
v \ar[mapsto]{r} & ((\exp_v^* \Im \Omega^T)|_{L^T}, (\exp_v^* \omega^T)|_{L^T}).
\end{tikzcd}
\end{equation}
Since $(\Omega^T, \omega^T)$ is a torsion-free Calabi--Yau structure, this linearises as in subsection \ref{ssec:speclagdeformation} using Cartan's magic formula to give
\begin{equation}
D_0F: v \mapsto (d(\iota_v \Im \Omega^T)|_{L^T}, d(\iota_v \omega^T)|_{L^T}).
\end{equation}
However, since $L^T$ is not special Lagrangian, we do not have that $\iota_v \omega^T$ has no normal component, and we certainly do not have the equality $\iota_v \Im \Omega^T|_{L^T} = *(\iota_v \omega^T|_{L^T})$ of \cite[equation (3.7)]{mclean}. 

If we suppose that these do hold, and further that the inverse of the linearisation $D_0F$ can be controlled appropriately, then an inverse function argument would prove that if $\Im \Omega^T|_{L^T}$ and $\omega^T|_{L^T}$ are sufficiently small then $L^T$ can be perturbed to special Lagrangian. This is moreover the case if these equalities only nearly hold (indeed, since the second is only used for bounding the linearisation we not need to assume it separately), and thus we formalise this as
\begin{condition}[SLing conditions]
\label{slingcondition}
Let $M$ be a Calabi--Yau manifold. The closed submanifold $L$ satisfies the SLing conditions if for some $k$ and $\mu$
\begin{enumerate}[i)]
\item The map $u \mapsto \iota_u \omega|_L$ is an isomorphism between $C^{k+1, \mu}$ normal vector fields to $L$ and $C^{k+1, \mu}$ $1$-forms on $L$ and there exists $C_1 \geq 1$ such that 
\begin{equation}
\frac{1}{C_1} \|u\|_{C^{k+1, \mu}} \leq \|\iota_u \omega|_{L^T}\| \leq C_1 \|u\|_{C^{k+1, \mu}}.
\end{equation}
\item The linear map $u \mapsto (d\iota_u \omega|_L, d\iota_u \Im \Omega|_L)$ is an isomorphism from $C^{k+1, \mu}$ normal vector fields $u$ such that $\iota_u \omega|_L$ is $L^2$-orthogonal to harmonic forms (these normal vector fields will later be called orthoharmonic: see the discussion after Definition \ref{defin:laplacianonnvfs}) onto exact $C^{k, \mu}$ $2$- and $n$-forms, and there exists $C_2 \geq 1$ such that for any such $u$, 
\begin{equation}
\|u\|_{C^{k+1, \mu}} \leq C_2 (\|d\iota_u \omega|_L\|_{C^{k, \mu}} + \|d \iota_u \Im \Omega|_L\|_{C^{k, \mu}}).
\end{equation}
\item There exists $r>0$ such that whenever $\|u\|_{C^{k+1, \mu}} < r$ and  $\|v\|_{C^{k+1, \mu}} < r$, the remainder term satisfies the following bound for some constant $C_3$ depending on $r$
\begin{equation}
\begin{aligned}
\|(&\exp^*_u \Im \Omega|_L - \exp^*_v \Im \Omega|_L - d\iota_u \Im \Omega|_L + d\iota_v \Im \Omega|_L,\\& \exp^*_u \omega|_L - \exp^*_v \omega|_L - d\iota_u \omega|_L + d\iota_v \omega|_L)\|_{C^{k, \mu}} \\&\leq C_3\|u-v\|_{C^{k+1, \mu}}(\|u\|_{C^{k+1, \mu}}+\|v\|_{C^{k+1, \mu}}).
\end{aligned}
\end{equation}
\item $\|(\Im \Omega|_L, \omega|_L)\|_{C^{k, \mu}} \leq \min\{\frac{1}{8 C_1^2 C_2^2 C_3}, \frac{r}{2 C_1 C_2}\}$, and we have the cohomology conditions $[\Im \Omega|_L] = 0$ and $[\omega|_L] = 0$. 
\end{enumerate}
\end{condition}
If $L$ satisfies Condition \ref{slingcondition}, we can perturb it. A standard application of the contraction mapping principle yields the following precise implicit function theorem. 
\begin{prop}
\label{carefulift}
Suppose that $L$ is a closed submanifold of the Calabi--Yau manifold $M$ and that $L$ satisfies Condition \ref{slingcondition}. Then there exists a normal vector field $v$ to $L$ such that $\exp_v(L)$ is special Lagrangian. We have $\|v\|_{C^{k+1, \mu}} \leq 2C_1 C_2 \|(\Im \Omega|_L, \omega|_L)\|_{C^{k, \mu}}$; note that by Condition \ref{slingcondition} (iv), this is always less than $\min\{\frac{1}{4 C_1 C_2 C_3}, r\}$. 
\end{prop}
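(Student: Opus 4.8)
The plan is to run a contraction-mapping argument on the nonlinear map
$F(v) = ((\exp_v^* \Im\Omega)|_L, (\exp_v^*\omega)|_L)$, importing all the analytic content from Condition~\ref{slingcondition}. First I would observe that $F$ takes values in \emph{closed} forms, since $\Im\Omega$ and $\omega$ are closed on $M$; moreover the cohomology class of $F(v)$ equals that of $F(0) = (\Im\Omega|_L, \omega|_L)$, because $\exp_v$ is homotopic to the inclusion $L \hookrightarrow M$. By the cohomology hypotheses in Condition~\ref{slingcondition}(iv), $F$ therefore lands in the space $Z$ of \emph{exact} $C^{k,\mu}$ forms. I would then restrict the domain to the subspace $W$ of orthoharmonic $C^{k+1,\mu}$ normal vector fields (those $u$ with $\iota_u\omega|_L$ $L^2$-orthogonal to harmonic forms); using Condition~\ref{slingcondition}(i) and the boundedness of $L^2$-projection onto the finite-dimensional space of harmonic $1$-forms, $W$ is a closed subspace, hence a Banach space. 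By Condition~\ref{slingcondition}(ii), $D_0F|_W : W \to Z$ is a Banach-space isomorphism whose inverse $P$ satisfies $\|P\alpha\|_{C^{k+1,\mu}} \le C_2\|\alpha\|_{C^{k,\mu}}$.

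Next I would introduce the remainder $Q(u) := F(u) - F(0) - D_0F(u)$, which by Condition~\ref{slingcondition}(iii) (taking $v = 0$, and using $Q(0) = 0$) satisfies $\|Q(u)\|_{C^{k,\mu}} \le C_3\|u\|_{C^{k+1,\mu}}^2$ and $\|Q(u) - Q(v)\|_{C^{k,\mu}} \le C_3\|u-v\|_{C^{k+1,\mu}}(\|u\|_{C^{k+1,\mu}} + \|v\|_{C^{k+1,\mu}})$ for $\|u\|, \|v\| < r$. Since $F(0)$, $F(u)$ and $D_0F(u)$ all lie in $Z$, so does $Q(u)$, and hence $\Phi(u) := -P\big(F(0) + Q(u)\big)$ is a well-defined self-map of a small ball in $W$; any fixed point $v$ of $\Phi$ satisfies $D_0F(v) = -F(0) - Q(v)$, i.e. $F(v) = 0$, which says exactly that $\exp_v(L)$ is special Lagrangian (for $v$ small enough that $\exp_v$ is an embedding).

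It remains to choose the radius. Setting $\rho := 2C_1C_2\|(\Im\Omega|_L, \omega|_L)\|_{C^{k,\mu}}$, Condition~\ref{slingcondition}(iv) gives $\rho \le \min\{\tfrac{1}{4C_1C_2C_3}, r\}$. On $\overline{B_\rho} \subset W$ I would then check: $\|\Phi(u)\|_{C^{k+1,\mu}} \le C_2\|F(0)\|_{C^{k,\mu}} + C_2C_3\rho^2 \le \tfrac{\rho}{2C_1} + \tfrac{\rho}{4C_1} < \rho$ (using $C_1 \ge 1$), so $\Phi$ maps $\overline{B_\rho}$ into itself (and indeed into $\overline{B_{3\rho/4}}$, where Condition~\ref{slingcondition}(iii) is applicable since $\tfrac34\rho < r$); and $\|\Phi(u) - \Phi(v)\|_{C^{k+1,\mu}} \le C_2\|Q(u) - Q(v)\|_{C^{k,\mu}} \le 2\rho C_2C_3\|u-v\|_{C^{k+1,\mu}} \le \tfrac{1}{2C_1}\|u-v\|_{C^{k+1,\mu}}$, so $\Phi$ is a contraction. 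The contraction mapping theorem then produces a unique fixed point $v \in \overline{B_\rho}$ with $\|v\|_{C^{k+1,\mu}} \le \rho = 2C_1C_2\|(\Im\Omega|_L,\omega|_L)\|_{C^{k,\mu}}$, and by (iv) this bound is at most $\min\{\tfrac{1}{4C_1C_2C_3}, r\}$, as claimed.

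I do not anticipate a genuine obstacle: the whole difficulty of the statement has been packaged into Condition~\ref{slingcondition}, and what is left is bookkeeping of the constants. The two points that genuinely need care are (a) the well-definedness of $\Phi$ — this is exactly where the exactness part of (iv) and the homotopy invariance of $\exp_v^*$ on cohomology enter, ensuring $F(0) + Q(u)$ lies in the image of $D_0F|_W$ — and (b) confirming that $P$, and hence $\Phi$, actually takes values in $W$, which is immediate from the way $P$ is defined in Condition~\ref{slingcondition}(ii). The constant $C_1$ plays no role beyond furnishing the slack ($\tfrac{1}{2C_1} < 1$ and $\tfrac{\rho}{2C_1} + \tfrac{\rho}{4C_1} < \rho$) that makes the self-map and contraction estimates strict.
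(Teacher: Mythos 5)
Your proposal is correct and is precisely the "standard application of the contraction mapping principle" that the paper invokes without writing out: restricting to orthoharmonic fields, inverting the linearisation via Condition \ref{slingcondition}(ii), and using (iii)--(iv) to make $\Phi(u) = -P(F(0)+Q(u))$ a contraction of the ball of radius $2C_1C_2\|(\Im\Omega|_L,\omega|_L)\|_{C^{k,\mu}}$. The constant bookkeeping checks out, and your choice of the orthoharmonic fixed point matches the paper's later remark that the estimate "follows from choosing the normal vector field $v$ so that its corresponding one-form is $L^2$-orthogonal to harmonic forms."
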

It therefore suffices to show that if $M_1$ and $M_2$ are a matching pair of Calabi--Yau manifolds and $L_1$ and $L_2$ a matching pair of special Lagrangian submanifolds and Hypothesis \ref{hyp:ambientgluing} holds, then the submanifold $L^T$ of $M^T$ constructed as in Definition \ref{defin:newsubmanifoldapproxgluing} satisfies Condition \ref{slingcondition}. 

Parts (iii) and (iv) of Condition \ref{slingcondition} are relatively straightforward; we need to discuss parts (i) and (ii). We begin with (i). We know from McLean \cite[Remark 3-4]{mclean} that $v \mapsto \iota_v \omega|_L$ defines an isomorphism between normal vectors to a special Lagrangian submanifold, and cotangent vectors to this submanifold. Moreover, this map is an isometry, so that if $L$ is special Lagrangian (i) holds with $C_1 = 1$. We extend this to the case where $(\Omega, \omega)$ is only defined locally and $L$ is close to special Lagrangian. 

\begin{lem}
\label{Jbehaves}
Suppose that $(\Omega, \omega)$ is an $SU(n)$ structure around $L$ in the sense of Definition \ref{defin:cystructurebundle} and $p$ is a point of $L$ such that $|(\omega|_L)_p|<1$ (i.e. $|\omega(u, v)| < |u||v|$ for all $u, v \in T_pL$). Then the complex structure $J_p: T_pM \to T_pM$ does not take any tangent vector to another tangent vector, and at $p$ the map $u \mapsto \iota_u \omega|_L$ from normal vectors to tangent covectors is an isomorphism.

Now suppose the uniform norm $\|\omega|_L\|_{C^0} < 1$, so that the preceding paragraph holds at all points. Then $v \mapsto \iota_v \omega|_L$ defines a map from smooth normal vector fields on $L$ to smooth $1$-forms on $L$. Provided that $\|\omega|_L\|_{C^{k, \mu}}$ is sufficiently small compared to the $C^{k, \mu}$ norm of $J$ and the $C^k$ norm of the second fundamental form of $L$ in $M$, we have bounds
\begin{equation}
\label{eq:Jbehavesbounds}
c\|v\|_{C^{k, \mu}} \leq \|\iota_v \omega|_L\|_{C^{k, \mu}} \leq  C\|v\|_{C^{k, \mu}},
\end{equation}
where the constants $c$ and $C$ depend on the $C^{k, \mu}$ norms of $\omega|_L$ and the induced almost complex structure $J$ and the $C^k$ norm of the second fundamental form of $L$ in $M$. 
\end{lem}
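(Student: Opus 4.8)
The plan is to reduce both claims to fibrewise linear algebra and then to control the fibrewise isomorphism and its inverse in the relevant norms. Write $g$ and $J$ for the metric and almost complex structure induced by the $SU(n)$ structure $(\Omega,\omega)$, so that $\omega(\cdot,\cdot)=g(J\cdot,\cdot)$ and $J$ is a $g$-isometry, and use $g$ to decompose $TM|_L=TL\oplus\nu_L$ orthogonally. Relative to this splitting, write $J$ in block form with components $A\colon TL\to TL$, $B\colon\nu_L\to TL$, $C\colon TL\to\nu_L$, $D\colon\nu_L\to\nu_L$ (smooth sections of the appropriate Hom-bundles over $L$). For $v$ a normal vector and $w$ a tangent vector at a point $p$ of $L$ we have $(\iota_v\omega|_L)(w)=g(Jv,w)=g\big((Jv)^T,w\big)$, so under the metric identification of $T^*L$ with $TL$ the map $v\mapsto\iota_v\omega|_L$ is precisely $v\mapsto Bv$. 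Everything thus reduces to showing that $B$ is a bundle isomorphism with $\|B\|$ and $\|B^{-1}\|$ controlled in the stated way; and note that if $\|\omega|_L\|_{C^0}<1$ then $B$ being a smooth bundle map immediately gives that $v\mapsto\iota_v\omega|_L$ takes smooth normal vector fields to smooth $1$-forms.

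Fibrewise, suppose $0\neq v\in T_pL$ with $Jv\in T_pL$; then $\omega(v,Jv)=g(Jv,Jv)=|Jv|^2=|v|\,|Jv|$, contradicting $|(\omega|_L)_p|<1$, which proves that $J_p$ takes no nonzero tangent vector to a tangent vector. Quantitatively, expanding $J_p^2=-\id$ in blocks gives $BC=-(\id+A^2)$; since $A$ is the endomorphism associated with the $2$-form $(\omega|_L)_p$ we have $\|A\|_{op}\le|(\omega|_L)_p|<1$, so $\id+A^2$ is invertible with $\|(\id+A^2)^{-1}\|_{op}\le(1-|(\omega|_L)_p|^2)^{-1}$. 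Hence $BC$ is invertible, so $C$ is injective and $B$ surjective, and since $\dim\nu_pL=\dim T_pL=n$ both are isomorphisms; moreover $\|B\|_{op},\|C\|_{op}\le\|J\|_{op}=1$, so $\|B^{-1}\|_{op}\le\|C\|_{op}\|(\id+A^2)^{-1}\|_{op}\le(1-|(\omega|_L)_p|^2)^{-1}$. Taking suprema over $L$ this already proves \eqref{eq:Jbehavesbounds} for $k=0$, $\mu=0$ with $c=1-\|\omega|_L\|_{C^0}^2$ and $C=1$.

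For the $C^{k,\mu}$ bounds, write $\iota_v\omega|_L=B\cdot v$ and $v=B^{-1}\cdot(\iota_v\omega|_L)$ and use the standard Leibniz-type product estimates for $C^{k,\mu}$ norms of a bundle map applied to a section (cf.\ Proposition \ref{bundlemapsckbounded}); it then suffices to bound $\|B\|_{C^{k,\mu}(L)}$ and $\|B^{-1}\|_{C^{k,\mu}(L)}$. Both $B$ and $C$ are the compositions of $J|_L$ with the orthogonal projections of $TM|_L$ onto $TL$ and $\nu_L$; the $C^{k,\mu}$ norm of $J|_L$ is controlled by the ambient $C^{k,\mu}$ norm of $J$ together with the $C^{k-1}$ norm of $\two$ by (the analogue of) Theorem \ref{localrestrictionthm}, while the $\nabla^L$-covariant derivatives of the two projections are expressed through $\two$ via the Gauss--Weingarten equations, so $\|B\|_{C^{k,\mu}}$ and $\|C\|_{C^{k,\mu}}$ are bounded in terms of $\|J\|_{C^{k,\mu}}$ and $\|\two\|_{C^k}$. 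Finally $B^{-1}=-C(\id+A^2)^{-1}$, and $\|(\id+A^2)^{-1}\|_{C^{k,\mu}}$ is estimated either by differentiating the identity $(\id+A^2)^{-1}=\id-A^2(\id+A^2)^{-1}$ repeatedly, or by summing the Neumann series $\sum_{m\geq0}(-A^2)^m$: both converge in $C^{k,\mu}$ provided $\|A\|_{C^{k,\mu}}$, which is comparable to $\|\omega|_L\|_{C^{k,\mu}}$ up to factors depending on $\two$, is small relative to the constant in the $C^{k,\mu}$ product estimate (which is itself controlled by $\|\two\|_{C^k}$, and the estimate for $\|C\|_{C^{k,\mu}}$ brings in $\|J\|_{C^{k,\mu}}$). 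Assembling these bounds yields \eqref{eq:Jbehavesbounds} with $c$ and $C$ depending only on $\|\omega|_L\|_{C^{k,\mu}}$, $\|J\|_{C^{k,\mu}}$ and $\|\two\|_{C^k}$.

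The main obstacle is this last step: the fibrewise inversion of $B$ is trivial, but to keep $\|B^{-1}\|_{C^{k,\mu}}$ under uniform control one must differentiate it, and each derivative interleaves derivatives of $J$ with derivatives of the orthogonal splitting $TM|_L=TL\oplus\nu_L$; the latter are only tame because the relevant connection defects are exactly the second fundamental form. Making this precise is essentially a careful bookkeeping exercise with the Gauss--Weingarten formulae and the $C^{k,\mu}$ product estimates, and it is precisely why the required smallness of $\|\omega|_L\|_{C^{k,\mu}}$ has to be measured against $\|J\|_{C^{k,\mu}}$ and $\|\two\|_{C^k}$ rather than being an absolute bound.
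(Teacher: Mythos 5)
Your proof is correct. The second half — controlling the $C^{k,\mu}$ norms by working with the ambient connection, invoking the analogue of Theorem \ref{localrestrictionthm}, and absorbing the derivatives of the orthogonal splitting into the second fundamental form via Gauss--Weingarten — is the same strategy the paper uses (it phrases this as the Lipschitz equivalence of the ambient-connection norms with the intrinsic ones). Where you differ is in the underlying linear algebra. The paper establishes the pointwise isomorphism and the lower bound by an ad hoc argument: it writes a normal vector as $v = Ju + u'$ with $u, u'$ tangential, bounds $\|u'\|$ by $\|u\|\,\|\omega|_L\|$, and then extracts the lower bound on the tangential part of $Jv$ from a chain of norm inequalities (the injectivity step in particular is a slightly delicate proof by contradiction). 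You instead write $J$ in block form relative to $TL \oplus \nu_L$ and read off $BC = -(\id + A^2)$ from $J^2 = -\id$, which gives an explicit inverse $B^{-1} = -C(\id+A^2)^{-1}$ and hence both the isomorphism and the sharp pointwise constant $c = 1 - \|\omega|_L\|_{C^0}^2$ in one stroke; the $C^{k,\mu}$ bound on the inverse then comes from a Neumann series or iterated differentiation rather than from re-running the inequality chain. Your route is tidier and more quantitative at the pointwise level, at the cost of having to track the product constants in $C^{k,\mu}$ when inverting $\id + A^2$; both routes land on constants depending on exactly the quantities named in the statement.
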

\begin{proof}
For the first part, we work at $p \in L$. Suppose that $v \in T_pL \subset T_pM$ such that $Jv \in T_pL$. $J_p$ is an isometry and so
\begin{equation}
|v|^2 = g(v, v) = \omega(v, J_pv) < |v||Jv| = |v|^2.
\end{equation}
This is a contradiction, proving the first claim.

It follows that the map from tangent vectors to normal vectors given by taking the normal component of $J_pv$ is an isomorphism as it is an injective linear map between spaces of the same dimension. That is, given a normal vector $v$ we can find a tangent vector $u$ such that $v$ is the normal component of $Ju$; \ie we can find a pair $u$ and $u'$ of tangent vectors such that $v = J_pu + u'$. 

To show $u \mapsto \iota_u \omega|_L$ is an isomorphism, we show that it too is injective. It is easy to see that if not there exists $u$ such that $J_pu$ is normal. As above, we can find tangent vectors $v$ and $v'$ such that $u = J_pv + v'$, and hence $J_pv = v' - u$. Applying $J_p$ to this and rearranging, 
\begin{equation}
J_pv' = -v - J_pu.
\end{equation}
Since $v$ and $v'$ are tangential, $u$ and $J_pu$ are normal and nonzero, and $J_p$ is an isometry, we obtain 
\begin{equation}
|v| < |-v-J_pu| = |J_pv'| = |v'| < |u +v'| = |J_pv| = |v|,
\end{equation}
which is evidently a contradiction. This completes the first part of the proof. 

For the second part, we shall work with the norms on sections of $TM|_L$ induced by the ambient connection; as described after Theorem \ref{localrestrictionthm}, the restrictions of these norms to normal vector fields are Lipschitz equivalent to the standard norms, with Lipschitz constant depending on the $C^k$ norm of the second fundamental form. 

We introduce the notations $\pi^1_1$ and $\pi^1_0$ for the normal and tangent parts of vector fields and one-forms. By the remark on Lipschitz equivalence after, and the argument indicated for, Theorem \ref{localrestrictionthm}, we obtain 
\begin{equation}
\|\pi^1_i \alpha\|_{C^{k, \mu}} \leq C' \|\alpha\|_{C^{k, \mu}},
\end{equation}
where $C'$ depends on the $C^k$ norm of the second fundamental form. The upper bound in \eqref{eq:Jbehavesbounds} follows immediately; it remains to show the lower bound. 

To do this, we use a similar idea to the first part. Given a normal vector field $v$, we can find local tangent vector fields $u$ and $u'$ such that 
\begin{equation}
\label{eq:vJuu}
v = Ju + u'.
\end{equation}
Now we note that $u'$ is small: if $w$ is also a local tangent vector field, taking the inner product of $w$ with \eqref{eq:vJuu} yields
\begin{equation}
0 = \omega(u, w) + g(u', w).
\end{equation}
$u$ and $w$ are both tangential, so $\|\omega(u, w)\|_{C^{k, \mu}}$ can be bounded by  $\|\omega|_L\|_{C^{k, \mu}}\|u\|_{C^{k, \mu}}\|w\|_{C^{k, \mu}}$. Hence, since the $C^{k, \mu}$ norm of a covector field can be identified as its operator norm as a map from $C^{k, \mu}$ vector fields to $C^{k, \mu}$ functions, we have that $\|u'\|_{C^{k, \mu}}$ can be bounded by $\|u\|_{C^{k, \mu}}\|\omega|_L\|_{C^{k, \mu}}$. In particular, we find
\begin{equation}
\|u\|_{C^{k, \mu}} \leq C_1 \|Ju\|_{C^{k, \mu}} \leq C_1 \|v\|_{C^{k, \mu}} + C_1 \|u\|_{C^{k, \mu}}\|\omega|_L\|_{C^{k, \mu}},
\end{equation}
where $C_1$ depends on the $C^{k, \mu}$ norm of $J$, and hence that if $C_1 \|\omega|_L\|_{C^{k, \mu}}$ is sufficiently small
\begin{equation}
\|u\|_{C^{k, \mu}} \leq \frac{C_1} {1-C_1 \|\omega|_L\|_{C^{k, \mu}}} \|v\|_{C^{k, \mu}}.
\end{equation}

Now we apply $J$ to \eqref{eq:vJuu}, obtaining
\begin{equation}
Jv = -u + Ju'.
\end{equation}
Consequently, the normal part of $Jv $ is the normal part of $Ju'$ and we have the estimate
\begin{equation}
\begin{aligned}
\|\pi^{1}_1 Jv\|_{C^{k, \mu}} &\leq C_2 \|J u'\|_{C^{k, \mu}} \leq C_3 \|u'\|_{C^{k, \mu}} \\&\leq C_3 \|u\|_{C^{k, \mu}}\|\omega|_L\|_{C^{k, \mu}} \leq \frac{C_1C_3 \|\omega|_L\|_{C^{k, \mu}}} {1-C_1 \|\omega|_L\|_{C^{k, \mu}}} \|v\|_{C^{k, \mu}},
\end{aligned}
\end{equation}
where $C_2$ depends on the $C^k$ norm of the second fundamental form and $C_3$ depends on the $C^k$ norm of the second fundamental form and the $C^{k, \mu}$ norm of $J$. If $\frac{C_1C_3\|\omega|_L\|_{C^{k, \mu}} } {1-C_1 \|\omega|_L\|_{C^{k, \mu}}}$ is sufficiently small we then have the desired lower bound
\begin{equation}
\|\pi^{1}_0 Jv\|_{C^{k, \mu}} \geq \left( 1- \frac{C_1C_3 \|\omega|_L\|_{C^{k, \mu}}}{1-C_1 \|\omega|_L\|_{C^{k, \mu}}}\right) \|v\|_{C^{k, \mu}}.\qedhere
\end{equation}
\end{proof}
We now turn to part (ii) of Condition \ref{slingcondition}. To understand this, we will show that $v \mapsto \iota_v \Im \Omega^T|_{L^T}$ and $v \mapsto  *\iota_v {\omega^T}|_{L^T}$ are similar. 

We will work locally, and take a different $SU(n)$ structure $(\Omega', \omega')$ around an open subset $U$ of $L^T$, again in the sense of Definition \ref{defin:cystructurebundle}, so that $U$ is special Lagrangian with respect to $(\Omega', \omega')$, and then we know from \cite[equation (3.7)]{mclean} that $v \mapsto \iota_v \Im \Omega'|_{U} = v \mapsto *' \iota_v  \omega'|_{U}$. Then we have to show that $v \mapsto \iota_v \Im \Omega^T|_{U}$ is close to $v \mapsto \iota_v \Im \Omega'|_{U}$ and similarly for $v \mapsto * \iota_v \omega^T|_{U}$. That these follow provided that $(\Omega', \omega')$ is close enough to $(\Omega^T, \omega^T)$ is the content of the following. 

\begin{prop}
\label{parallelgluingprop}
Suppose that $M$ is a $2n$-dimensional manifold, and $L$ is an $n$-dimensional submanifold. Let $\SUn(M)$ be the bundle of $SU(n)$ structures over $M$ from Definition \ref{defin:cystructurebundle}. 

Suppose that $(\Omega_1, \omega_1)$ and $(\Omega_2, \omega_2)$ are two sections of $\SUn(M)|_L$. Assume further that $(\Omega_1, \omega_1)$ is the restriction to $L$ of a Calabi--Yau structure on $M$, so that there is a well-defined metric $g$ on $M$, giving a well-defined $C^k$ norm on $TM|_L$, and suppose that the second fundamental form with respect to $g$ of $L$ in $M$ is bounded in $C^{k-1}$ by $R$.

Then there exist $C$ and $\delta_0$ depending on $R$ such that if 
\begin{equation}
\|\Omega_1 - \Omega_2\|_{C^k} + \|\omega_1 - \omega_2\|_{C^k} < \delta < \delta_0,
\end{equation}
then 
\begin{equation}
\label{eq:pgpfinal}
\begin{aligned}
&\|(u \mapsto *_1(\iota_u \omega_1|_L)) - (u \mapsto *_2(\iota_u \omega_2|_L))\|_{C^k} \\&+ \|(u \mapsto \iota_u \Im \Omega_1|_L) - (u \mapsto \iota_u \Im \Omega_2|_L)\|_{C^k} < C\delta,
\end{aligned}
\end{equation}
where these norms are the induced norms on the bundle $\nu^*_L \otimes \bigwedge^{n-1} T^*L$.
\end{prop}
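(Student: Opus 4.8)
The plan is to realise both maps $u\mapsto\iota_u\Im\Omega_i|_L$ and $u\mapsto *_i(\iota_u\omega_i|_L)$ as the result of applying a fixed chain of smooth bundle maps to the $SU(n)$ structure $(\Omega_i,\omega_i)$, and then to deduce \eqref{eq:pgpfinal} from the $C^k$-continuity of smooth bundle maps, Proposition \ref{bundlemapsckbounded}, together with the restriction estimate Theorem \ref{localrestrictionthm}; throughout, the point is to track how the constants depend on the $C^{k-1}$ bound $R$ on the second fundamental form.

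The $\Im\Omega$ summand is the easy one, being linear in $\Omega$. The assignment $(\Omega,u)\mapsto\iota_u\Im\Omega|_L$ is bilinear: contraction $\iota_u$ is a universal algebraic operation, a normal vector is regarded as an element of $TM|_L$ via the fixed metric $g=g_1$ of the Calabi--Yau structure restricting to $(\Omega_1,\omega_1)$, and restriction of an $(n-1)$-form from $M$ to $L$ is the pointwise adjoint of $TL\hookrightarrow TM|_L$. Hence $(u\mapsto\iota_u\Im\Omega_1|_L)-(u\mapsto\iota_u\Im\Omega_2|_L)=(u\mapsto\iota_u\Im(\Omega_1-\Omega_2)|_L)$, whose $C^k$ norm is bounded by $\|\Omega_1-\Omega_2\|_{C^k}$ times a constant measuring the $C^k$-boundedness of these operations; the only one with non-trivial dependence is the restriction map, controlled by the $C^{k-1}$ norm of $\two$ via Theorem \ref{localrestrictionthm} and the remark following it. This handles the second term of \eqref{eq:pgpfinal}, with constant depending on $R$.

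For the Hodge-star summand the nonlinearity enters through the induced metric, and this is the substantive part. By Definition \ref{defin:sunstructure}(v) an $SU(n)$ structure $(\Omega_p,\omega_p)$ on $T_pM$ determines the complex structure $J_p$ making $\Omega$ a holomorphic volume form, and hence the metric $g_p=\omega_p(\cdot,J_p\cdot)$; the assignment $(\Omega,\omega)\mapsto(J,g)$ is a smooth (real-analytic) bundle map on $\SUn(M)$, which is a smooth subbundle of $\bigl(\bigwedge^{n}T^*M\otimes_\R\C\bigr)\oplus\bigwedge^{2}T^*M$ as noted after Proposition \ref{newjetbundles} (compare Hitchin \cite{hitchinmodulispace}). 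Restricting $g$ to $TL$, forming the volume form and Hodge star $*_{g|_L}$ (smooth on positive-definite metrics), contracting with a normal vector and restricting to $L$, and composing with the $g_1$-identification of $\nu_L$, then exhibits $\Theta\colon(\Omega,\omega)\mapsto\bigl(u\mapsto *_{g|_L}(\iota_u\omega|_L)\bigr)$ as a smooth bundle map from $\SUn(M)|_L$ to $\nu_L^*\otimes\bigwedge^{n-1}T^*L$. To estimate $\|\Theta(\Omega_1,\omega_1)-\Theta(\Omega_2,\omega_2)\|_{C^k}$ I would work with the $C^k$ norms on $TM|_L$-associated bundles defined via the ambient connection — Lipschitz-equivalent to the intrinsic ones with constant depending on the $C^{k-1}$ norm of $\two$, by the remark after Theorem \ref{localrestrictionthm} — so that Proposition \ref{bundlemapsckbounded} applies. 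Since $(\Omega_1,\omega_1)$ is a genuine Calabi--Yau structure, it and its $k$-jet lie in a fixed neighbourhood $V$ of the locus where $g|_L$ is uniformly positive-definite and $\Theta$ and its derivatives are bounded; choosing $\delta_0$ small (and, when $L$ is non-compact, working over a region where the data are uniformly controlled, e.g. by asymptotic cylindricity) forces $(\Omega_2,\omega_2)$, and a short path joining it to $(\Omega_1,\omega_1)$ within the fibres of $\SUn(M)|_L$, to lie in $V$ as well. The mean value inequality along this path and Proposition \ref{bundlemapsckbounded} then give $\|\Theta(\Omega_1,\omega_1)-\Theta(\Omega_2,\omega_2)\|_{C^k}\le C\|(\Omega_1,\omega_1)-(\Omega_2,\omega_2)\|_{C^k}\le C\delta$ with $C$ depending only on $R$ and the fixed Calabi--Yau structure. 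Adding the two contributions proves \eqref{eq:pgpfinal}.

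The main obstacle is this last estimate, namely obtaining the $C^k$-Lipschitz bound with constant depending only on $R$. It forces two technical points: first, that the whole chain of bundle maps be smooth at $(\Omega_1,\omega_1)$ with derivatives controlled by $R$, which is why one passes temporarily to norms defined by the ambient connection and pays the $\two$-dependent comparison; and second, that the homotopy used for the mean value argument remain in the locus where $\SUn(M)$ is smooth and the induced metrics are uniformly non-degenerate, which is precisely what the smallness threshold $\delta_0$ — taken uniformly along $L$, after reducing if necessary to a uniformly-controlled local picture as in Theorem \ref{localrestrictionthm} and patching — is there to guarantee.
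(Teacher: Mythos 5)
Your proposal is correct and follows essentially the same route as the paper: both exhibit $(\Omega,\omega)\mapsto(u\mapsto *\iota_u\omega|_L,\,u\mapsto\iota_u\Im\Omega|_L)$ as a smooth bundle map built from $(\Omega,\omega)\mapsto g$, contraction, restriction and $g\mapsto *_g$, apply Proposition \ref{bundlemapsckbounded} in the ambient-connection $C^k$ norms (using that $(\Omega_1,\omega_1)$ is parallel and $\delta_0$ small to stay in a compact piece of $\SUn(M)$ after local trivialisation), and then convert to the intrinsic norms via the Lipschitz equivalence controlled by the $C^{k-1}$ bound $R$ on the second fundamental form. Your separate linear treatment of the $\Im\Omega$ summand is a harmless simplification of what the paper handles uniformly through the same bundle-map argument.
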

\begin{proof}
Since the map $(\Omega, \omega) \mapsto g$, contraction, and the map $g \mapsto *_g$ are smooth bundle maps, 
\begin{equation}
\label{eq:pgpmap}(\Omega, \omega) \mapsto (u \mapsto *\iota_u \omega|_L - u \mapsto \iota_u \Im \Omega|_L)
\end{equation}
is a smooth bundle map. We may then apply Proposition \ref{bundlemapsckbounded} using the ambient Levi-Civita connection. Since $(\Omega_1, \omega_1)$ is parallel and $\delta_0$ is small, this proposition applies provided $(\Omega_1, \omega_1)$ and $(\Omega_2, \omega_2)$ lie in a compact subset of $\SUn(M)$. By trivialising so that $(\Omega_1, \omega_1)$ is the standard structure, we may assume this locally. It follows just as in Proposition \ref{bundlemapsckbounded} by extending to a smooth, hence Lipschitz continuous, map of jet bundles that \eqref{eq:pgpfinal} holds when the $C^k$ norms are those induced using the ambient connection. 

But as described after Theorem \ref{localrestrictionthm}, the $C^k$ norms induced using the ambient connection are Lipschitz equivalent with a constant depending on the $C^{k-1}$ norm of the second fundamental form to the $C^k$ norms induced using the restricted connection, which is bounded by $R$. \eqref{eq:pgpfinal} with the $C^k$ norms induced using the restricted connection follows. 
\end{proof}
\subsection{The gluing theorem}
In this subsection we combine Theorem \ref{laplacelowerbound} with a special case of Proposition \ref{parallelgluingprop} to complete the proof of Theorem A by proving Theorem \ref{slperturbthm}, that the approximate special Lagrangian constructed in section \ref{sec:constructingapproxspeclag} can be perturbed to be special Lagrangian. As a preliminary, we will consider the nonlinearity and show that Condition \ref{slingcondition} (iii) holds with a constant independent of $T$.

We are interested in the map $u \mapsto (\exp_u^* \Im \Omega|_L - d\iota_u \Im \Omega|_L, \exp_u^* \omega|_L - d\iota_u \omega|_L)$. We note that the restriction to $L$ is always controlled by the second fundamental form, and so it suffices to consider the rest of the map, which we formalise as follows. 

\begin{defin}
Let $\F$ be the subset of $(J^1 TM \oplus J^1 \bigwedge^n T^*M \oplus J^1 \bigwedge^2 T^*M) \times (\bigwedge^n T^*M \oplus \bigwedge^2 T^*M)$ consisting of pairs $(((u, \nabla u), (\alpha_0, \nabla \alpha_0),  (\beta_0, \nabla \beta_0)), (\alpha, \beta))$ such that 
\begin{equation}
\pi_{\bigwedge^n T^*M \oplus \bigwedge^2 T^*M} (\alpha, \beta) = \exp(u).
\end{equation}

\end{defin}
It is easy to see
\begin{lem}
\label{gluingpointwisespacesubmanifold}
The intersection of $\F$ with the set of elements with $(u, \nabla u)$ small is a submanifold and is a smooth bundle over $M$. 
\end{lem}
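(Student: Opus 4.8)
The plan is to recognise that $\F$ is cut out from a smooth vector bundle over $M$ by an equation involving only smooth bundle maps, so that the implicit function theorem in the fibres applies uniformly. First I would make the bundle structure explicit: the total space
\[
E = (J^1 TM \oplus J^1 {\textstyle\bigwedge}^n T^*M \oplus J^1 {\textstyle\bigwedge}^2 T^*M) \times ({\textstyle\bigwedge}^n T^*M \oplus {\textstyle\bigwedge}^2 T^*M)
\]
is naturally a fibre bundle over $M$ via the projection that forgets everything in the fibres (equivalently, the fibre product over $M$ of the listed bundles associated to $TM$). The subset $\F$ is the zero set of the section
\[
\Phi\colon (((u,\nabla u),(\alpha_0,\nabla\alpha_0),(\beta_0,\nabla\beta_0)),(\alpha,\beta)) \longmapsto \pi_{\bigwedge^n T^*M\oplus\bigwedge^2 T^*M}(\alpha,\beta) - \exp(u),
\]
which takes values in the bundle $\bigwedge^n T^*M \oplus \bigwedge^2 T^*M$ pulled back to $E$. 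Here the exponential map of the ambient metric restricted to a neighbourhood of the zero section of $TM$ is a smooth map $TM \to M \times M$, and composing with the restriction-of-forms data makes $u \mapsto \exp(u)$ (interpreted as ``evaluate $\alpha,\beta$ at $\exp(u)$'' after the appropriate identification) a smooth bundle map; so $\Phi$ is a smooth bundle map in the sense of Proposition \ref{bundlemapsckbounded}.

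Next I would check that $\Phi$ is a submersion along $\F$ where $(u,\nabla u)$ is small. Working fibrewise over a point $p \in M$, the derivative of $\Phi$ in the $(\alpha,\beta)$ directions is simply the projection $\pi_{\bigwedge^n T^*M\oplus\bigwedge^2 T^*M}$ composed with evaluation at $\exp_p(u)$, which is surjective onto the fibre $\bigwedge^n T^*_{\exp_p(u)}M \oplus \bigwedge^2 T^*_{\exp_p(u)}M$ for $u$ small (it is the identity when $u=0$, and stays an isomorphism by continuity; the $(u,\nabla u,\alpha_0,\dots)$ directions only add to the rank). Hence $0$ is a regular value of $\Phi$ restricted to the open subset of $E$ where $(u,\nabla u)$ is small, and the regular value theorem (the fibrewise implicit function theorem, applied with parameter $p$) shows $\F \cap \{(u,\nabla u) \text{ small}\}$ is a smooth submanifold of $E$. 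The same fibrewise implicit function theorem presents it, locally in $p$, as a graph over the complementary directions, i.e. over $J^1 TM \oplus J^1 \bigwedge^n T^*M \oplus J^1 \bigwedge^2 T^*M$; patching these local graphs with the transition functions of the underlying bundles shows that $\F$ is itself a smooth fibre bundle over $M$, with fibre diffeomorphic to a neighbourhood of the origin in $J^1_p TM \oplus J^1_p \bigwedge^n T^*M \oplus J^1_p \bigwedge^2 T^*M$.

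The only point that requires a little care — and the one I would flag as the main obstacle — is the bookkeeping around the ``evaluate at $\exp(u)$'' identification: the forms $(\alpha,\beta)$ in the second factor live over the moving point $\exp_p(u)$, not over $p$, so strictly one works in the pullback bundle $\exp^*(\bigwedge^n T^*M \oplus \bigwedge^2 T^*M)$ over a neighbourhood of the zero section of $TM$, and the constraint says the fibre component of $(\alpha_0,\beta_0)$ over $p$ equals the parallel-transported (or simply the $\exp$-pulled-back) value of $(\alpha,\beta)$. Since $\exp$ is a diffeomorphism onto its image near the zero section and all the maps involved (parallel transport along geodesics, $\exp$, projections) are smooth bundle maps, this does not affect smoothness; it merely means the ``complementary directions'' in the graph description must be chosen consistently, which is where I would be most careful. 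Everything else — the transversality computation and the assembly into a bundle — is routine and follows from Proposition \ref{newjetbundles}, Proposition \ref{bundlemapsckbounded}, and the standard implicit function theorem with parameters.
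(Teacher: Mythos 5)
There is a genuine misreading here. In Definition \ref{defin:remaindermapG}'s setup, $\pi_{\bigwedge^n T^*M \oplus \bigwedge^2 T^*M}(\alpha,\beta)$ denotes the \emph{base point} of the element $(\alpha,\beta)$ of the bundle $\bigwedge^n T^*M \oplus \bigwedge^2 T^*M$, so the defining condition of $\F$ is an equation in $M$: it says only that $(\alpha,\beta)$ lives over the point $\exp_p(u)$ rather than over $p$ (this is exactly what is needed for $\exp_u^*\alpha$ to make sense in the map $G$). It imposes no relation whatsoever between the form values $(\alpha,\beta)$ and the jets $(\alpha_0,\nabla\alpha_0),(\beta_0,\nabla\beta_0)$. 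Your $\Phi$ subtracts two points of $M$ and is claimed to take values in a vector bundle, and your transversality computation "in the $(\alpha,\beta)$ directions" treats the constraint as an equation of forms; correspondingly your conclusion that the fibre of $\F$ is a neighbourhood of the origin in $J^1_pTM \oplus J^1_p\bigwedge^n T^*M \oplus J^1_p\bigwedge^2 T^*M$ is wrong — the actual fibre also contains the full vector space $\bigwedge^n T^*_{\exp_p(u)}M \oplus \bigwedge^2 T^*_{\exp_p(u)}M$ of free choices of $(\alpha,\beta)$, and if $(\alpha,\beta)$ were determined by the constraint the map $G$ would carry no information. A related slip is the claim that the Cartesian product $E$ is "equivalently the fibre product over $M$": it is not, and taking the fibre product would force $(\alpha,\beta)$ to be based at $p$, defeating the construction.

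The correct (and the paper's) argument is shorter: consider the map from the open subset of $E$ where $(u,\nabla u)$ is small to $M \times M$ sending an element to the pair $\bigl(\exp(u),\ \pi(\alpha,\beta)\bigr)$, where the second entry is the base point of $(\alpha,\beta)$. Each component is a submersion, so the pair is a submersion onto $M\times M$, and $\F$ is the preimage of the diagonal; this gives the submanifold statement, and projecting to the base point $p$ of the first factor exhibits $\F$ as a smooth bundle over $M$. Your general strategy (regular value theorem with parameters) is the right kind of tool, but it must be applied to this $M\times M$-valued map, not to a difference of forms.
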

This simply follows because this open subset of $\F$ is the preimage of the diagonal submanifold of $M \times M$ under an obvious submersion. 

We may now make
\begin{defin}
\label{defin:remaindermapG}
Let $G$ be the bundle map from $\F$ to $\bigwedge^n T^*_pM \oplus \bigwedge^2 T^*_pM$ given by
\begin{equation}
(((u, \nabla u), (\alpha_0, \nabla \alpha_0),  (\beta_0, \nabla \beta_0)), (\alpha, \beta)) \mapsto (\exp_u^* \alpha - d\iota_u \alpha_0, \exp_u^* \beta - d\iota_u \beta_0).
\end{equation}
\end{defin}
Note that $\exp_u^*$ does only depend on the first jet. Note further that isometrically embedding an incomplete manifold $M$ into $M'$ will enlarge $\F$, as more points $\exp_u(p)$ will be in $M'$. The extended map $G$ will of course depend on the metric on $M' \setminus M$. This idea will be needed below to deal with behaviour on the neck of a glued manifold. 

We may then prove
\begin{prop}
\label{basicgluingsmoothness}
The map $G$ is smooth. Moreover, it depends continuously on the metric on $M$ in the sense that  if $g_s$ is a finite-dimensional family of metrics, then $G$ depends continuously on $s$. Moreover, we have that for every $\epsilon$, $\delta_0$ and $p$ there exists $\delta$ depending on $p$, $\epsilon$ and $\delta_0$ such that if $\pi(u) = p$, $|s-s'| < \delta$, and $|(u, \nabla u)| < \delta_0$
\begin{equation}
\label{eq:newcomplicatedsctyest}
\begin{aligned}
&|G(s, (((u, \nabla u), (\alpha_0, \nabla \alpha_0),  (\beta_0, \nabla \beta_0)), (\alpha, \beta))) \\&- G (s', (((u, \nabla u), (\alpha_0, \nabla \alpha_0),  (\beta_0, \nabla \beta_0)), (\alpha, \beta)))| \\<& \epsilon|(u, \nabla u)| |(\alpha, \beta)|.
\end{aligned}
\end{equation}
In particular, if there is a metric $g_\infty$ such that $g_s \to g_\infty$ as $s \to \infty$, then $G$ converges to the corresponding $G_\infty$ as $s \to \infty$, and we have that for every $\epsilon$, $\delta_0$ and $p$ there exists $K$ such that if $\pi(u) = p$, $s > K$ and $|(u, \nabla u)| < \delta_0$, \eqref{eq:newcomplicatedsctyest} holds. 
\end{prop}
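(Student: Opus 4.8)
The plan is to present $G$ as a composite of elementary smooth bundle maps, to pin down exactly how it depends on the metric, and then to read off \eqref{eq:newcomplicatedsctyest} from joint smoothness together with the observation that the relevant difference vanishes whenever the degree-zero component $u(p)$ of the vector field vanishes.

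First, smoothness and dependence on $g$. The terms $d\iota_u\alpha_0$, $d\iota_u\beta_0$ are, at $p$, the alternations of $\iota_{\nabla u}\alpha_0+\iota_u\nabla\alpha_0$ and $\iota_{\nabla u}\beta_0+\iota_u\nabla\beta_0$; these are purely algebraic in the prescribed $1$-jets $(u,\nabla u)$, $(\alpha_0,\nabla\alpha_0)$, $(\beta_0,\nabla\beta_0)$ and do \emph{not} involve the metric at all (this is just the compatibility with contractions of the Levi--Civita connection used to trivialise the jet bundle in Proposition \ref{newjetbundles}, together with the metric-independence of $d$). On the other hand $\exp_u^*\alpha$ is an algebraic expression in $\alpha$, in the point $\exp_u(p)$, and in the linear map $(\mathrm d\exp_u)_p$, and the latter two are determined by $(u,\nabla u)$ at $p$ and the metric, depending smoothly on all of these because geodesics and their Jacobi fields solve an ODE whose coefficients are smooth in the metric and its first derivatives. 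Hence $G$ is a composite of smooth bundle maps, so it is smooth, and it depends on the metric \emph{only} through the two pullback terms. Restricting to the open part of $\F$ where $(u,\nabla u)$ is small, which by Lemma \ref{gluingpointwisespacesubmanifold} is a smooth bundle over $M$, and identifying this bundle for nearby metrics by parallel transport of the $\bigwedge^nT^*M\oplus\bigwedge^2T^*M$-factor back to $p$ along the relevant geodesic, we may regard $G$ as a map on a \emph{fixed} bundle depending smoothly on the metric; in particular, for a smooth finite-dimensional family $g_s$ it depends smoothly, hence continuously, on $s$.

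For the estimate, work on this fixed bundle. The difference $G(s,\cdot)-G(s',\cdot)$ involves only the pullback terms, hence is linear in $(\alpha,\beta)$ and independent of $(\alpha_0,\nabla\alpha_0,\beta_0,\nabla\beta_0)$; moreover it vanishes whenever $u(p)=0$, because then $\exp_u(p)=p$, the relevant geodesic is constant, and $(\mathrm d\exp_u)_p$ reduces to $\mathrm{id}+\nabla u(p)$, a map determined algebraically by the prescribed $1$-jet independently of which metric's exponential map is used, so the two pullbacks coincide. Now fix $p$ and $\delta_0$. On the compact slice of the fibre over $p$ given by $|(u,\nabla u)|\le\delta_0$ and $|(\alpha,\beta)|=1$, the mean value inequality applied along the segment obtained by scaling only the $u$-component from $0$ to $u$ (which stays in the slice, and along which the difference starts at $0$) gives $|G(s,x)-G(s',x)|\le|u|\,C(s,s',p,\delta_0)$, where $C$ is the supremum over the slice of the $u$-derivative of $G(s,\cdot)-G(s',\cdot)$; by homogeneity in $(\alpha,\beta)$ this becomes $|G(s,x)-G(s',x)|\le|u|\,|(\alpha,\beta)|\,C(s,s',p,\delta_0)$ for arbitrary $(\alpha,\beta)$. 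Since $G$ is jointly smooth in $(s,\mathrm{jet})$, the quantity $C(s,s',p,\delta_0)$ is continuous in $(s,s')$ and vanishes on $s=s'$, so for each $\epsilon$ there is $\delta=\delta(p,\epsilon,\delta_0)>0$ with $C<\epsilon$ when $|s-s'|<\delta$; as $|u|\le|(u,\nabla u)|$ this is \eqref{eq:newcomplicatedsctyest}. For the limit version, if $g_s\to g_\infty$ with all derivatives on the region of $M$ under consideration, the ODE coefficients converge with all derivatives, so $\exp^{g_s}$ and its first derivative — hence $G(s,\cdot)$ — converge locally uniformly to those for $g_\infty$, and the analogous quantity measuring the $u$-derivative of $G(s,\cdot)-G_\infty(\cdot)$ over the slice tends to $0$ as $s\to\infty$; choosing $K=K(p,\epsilon,\delta_0)$ beyond which it is $<\epsilon$ yields \eqref{eq:newcomplicatedsctyest} with $G_\infty$ in place of $G(s',\cdot)$.

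The main obstacle is the one place where care is really needed: the claim that $G(s,\cdot)-G(s',\cdot)$ vanishes on the whole set $\{u(p)=0\}$, that is, that the metric enters the pullback term only through the \emph{value} $u(p)$ and not through the full $1$-jet. This rests on the degeneration of the Jacobi-field description of $(\mathrm d\exp_v)_p$ at $v(p)=0$: along the constant geodesic the curvature term in the Jacobi equation drops out and one is left with $\mathrm{id}+\nabla_{\,\cdot\,}v(p)$, an expression that is algebraic in the prescribed jet. Verifying this carefully, and tracking the identification of $\F$ for different metrics that makes the subtraction $G(s)-G(s')$ literally meaningful, is the only non-routine part; everything else is smoothness of ODE flows in parameters, multilinear algebra, and compactness.
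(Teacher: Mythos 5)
Your proof is correct and follows essentially the same route as the paper's: isolate the pullback terms as the only metric-dependent part of $G$, describe $(\mathrm{d}\exp_u)_p$ via Jacobi fields along the geodesic with initial velocity $u$, and deduce smoothness and continuity in the metric from smooth dependence of ODE solutions on initial conditions and parameters, with the estimate \eqref{eq:newcomplicatedsctyest} obtained from the vanishing of the metric-dependence at the zero jet together with continuity of the $u$-derivative in $s$. Your sharper observation that the difference already vanishes on all of $\{u(p)=0\}$ (via the degeneration of the Jacobi equation along a constant geodesic) rather than only at $(u,\nabla u)=0$, which is all the paper uses, is correct but not needed, since the right-hand side of \eqref{eq:newcomplicatedsctyest} carries the full jet norm $|(u,\nabla u)|$.
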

\begin{proof}
The contraction and exterior derivative terms are obviously smooth, and are independent of the metric. The difficulty is the terms $\exp_u^* \alpha$ and $\exp_u^* \beta$. We extend the jet $(u, \nabla u)$ to a local field, so defining a local diffeomorphism. 

We begin by showing that the pushforward map $(\exp_u)_*$ from $T_pM$ to $T_{\exp_p(u)}M$ has these properties (viewed as a map on $J^1(TM) \oplus TM$); then we may dualise. Given $((u, \nabla u), v)$ at $p \in M$, by hypothesis we have a unique geodesic $\gamma$ with initial velocity $u$. Construct the Jacobi field $X$ along $\gamma$ with $X_0 = v$ and $(\nabla_\ddt X)_0 = \nabla_v u$. It is straightforward to see that the pushforward $(\exp_u)_* v$ is precisely the final value of this Jacobi field.

To prove that pushforward has the desired properties, therefore, we just have to show that this Jacobi field does. But locally, we are just solving a system of ordinary differential equations. The smoothness of the map then reduces to the fact that the solution to a system of ordinary differential equations depends smoothly on the initial conditions, which is well-known; see \cite[chapter 2, Corollaries 9 and 10]{arnoldodes}. Similarly, continuity in $s$ for a smooth family of metrics $g_s$ is simply continuity in the finite-dimensional parameter $s$, and again this is well-known. The analogue to \eqref{eq:newcomplicatedsctyest} follows by noting that the map giving the derivative of pushforward in $u$ is also continuous in $s$ and that the pushforward is independent of the metric if $(u, \nabla u) = 0$.

Dualising by choosing a smooth local trivialisation of $TM$ immediately gives the corresponding results for the pullback map from the relevant components of $\F$ to $T^* M$; the final result is then immediate. 
\end{proof}
\begin{rmk}
Similarly, in \cite[Definition 5.5]{joycecones3}, Joyce prepares for analysis of the nonlinear term by passing to work with finite-dimensional spaces. He goes on to give a more direct analysis of this term than we will; we will only show that we can bound things uniformly in $T$. 
\end{rmk}
We now argue directly using this. 
\begin{prop}
\label{speclaggluingremainderestimate}
Let $M_1$ and $M_2$ be a matching pair of asymptotically cylindrical Calabi--Yau manifolds and let $L_1$ and $L_2$ be a matching pair of asymptotically cylindrical special Lagrangian submanifolds. Let $L^T$ be the approximate gluing of $L_1$ and $L_2$ defined in Definition \ref{defin:newsubmanifoldapproxgluing}. Suppose Hypothesis \ref{hyp:ambientgluing} applies, so that $M^T$ is Calabi--Yau with a suitable Calabi--Yau structure $(\Omega^T, \omega^T)$. Then for any $k$ and $\mu$, for $T$ sufficiently large,  Condition \ref{slingcondition}(iii) holds with constants $C_3$ and $r$ independent of $T$. 
\end{prop}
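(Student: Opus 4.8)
The plan is to recognise the remainder map in Condition \ref{slingcondition}(iii) as a smooth map of Banach spaces whose derivative at the origin \emph{vanishes}, and then to read off the required quadratic estimate from the fundamental theorem of calculus; the entire point of the proof is to check that every constant produced this way is independent of $T$, and this is where Proposition \ref{basicgluingsmoothness} together with the uniform geometry of $(M^T, L^T)$ does the work. Write
\[
R^T : u \mapsto \big(\exp_u^* \Im \Omega^T|_{L^T} - d\iota_u \Im \Omega^T|_{L^T},\ \exp_u^* \omega^T|_{L^T} - d\iota_u \omega^T|_{L^T}\big),
\]
so that Condition \ref{slingcondition}(iii) asks exactly for $\|R^T(u)-R^T(v)\|_{C^{k,\mu}} \le C_3\|u-v\|_{C^{k+1,\mu}}(\|u\|_{C^{k+1,\mu}}+\|v\|_{C^{k+1,\mu}})$ whenever $\|u\|_{C^{k+1,\mu}},\|v\|_{C^{k+1,\mu}} < r$. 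After restriction to $L^T$, $R^T$ is the composition of jet prolongation $j^1$, the evaluation of the parallel forms $\Im\Omega^T$, $\omega^T$ together with their first jets at $p$ and at $\exp_p(u(p))$, the smooth bundle map $G$ of Definition \ref{defin:remaindermapG} built from the metric $g(\Omega^T,\omega^T)$, and the restriction of forms from $M^T$ to $L^T$. By Proposition \ref{basicgluingsmoothness}, $G$ is smooth; composing with $j^1$ and with the evaluation maps and invoking the standard composition-operator (``$\omega$-lemma'') argument — the same one via which Baier's theorem is used in subsection \ref{ssec:speclagdeformation} — shows $R^T$ is a smooth map from $C^{k+1,\mu}$ normal vector fields on $L^T$ to $C^{k,\mu}$ forms on $L^T$.

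Next I would observe that $D_0 R^T = 0$. Indeed the linearisation of $u \mapsto (\exp_u^*\Im\Omega^T|_{L^T}, \exp_u^*\omega^T|_{L^T})$ at $0$ is, by Cartan's magic formula, $u \mapsto (\L_u\Im\Omega^T|_{L^T}, \L_u\omega^T|_{L^T}) = (d\iota_u\Im\Omega^T|_{L^T}, d\iota_u\omega^T|_{L^T})$ since $\Im\Omega^T$ and $\omega^T$ are closed, and this is exactly the linear part subtracted off in $R^T$; equivalently, along the relevant section the $u$-derivative of $G$ at the origin of jet space is $u\mapsto(\iota_u d\Im\Omega^T, \iota_u d\omega^T) = 0$. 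Since $R^T$ is smooth, $w\mapsto D_w R^T$ is Lipschitz on the ball $\|w\|_{C^{k+1,\mu}}<r$ with constant controlled by a bound on $D^2 R^T$ over that ball, so $\|D_w R^T\|_{\mathrm{op}} \le C\|w\|_{C^{k+1,\mu}}$ there; the fundamental theorem of calculus then gives, for $\|u\|_{C^{k+1,\mu}},\|v\|_{C^{k+1,\mu}}<r$,
\[
\|R^T(u)-R^T(v)\|_{C^{k,\mu}} = \Big\|\int_0^1 D_{v+t(u-v)}R^T\,(u-v)\,dt\Big\|_{C^{k,\mu}} \le C\,\|u-v\|_{C^{k+1,\mu}}\big(\|u\|_{C^{k+1,\mu}}+\|v\|_{C^{k+1,\mu}}\big),
\]
which is Condition \ref{slingcondition}(iii) with $C_3=C$. (Equivalently, one may split $L^T$ into the compact parts $L_i^\cpt$ and the neck and compare $R^T$ region-by-region with the genuine remainder maps of $L_i$ in $M_i$ and of the cylindrical model, for which this quadratic estimate is the standard McLean estimate, the comparison errors being quadratic in $u,v$ and small for $T$ large.)

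It remains — and this is the crux — to see that $r$ and the $D^2 R^T$ bound, hence $C$, can be taken independent of $T$ for $T$ large. The metric $g(\Omega^T,\omega^T)$ has local geometry converging, on the compact parts, to that of $M_i$ and, on the neck, to the cylindrical metric (Proposition \ref{ambientmetric} and Corollary \ref{newgluedmetricsallthesame}); in particular its injectivity radius is bounded below uniformly in $T$, which fixes a uniform $r$ for which $\exp_u$ is defined and confines the relevant jet data to a fixed compact region of the domain of $G$. Over that region the derivatives of $G$ up to second order are bounded, and by the metric-dependence and $s\to\infty$ statements of Proposition \ref{basicgluingsmoothness} — in particular the estimate \eqref{eq:newcomplicatedsctyest} — these bounds are uniform in $T$. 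The forms $\Im\Omega^T$, $\omega^T$ have $C^k$ norms bounded uniformly in $T$ by Hypothesis \ref{hyp:ambientgluing} (via \eqref{eq:ambientnorm} and the evident uniform bounds on $\gamma_T(\Omega_1,\Omega_2)$, $\gamma_T(\omega_1,\omega_2)$), so their evaluations at $p$ and $\exp_p(u(p))$ stay in a fixed compact region; converting the pointwise bounds on $G$ into the Banach-space $D^2 R^T$ bound uses the $C^{k,\mu}$ form of Proposition \ref{bundlemapsckbounded} together with the fact that restriction of $C^{k,\mu}$ forms from $M^T$ to $L^T$ is bounded uniformly in $T$, which follows from Theorem \ref{localrestrictionthm} and the remark after it since the second fundamental form of $L^T$ in $M^T$ is $C^k$-bounded uniformly in $T$ (Corollary \ref{newgluedmetricsallthesame}). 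I expect the main obstacle to be precisely this bookkeeping on the neck, where $M^T$ is not a piece of $M_1$ or $M_2$: one must match the finite-dimensional, pointwise statements of Proposition \ref{basicgluingsmoothness} with the Banach-space estimate needed here, and make sure the convergence $g(\Omega^T,\omega^T)\to$ cylindrical is strong enough (with all derivatives) to render the second-order control of $G$, and hence $C_3$ and $r$, genuinely uniform in $T$.
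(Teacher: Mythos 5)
Your proposal is correct and follows essentially the same route as the paper: zero linearisation of the remainder at the origin, smoothness of the pointwise bundle map $G$ from Proposition \ref{basicgluingsmoothness}, and uniformity in $T$ obtained by comparing the glued Calabi--Yau structure with $(\Omega_i,\omega_i)$ on the compact pieces and with the cylindrical structure on the neck, together with the uniform second fundamental form bound of Corollary \ref{newgluedmetricsallthesame} to control the restriction to $L^T$. The ``bookkeeping on the neck'' you flag is resolved in the paper exactly as you anticipate: one pulls back to a fixed finite cylinder $(-2,2)\times N$ and treats the restricted structures as a two-parameter family in $(t,T)$ converging to the cylindrical one, so the constants are uniform in both parameters.
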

\begin{proof}
We may work locally around each point $p$ of $L^T$, and look for a pointwise estimate,
\begin{equation}
\begin{aligned}
&|G(((u, \nabla u), (\Im \Omega_p, \nabla \Im \Omega_p), (\omega_p, \nabla \omega_p)), (\Im \Omega_{\exp_u(p)}, \omega_{\exp_u(p)}))  \\&- G(((v, \nabla v), (\Im \Omega_p, \nabla \Im \Omega_p), (\omega_p, \nabla \omega_p)), (\Im \Omega_{\exp_v(p)}, \omega_{\exp_v(p)}))| \\<& C (|(u, \nabla u)| + |(v, \nabla v)|)|(u-v, \nabla u - \nabla v)|.
\end{aligned}
\end{equation}
As $G$ has zero linearisation around $u = 0$, and all of these maps are smooth in $p$ (including that the sections $\Omega$ and $\omega$ are),  some such local constant can be found for each $p$ when we restrict to $(u, \nabla u)$ in a ball around zero of radius $r_p$.  Since $M^T$ is compact, we can find $C$ and $r_p$ independently of $p$ for each $T$; it remains to prove that they are independent of $T$, as the difference between this and the remainder term we wish to bound is just the restriction to $L^T$, which we know is uniformly  bounded in $T$. 

We appeal to the continuity of $G$ and hence of these constants in the metric. Similarly, by differentiating $G$ we obtain an estimate for the derivatives, analogously to Proposition \ref{bundlemapsckbounded}, and using \eqref{eq:newcomplicatedsctyest} these derivatives are also continuous, so the estimates can again by chosen continuously. We will note when we do this that $(\Omega, \omega)$ also converge. The most obvious space of parameters to consider is just $T$ (large enough). Unfortunately, as $T \to \infty$ the metric on $M^T$ becomes increasingly singular and so there is no $g_\infty$ with which we may compare. Hence we need to be slightly more careful, and we work locally. 

We begin by choosing $T'_0 > T_0$, with $T_0$ from Hypothesis \ref{hyp:ambientgluing}, and considering $G$ on $M_1^{\tr T'_0} \subset M^T$. $M_1^{\tr T'_0}$ is incomplete, so we further consider it as a subspace of $M_1^{\tr (T'_0+1)}$ as indicated above to make sure $\F$ is large enough; $G$ is then well-defined on $M_1^{\tr T'_0}$ for $(u, \nabla u)$ small enough, uniformly in $t$. Now by Hypothesis \ref{hyp:ambientgluing}, the Calabi--Yau structure $(\Omega^T|_{M_1^{\tr (T'_0 +1)}}, \omega^T|_{M_1^{\tr (T'_0 + 1)}})$ converges to $(\Omega_1|_{M_1^{\tr (T'_0 + 1)}}, \omega_1|_{M_1^{\tr (T'_0 + 1)}})$ as $T \to \infty$; hence, the metric converges to the metric $g_1$ induced by $(\Omega_1, \omega_1)$.  $C$ and $r_p$ can now be bounded independently of $T$ on $M_1^{\tr T'_0}$, by first finding $C$ and $r_p$ for $g_1$ and using smoothness in the forms and the continuity part of Proposition \ref{basicgluingsmoothness}. Similar arguments apply for $M_2^{\tr T'_0}$. 

If $T < T'_0 -\frac12$, then $M_1^{\tr T'_0}$ and $M_2^{\tr T'_0}$ intersect and so we have covered all of $M^T$. Otherwise, it remains to consider the subset $(-T - 1 + T'_0, T+1 - T'_0) \times N$ of the neck of $M^T$. Since we want to work on a fixed manifold, we shall consider $(-1, 1) \times N$, and include this as the subset $(t - 1, t+1) \times N$ for $|t| < T - T'_0$. Again, the Calabi--Yau structures $(\Omega^T, \omega^T)|_{(t-1, t+1) \times N}$ define incomplete metrics, and so to make $\F$ large enough we extend to the corresponding $(t-2, t+2) \times N$, and then $G$ applied to sufficiently small tangent vectors over $(-1, 1) \times N$ depends continuously on the metric on $(-2, 2) \times N$. 

Now $(\Omega^T, \omega^T)|_{(-t-2, t+2) \times N}$ gives a family of Calabi--Yau structures on $(-2, 2) \times N$. This family can be parametrised by the pair $(t, T)$ in the set $\{T \geq T'_0 - \frac12, |t+1| < T - T'_0\}$. Clearly, for each $t$ as $T$ approaches infinity, we approach the cylindrical metric $\tilde g$ on $(-2, 2) \times N$ and the Calabi--Yau structure approaches the cylindrical Calabi--Yau structure. Thus by choosing $T'_0$ large enough we may consider the $g|_{(-t-2, t+2) \times N}$ as perturbations of $\tilde g$ and the forms as perturbations of the cylindrical forms. Hence, we can find $C$ and $r_p$ independent of both $t$ and $T$.

This provides the required constants uniformly in $T$.
\end{proof}
We may now prove Theorem A.
\begin{thm}[Theorem A]
\label{slperturbthm}
Let $M_1$ and $M_2$ be a matching pair of asymptotically cylindrical Calabi--Yau manifolds and let $L_1$ and $L_2$ be a matching pair of asymptotically cylindrical special Lagrangian submanifolds. Let $L^T$ be the approximate gluing of $L_1$ and $L_2$ defined in Definition \ref{defin:newsubmanifoldapproxgluing}. Suppose Hypothesis \ref{hyp:ambientgluing} applies, so that $M^T$ is Calabi--Yau with a suitable Calabi--Yau structure $(\Omega^T, \omega^T)$. Then for any $k$ and $\mu$, for $T$ sufficiently large,  Condition \ref{slingcondition} holds for the submanifold $L^T$ of $M^T$ so there is a normal vector field $v$ such that $\exp_v(L^T)$ is a special Lagrangian submanifold for $(\Omega^T, \omega^T)$. $v$ is smooth and decays exponentially with $T$, in the sense that there is $\epsilon>0$ and a sequence $C_k$ so that $\|v\|_{C^k} \leq C_k e^{-\epsilon T}$. 
\end{thm}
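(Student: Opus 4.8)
The plan is to verify, for $T$ large, that the glued submanifold $L^T$ of $M^T$ satisfies all four parts of Condition \ref{slingcondition} with the constants $C_1,C_2,C_3,r$ controlled in $T$, and then to quote Proposition \ref{carefulift}. Three facts about the $T$-dependence of the glued data are used throughout: by Proposition \ref{ambientimplications} the restrictions satisfy $[\Im\Omega^T|_{L^T}]=0=[\omega^T|_{L^T}]$ and $\|\Im\Omega^T|_{L^T}\|_{C^{k,\mu}}+\|\omega^T|_{L^T}\|_{C^{k,\mu}}\le C_ke^{-\epsilon T}$; by Corollary \ref{newgluedmetricsallthesame} the second fundamental form $\two$ of $L^T$ in $M^T$ is bounded in every $C^k$ uniformly in $T$, and $g^T|_{L^T}$ (the metric induced by $(\Omega^T,\omega^T)$) is uniformly close, with all derivatives, to the directly glued metric on $L^T$; and by Proposition \ref{ambientmetric} the induced metric, hence the induced almost complex structure $J^T$, has $C^{k,\mu}$ norm bounded uniformly in $T$. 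We fix $k$ and $\mu$; smoothness of the resulting $v$ and the estimates at all regularities come at the end.

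Parts (i), (iii), (iv) are the straightforward ones. Part (iii), the quadratic control of the remainder term with $C_3,r$ independent of $T$, is precisely Proposition \ref{speclaggluingremainderestimate}. For part (i), apply Lemma \ref{Jbehaves} at regularity $(k+1,\mu)$: since $\|\omega^T|_{L^T}\|_{C^{k+1,\mu}}\le C_{k+1}e^{-\epsilon T}\to 0$ while $\|J^T\|_{C^{k+1,\mu}}$ and the $C^{k+1}$ norm of $\two$ stay bounded uniformly in $T$, the hypotheses hold for $T$ large and $u\mapsto \iota_u\omega^T|_{L^T}$ is an isomorphism with $C_1$ bounded uniformly in $T$ (indeed $C_1\to 1$). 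Part (iv): the cohomology conditions are Proposition \ref{ambientimplications}(i), and the numerical smallness holds because $\|(\Im\Omega^T|_{L^T},\omega^T|_{L^T})\|_{C^{k,\mu}}$ is exponentially small in $T$ while $C_1,C_3,r$ are bounded and $C_2$, as shown next, grows at most polynomially in $T$; an exponentially small quantity eventually drops below $\min\{(8C_1^2C_2^2C_3)^{-1},\,r/(2C_1C_2)\}$.

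The main point is part (ii): an isomorphism bound for $u\mapsto(d\iota_u\omega^T|_{L^T},d\iota_u\Im\Omega^T|_{L^T})$ on orthoharmonic normal vector fields with $C_2$ controlled in $T$. Following the introduction, we reduce to $d+d^*$. Use part (i) to identify normal vector fields with $1$-forms via $\alpha=\iota_u\omega^T|_{L^T}$, an isomorphism with $C_1$-bounded norm in both directions; ``orthoharmonic'' means $\alpha$ is $L^2$-orthogonal to the harmonic $1$-forms of $g^T|_{L^T}$. Near each point of $L^T$ pick an auxiliary $SU(n)$ structure $(\Omega',\omega')$ (Definition \ref{defin:cystructurebundle}) for which $L^T$ is special Lagrangian; since $L^T$ is nearly special Lagrangian such a structure exists with $\|\Omega^T-\Omega'\|_{C^{k+1}}+\|\omega^T-\omega'\|_{C^{k+1}}\le C(\|\Im\Omega^T|_{L^T}\|_{C^{k+1}}+\|\omega^T|_{L^T}\|_{C^{k+1}})\le C_{k+1}e^{-\epsilon T}$. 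McLean's identity \cite[equation (3.7)]{mclean} gives $\iota_u\Im\Omega'|_{L^T}=*'(\iota_u\omega'|_{L^T})$, so the ($C^{k+1,\mu}$ version of) Proposition \ref{parallelgluingprop} — whose hypothesis on $\two$ is met uniformly in $T$ — yields, as maps from normal vector fields to forms on $L^T$,
\begin{equation*}
\left\|\left(u\mapsto\iota_u\Im\Omega^T|_{L^T}\right)-\left(u\mapsto *_T(\iota_u\omega^T|_{L^T})\right)\right\|_{C^{k+1,\mu}}\le C_{k+1}e^{-\epsilon T},
\end{equation*}
where $*_T$ is the Hodge star of $g^T|_{L^T}$ (the local estimates patch, the norms being suprema over $L^T$). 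Hence in the $\alpha$ variable the linearisation becomes $\alpha\mapsto(d\alpha,d*_T\alpha)+R_T$ with $\|R_T\alpha\|_{C^{k,\mu}}\le C_ke^{-\epsilon T}\|\alpha\|_{C^{k+1,\mu}}$ (applying $d$ costs a derivative, which is why we worked at level $k+1$), and $R_T\alpha$ is exact. Now $d*_T\alpha$ and $d^*\alpha$ differ by the uniformly bounded isomorphism $*_T$, so $\alpha\mapsto(d\alpha,d*_T\alpha)$ on orthoharmonic $1$-forms is, up to such factors, $\alpha\mapsto(d\alpha,d^*\alpha)$, an isomorphism onto the exact $2$-forms times the exact $n$-forms by Hodge theory on the closed manifold $L^T$; moreover, since $L^T$ is itself a gluing of $L_1$ and $L_2$ with parameter comparable to $T$ and metric uniformly close (all derivatives) to the directly glued one, the ``moreover'' clause of Theorem \ref{laplacelowerbound} bounds the inverse of this operator by a polynomial in $T$. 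As $R_T$ has operator norm $\le C_ke^{-\epsilon T}$, which for $T$ large is far smaller than the reciprocal of that polynomial, the perturbed map is still an isomorphism onto the exact $2$- and $n$-forms with inverse of polynomial-in-$T$ norm; transporting back through the $C_1$-bounded identification gives part (ii) with $C_2$ polynomial in $T$.

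With Condition \ref{slingcondition} verified for $T$ large, Proposition \ref{carefulift} produces a normal vector field $v$ on $L^T$ with $\exp_v(L^T)$ special Lagrangian for $(\Omega^T,\omega^T)$ and $\|v\|_{C^{k+1,\mu}}\le 2C_1C_2\|(\Im\Omega^T|_{L^T},\omega^T|_{L^T})\|_{C^{k,\mu}}$. Since $C_1$ is bounded, $C_2$ is polynomial in $T$, and the last factor is $\le C_ke^{-\epsilon T}$, we get $\|v\|_{C^{k+1,\mu}}\le C_k'e^{-\epsilon'T}$ for any fixed $\epsilon'<\epsilon$ and $T$ large. Running the argument at each $(k,\mu)$ gives a priori different vector fields, but by the uniqueness in the contraction mapping of Proposition \ref{carefulift} (the higher-regularity solution lies in the small ball used at lower regularity) they coincide, so the single $v$ satisfies $\|v\|_{C^k}\le C_ke^{-\epsilon'T}$ for all $k$; and $v$ is smooth because $\exp_v(L^T)$, being special Lagrangian, is minimal, hence smooth, and $v$ inherits its regularity. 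The main obstacle is part (ii): the uniform (polynomial-in-$T$) bound on the inverse of the linearisation, for which the decisive tools are Lemma \ref{Jbehaves}, Proposition \ref{parallelgluingprop} and Theorem \ref{laplacelowerbound}.
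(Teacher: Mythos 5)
Your overall architecture is the paper's: verify the four parts of Condition \ref{slingcondition} with $C_1,C_3,r$ uniform and $C_2$ polynomial in $T$, reduce part (ii) to $d+d^*$ by comparing $\iota_u\Im\Omega^T|_{L^T}$ with $*\,\iota_u\omega^T|_{L^T}$ via Proposition \ref{parallelgluingprop} and local reference $SU(n)$ structures making $L^T$ special Lagrangian, invoke Theorem \ref{laplacelowerbound} for the polynomial lower bound, and finish with Proposition \ref{carefulift}. The one substantive divergence is how you obtain the local reference structures $(\Omega',\omega')$. The paper constructs them explicitly from the gluing data: on $M_1^{\mathrm{tr}(T-2)}$ and $M_2^{\mathrm{tr}(T-2)}$ it takes the original Calabi--Yau structures $(\Omega_i,\omega_i)$ (for which $L_i$ is genuinely special Lagrangian), and on the neck it pulls back the cylindrical structure $(\tilde\Omega,\tilde\omega)$ under the inverse of $\exp_{v_T}$, where $v_T$ is the exponentially small normal field carrying $K\times(T-4,T+4)$ to $L^T$; exponential closeness to $(\Omega^T,\omega^T)$ then comes from Hypothesis \ref{hyp:ambientgluing} and the decay of $v_T$. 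You instead assert that near-special-Lagrangian-ness alone produces such a structure with $\|\Omega^T-\Omega'\|+\|\omega^T-\omega'\|\lesssim\|\Im\Omega^T|_{L^T}\|+\|\omega^T|_{L^T}\|$. That is essentially the paper's later Lemma \ref{newsunpointwise}/Proposition \ref{newsunconstruction} (used for Theorem A1), which the paper deliberately does \emph{not} rely on here; as stated there it also requires control of $\|\Re\Omega^T|_{L^T}-\vol_{L^T}\|$, which your bound omits and which you would need to supply (it does hold for the glued $L^T$, again from the explicit construction, but that is a separate check). So your route is legitimate and arguably cleaner, but the key existence-with-estimate step is asserted rather than proved, and is exactly where the paper spends most of its effort.

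Two smaller bookkeeping points. First, Proposition \ref{parallelgluingprop} gives a $C^k$ bound on the difference of the tensors from a $C^k$ bound on the difference of structures; to convert this into an operator bound from $C^{k+1,\mu}$ vector fields to $C^{k+1,\mu}$ forms (before applying $d$) the paper asks for $C^{k+2}$ closeness of the structures and uses function-linearity to pass from a tensor-norm bound to a Hölder operator bound — your ``the local estimates patch'' glosses over this, and working only at $C^{k+1}$ closeness is one derivative short. Second, your final uniqueness argument for identifying the solutions produced at different regularities is a reasonable substitute for the paper's remark that $v$ is smooth and the decay rate is independent of $k$, so the $C^k$ estimates extend to all $T$ above a $k$-independent threshold; both are fine.
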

\begin{proof}
Proposition \ref{ambientimplications} gives that $\|\omega^T|_{L^T}\|$ decays exponentially. Lemma \ref{Jbehaves} gives precisely Condition \ref{slingcondition}(i), with constant $C_1$ depending on how small $\omega^T|_{L^T}$ is, provided it is small enough. Hence, Condition \ref{slingcondition}(i)  holds for $T$ sufficiently large and $C_1$ can be taken uniform in $T$. Proposition \ref{speclaggluingremainderestimate} says that Condition \ref{slingcondition}(iii) holds for $T$ sufficiently large with $C_3$ bounded independently of $T$. It remains to show that Condition \ref{slingcondition}(ii) and (iv) hold. We shall first show that (ii) holds and that $C_2$ grows at most polynomially in $T$. Proposition \ref{ambientimplications} will then precisely give (iv). That is, we will have proved that Condition \ref{slingcondition} holds for $L^T$, so by Proposition \ref{carefulift} there is such a $v$ perturbing $L^T$ to a special Lagrangian. The norm of $v$ is controlled again by Proposition \ref{carefulift}: since $C_1$, $C_2$ and $C_3$ grow at most polynomially, and Proposition \ref{ambientimplications} says that $\|\Im \Omega^T|_{L^T}\| + \|\omega^T|_{L^T}\|$ decays exponentially in $T$, this norm decays exponentially in $T$. 

To prove Condition \ref{slingcondition}(ii), we consider the two linear maps 
\begin{equation}
\label{eq:linearisation}
v \mapsto (d\iota_v \omega^T|_{L^T}, d\iota_v \Im \Omega^T|_{L^T})
\end{equation}
and 
\begin{equation}
\label{eq:sllaplacian}
v \mapsto (d\iota_v \omega^T|_{L^T}, d* \iota_v \omega^T|_{L^T}).
\end{equation}
Corollary \ref{newgluedmetricsallthesame} says that the metric of $(\Omega^T, \omega^T)$ is close to the glued metric. Applying Theorem \ref{laplacelowerbound} on a lower bound for $d+d^*$ and Condition \ref{slingcondition}(i), we see \eqref{eq:sllaplacian} is an isomorphism with a lower bound of the form 
\begin{equation}
\label{eq:polynomiallowerbound}
\|u\|_{C^{k+1, \mu}} \leq CT^r ( \|d\iota_u \omega^T|_{L^T}\|_{C^{k, \mu}} + \| d* \iota_v \omega^T|_{L^T}\|_{C^{k, \mu}}).
\end{equation}
We shall apply Proposition \ref{parallelgluingprop} to show that the difference of \eqref{eq:linearisation} and \eqref{eq:sllaplacian} is exponentially small in $T$. It will follow by openness of isomorphisms of Banach spaces that \eqref{eq:linearisation} also has a lower bound of the form \eqref{eq:polynomiallowerbound}, which is Condition \ref{slingcondition}(ii) with $C_2$ growing at most polynomially in $T$. 

\eqref{eq:linearisation} and \eqref{eq:sllaplacian} are the composites with the exterior derivative of
\begin{equation}
\label{eq:predlinearisation}
v \mapsto (\iota_v \omega^T|_{L^T}, \iota_v \Im \Omega^T|_{L^T})
\end{equation}
and 
\begin{equation}
\label{eq:predsllaplacian}
v \mapsto (\iota_v \omega^T|_{L^T}, *\iota_v \omega^T|_{L^T}).
\end{equation}
To show that \eqref{eq:linearisation} and \eqref{eq:sllaplacian} are exponentially close as maps from $C^{k+1, \mu}$ normal vector fields to $C^{k, \mu}$ forms, therefore, it suffices to show that \eqref{eq:predlinearisation} and \eqref{eq:predsllaplacian} are exponentially close as maps from $C^{k+1, \mu}$ normal vector fields to $C^{k+1, \mu}$ forms, and this is an application of Proposition \ref{parallelgluingprop}.

Proposition \ref{parallelgluingprop} is essentially a local result. We pick some fixed $A(T)$ decaying exponentially in $T$, and have to show around every point $p$ of $L^T$ we can find an open neighbourhood $U \cap L^T$ of $p$ and an $SU(n)$ structure $(\Omega_U, \omega_U)$ around $U \cap L^T$ with respect to which $U \cap L^T$ is special Lagrangian and so that 
\begin{equation}
\|\Omega_U - \Omega^T\|_{C^{k+2}(U \cap L^T)} + \|\omega_U - \omega^T\|_{C^{k+2}(U \cap L^T)} \leq A(T).
\end{equation}

Proposition \ref{parallelgluingprop} would then imply that, as maps from $C^{k+2}$ to $C^{k+2}$, \eqref{eq:predlinearisation} and \eqref{eq:predsllaplacian} differ by $A(T) B$ for some fixed constant $B$. Note that $B$ may depend on the second fundamental form of $L^T$ in $M^T$, but since we know that the second fundamental form is bounded uniformly in $T$ by Corollary \ref{newgluedmetricsallthesame}, $A(T) B$ also decays exponentially in $T$. 

On the other hand, \eqref{eq:predlinearisation} and \eqref{eq:predsllaplacian} are function-linear and so are given by tensors. Thus, this bound is a bound on the difference of the tensors, and so maps also differ by at most $A(T)B$ as maps from $C^{k+1, \mu}$ normal vector fields to $C^{k+1, \mu}$ forms. As $A(T) B$ is exponentially decaying, we would obtain that \eqref{eq:predlinearisation} and \eqref{eq:predsllaplacian} are exponentially close, and so that Condition \ref{slingcondition}(ii) holds with $C_2$ growing at most polynomially.

We shall now find $(\Omega_U, \omega_U)$. In order to do this, we will work with three open subsets $U_1$, $U_2$ and $U_3$ forming an open cover of $M^T$, so that $\{U_i \cap L^T\}$ form an open cover of $L^T$. For each $U_i$ we may find $A_i(T)$ decaying exponentially, and by taking the maximum we obtain $A(T)$ decaying exponentially. 

Specifically, let $U_1 = M_1^{\tr (T-2)} \subset M^T$, $U_2 = M_2^{\tr (T-2)} \subset M^T$, and $U_3$ be the subset $(-3, 3) \times N$ of the neck. 

We first deal with $U_i$ for $i=1, 2$. Here, we have by construction that 
\begin{equation}
\gamma_T(\Omega_1, \Omega_2) = \Omega_i, \qquad \gamma_T(\omega_1, \omega_2) = \omega_i,
\end{equation}
and for $T$ sufficiently large $L^T \cap U_i = L_i \cap U_i$. Consequently, $(\Omega_i, \omega_i)$ defines an $SU(n)$ structure around $U_i  \cap L^T$ for which $U_i \cap L^T$ is special Lagrangian. By Hypothesis \ref{hyp:ambientgluing}, $(\Omega_i, \omega_i) = (\gamma_T(\Omega_1, \Omega_2), \gamma_T(\omega_1, \omega_2))$ is exponentially close to $(\Omega_T, \omega_T)$ on $U_i \cap L^T$, so we are done. 

It remains to find an $SU(n)$ structure around $U_3 \cap L^T$ with the desired properties. We know that $U_3 \cap L^T$ is the intersection with $U_3$ of the image under an exponentially small normal vector field of $K \times (T-4, T+4)$ with respect to the asymptotically cylindrical metrics. Note that where these metrics are not defined or do not agree, the vector field is zero, so this is a well-defined notion. Since the glued metric is exponentially close to the asymptotically cylindrical metrics, this is still the case for the glued metric. That is, there is some normal vector field $v_T$, so that $L^T \cap U_3 = \exp_{v_T}(K \times (T-4, T+4)) \cap U_3$ and $v_T$ decays exponentially in $T$. 

Note that $K \times (T-4, T+4)$ is special Lagrangian with respect to the cylindrical $SU(n)$ structure $(\tilde \Omega, \tilde \omega)$. By construction, we have that $(\gamma_T(\Omega_1, \Omega_2), \gamma_T(\omega_1, \omega_2))$ is exponentially close to $(\tilde \Omega, \tilde \omega)$ on $U_3$ and by Hypothesis \ref{hyp:ambientgluing} again we consequently have that $(\tilde \Omega, \tilde \omega)$ is exponentially close to $(\Omega^T, \omega^T)$ on $U_3$. 

We may extend $v_T$ to a tubular neighbourhood $V_1$ of $K \times (T-4, T+4)$, containing $N \times (T-4, T+4) \cap L^T$, which $\exp_{v_T}$ maps locally diffeomorphically to a tubular neighbourhood $V_2$ of $L^T \cap U_3$. We may choose this extension, which we shall also call $v_T$, to also decay exponentially in $T$. Let $F$ be the inverse of the diffeomorphism $\exp_{v_T}: V_1 \to V_2$. Let  our $SU(n)$ structure $(\Omega_{U_3}, \omega_{U_3})$ around $L^T \cap U_3$ be $F^*(\tilde \Omega, \tilde \omega)$ (which we think of as defined on $V_1$ for simplicity). Since $v_T$ is exponentially small, and $\tilde \Omega$ and $\tilde \omega$ are bounded with their derivatives independently of $T$, 
\begin{equation}
(\Omega_{U_3}, \omega_{U_3}) - (\tilde \Omega, \tilde \omega)
\end{equation}
is exponentially small, and so too is 
\begin{equation}
\label{eq:cylindricalisedlessproper}
(\Omega_{U_3}, \omega_{U_3}) - (\Omega^T, \omega^T)
\end{equation}
on $V_1 \cap V_2$. In particular, \eqref{eq:cylindricalisedlessproper} is exponentially small on $L^T \cap U_3$. However, it is easy to see
\begin{equation}
\begin{aligned}
&\left.F^*((dt + id\theta) \wedge \Omega_\xs, dt\wedge d\theta + \omega_\xs)\right|_{\exp_v(K \times (T-4, T+4))} \\=& \left.((dt + id\theta) \wedge \Omega_\xs, dt\wedge d\theta + \omega_\xs)\right|_{(K \times (T-4, T+4))} = (\vol, 0).
\end{aligned}
\end{equation}
That is, $L^T \cap U_3$ is special Lagrangian with respect to $(\Omega_{U_3}, \omega_{U_3})$. That is, $(\Omega_{U_3}, \omega_{U_3})$ has the desired properties. This completes the proof of Condition \ref{slingcondition} (ii). 

$v$ is smooth since the special Lagrangian is smooth and it is a normal vector field between smooth submanifolds, as at the beginning of subsection \ref{ssec:speclagdeformation}. For the exponential decay of all the $C^{k, \mu}$ norms at a fixed rate, we note that how large $T$ needs to be in this argument depends on $k$ and $\mu$. However, for all $k$ and $\mu$ we have $\|v\|_{C^{k+1, \mu}}$ decaying exponentially in $T$ for $T$ large enough; moreover, with the same rate. Since $v$ is smooth, it follows that we can extend this to smaller $T$. 
\end{proof}
This is the gluing theorem for special Lagrangians. It follows from the proof of Proposition \ref{carefulift} that there is more than one possible special Lagrangian perturbation of $L^T$, and the family of perturbations corresponds to the harmonic normal vector fields on $L^T$. In particular, if we vary the harmonic normal vector field we can construct a whole local deformation space to our glued special Lagrangian; the estimate in Proposition \ref{carefulift} follows from choosing the normal vector field $v$ so that its corresponding one-form is $L^2$-orthogonal to harmonic forms. 
\begin{rmk}
We may compare this with the similar analysis in Joyce\cite{joycecones3} and Pacini\cite{pacini}. Both of these construct a submanifold that is close to special Lagrangian and then argue that it may be deformed. However, in both those cases, careful application of the Lagrangian neighbourhood theorem is used to ensure that the initial submanifold corresponding to $L^T$ is itself Lagrangian. If we defined $L^T$ with similar care we could presumably obtain that $\gamma_T(\omega_1, \omega_2)|_{L^T} = 0$, but as we have had to introduce a perturbation to $\omega$ to obtain a Calabi--Yau structure, we cannot obtain that $\omega^T|_{L^T} = 0$. 

If $L^T$ is Lagrangian, then we may again apply the Lagrangian neighbourhood theorem to infer that the one-forms corresponding to Lagrangian deformations of it under the specialised isomorphism of Condition \ref{slingcondition}(i) are closed. The assumption that they are orthoharmonic thus becomes that they are exact, and this rather simplifies the analysis. 
\end{rmk}

In section \ref{sec:opennessthm}, we will analyse a gluing map to show that it defines a local diffeomorphism on deformations of special Lagrangians. This means we need one single gluing map defined on nearly special Lagrangian submanifolds $L$, and we need to make a uniform choice of this harmonic normal field. Thus we make the following definition
\begin{defin}
\label{defin:slingmap}
The map $\SLing$ is defined from submanifolds of $M^T$ satisfying Condition \ref{slingcondition} to special Lagrangian submanifolds of $M^T$ by, given a submanifold $L$, finding a normal vector field $v$ to $L$ such that the corresponding one-form $\iota_v \omega^T|_L$ is $L^2$-orthogonal to harmonic forms as in Proposition \ref{carefulift} and letting $\SLing(L) = \exp_v(L)$. 
\end{defin}
Theorem \ref{slperturbthm} can then be interpreted as saying that the domain of $\SLing$ contains all approximate gluings of asymptotically cylindrical special Lagrangians for sufficiently large $T$.  
\subsection{Generalisations}
In this subsection, we explain that the domain of $\SLing$, or equivalently the set of submanifolds $L$ satisfying Condition \ref{slingcondition}, is larger than merely the patchings of section \ref{sec:constructingapproxspeclag}. We first observe that this is an open subset of submanifolds; then, in Theorem \ref{thm:finalslperturbthm} we prove Theorem A1: any ``nearly special Lagrangian" in the sense that $\omega|_L$ and $\Im \Omega|_L$ are exact and sufficiently small (unfortunately depending on $L$) can also be perturbed to be special Lagrangian. The work of this subsection is not required for the remainder of the paper, and so our treatment is somewhat brief. 

We begin with the openness result.
\begin{prop}
Condition \ref{slingcondition} is open in submanifolds with the $C^\infty$ topology, so that the domain of $\SLing$ is an open set. 
\end{prop}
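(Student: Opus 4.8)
The plan is to identify submanifolds near $L$ with small normal vector fields on $L$ and then observe that every quantity entering Condition \ref{slingcondition} depends continuously on such a vector field, so that the open and strict parts of the Condition survive a small perturbation. Concretely, as at the start of subsection \ref{ssec:speclagdeformation}, any submanifold $L'$ that is $C^1$-close to $L$ is of the form $\exp_w(L)$ for a normal vector field $w$ on $L$, with $w$ small in $C^N$ when $L'$ is close in $C^N$; since the topology is $C^\infty$ it suffices to show Condition \ref{slingcondition} persists (with the \emph{same} $k$ and $\mu$) for $w$ small in a sufficiently high $C^N$. Pulling back all data on $L'$ by the diffeomorphism $\exp_w\colon L\to L'$ --- the restricted metric and its connection, the second fundamental form of $L'$ in $M$, the restrictions $\omega|_{L'}$ and $\Im\Omega|_{L'}$, the induced almost complex structure along $L'$, and the map $v\mapsto\iota_v\omega|_{L'}$ --- I would first record that these vary continuously with $w$ in the relevant $C^{k,\mu}$, $C^{k}$ and $C^{k+1,\mu}$ norms and agree with the data on $L$ at $w=0$; this is the same smoothness used throughout (Baier \cite[Theorem 2.2.15]{baier}, Proposition \ref{basicgluingsmoothness}).

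Granting this, most of the steps are short. For part (iv): the cohomology conditions are preserved because $\iota_{L'}$ is homotopic to $\iota_L$ in $M$ (via $s\mapsto\exp_{sw}$), so $[\omega|_{L'}]=0$ and $[\Im\Omega|_{L'}]=0$; and the norm inequality is non-strict with right-hand side $\min\{\tfrac{1}{8C_1^2C_2^2C_3},\tfrac{r}{2C_1C_2}\}$ built from the constants in (i)--(iii), so it survives once its left-hand side (which moves continuously) and those constants for $L'$ are close to those for $L$. For part (i): Lemma \ref{Jbehaves} gives the isomorphism and the two-sided bound, with constants controlled by the $C^{k+1,\mu}$ norm of the almost complex structure, the $C^k$ norm of the second fundamental form, and the smallness of $\|\omega|_{L'}\|_{C^{k+1,\mu}}$; since $L$ satisfies the hypotheses of that Lemma strictly, so does $L'$ for $w$ small, with $C_1'$ close to $C_1$. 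For part (iii): here $M$, hence the bundle map $G$ of Definition \ref{defin:remaindermapG}, is fixed, $G$ has vanishing linearisation along the zero section, and $L'$ is compact and close to $L$, so by smoothness the pointwise estimate holds with $C_3',r'$ locally uniform in $L'$ --- this is the argument of Proposition \ref{speclaggluingremainderestimate} stripped of its $T$-dependence.

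The step that needs care, and which I expect to be the main obstacle, is part (ii). Pulling back by $\exp_w$, the linear map $v\mapsto(d\iota_v\omega|_{L'},d\iota_v\Im\Omega|_{L'})$ becomes a bounded operator $T_w$ from the space of $C^{k+1,\mu}$ normal vector fields on $L$ that are orthoharmonic for the pullback of $g_{L'}$ to the closed subspace of exact $C^{k,\mu}$ $2$- and $n$-forms on $L$. The target is a \emph{fixed} Banach space (exactness is metric-independent), but the source varies with $w$ --- although only through the finite-rank $L^2(g_{L'})$-projection onto harmonic $1$-forms and the isomorphism of (i), both continuous in $w$, and it always has codimension $b^1(L)$. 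I would therefore take as model space the $g_L$-orthoharmonic normal vector fields, check that for $w$ small the $g_L$-orthoharmonic projection restricts to a Banach isomorphism of the $g_{L'}$-orthoharmonic subspace onto this model, and thereby present $\{T_w\}$ as a continuously varying family of operators between two fixed Banach spaces whose value at $w=0$ is the isomorphism of Condition \ref{slingcondition}(ii), with inverse bounded by $C_2$. Since the invertible operators form an open set and inversion is continuous there, $T_w$ is invertible with $\|T_w^{-1}\|$ close to $C_2$ for $w$ small, giving (ii) for $L'$. Assembling (i)--(iv) for every $L'$ sufficiently $C^\infty$-close to $L$ shows Condition \ref{slingcondition}, and hence the domain of $\SLing$, is open; the only real work is the bookkeeping in (ii) needed to express the varying orthoharmonic subspaces as a single Banach space so that openness of isomorphisms applies.
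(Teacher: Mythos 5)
Your proposal is correct and is essentially a detailed elaboration of the paper's own (one-sentence) argument, namely that all four parts of Condition \ref{slingcondition} depend continuously on the submanifold, so the set where they hold is open. The extra care you take with part (ii) --- presenting the varying orthoharmonic subspaces over a fixed model space so that openness of Banach-space isomorphisms applies --- is exactly the mechanism the paper itself invokes elsewhere (in the proof of Theorem \ref{slperturbthm}) and is the right way to make the continuity claim precise.
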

This is essentially immediate: the four conditions all depend continuously on the structure, and so the set where they are satisfied is open. We now turn to a more direct generalisation. We note first of all that the proof of Theorem \ref{slperturbthm} gives the following statement.
\begin{prop}
\label{slperturbgeneralised}
Suppose $M$ is a $2n$-dimensional Calabi--Yau manifold with Calabi--Yau structure $(\Omega, \omega)$ and that $L$ is a closed submanifold. Let $k$ be a positive integer, and $A$ be a positive constant. Suppose that around each point $p$ of $L$ there is a local $SU(n)$ structure $(\Omega'_p, \omega'_p)$ around a neighbourhood of $p$ in $L$, in the sense of Definition \ref{defin:cystructurebundle}, with 
\begin{equation}
\|\Omega'_p - \Omega\|_{C^{k+2}}+  \|\omega'_p - \omega\|_{C^{k+2}} \leq A,
\end{equation}
and so that a neighbourhood of $p$ in $L$ is a special Lagrangian with respect to $(\Omega'_p, \omega'_p)$. 

If $A$ is sufficiently small, then for any $\mu \in (0, 1)$ Condition \ref{slingcondition} (ii)  holds for $k$ and $\mu$. We note that (iii) holds automatically with some constants $C_3$ and $r$. Hence, if  $\|\Im \Omega|_L\|_{C^{k+1}} + \|\omega|_L\|_{C^{k+1}}$ is also sufficiently small depending on $A$, the second fundamental form of $L$ in $M$, the inverse Laplacian bound, $r$, and $C_3$, and also $[\Im \Omega|_L] = 0 = [\omega|_L]$ then (iv) holds and $\SLing(L)$ exists.
\end{prop}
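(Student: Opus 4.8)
The plan is to verify, one at a time, that the hypotheses of the lemma imply each clause of Condition \ref{slingcondition}, mostly by invoking the machinery already in place. The essential point is that the hypotheses here abstract exactly what was used on the neck and compact pieces in the proof of Theorem \ref{slperturbthm}: a local $SU(n)$ structure, close to the true one, for which $L$ is special Lagrangian.

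First I would note that Condition \ref{slingcondition}(i) is nearly immediate from Lemma \ref{Jbehaves}: since $\|\omega|_L\|_{C^{k+1}} \leq \|\Im\Omega|_L\|_{C^{k+1}} + \|\omega|_L\|_{C^{k+1}}$ is assumed small, and the $C^{k,\mu}$ norms of $J$ and of the second fundamental form are fixed data, the smallness hypothesis gives \eqref{eq:Jbehavesbounds} and hence that $u \mapsto \iota_u\omega|_L$ is an isomorphism with a controlled constant $C_1$. (Here one uses that small $C^{k+1}$ norm, together with a standard interpolation/regularity remark, controls the $C^{k,\mu}$ norm, or one simply runs Lemma \ref{Jbehaves} at regularity $k,\mu$ from the outset; either way this is routine.) For Condition \ref{slingcondition}(iii), I would appeal to Proposition \ref{basicgluingsmoothness}: the remainder map $G$ is a smooth bundle map with vanishing linearisation at $u=0$, and $M$ is compact, so by the same Taylor-estimate argument as in Proposition \ref{speclaggluingremainderestimate} there exist $C_3$ and $r$ depending only on $M$, $L$, and the inclusion for which the quadratic estimate holds — no uniformity in a parameter is needed here since there is only one manifold. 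This is the content of the parenthetical remark ``(iii) holds automatically''.

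The substantive step is Condition \ref{slingcondition}(ii), and this is where I would spend the effort. I would compare the two linear maps $v \mapsto (d\iota_v\omega|_L, d\iota_v\Im\Omega|_L)$ and $v \mapsto (d\iota_v\omega|_L, d*\iota_v\omega|_L)$, exactly as in the proof of Theorem \ref{slperturbthm}. The second map is, after the isomorphism of Condition \ref{slingcondition}(i), conjugate to $d + d^*$ on one-forms, which by Hodge theory is an isomorphism from orthoharmonic data to exact forms, with some fixed bound $C_2'$ depending on $L$ with its induced metric. The difference between the two maps is controlled by Proposition \ref{parallelgluingprop}: around each $p$ we have the local $SU(n)$ structure $(\Omega'_p, \omega'_p)$ with $L$ special Lagrangian, so \eqref{eq:predlinearisation}-type maps agree with their $*$-counterparts up to an error controlled by $CA$, where $C$ depends on the second fundamental form bound. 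Since these are function-linear (given by tensors), the local estimates patch to a global one, and if $A$ is small enough relative to $C_2'$ and the second fundamental form, openness of isomorphisms of Banach spaces gives that the first map is also an isomorphism with a bound $C_2$ of roughly the same size. The main obstacle, as in Theorem \ref{slperturbthm}, is precisely this passage from ``pointwise/local closeness of the two bundle maps'' to ``closeness as operators on the global Hölder spaces'', which relies on the function-linearity observation plus the uniform second fundamental form bound; here it is cleaner than before because there is no $T$ to track.

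Finally, Condition \ref{slingcondition}(iv) is exactly the remaining smallness hypothesis: having fixed $C_1, C_2, C_3, r$ above (all depending only on $A$, the second fundamental form, the inverse-Laplacian bound, and the inclusion), the requirement $\|(\Im\Omega|_L, \omega|_L)\|_{C^{k,\mu}} \leq \min\{\frac{1}{8C_1^2 C_2^2 C_3}, \frac{r}{2C_1 C_2}\}$ is met provided $\|\Im\Omega|_L\|_{C^{k+1}} + \|\omega|_L\|_{C^{k+1}}$ is small enough in terms of these constants, and the cohomology vanishing $[\Im\Omega|_L] = 0 = [\omega|_L]$ is assumed outright. Thus Condition \ref{slingcondition} holds, and Proposition \ref{carefulift} produces the normal vector field $v$ with $\exp_v(L)$ special Lagrangian; equivalently $\SLing(L)$ is defined. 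I would conclude by remarking that Theorem \ref{thm:finalslperturbthm} follows by taking the local $SU(n)$ structures to be (restrictions of) a single fixed Calabi--Yau structure near $L$ — so $A$ can be taken as small as desired by shrinking the neighbourhood — reducing the hypotheses of Proposition \ref{slperturbgeneralised} to ``$\Im\Omega|_L$ and $\omega|_L$ exact and sufficiently small''.
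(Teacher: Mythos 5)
Your proof is correct and takes essentially the same route as the paper, which derives this proposition directly from the proof of Theorem \ref{slperturbthm}: you reconstruct that argument clause by clause, using Lemma \ref{Jbehaves} for (i), the comparison of the linearisation with $d+d^*$ via Proposition \ref{parallelgluingprop} and the hypothesised local $SU(n)$ structures (plus the function-linearity patching) for (ii), the compactness/Taylor argument for (iii), and the smallness hypothesis for (iv), before invoking Proposition \ref{carefulift}. The only inaccuracy is your closing aside about Theorem \ref{thm:finalslperturbthm}: the local structures cannot be restrictions of the ambient Calabi--Yau structure, since $L$ is not special Lagrangian for it; the paper instead builds them by the pointwise perturbation of Proposition \ref{newsunconstruction} --- but that remark lies outside the proof of the proposition itself.
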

\begin{rmk}
When $k=0$ and restricting to the Lagrangian case, Joyce \cite[Proposition 5.8]{joycecones3} gave a rather more direct estimate of the constant corresponding to $C_3$ in (iii). Fundamentally, that argument shows that for any given $r$, estimating $C_3$ rests on estimating the derivatives of the nonlinear map $v \to (\exp_v^* \Im \Omega|_L, \exp_v^* \omega|_L)$; that is, by the chain rule, linearity of restriction to $L$, and Theorem \ref{localrestrictionthm}, it rests on the derivatives of $v \to (\exp_v^* \Im \Omega|_L, \exp_v^* \omega|_L)$ and the second fundamental form. Since $\Im \Omega$ and $\omega$ are parallel and of fixed size, estimating these derivatives depends entirely on estimating the pullback map $J^1(\nu_L) \to \bigwedge^n T^*M \oplus \bigwedge^2 T^*M|_L$, and its derivatives; $r$ controls exactly how large a ball in $J^1(\nu_L)$ we admit. Based on the Rauch comparison result (Proposition \ref{rauchcomparison}) compared with the identification of pushforward with Jacobi fields in Proposition \ref{basicgluingsmoothness}, it seems plausible that this should only depend on the curvature of $M$ and its derivatives. In any case, it should be possible to choose $r$ and then estimate these derivatives independently of $L$, by extending to work with the corresponding map $J^1(TM) \to \bigwedge^n T^*M \oplus \bigwedge^2 T^*M$. 

This estimate on the derivatives of pullback corresponds roughly to (iii) of \cite[Theorem 5.3]{joycecones3}, which is an estimate on certain adapted derivatives of $\Im \Omega$ considered as a form on $T^*L$, under the identification given by the Lagrangian neighbourhood theorem (that is, on the pullback of $\Im \Omega$ under an appropriate diffeomorphism). In \cite[Proposition 5.8]{joycecones3}, this yields the required derivatives, because the pushforward just reduces to an algebraic map under this identification. 
\end{rmk}

We shall now explain that the existence of $(\Omega'_p, \omega'_p)$ follows from smallness of $\Im \Omega|_L$ and $\omega|_L$. Combining this with Proposition \ref{slperturbgeneralised} leads to Theorem \ref{thm:finalslperturbthm} (Theorem A1).

We state the following result.
\begin{lem}
\label{newsunpointwise}
Let $V$ be a $2n$-dimensional vector space, and $L$ an $n$-dimensional subspace. For every $\epsilon>0$ there exists $\delta >0$  such that if $(\Omega, \omega)$ is an $SU(n)$ structure on $V$ and $|\Im\Omega|_L|+ | \omega|_L| + |\Re\Omega|_L - \vol_L| < \delta$ with respect to the metric induced by $(\Omega, \omega)$, then there is an $SU(n)$ structure $(\Omega', \omega')$ such that $L$ is special Lagrangian with respect to $(\Omega', \omega')$ and $|\Omega'-\Omega| + |\omega'-\omega| < \epsilon$.
\end{lem}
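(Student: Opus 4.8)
The plan is to reduce the lemma to a uniform closeness estimate on the (finite-dimensional) set of $SU(n)$ structures on $V$, and then to exploit the fact that, although this set is non-compact, the data we need is controlled by the \emph{compact} Grassmannian $\Gr(n,V)$ of $n$-planes. After fixing a linear identification $V\cong\C^n$, the set $\mathcal{S}$ of $SU(n)$ structures on $V$ in the sense of Definition \ref{defin:cystructurebundle} is the orbit of the standard structure $(\Omega_1,\omega_1)$ under $GL(2n,\R)$, with stabiliser a copy of $SU(n)$; moreover the assignment $(\Omega,\omega)\mapsto g_{(\Omega,\omega)}$ of the induced metric is natural, so that whenever $h\in GL(2n,\R)$ carries one $SU(n)$ structure to another it is an isometry of the corresponding metrics on $V$. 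Let $\mathcal{S}_L\subset\mathcal{S}$ be the set of structures for which $L$ is special Lagrangian, i.e. $\Im\Omega|_L=0=\omega|_L$; this is exactly the conclusion we must produce, it is non-empty, and $h^{-1}$ carries $\mathcal{S}_L$ to $\mathcal{S}_{h^{-1}L}$. Since the conclusion asks only for $(\Omega',\omega')\in\mathcal{S}_L$ close to $(\Omega,\omega)$, it suffices to prove this under the weaker hypothesis that only $|\Im\Omega|_L|+|\omega|_L|<\delta$ (the $\Re\Omega|_L$ term of the stated hypothesis is then not needed at all).

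First I would use transitivity to pass to a fixed reference structure. Choose $h\in GL(2n,\R)$ with $(\Omega,\omega)=(h^{-1})^*(\Omega_1,\omega_1)$ and put $P:=h^{-1}L\in\Gr(n,V)$. Because $h^{-1}|_L\colon (L,g_{(\Omega,\omega)}|_L)\to(P,g_{(\Omega_1,\omega_1)}|_P)$ is an isometry and $\Omega|_L=(h^{-1}|_L)^*(\Omega_1|_P)$, the continuous function $Q\mapsto|\Im\Omega_1|_Q|+|\omega_1|_Q|$ on $\Gr(n,V)$ (norms in $g_{(\Omega_1,\omega_1)}$) takes the same value at $P$ as $|\Im\Omega|_L|+|\omega|_L|$ does, and, symmetrically, pushing any structure making $P$ special Lagrangian forward by $(h^{-1})^*$ gives one making $L$ special Lagrangian and equally close to $(\Omega,\omega)$ in the respective metrics. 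So it is enough to show: for every $\epsilon>0$ there is $\delta>0$ such that every $P$ with $|\Im\Omega_1|_P|+|\omega_1|_P|<\delta$ admits $(\Omega',\omega')\in\mathcal{S}$ with $P$ special Lagrangian for $(\Omega',\omega')$ and $|\Omega_1-\Omega'|+|\omega_1-\omega'|<\epsilon$. Now this last function on $\Gr(n,V)$ is continuous and vanishes exactly on the compact set $\mathcal{L}=SU(n)\cdot P_0$ of special Lagrangian planes for $(\Omega_1,\omega_1)$ (with $P_0=\R^n$), so a standard compactness argument supplies, for any $\epsilon_1>0$, a $\delta>0$ forcing $\mathrm{dist}(P,\mathcal{L})<\epsilon_1$. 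The stabiliser $SU(n)$ of $(\Omega_1,\omega_1)$ acts on $\Gr(n,V)$ by isometries and transitively on $\mathcal{L}$, so after replacing $h$ by $hk$ for a suitable $k\in SU(n)$—which changes neither $(\Omega,\omega)$ nor the smallness just obtained—I may assume $\mathrm{dist}(P,P_0)<\epsilon_1$. Finally, take a smooth local section $a$ of the submersion $GL(2n,\R)\to\Gr(n,V)$, $g\mapsto gP_0$, near $P_0$ with $a(P_0)=\id$; then $P_0$ being special Lagrangian for $(\Omega_1,\omega_1)$ forces $P=a(P)P_0$ to be special Lagrangian for $(a(P)^{-1})^*(\Omega_1,\omega_1)$, and continuity of $a$ at $P_0$ makes this structure within $\epsilon$ of $(\Omega_1,\omega_1)$ once $\epsilon_1$ is small enough (depending only on $\epsilon$ and the reference data). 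Unwinding the reductions yields the lemma.

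Since everything is finite-dimensional the work is bookkeeping rather than analysis, and the hard part is simply keeping the reductions consistent: one must use the naturality of the metric attached to an $SU(n)$ structure (this is what makes the $GL(2n,\R)$-reduction isometric, hence makes all the norm identities hold), invoke the linear-algebraic fact from Harvey--Lawson that $\mathcal{S}$ is a single $GL(2n,\R)$-orbit with stabiliser $SU(n)$ acting transitively on special Lagrangian $n$-planes, and keep track of the orientation of $L$ so that $\vol_L$ is unambiguous—though, as noted, the $\Re\Omega|_L$ part of the hypothesis is not used. Given these, the compactness of $\Gr(n,V)$ furnishes the required uniformity for free.
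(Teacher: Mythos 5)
Your proof is correct, but it takes a genuinely different route from the paper's. The paper argues by explicit construction: after normalising $(\Omega,\omega)$ to the standard structure, it corrects $\omega$ by the explicit formula $\omega' = \omega - \sum_{i,j}\omega(e_i,e_j)(\xi_i\wedge\xi_j + J\xi_i\wedge J\xi_j)$ (for a basis $e_i$ of $L$, with $\{\xi_i, -J\xi_i\}$ dual to $\{e_i, Je_i\}$), which kills $\omega'|_L$ while staying of type $(1,1)$ and positive, and then rescales $\Omega$ and rotates its phase to kill $\Im\Omega'|_L$. Your argument instead exploits that the $SU(n)$ structures form a single $GL(2n,\R)$-orbit with stabiliser $SU(n)$, reduces to the standard structure and a plane in the compact Grassmannian, observes that the error functional vanishes exactly on the special Lagrangian Grassmannian $SU(n)\cdot P_0$, and concludes by compactness plus a local section of $GL(2n,\R)\to\Gr(n,V)$. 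Your version is softer and shorter, and your observation that the $|\Re\Omega|_L - \vol_L|$ hypothesis is redundant is genuine (in the paper's construction that term is what makes the phase rotation small, but it indeed follows from smallness of $\omega|_L$ and $\Im\Omega|_L$ by the same compactness argument, up to the sign of the orientation, which Definition \ref{defin:SL} ignores). What the paper's explicit formula buys, and your argument does not directly provide, is that the assignment $(\Omega,\omega,L)\mapsto(\Omega',\omega')$ is a canonical smooth bundle map: this is precisely what is invoked immediately afterwards to globalise the lemma to Proposition \ref{newsunconstruction} with $C^k$ estimates via Proposition \ref{bundlemapsckbounded}. Your construction depends on non-canonical choices (the element $h$, the correction $k\in SU(n)$, the local section $a$), so while it proves the pointwise lemma as stated, one would need extra work (e.g. an averaging or a canonical choice of section) to recover the bundle-map statement the paper relies on.
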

\begin{proof}[Sketch proof.]
A full proof of this is technically slightly involved, but conceptually straightforward. We may assume without loss of generality that $(\Omega, \omega)$ is the standard $SU(n)$ structure $\Omega = (e_1 + i e_2) \wedge \cdots \wedge (e_{2n-1} + i e_{2n}), \omega = e_1 \wedge e_2 + \cdots + e_{2n-1} \wedge e_{2n}$. Then setting 
\begin{equation}
\omega' = \omega - \sum_{i, j} \omega(e_i, e_j) \left(\xi_i \wedge \xi_j + J\xi_i \wedge J\xi_j\right),
\end{equation}
where the $\xi_i$ and $-J \xi_i$ form the dual basis to the $e_i$ and $Je_i$, yields another hermitian metric with $\omega'|_L = 0$. We then simply have to rescale $\Omega$ in absolute value so that it satisfies Definition \ref{defin:sunstructure}(iii) and adjust its phase so that $\Im \Omega'|_L = 0$. It is not hard to show that these only yield a small change in the structure.
\end{proof}
In fact, the construction in Lemma \ref{newsunpointwise} defines a bundle map, and so we can simply apply it globally on $L$ to obtain
\begin{prop}
\label{newsunconstruction}
Let $M$ be a $2n$-dimensional manifold, and $L$ an $n$-dimensional submanifold. For every $\epsilon>0$ there exists $\delta >0$  such that if $(\Omega, \omega)$ is a torsion-free $SU(n)$ structure on $M$ and $\|\Im\Omega|_L\|_{C^k}+ \| \omega|_L\|_{C^k} +\|\Re \Omega|_L - \vol_L\|_{C^k} < \delta$, then there is an $SU(n)$ structure $(\Omega', \omega')$ around $L$ such that $L$ is special Lagrangian with respect to $(\Omega', \omega')$ and $\|\Omega'-\Omega\|_{C^k} + \|\omega'-\omega\|_{C^k} < \epsilon$.
\end{prop}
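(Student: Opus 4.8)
The plan is to observe that the recipe in the proof of Lemma \ref{newsunpointwise} is canonical --- it uses no auxiliary choices once the orientation of $L$ is fixed --- so that it defines a smooth bundle map, to which Proposition \ref{bundlemapsckbounded} applies; the quantitative $C^{k}$ estimate then falls out of the pointwise one.

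First I would make the pointwise construction manifestly basis-free. Given an $SU(n)$ structure $(\Omega_p,\omega_p)$ on $T_pM$, let $J_p$ be its complex structure and put $B_p(\Omega,\omega)=(\Im\Omega|_{T_pL},\,\omega|_{T_pL},\,\Re\Omega|_{T_pL}-\vol_{T_pL})$. Provided $|\omega_p|_{T_pL}|<1$, the argument of Lemma \ref{Jbehaves} gives the splitting $T_pM=T_pL\oplus J_pT_pL$, with associated projections $\pi,\pi'$; the form $\omega_p' := \omega_p-\pi^*(\omega_p|_{T_pL})-(\pi')^*(\omega_p|_{J_pT_pL})$ coincides with the expression written out in a basis in Lemma \ref{newsunpointwise}, is $J_p$-Hermitian, restricts to zero on $T_pL$, and is positive-definite once $\delta$ is small. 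Then $\Omega_p$ is a $(n,0)$-form for $J_p$; rescaling it by the unique $\lambda_p>0$ enforcing Definition \ref{defin:sunstructure}(iii) against $\omega'_p$, and then multiplying by the unique unit complex number making the restriction to $T_pL$ a positive real multiple of the volume form of the induced metric (this restriction is nonzero because $T_pL$ is now $\omega'_p$-Lagrangian), produces $\Omega'_p$. Each ingredient --- the algebraic map $(\Omega,\omega)\mapsto J$, restriction to the fixed subbundle $TL$, the two projections, the induced volume form, and the two normalisations --- is fibrewise smooth and choice-free, so $\Psi:(\Omega_p,\omega_p)\mapsto(\Omega'_p,\omega'_p)$ is a smooth bundle map from the open set $W\subset\SUn(M)|_L$ on which $B_p$ is small into $\SUn(M)|_L$, and by construction $T_pL$ is special Lagrangian for $\Psi(\Omega_p,\omega_p)$. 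Moreover, if $T_pL$ is already special Lagrangian then $\omega_p|_{T_pL}=\omega_p|_{J_pT_pL}=0$, $\lambda_p=1$ and the phase is $1$, so $\Psi$ restricts to the identity on the submanifold $Z:=B^{-1}(0)\subset W$.

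Finally I would estimate $\Phi:=\Psi-\mathrm{id}$, a smooth bundle map vanishing on $Z$. One checks that $B$ is a submersion near $Z$ --- at a point of $Z$ its fibrewise differential is onto, since rotating the phase of $\Omega$ and rescaling the structure sweep out the two $\bigwedge^{n}T^*L$ summands, and perturbing by $J_pS$ for $S\in\mathrm{End}(T_pL)$ changes $\omega|_{T_pL}$ by the $2$-form of the antisymmetric part of $S$, hence sweeps out $\bigwedge^{2}T^*L$. So the relative Hadamard lemma applied fibrewise writes $\Phi(\Omega,\omega)=\Lambda(\Omega,\omega)\bigl[B(\Omega,\omega)\bigr]$ for a smooth bundle map $\Lambda$ into the relevant $\mathrm{Hom}$-bundle. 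Working over a relatively compact part of $L$ --- all that is needed, since in the application $L^{T}$ is compact, and in general one may localise --- Proposition \ref{bundlemapsckbounded} then yields a constant $C$ with
\begin{equation*}
\|\Omega'-\Omega\|_{C^k}+\|\omega'-\omega\|_{C^k}=\|\Phi(\Omega,\omega)\|_{C^k}\le C\,\|B(\Omega,\omega)\|_{C^k}=C\bigl(\|\Im\Omega|_L\|_{C^k}+\|\omega|_L\|_{C^k}+\|\Re\Omega|_L-\vol_L\|_{C^k}\bigr),
\end{equation*}
so taking $\delta=\epsilon/C$ and $(\Omega',\omega')=\Psi(\Omega,\omega)$ finishes the proof. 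The main obstacle is the bookkeeping in the second step: confirming that the pointwise recipe genuinely assembles into one smooth bundle map defined uniformly near $Z$, in particular that $\lambda_p$ and the phase depend smoothly on the data and stay away from their degeneracies, rather than anything conceptually hard; once that is granted, the estimate is a routine application of Proposition \ref{bundlemapsckbounded}.
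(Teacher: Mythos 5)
Your overall strategy --- globalise the pointwise recipe of Lemma \ref{newsunpointwise} as a smooth, choice-free bundle map $\Psi$ and extract the $C^k$ estimate from Proposition \ref{bundlemapsckbounded} --- is exactly what the paper does; its own proof is essentially the one sentence preceding the proposition. The basis-free description of $\omega'$, the normalisations of $\Omega$, and the observation that $\Psi$ restricts to the identity on $Z=B^{-1}(0)$ are all correct.

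The gap is the claim that $B$ is a submersion along $Z$. It is not: the fibrewise differential of the third component $\Re\Omega|_{T_pL}-\vol_{T_pL}$ vanishes identically at every point of $Z$. Indeed, by the Harvey--Lawson comass-one inequality one has $|\Omega|_\ell|\le\vol_\ell$ for every $n$-plane $\ell$ and every $SU(n)$ structure, with equality exactly when $\ell$ is Lagrangian; so the function $(\Re\Omega|_\ell)^2+(\Im\Omega|_\ell)^2-\vol_\ell^2$ attains its maximum value $0$ on $Z$, and differentiating it along any curve of structures through a point of $Z$ (where $\Re\Omega|_\ell=\vol_\ell$ and $\Im\Omega|_\ell=0$) forces $\tfrac{d}{dt}\bigl(\Re\Omega|_\ell-\vol_\ell\bigr)=0$. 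In particular rescaling does not sweep out the second $\bigwedge^nT^*L$ summand (inside $\SUn$ a rescaling multiplies $\Re\Omega|_\ell$ and $\vol_\ell$ by the same factor), and the relative Hadamard factorisation $\Phi=\Lambda[B]$ is not available as stated. The repair is cheap: either factor through $B_1=(\Im\Omega|_L,\omega|_L)$ alone --- on the neighbourhood $W$ where $\Re\Omega|_L$ is close to $+\vol_L$ one has $B_1^{-1}(0)=Z$, and $B_1$ \emph{is} a submersion there (phase rotation hits the $\bigwedge^n$ summand and the $J_pS$ perturbations hit $\bigwedge^2$) --- or dispense with Hadamard and estimate directly: $\omega'-\omega$ is assembled from $\omega|_{T_pL}$ and from $\omega|_{J_pT_pL}$, which equals $\omega|_{T_pL}$ after composing with $J_p$, so $|\omega'-\omega|\le C|\omega|_{T_pL}|$, and then $|\lambda_p-1|$ and the phase are bounded by $C(|\omega|_{T_pL}|+|\Im\Omega|_{T_pL}|)$ because $\Re\Omega|_{T_pL}$ is bounded away from zero on $W$. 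Either way the estimate $\|\Psi(\Omega,\omega)-(\Omega,\omega)\|_{C^k}\le C\|B\|_{C^k}$ survives (indeed with $B_1$ in place of $B$; the third hypothesis only serves to select the branch $\Re\Omega|_L\approx+\vol_L$), and the rest of your argument goes through.
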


Consequently, we find the following theorem saying that all ``nearly special Lagrangian" submanifolds, in some sense, can be perturbed to special Lagrangian. 
\begin{thm}[Theorem A1]
\label{thm:finalslperturbthm}
Suppose $M$ is a $2n$-dimensional Calabi--Yau manifold with Calabi--Yau structure $(\Omega, \omega)$ and that $L$ is a closed submanifold. Suppose that for some $k$, $\|\Im \Omega|_L\|_{C^{k+2}} + \|\omega|_L\|_{C^{k+2}}$ is sufficiently small, in terms of a constant depending on the second fundamental form of $L$ in $M$, a lower bound on $d+d^*$ on $L$ in the sense of Theorem \ref{laplacelowerbound}, and the constant $C_3$ of Condition \ref{slingcondition}(iii) (with appropriate regularities). Suppose further that $[\Omega|_L] =0$ and $[\omega|_L] = 0$. Then Condition \ref{slingcondition} holds (for any $\mu$) and we can thus perturb $L$ to a special Lagrangian submanifold. 
\end{thm}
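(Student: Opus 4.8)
The plan is to reduce this to an application of Proposition \ref{slperturbgeneralised}. That proposition already does the heavy lifting: it says that if we can find, around every point $p$ of $L$, a local $SU(n)$ structure $(\Omega_p', \omega_p')$ that is $C^{k+2}$-close to $(\Omega, \omega)$ (within a fixed tolerance $A$) and with respect to which a neighbourhood of $p$ in $L$ is special Lagrangian, and if additionally $\|\Im \Omega|_L\|_{C^{k+1}} + \|\omega|_L\|_{C^{k+1}}$ is small enough (depending on $A$, the second fundamental form, the inverse-Laplacian bound of Theorem \ref{laplacelowerbound}, and the nonlinearity constants $r, C_3$ of Condition \ref{slingcondition}(iii)), and the cohomology classes vanish, then Condition \ref{slingcondition} holds and $\SLing(L)$ exists. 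So the only thing to supply is the existence of such a family of local $SU(n)$ structures, uniformly close to $(\Omega, \omega)$.

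First I would observe that $\|\Re \Omega|_L - \vol_L\|_{C^{k}}$ can be controlled by $\|\Im \Omega|_L\|_{C^{k}}$ together with a lower-order term: indeed, $|\Re \Omega|_L|^2 + |\Im \Omega|_L|^2 = |\Omega|_L|^2$ is determined pointwise by the $SU(n)$ structure and the metric, and on a genuine special Lagrangian $\Re \Omega|_L = \vol_L$; since everything here is a smooth bundle map of the pair $(\Omega, \omega)$ restricted to $L$ (using Proposition \ref{bundlemapsckbounded} and Theorem \ref{localrestrictionthm} to pass from $C^{k+2}$ bounds on $M$ to $C^{k}$ bounds on $L$, at the cost of the second fundamental form), smallness of $\Im \Omega|_L$ and $\omega|_L$ in $C^{k+2}(M)$ forces $\Re \Omega|_L - \vol_L$ to be small in $C^{k}(L)$. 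Hence the hypothesis of Proposition \ref{newsunconstruction}, namely $\|\Im \Omega|_L\|_{C^{k}} + \|\omega|_L\|_{C^{k}} + \|\Re \Omega|_L - \vol_L\|_{C^{k}} < \delta$, is implied (with suitable constants absorbing the second fundamental form) by $\|\Im \Omega|_L\|_{C^{k+2}} + \|\omega|_L\|_{C^{k+2}}$ being sufficiently small. Applying Proposition \ref{newsunconstruction} with the regularity index $k+2$ and $\epsilon = A$ then produces a \emph{global} $SU(n)$ structure $(\Omega', \omega')$ around $L$, with $\|\Omega' - \Omega\|_{C^{k+2}} + \|\omega' - \omega\|_{C^{k+2}} < A$, for which $L$ is special Lagrangian. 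Taking $(\Omega_p', \omega_p')$ to be the restriction of this single global structure for every $p$ then trivially furnishes the local structures demanded by Proposition \ref{slperturbgeneralised}.

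With this in hand the argument closes: choose $A$ small enough that Proposition \ref{slperturbgeneralised} gives Condition \ref{slingcondition}(ii), fix the resulting $r$ and $C_3$ from (iii), and then choose the smallness threshold on $\|\Im \Omega|_L\|_{C^{k+2}} + \|\omega|_L\|_{C^{k+2}}$ small enough — in terms of the second fundamental form, the $d + d^*$ lower bound, $r$, and $C_3$ — that (iv) also holds (here one uses $[\Im \Omega|_L] = 0 = [\omega|_L]$ directly, and that $[\Omega|_L] = 0$ gives $[\Re \Omega|_L] = 0$, so in particular $[\Im \Omega|_L] = 0$, as needed). Then Condition \ref{slingcondition} holds in full and Proposition \ref{carefulift} produces the special Lagrangian perturbation of $L$. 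The only delicate point is bookkeeping the order of quantifiers — $A$ must be fixed before $r, C_3$, which in turn must be fixed before the final smallness threshold on $(\Im \Omega|_L, \omega|_L)$ — but there is no circularity because the constants $r$ and $C_3$ in Proposition \ref{slperturbgeneralised}(iii) depend only on $M$, $L$ and the chosen regularities, not on how small $(\Im \Omega|_L, \omega|_L)$ is. The main obstacle, such as it is, is simply making sure that the $C^{k+2}$-on-$M$ versus $C^{k}$-on-$L$ regularity shift needed to feed Proposition \ref{newsunconstruction} is consistent with the $C^{k+2}$-on-$L$ closeness that Proposition \ref{slperturbgeneralised} requires of the local structures; this is why we invoke Proposition \ref{newsunconstruction} at regularity $k+2$ and accept that the hypothesis is then phrased in $C^{k+2}$ norms on $M$ restricted to $L$.
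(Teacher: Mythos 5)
Your proposal is correct and follows essentially the same route as the paper: the theorem is obtained by feeding Proposition \ref{newsunconstruction} (which supplies a global, hence in particular local, $SU(n)$ structure close to $(\Omega,\omega)$ making $L$ special Lagrangian) into Proposition \ref{slperturbgeneralised}, with the same order of quantifiers for $A$, $r$, $C_3$ and the final smallness threshold. The one imprecision is your claim that $|\Omega|_L|^2$ is determined pointwise independently of the tangent plane --- in fact $|\Omega|_\ell|\leq 1$ with equality only for Lagrangian planes, so the control of $\Re\Omega|_L-\vol_L$ really uses smallness of $\omega|_L$ (to force near-Lagrangian planes) together with smallness of $\Im\Omega|_L$ and the orientation convention; this is exactly the deduction the paper itself leaves tacit.
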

\begin{rmk}
As mentioned after Proposition \ref{slperturbgeneralised}, a bound on $C_3$ could presumably be obtained by the argument of Joyce \cite[Proposition 5.8]{joycecones3}. Hence Theorem \ref{thm:finalslperturbthm} could be regarded as a rather less precise but somewhat extended version of Joyce's result \cite[Theorem 5.3]{joycecones3} which says that Lagrangian submanifolds that are sufficiently close to special Lagrangian can be perturbed to be special Lagrangian. 
\end{rmk}

\section{The derivative of \texorpdfstring{$\SLing$}{SLing}} 
\label{sec:dsling}
In the remainder of the paper, we shall show Theorem B: that the gluing map of special Lagrangians defined by combining the approximate gluing map of Definition \ref{defin:newsubmanifoldapproxgluing} with the $\SLing$ map of Definition \ref{defin:slingmap} is a local diffeomorphism from the space of matching special Lagrangian deformations of a matching pair to the space of special Lagrangian deformations of its gluing. 

To do this, we show that these maps are smooth and find their derivatives, and then apply the inverse function theorem. 

Smoothness is rather complicated. Although we know in the compact case that closed submanifolds of a given manifold $M$ form a Fr\'echet manifold (see Hamilton \cite[Example 4.1.7]{hamiltonnashmoser}), we will need to apply the inverse function theorem, and this does not hold for Fr\'echet manifolds without further assumptions. On the other hand, it's not at all straightforward to define a Banach manifold of submanifolds as the transition functions often involve composition. Consequently, we will restrict to finite-dimensional spaces of submanifolds. 

To prove that maps of forms and of normal vector fields (hence of submanifolds) are smooth, we will exhibit them as composites with smooth maps of bundles. For instance a map $F: TM \oplus TM \to TM$ such that $\pi(F(u, v)) = \exp(u)$ will, on restriction to a fixed family $u_s$, induce a smooth map of vector fields. We can then find their derivatives by looking at the derivatives of these bundle maps, and give bounds in this way. 

This analysis is simpler for $\SLing$, though there are other terms involved in that case making things more difficult. Thus, we will deal with that case first in this section before extending the relevant analysis to the approximate gluing map in section \ref{sec:dpatching}. Finally, in section \ref{sec:opennessthm}, we obtain the estimates on the derivatives which we will use to prove the result. 

We shall do much of this section and the next in the case where $M$ is Riemannian and $L$ is any submanifold, rather than restricting to the case where $M$ is Calabi--Yau and the submanifold $L$ is close to special Lagrangian. This is chiefly because the derivatives involved turn out, in terms of normal vector fields, to relate to Jacobi fields, and to work instead with one-forms simply adds additional complexity. 

In this section, we deal with closed manifolds and submanifolds, though our arguments will mostly be local and so will readily generalise. 

\subsection{General Riemannian manifolds}\enlargethispage{\baselineskip}
\label{ssec:riemanniannormalvectorfields}
We begin by working with identifications between various tubular neighbourhoods, which we explain with an example. Suppose that we have a curve of submanifolds $L_s$, and another curve of submanifolds $L'_s$ (in due course we will take $L'_s = \SLing(L_s)$). We suppose also that we know  about the normal vector field $w_s$ to $L_s$ such that $L'_s = \exp_{w_s}(L_s)$; for instance, if $L'_s = \SLing(L_s)$, this vector field is identified by the construction. Since we are interested in the derivative, there are further natural normal vector fields $u$ on $L_0$ giving the tangent of $L_s$ and $v$ on $L'_0$ giving the tangent of $L'_s$. Intuitively, it seems clear that ``$v = w' + u$". This is in fact the case, and we shall prove it carefully. The above explanation also shows why this looks like a general discussion of identifications between normal vector fields on different submanifolds, since we need to know precisely how to identify $u$, $v$, and $w'$ so that this makes sense; for the answer, see Proposition \ref{combinedtransfer}. 

We also will prove a basic smoothness result Proposition \ref{basictransfersmoothness}. This result is structurally the same as Proposition \ref{basicgluingsmoothness} that we needed in the previous section; it is used first to show that the maps are smooth as indicated above, and second as in Proposition \ref{speclaggluingremainderestimate} to obtain analytic controls on the maps. 

 We begin by defining some ways of identifying normal vector fields on $L_0$ with normal vector fields on $L'_0$. Suppose that $L$ is a submanifold of the Riemannian manifold $(M, g)$ and that $L' = \exp_v(L)$ is a small deformation of it.We note the following estimate.
\begin{prop}[corollary of ``Rauch Comparison Theorem", \eg Cheeger--Ebin {\cite[Theorems 1.28 and 1.29]{cheegerebin}}]
\label{rauchcomparison}
Let $(M, g)$ be a Riemannian manifold with sectional curvature bounded below and above by the constants $-c$ and $C$ respectively. Let $\gamma$ be a geodesic in $M$ with initial velocity $v_0$. Let $X$ be a Jacobi field along $\gamma$ with $|X_0| = A$ and $|\nabla_{\ddt}(X)_0|= B$. Then for $|v_0| < \frac{\pi}{2\sqrt{C}}$ we have estimates
\begin{equation}
A \cos\left(|v_0|\sqrt{C}\right) - \frac{B}{\sqrt{c}} \sinh\left(|v_0|\sqrt{c}\right)  \leq |X_1| \leq A \cosh\left( |v_0|\sqrt{c}\right) + \frac{B}{\sqrt{c}} \sinh\left(|v_0|\sqrt{c}\right).
\end{equation}
\end{prop}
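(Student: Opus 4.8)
\textbf{Proof plan for Proposition \ref{rauchcomparison}.}

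The plan is to deduce the two-sided estimate on $|X_1|$ directly from the standard Rauch comparison machinery by splitting $X$ into the part determined by $X_0$ and the part determined by $\nabla_{\ddt}X_0$, and then comparing with the model Jacobi equations $\ddot f + c f = 0$ (for the lower curvature bound $-c$) and $\ddot f - C f = 0$ — wait, more precisely, by comparing against the space forms of curvature $-c$ and $C$. Since the Jacobi equation is linear, I would first write $X = Y + Z$ where $Y$ is the Jacobi field with $Y_0 = X_0$, $\nabla_{\ddt}Y_0 = 0$ and $Z$ is the Jacobi field with $Z_0 = 0$, $\nabla_{\ddt}Z_0 = \nabla_{\ddt}X_0$. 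By the triangle inequality $|X_1| \leq |Y_1| + |Z_1|$ and $|X_1| \geq |Y_1| - |Z_1|$, so it suffices to bound $|Y_1|$ above and below and to bound $|Z_1|$ above.

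The core step is then to invoke Cheeger--Ebin \cite[Theorems 1.28 and 1.29]{cheegerebin}: in a manifold with sectional curvature $\leq C$, a Jacobi field vanishing at a point grows at least as fast as the corresponding model field in the space form of curvature $C$, while in a manifold with sectional curvature $\geq -c$ it grows at most as fast as in the space form of curvature $-c$ (both statements holding up to the first conjugate point, which is why we need $|v_0| < \frac{\pi}{2\sqrt{C}}$ — this keeps us well inside the conjugate radius of the $C$-model so the comparison is in force). For $Z$, the model field with $Z_0 = 0$, $|\nabla_{\ddt}Z_0| = B$ in the curvature $-c$ space form is $t \mapsto \frac{B}{\sqrt c}\sinh(\sqrt c\, t)$ evaluated along an arclength reparametrisation; pulling the $|v_0|$ factor through the reparametrisation gives the upper bound $|Z_1| \leq \frac{B}{\sqrt c}\sinh(|v_0|\sqrt c)$. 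For $Y$, which does not vanish at an endpoint, I would instead use the companion comparison (again \cite[Theorem 1.29]{cheegerebin} or the index-form version) for Jacobi fields with prescribed value and zero derivative at $t=0$: the lower bound comes from the curvature $\leq C$ comparison giving $|Y_1| \geq A\cos(|v_0|\sqrt C)$, and the upper bound from the curvature $\geq -c$ comparison giving $|Y_1| \leq A\cosh(|v_0|\sqrt c)$. Assembling $|X_1| \geq |Y_1| - |Z_1| \geq A\cos(|v_0|\sqrt C) - \frac{B}{\sqrt c}\sinh(|v_0|\sqrt c)$ and $|X_1| \leq |Y_1| + |Z_1| \leq A\cosh(|v_0|\sqrt c) + \frac{B}{\sqrt c}\sinh(|v_0|\sqrt c)$ gives the claimed inequalities.

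The main obstacle is bookkeeping rather than anything deep: Rauch comparison is usually stated for Jacobi fields that vanish at one endpoint, and one has to be a little careful decomposing the general field and tracking how the normalisation of the geodesic (velocity $v_0$ of possibly non-unit length, parametrised on $[0,1]$) turns the usual arclength-variable estimates into the stated ones with $|v_0|$ inside the trigonometric and hyperbolic functions. Provided one reparametrises to arclength, applies the two standard comparison theorems separately to the two pieces, and reparametrises back, the result follows; since this is a cited corollary I would keep the write-up brief and point to \cite[Theorems 1.28 and 1.29]{cheegerebin} for the substantive content.
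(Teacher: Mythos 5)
Your overall strategy is the standard and intended one (split $X=Y+Z$ by linearity of the Jacobi equation, apply Rauch I to the piece vanishing at $t=0$ and Berger/Rauch II to the piece with vanishing derivative, and note that $|v_0|<\pi/(2\sqrt C)$ is exactly the focal-point condition needed for the $\cos$ lower bound). But there is a concrete error in the step you yourself flagged as the main obstacle, namely the arclength conversion for the $Z$-piece. Writing $s=t|v_0|$, one has $\nabla_{\dds}Z=\tfrac{1}{|v_0|}\nabla_{\ddt}Z$, so the initial data of $Z$ in arclength are $Z(0)=0$ and $|\nabla_{\dds}Z(0)|=B/|v_0|$, and the Rauch upper bound at $s=|v_0|$ is
\begin{equation*}
|Z_1|\;\leq\;\frac{B}{|v_0|\sqrt{c}}\,\sinh\bigl(|v_0|\sqrt{c}\bigr),
\end{equation*}
not $\tfrac{B}{\sqrt c}\sinh(|v_0|\sqrt c)$. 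The two differ by the factor $1/|v_0|$, and in the regime in which this proposition is used ($v_0$ a small normal vector, so $|v_0|<1$) the correct bound is strictly larger than the one you claim. The discrepancy is not cosmetic: in the flat limit $c\to 0$ your bound tends to $B|v_0|$, whereas the actual value of $|Z_1|$ is $B$ (since $Z_t$ is $t$ times the parallel transport of $\nabla_{\ddt}Z_0$), so the inequality you assert cannot be derived from Rauch and is in fact false for small $|v_0|$. The same factor infects the lower bound.

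Two further remarks. First, the statement as printed appears to carry the same normalisation slip — the companion estimate in Proposition \ref{prop:paralleljacobiestimate} has the correct $\tfrac{1}{\sqrt{C}|v_0|}\sinh(\sqrt{C}|v_0|t)$ factor, consistent with the flat model — so the corollary of Cheeger--Ebin that one can actually prove, and that your decomposition does prove once the derivative is correctly rescaled, is the version with $\tfrac{B}{|v_0|\sqrt c}\sinh(|v_0|\sqrt c)$ in both inequalities; all subsequent uses in the paper (nonvanishing of $X_1$ for $B$ small, injectivity of the transfer maps) survive with that form. Second, a minor omission: the Rauch and Berger theorems compare norms of Jacobi fields orthogonal to $\gamma$; for a general field you should split off the tangential component, which is affine in $t$ and satisfies the (corrected) bounds by direct computation, before invoking the comparison theorems for the normal components.
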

Moreover, we have the following similar result, using a result in Jost \cite{jostrgga}.
\begin{prop}[corollary of {\cite[Theorem 4.5.3]{jostrgga}}]
\label{prop:paralleljacobiestimate}
Let $(M, g)$ be a Riemannian manifold with sectional curvature bounded below and above by the constants $-C$ and $C$, for some fixed $C>0$. Let $\gamma$ be a geodesic in $M$ with initial velocity $v_0$. Let $X$ be a Jacobi field along $\gamma$ with $|X_0| = A$ and $|(\nabla_\ddt X)_0| = B$. Let $P_t$ be parallel transport from $\gamma(0)$ to $\gamma(t)$ along $\gamma$. Then for each $t$,
\begin{equation}
|X_t - P_t(X_0 + t (\nabla_\ddt X)_0)| \leq A (\cosh (\sqrt {C} |v_0| t)-1) + B (\frac{1}{\sqrt {C} |v_0|} \sinh(\sqrt{C} |v_0| t) - t).
\end{equation}
\end{prop}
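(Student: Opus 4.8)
The plan is to transport the Jacobi field along $\gamma$ to the fixed vector space $T_{\gamma(0)}M$ and reduce the estimate to a one-dimensional comparison for a Volterra integral inequality. First I would set $Y(t) = P_t^{-1}(X_t)$; since $P_t$ is a linear isometry commuting with $\nabla_{\ddt}$-differentiation along $\gamma$, this is a curve in $T_{\gamma(0)}M$ with $Y(0) = X_0$ and $Y'(0) = (\nabla_{\ddt}X)_0$, and the quantity to be bounded is exactly $|Y(t) - (X_0 + t(\nabla_{\ddt}X)_0)|$. The Jacobi equation $\nabla_{\ddt}\nabla_{\ddt}X + R(X,\dot\gamma)\dot\gamma = 0$ then becomes $Y''(t) = -P_t^{-1}\big(R(X_t,\dot\gamma)\dot\gamma\big)$. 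Using that $\gamma$ is a geodesic, so that $|\dot\gamma| \equiv |v_0|$, and that the self-adjoint operator $Z \mapsto R(Z,\dot\gamma)\dot\gamma$ on $\dot\gamma^{\perp}$ has eigenvalues equal to $|v_0|^2$ times the sectional curvatures of the two-planes containing $\dot\gamma$, the curvature hypothesis gives $|R(X_t,\dot\gamma)\dot\gamma| \le C|v_0|^2|X_t|$ (this elementary fact is also the substance of \cite[Theorem 4.5.3]{jostrgga}), and hence $|Y''(t)| \le C|v_0|^2|Y(t)|$.

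Next I would set $W(t) = Y(t) - (X_0 + t(\nabla_{\ddt}X)_0)$, so that $W(0) = W'(0) = 0$ and $W''(t) = Y''(t)$. Taylor's formula with integral remainder, applied to the vector-valued $W$, gives $W(t) = \int_0^t (t-s)\,W''(s)\,ds$; combining this with $|Y(s)| \le |W(s)| + |X_0| + s|(\nabla_{\ddt}X)_0| = |W(s)| + A + Bs$ yields the scalar integral inequality
\begin{equation}
|W(t)| \le C|v_0|^2 \int_0^t (t-s)\big(|W(s)| + A + Bs\big)\,ds .
\end{equation}

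Finally I would compare $|W(t)|$ with the claimed bound $g(t) = A(\cosh(\sqrt{C}|v_0|t) - 1) + B(\tfrac{1}{\sqrt{C}|v_0|}\sinh(\sqrt{C}|v_0|t) - t)$. A short computation checks that $g$ satisfies $g(0) = g'(0) = 0$ together with the integral equation obtained from the displayed inequality by replacing $|W|$ with $g$ and the inequality with an equality (equivalently $g'' = C|v_0|^2(g + A + Bt)$ with zero initial data). Since the Volterra kernel $(t-s)$ is nonnegative and the right-hand side is monotone in the unknown function, Picard iteration started from $0$ — or a Grönwall estimate applied to $|W| - g$ — gives $|W(t)| \le g(t)$ for all $t$ in the domain of $\gamma$, which is the assertion.

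I expect the only genuine step to be the curvature bound $|R(X_t,\dot\gamma)\dot\gamma| \le C|v_0|^2|X_t|$ with the clean constant $C$ (no extra numerical factor), which is exactly where \cite[Theorem 4.5.3]{jostrgga} is used and which follows from the self-adjointness remark above; the reduction to a one-dimensional comparison and the Volterra/Grönwall argument are entirely routine.
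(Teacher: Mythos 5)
Your proof is correct. The paper offers no argument here at all -- the estimate is simply quoted as a corollary of Jost's Jacobi-field comparison theorem -- so your write-up supplies a self-contained proof of what the paper delegates to the reference; in substance it is the standard proof of Jost's theorem, transparently rescaled from unit-speed geodesics to a geodesic of speed $|v_0|$ (which is exactly the adaptation the paper's ``corollary of'' is silently performing). All the steps check out: parallel transport commutes with $\nabla_{\ddt}$ along $\gamma$, the comparison function $g(t)=A(\cosh(\sqrt{C}|v_0|t)-1)+B(\tfrac{1}{\sqrt{C}|v_0|}\sinh(\sqrt{C}|v_0|t)-t)$ does satisfy $g''=C|v_0|^2(g+A+Bt)$ with $g(0)=g'(0)=0$, and the Volterra kernel $(t-s)\ge 0$ makes the monotone iteration legitimate. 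Two places deserve one more sentence if this were written out in full: first, the bound $|R(X_t,\dot\gamma)\dot\gamma|\le C|v_0|^2|X_t|$ for a Jacobi field that is not normal to $\gamma$ uses that the curvature operator annihilates the $\dot\gamma$-component of $X_t$, so one bounds by $C|v_0|^2|X_t^{\perp}|\le C|v_0|^2|X_t|$; second, the concluding comparison needs the usual care -- from $|W|-g\le\mathcal{L}(|W|-g)$ with $\mathcal{L}$ the monotone Volterra operator one iterates and uses $\|\mathcal{L}^n\|\to 0$ on compact $t$-intervals, rather than applying Gr\"onwall directly to the possibly sign-changing $|W|-g$. Both points are routine and your sketch already gestures at them; the argument is valid for $t\ge 0$, which is all the paper uses (it applies the estimate at $t=1$).
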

To give our definitions, we note that at a point we may represent $L$ by its tangent space. This is a point of the Grassmannian fibre bundle $\Gr_k(TM)$. To determine $L'$, we use the normal vector field $v$ such that $L' = \exp_v(L)$. The value of $v$ at a point of $L$ just determines a point of $L'$; we will need the tangent space to $L'$ also. We will use the first jet of $v$. This is comparable to the use of the first jet bundle for $u$ in Definition \ref{defin:remaindermapG}. 

As we will mostly be working with $n$-dimensional special Lagrangian submanifolds of $2n$-dimensional Calabi--Yau manifolds, we will restrict to the case of $n$-dimensional submanifolds. Consequently, we will define our transfer maps on the Whitney sum $TM \oplus J^1(TM) \oplus \Gr_n(TM)$. Note that the $J^1(TM)$ term will be the $1$-jet at a point of a small deformation and so can be expected to be small. 
\begin{defin}
\label{defin:pointwisetransfers}
Let $M$ be a $2n$-dimensional Riemannian manifold. Suppose that 
\begin{equation}
(u, (v, \nabla v), \ell) \in TM \oplus J^1(TM) \oplus \Gr_{n}(TM),
\end{equation}
with $\pi(u, (v, \nabla v), \ell) = p$ and $(v, \nabla v)$ small. We note that $v$, being small, defines a geodesic $\gamma$ in $M$. 

Choose a basis $\{e_1, \ldots, e_n\}$ for the subspace $\ell$ of $T_pM$. For each $e_i$ construct the Jacobi field $X^{(i)}$ along $\gamma$ with $X^{(i)}_0 = e_i$ and $(\nabla_\ddt X^{(i)})_0 = \nabla_{e_i} v$. The final values of the $X^{(i)}$ form a subset of $T_{\exp_p(v)}M$. If this subset is not linearly independent, then it follows by linearity of the Jacobi equation that we could choose $e_1$ so that the final value of $X^{(1)}$ is zero; but for $v$ and $\nabla v$ sufficiently small this is impossible by Proposition \ref{rauchcomparison}. Hence, this set is linearly independent and forms a basis for an $n$-dimensional subspace $\ell'$ of $T_{\exp_p(v)}M$. 

Now find Jacobi fields $X_1$ and $X_2$ along $\gamma$ with initial conditions $(X_1)_0 = 0 = (\nabla_\ddt X_2)_0$ and $(\nabla_\ddt X_1)_0 = u = (X_2)_0$ and suppose that $X_i$ has final value $w_i \in T_{\exp_p(v)}M$. We then take $T_i(u, (v, \nabla v), \ell)$ to be the part of $w_i$ orthogonal to the subspace $\ell'$. 
\end{defin}
$T_i$ is only defined on an open subset of $TM \oplus J^1(TM) \oplus \Gr_{n}(TM)$, as the geodesic $\gamma$ must exist. This cannot necessarily be stated as ``$|(v, \nabla v)| < \epsilon$" for some uniform $\epsilon$, because $M$ might be incomplete. In this case, we shall sometimes extend to $M'$ into which we can embed $M$ isometrically, as for $G$ in Definition \ref{defin:remaindermapG}. 

We now consider what the $T_i$ look like for fixed submanifolds. 
\begin{defin}
\label{defin:sectiontransfers}
Let $M$ be a Riemannian manifold, and let $L$ and $L'$ be submanifolds of $M$. Suppose that there is a normal vector field $v$ on $L$ such that $\exp_{v}(L) = L'$. Suppose that $\|v\|_{C^1}$ is sufficiently small.

Define two bundle maps $\nu_L \to TM|_{L'}$ by writing $T_i(u) = T_i(u, (v, \nabla v), T_pL)$, where $u \in (\nu_L)_p$, $(v, \nabla)$ is some extension of the first jet of $v$ at $p$ to $J^1(TM)|_L$, and $T_i$ is as in Definition \ref{defin:pointwisetransfers}, for $i=1, 2$. 
\end{defin}
It is easy to see by examining Definition \ref{defin:pointwisetransfers} that these bundle maps are independent of the extension of the jet required. It is also easy to see using the identification of pushforward in the proof of Proposition \ref{basicgluingsmoothness} that the projection in Definition \ref{defin:pointwisetransfers} is just the orthogonal projection onto the normal bundle of $L'$, so that $T_1$ and $T_2$ define bundle maps from $\nu_L$ to $\nu_{L'}$. 

We now show that these bundle maps are bundle isomorphisms.   
\begin{prop}
\label{transferisomorphism}
The bundle maps $T_1$ and $T_2$ of Definition \ref{defin:sectiontransfers} define bundle isomorphisms $\nu_L \to \nu_{L'}$ if $\|v\|_{C^1}$ is sufficiently small. 
\end{prop}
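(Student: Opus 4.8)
Since $M$ has dimension $2n$ and $L$, $L'$ are $n$-dimensional, both $\nu_L$ and $\nu_{L'}$ have rank $n$, and by the remark preceding the statement we already know that $T_1$ and $T_2$ are fibrewise-linear bundle maps $\nu_L \to \nu_{L'}$. So the plan is to prove that, for $\|v\|_{C^1}$ small enough, each $T_i$ is injective on every fibre; surjectivity, and the fact that $T_i^{-1}$ is again a (smooth) bundle map, then follow automatically from linear algebra. Concretely I would establish a uniform lower bound $|T_i(u)| \geq \tfrac12|u|$ for all $p \in L$ and $u \in (\nu_L)_p$.

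The idea is to compare with the case $v\equiv 0$, where everything is the identity, and to control the error by the Jacobi-field estimates of Proposition \ref{prop:paralleljacobiestimate}. When $v\equiv 0$ the geodesic $\gamma$ is the constant path at $p$, the Jacobi equation degenerates to $\nabla_{\partial_t}\nabla_{\partial_t}X = 0$, so every Jacobi field along $\gamma$ has final value $X_0 + (\nabla_{\partial_t}X)_0$; for $X_1$ (initial data $(0,u)$) and for $X_2$ (initial data $(u,0)$) this final value is $u$, the subspace $\ell'$ equals $T_pL$, and the orthogonal projection of $u \in (\nu_L)_p$ onto $\nu_L$ is $u$ itself. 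Hence $T_1 = T_2 = \mathrm{id}_{\nu_L}$ in this case. For small $v$, fix a compact neighbourhood of $L$ (this uses that $L$ is closed; the argument is local in general) and let $C$ bound the absolute value of the sectional curvature there. Writing $P$ for parallel transport along $\gamma$ from $p$ to $\exp_p(v)$, Proposition \ref{prop:paralleljacobiestimate} applied to the Jacobi field defining $T_i(u)$ gives $|w_i - P(u)| \leq \epsilon_0(\|v\|_{C^0})\,|u|$ with $\epsilon_0(\delta)\to 0$ as $\delta\to 0$ (the case $A=0,B=|u|$ for $T_1$, and $A=|u|,B=0$ for $T_2$). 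Applying the same estimate to the Jacobi fields $X^{(j)}$ built from a basis $\{e_j\}$ of $T_pL$ and the derivatives $\nabla_{e_j}v$ shows that the spanning vectors of $\ell'$ are within $\epsilon_1(\|v\|_{C^1})|e_j|$ of $P(e_j)$, hence that $\ell'$, and therefore $\nu_{L'} = (\ell')^\perp$, lies within $\epsilon_2(\|v\|_{C^1})$ of $P(\nu_L)$ in the Grassmannian, with $\epsilon_2(\delta)\to0$.

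Combining these: $P(u) \in P(\nu_L)$, so $|\pi_{\nu_{L'}}(P(u))| \geq (1 - C\epsilon_2)|P(u)| = (1 - C\epsilon_2)|u|$, while $|T_i(u) - \pi_{\nu_{L'}}(P(u))| = |\pi_{\nu_{L'}}(w_i - P(u))| \leq \epsilon_0|u|$; hence $|T_i(u)| \geq (1 - \epsilon_0 - C\epsilon_2)|u|$. By compactness of $L$ the quantities $\epsilon_0(\|v\|_{C^0})$, $\epsilon_2(\|v\|_{C^1})$ are uniform in $p$, so for $\|v\|_{C^1}$ small enough the right-hand side is $\geq \tfrac12|u|$ everywhere, giving fibrewise injectivity and hence the bundle isomorphism. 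The main obstacle is the last estimate of the preceding paragraph, namely controlling the Grassmannian distance between $\nu_{L'}$ and the parallel transport of $\nu_L$: this is exactly where the $C^1$-smallness of $v$ (rather than mere $C^0$-smallness) is needed, since $\ell'$ depends on $\nabla v$, and one must be careful to obtain the bound uniformly over $p$ — everything else is a routine application of Proposition \ref{prop:paralleljacobiestimate} together with the $v\equiv0$ computation.
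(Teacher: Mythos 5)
Your proposal is correct and follows essentially the same route as the paper: both reduce to fibrewise injectivity of a linear map between rank-$n$ bundles and then use Proposition \ref{prop:paralleljacobiestimate} to compare the relevant Jacobi-field final values with parallel transports, exploiting that parallel transport preserves the orthogonality of $T_pL$ and $(\nu_L)_p$. The only cosmetic difference is that you phrase the conclusion as a quantitative lower bound $|T_i(u)|\geq\tfrac12|u|$ via Grassmannian closeness of $\ell'$ to $P(T_pL)$, whereas the paper argues directly that the final "normal" Jacobi field cannot lie in $\ell'$ because its angle with the "tangential" ones stays bounded away from zero.
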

\begin{proof}
By definition $T_i: \nu_L \to \nu_{L'}$ are bundle maps over $\exp_v:L \to L'$. Both $\nu_L$ and $\nu_{L'}$ have rank $n$, so it suffices to prove that there is no normal vector $u \in (\nu_L)_p$ such that $T_i(u) = 0$.

Let $\gamma$ be the geodesic with initial velocity $v_p$. We must show that if we have $e \in T_pL$ and $u \in (\nu_L)_p$, and Jacobi fields $X^0$ and $X^1$ along $\gamma$ with $(X^0)_0 = e $ and $(\nabla_\ddt X^0)_0 = \nabla_e v$, and  either $(X^1)_0 = u $ and $(\nabla_\ddt X^1)_0 = 0$, or vice versa, then the final values of $X^0$ and $X^1$ differ. 

To do this, we will apply Proposition \ref{prop:paralleljacobiestimate}. Let $P_t$ be the parallel transport along $\gamma$ and then let 
\begin{equation}
Y^0 = P_1( e + \nabla_e v), \qquad Y^1 = P_1(u)
\end{equation}
By Proposition \ref{prop:paralleljacobiestimate}, we have
\begin{align}
|X^0_1 - Y^0| &\leq |e| \left( \cosh (\sqrt{C} |v_p|) - 1 + |(\nabla v)_p|\left( \frac{\sinh (\sqrt{C} |v_p|)}{\sqrt{C} |v_p|}  - 1\right) \right),\\
|X^1_1 - Y^1| &\leq |u| \max \{\cosh(\sqrt{C} |v_p|) - 1, \frac{\sinh (\sqrt{C} |v_p|)}{\sqrt{C} |v_p|}  - 1\}.
\end{align}
This tells us that by choosing $\|v\|_{C^1}$ small enough, we can make $X^i_1$ and $Y^i$ as close as we like, in terms of $|e|$ and $|u|$. It's easy to see that the inner product of $Y^0$ and $Y^1$ is small in terms of $|e|$ and $|u|$, since $e$ and $u$ are orthogonal, and also that $Y^0$ and $Y^1$ are similar in size to $|e|$ and $|u|$. It follows that for $\|v\|_{C^1}$ small enough, $|\la X^0_1, X^1_1\ra| <  |X^0_1| |X^1_1|$, and thus $X^0_1$ and $X^1_1$ cannot be equal for $\|v\|_{C^1}$. 
\end{proof}
Hence, the $T_i$ define isomorphisms from normal vector fields on $L$ to normal vector fields on $L'$. Note also that the construction of $\ell'$ in Definition \ref{defin:pointwisetransfers} precisely gives a further isomorphism
\begin{equation}
\label{eq:pointwisepushforward}
T_pM \cong T_{\exp_p(v)} M,
\end{equation}
with $\ell'$ being the image of $\ell$. Passing to a map of tangent vectors as in Definition \ref{defin:sectiontransfers}, this isomorphism corresponds to the pushforward of tangent vectors by the diffeomorphism $\exp_v$ from $L$ to $L'$. We have shown as part of Proposition \ref{basicgluingsmoothness} that it is smooth and depends continuously on the metric as a map of triples $(u, (v, \nabla v), \ell)$; in fact, of course, this map is essentially independent of the space $\ell$.  

We now pass to a finite-dimensional family $\U$ of submanifolds $L_s$. We assume we choose the parametrisation diffeomorphically, so that smooth dependence on $s$ and $L_s$ is equivalent. We will show that the transfer maps $T_i$ define smooth maps from normal vector fields on $L_0$ to normal vector fields on $L_s$ in $\U$. As with the definition, we begin with a pointwise result. 
\begin{prop}
\label{basictransfersmoothness}
Let $M$ be a Riemannian manifold. Consider the maps 
\begin{equation}
T_i: TM \oplus J^1(TM) \oplus \Gr_n(TM) \to TM
\end{equation}
defined as in Definition \ref{defin:pointwisetransfers}. These maps are smooth. They depend continuously on the metric on $M$ in the sense described in Proposition \ref{basicgluingsmoothness}, and \eqref{eq:newcomplicatedsctyest} becomes
\begin{equation}
\label{eq:oldcomplicatedsctyest}
|T_i(s, u, (v, \nabla v), \ell) - T_i (s', u, (v, \nabla v), \ell)| < \epsilon|(u, (v, \nabla v))|.
\end{equation}

Moreover, if we extend these maps by the identity to 
\begin{equation}
\label{eq:Tiextended}
T_i: TM \oplus J^1(TM) \oplus \Gr_n(TM) \to TM \oplus J^1(TM) \oplus \Gr_n(TM),
\end{equation}
they are injective immersions. That is, their images are submanifolds and the inverse maps defined on these images are smooth, and depend continuously on the metric in the same sense. 
\end{prop}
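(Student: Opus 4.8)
The plan is to reduce every claim to smooth (respectively continuous) dependence of solutions of ordinary differential equations on their initial data and parameters, in exactly the manner of the proof of Proposition \ref{basicgluingsmoothness}. Given $(u,(v,\nabla v),\ell)$ with $(v,\nabla v)$ small, the geodesic $\gamma$ solves the geodesic equation with initial data determined by $v$; choosing a local smooth frame for the tautological $n$-plane bundle over $\Gr_n(TM)$ turns the choice of basis $\{e_i\}$ of $\ell$ into a smooth choice, and then the Jacobi fields $X^{(i)}$, $X_1$, $X_2$ solve the linear Jacobi equation along $\gamma$ with initial data depending smoothly on $(u,(v,\nabla v),\ell)$. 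So I would first invoke \cite[chapter 2, Corollaries 9 and 10]{arnoldodes} to conclude that all their time-one values depend smoothly on the data and, for a finite-dimensional family of metrics $g_s$, continuously on $s$ (the geodesic and Jacobi equations involve $g_s$ only through the Christoffel symbols and the curvature). Next I would observe that $\ell'$ is the span of the time-one values of the $X^{(i)}$, that these stay linearly independent for $(v,\nabla v)$ small by Proposition \ref{rauchcomparison} (as already used in Definition \ref{defin:pointwisetransfers}), whence $\ell'$ is a smooth $\Gr_n(TM)$-valued function of the data, independent of the chosen frame by linearity of the Jacobi equation, and that orthogonal projection onto ${\ell'}^\perp$ is then a smooth bundle operation. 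Composing these gives smoothness of $T_i$ and continuous dependence on the metric. As with $G$ in Definition \ref{defin:remaindermapG}, these maps are only defined where $\gamma$ exists, so for incomplete $M$ I would first embed $M$ isometrically into a larger manifold.

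For the estimate \eqref{eq:oldcomplicatedsctyest} I would argue as for \eqref{eq:newcomplicatedsctyest}. When $(v,\nabla v)=0$ the geodesic is constant, $\ell'=\ell$, and both $X_1$ and $X_2$ have time-one value $u$, so $T_i(s,u,0,\ell)$ is simply the $g_s$-orthogonal projection of $u$ onto $\ell^\perp$; this depends on $s$ only through the inner product at the base point, hence changes by at most $\epsilon\lvert u\rvert$ once $\lvert s-s'\rvert$ is small. Since $T_i$ is linear in $u$ and, by differentiating the ODE identification, its derivative in the directions $(v,\nabla v)$ is continuous in $s$, the $(v,\nabla v)$-increment has $s$-difference controlled by $\lvert s-s'\rvert\,\lvert(v,\nabla v)\rvert\,\lvert u\rvert$; adding the two contributions gives \eqref{eq:oldcomplicatedsctyest} for $\lvert s-s'\rvert$ and $(v,\nabla v)$ small, exactly as in Proposition \ref{basicgluingsmoothness}.

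For the extended maps \eqref{eq:Tiextended} I would note that they carry along the data $(v,\nabla v)$ and $\ell$, transported to the new base point $\exp_p(v)$ via the pushforward isomorphism \eqref{eq:pointwisepushforward}, so that $p$, $v$, $\nabla v$ and $\ell$ are all recoverable from the output, while on the normal directions $T_i$ is, for $\lVert v\rVert_{C^1}$ small, the linear isomorphism $\nu_L\to\nu_{L'}$ of Proposition \ref{transferisomorphism} (restricting, if one prefers, to the locus $u\perp\ell$, which is the case of interest by Definition \ref{defin:sectiontransfers}). Thus the extended map is fibrewise a linear isomorphism over a base map that is, for $(v,\nabla v)$ near the zero section, an injective immersion — essentially the Riemannian exponential together with a tubular-neighbourhood identification — and hence is itself an injective immersion between manifolds of equal dimension, \ie an open embedding. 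The inverse function theorem then gives that its image is a submanifold and that the inverse on that image is smooth; alternatively this inverse may be produced directly by solving the Jacobi equation backwards along the reversed geodesic, which makes its continuity in $g_s$ transparent. Either way, continuity of the inverse in the metric is inherited from that of the forward map, since inversion is continuous.

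The step I expect to be the real work is the middle one in the first paragraph: checking that $\ell'$ and the orthogonal projection onto ${\ell'}^\perp$ depend smoothly, and frame-independently, on the data — this is precisely where Proposition \ref{rauchcomparison} is essential, to keep the spanning Jacobi fields linearly independent — together with pinning down what ``extend by the identity'' in \eqref{eq:Tiextended} must mean so that injectivity genuinely holds. Everything else is a routine, if somewhat lengthy, appeal to smooth dependence of ODE solutions on their data and parameters.
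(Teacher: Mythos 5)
Your proposal is correct and follows essentially the same route as the paper, which omits the detailed argument precisely because it is "essentially identical" to that of Proposition \ref{basicgluingsmoothness}: identify the Jacobi-field data, invoke smooth and continuous dependence of ODE solutions on initial conditions and parameters, and note (as you implicitly do by having only $|(u,(v,\nabla v))|$ on the right of \eqref{eq:oldcomplicatedsctyest}) that compactness of the Grassmannian fibres removes any need to estimate $\ell$. Your additional care over the frame-independence of $\ell'$ and the meaning of the extension by the identity fills in details the paper leaves tacit, but introduces no new idea.
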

We omit the proof, which is essentially identical to that of Proposition \ref{basicgluingsmoothness}: note that since the fibres of the Grassmannian are compact, we do not need to estimate the size of $\ell$ in \eqref{eq:oldcomplicatedsctyest}. 

We may now prove
\begin{prop}
\label{transfersmoothness}
Let $L_s$ and $L'_{s'}$ be finite-dimensional smooth compact families of submanifolds of $M$, with $L_0$ and $L'_0$ close enough that there exists a normal vector field $v(0, 0)$ to $L_0$ whose image under the Riemannian exponential map is $L'_0$.

Then for $s$ and $s'$ small enough and $v(0, 0)$ small enough, there exists a normal vector field to $L_s$ whose image under the Riemannian exponential map is $L'_{s'}$. This normal vector field is the image under $T_2$ of a normal vector field $v(s, s')$ on $L_0$. The map $(s, s') \mapsto v(s, s')$ is a smooth map.
\end{prop}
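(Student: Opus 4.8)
The plan is a two-stage construction. First, for $(s,s')$ near $0$, locate the small normal vector field $z(s)$ on $L_0$ with $\exp_{z(s)}(L_0) = L_s$ and the small normal vector field $w(s,s')$ on $L_s$ with $\exp_{w(s,s')}(L_s) = L'_{s'}$; then set $v(s,s') := (T_2^{z(s)})^{-1}(w(s,s'))$, where $T_2^{z(s)}\colon \nu_{L_0} \to \nu_{L_s}$ is the transfer isomorphism of Definition \ref{defin:sectiontransfers} and Proposition \ref{transferisomorphism}. By construction $T_2(v(s,s')) = w(s,s')$ on the nose, so the content of the proposition reduces to showing that $(s,s') \mapsto v(s,s')$ is smooth, which I would do — following the strategy announced at the start of this section — by writing $z$, $w$ and the fibrewise inversion of $T_2$ as composites of the smooth bundle maps of Propositions \ref{basicgluingsmoothness} and \ref{basictransfersmoothness} with the smooth maps defining the families.

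\textbf{Existence.} Since $L_0$ and $L'_0$ are compact and the families are smooth, $L_s$ is $C^1$-close to $L_0$ and $L'_{s'}$ is $C^1$-close to $L'_0$ for $(s,s')$ small, uniformly over each submanifold; as $L_0$ and $L'_0$ are already close enough to be joined by $v(0,0)$, the tubular-neighbourhood argument recalled at the start of subsection \ref{ssec:speclagdeformation} produces the unique small $z(s)$ and $w(s,s')$ above for $\|v(0,0)\|_{C^1}$ and $(s,s')$ small enough. Proposition \ref{transferisomorphism} then makes $T_2^{z(s)}$ a bundle isomorphism, so $v(s,s')$ is well-defined and satisfies $T_2(v(s,s')) = w(s,s')$, proving the first two assertions.

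\textbf{Smoothness.} Realise $L_s$ as $\phi(s,L_0)$ for a smooth $\phi\colon P\times L_0\to M$ with each $\phi(s,\cdot)$ an embedding — possible because we have chosen the parametrisation diffeomorphically — and similarly for $L'_{s'}$ (identifying $L'_0$ with $L_0$ via $\exp_{v(0,0)}$). Let $\exp$ restricted to $\nu_{L_0}$ be the normal exponential map, a diffeomorphism of a neighbourhood of the zero section onto a tubular neighbourhood of $L_0$ with smooth inverse. For $s$ small $L_s$ lies in that neighbourhood, $G_s := \pi\circ\exp^{-1}\circ\,\phi(s,\cdot)\colon L_0\to L_0$ (with $\pi\colon\nu_{L_0}\to L_0$ the projection) is a diffeomorphism close to the identity, and $z(s)_p=\exp^{-1}\big(\phi(s,G_s^{-1}(p))\big)$; since $\exp^{-1}$, $\phi$ and $\pi$ are smooth and $s\mapsto G_s^{-1}$ is smooth in the finite-dimensional parameter by the inverse function theorem (cf. the discussion of families of diffeomorphisms following Hamilton \cite{hamiltonnashmoser}), $(s,p)\mapsto z(s)_p$ and hence $(s,p)\mapsto j^1_p z(s)$ are smooth. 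The same argument over $L_s$, using the smooth pushforward identification $\nu_{L_0}\cong\nu_{L_s}$ established in Proposition \ref{basicgluingsmoothness}, makes $(s,s',p)\mapsto w(s,s')_{\phi(s,p)}$ smooth. Finally, the extended transfer map $T_2\colon TM\oplus J^1(TM)\oplus\Gr_n(TM)\to TM\oplus J^1(TM)\oplus\Gr_n(TM)$ of Proposition \ref{basictransfersmoothness} is an injective immersion with smooth inverse on its image; feeding it the smooth data $(z(s)_p,\nabla z(s)_p,T_pL_0)$ exhibits $T_2^{z(s)}|_p$ as a smoothly varying linear isomorphism, whose inverse therefore also varies smoothly, and composing this inverse with $w(s,s')$ re-expressed over the base point $\exp_p(z(s)_p)=\phi(s,G_s^{-1}(p))\in L_s$ gives $v(s,s')_p$ as a smooth function of $(s,s',p)$; hence $(s,s')\mapsto v(s,s')$ is smooth.

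\textbf{Main obstacle.} As with Proposition \ref{basicgluingsmoothness}, no individual step is hard in isolation; the real care is in keeping track of base points, since $z(s)$ and $w(s,s')$ are naturally parametrised through $\phi$ whereas $T_2^{z(s)}$ sends the fibre of $\nu_{L_0}$ over $p$ to the fibre of $\nu_{L_s}$ over the shifted point $\exp_p(z(s)_p)$, so the reparametrising diffeomorphisms $G_s$ and their smooth inverses must be carried coherently through the whole composite before smoothness follows formally from Propositions \ref{basicgluingsmoothness} and \ref{basictransfersmoothness}.
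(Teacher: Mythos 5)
Your proposal is correct, and it reaches the same object $v(s,s')$ as the paper, but it factors the construction differently. The paper performs a single inversion: it defines the combined map \eqref{eq:generalisedexp} on $\nu_{L_0}\times\U$, $(v,s)\mapsto(\exp(T_2(v,j^1(u_s),T L_0)),s)$, checks by a curve argument that its derivative along the zero section is an isomorphism, inverts it on a uniform tubular neighbourhood (using compactness of $L_0$), and then composes with the family of inclusions of $L'_{s'}$ — so the transfer $T_2$ and the exponential are inverted simultaneously and one never has to separately invert the normal exponential of $L_s$ or the fibrewise linear map $T_2^{z(s)}$. You instead invert twice (the family of normal exponentials to get $w(s,s')$, then the fibrewise $T_2^{z(s)}$ using the injective-immersion part of Proposition \ref{basictransfersmoothness}) and pay for it with the base-point bookkeeping you correctly flag at the end. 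The one place where your write-up is thinnest is the sentence ``the same argument over $L_s$'' producing $w(s,s')$: to make the joint smoothness in $(s,s')$ of this family of inversions rigorous you would set up and invert the map $(v,s)\mapsto(\exp_{P_s(v)}(\exp_{u_s}(\pi(v))),s)$ with $P_s$ the pushforward identification $\nu_{L_0}\cong\nu_{L_s}$ — which is precisely the paper's map \eqref{eq:generalisedexp} with $T_2$ replaced by pushforward, so you end up reproducing the paper's key step for that sub-problem anyway. What your decomposition buys is that each individual inversion is more elementary (a standard parametrized tubular neighbourhood, then a smoothly varying family of linear isomorphisms); what the paper's buys is that only one application of the inverse function theorem and one derivative computation are needed, with no reparametrising diffeomorphisms to track.
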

\begin{proof}
The existence of the normal vector fields is immediate, so we have to prove that $v(s, s')$ depends smoothly on $(s, s')$. We know that $\exp$ is a smooth map $TM \to M$. Write $u_{s}$ for the normal vector field to $L_0$ whose image is $L_s$, and consider the map
\begin{equation}
\label{eq:generalisedexp}
\begin{tikzcd}[row sep=0pt]
\nu_L \times \U \ar{r}& M\times \U, \\[0pt]
(v, s) \ar[mapsto]{r}& (\exp(T_2(v, j^1(u_s)_{\pi(v)}, T_{\pi(v)}L_0)), s).
\end{tikzcd}
\end{equation}
where $j^1$ is the jet prolongation map of Proposition \ref{newjetbundles}, and we choose some smooth extension of this to $J^1(TM)|_L$ as before. Using Proposition \ref{basictransfersmoothness}, we have that \eqref{eq:generalisedexp} is smooth. It is also easy to see that its derivative at $(0_p, s)$ is an isomorphism if and only if the component $T_{0_p} \nu_L \to T_{\exp_p(u_s)} M$ is an isomorphism.

We show that this is the case if $s$ is sufficiently small, by finding curves for each tangent vector and considering their image under \eqref{eq:generalisedexp}. $T_{0_p} \nu_L$ can be decomposed as the direct sum $T_{0_p}((\nu_L)_p) \oplus T_pL$; clearly, $T_{0_p}((\nu_L)_p) \cong (\nu_L)_p$. For $v' \in T_pL$, we can choose some curve $\gamma$ in $L$ through $p$ with tangent $v'$ and then consider the corresponding curve $0_\gamma$ in $\nu_L$. The  image of this curve under \eqref{eq:generalisedexp} is precisely $\exp_{u_s} \circ \gamma$, so for $u_s$ sufficiently small, $T_pL \subset T_{0_p} \nu_L$ is mapped isomorphically to $T_{\exp_p(u_s)}L \subset T_{\exp_p(u_s)}M$. On the other hand, for $v' \in T_{0_p}((\nu_L)_p)$, we choose the curve $sv'$. The image of this curve is $\exp(T_2(sv'))$; since the derivative of $\exp$ at zero is the identity, the tangent at zero is just $T_2 (v')$ and hence $T_{0_p}((\nu_L)_p) \subset T_{0_p} \nu_L$ is mapped isomorphically to $(\nu_{L_s})_{\exp_p(u_s)} \subset T_{\exp_p(u_s)}M$, by Proposition \ref{transferisomorphism}. 

Hence, for $s$ sufficiently small, the derivative $T_{0_p} \nu_L \to T_{\exp_p(u_s)} M$ is an isomorphism, and so \eqref{eq:generalisedexp} is a local diffeomorphism. Since $L_0$ is compact, we can find a tubular neighbourhood $T$ of $L_0$ on which the inverse map
\begin{equation}
\label{eq:generalisedexpinv}
T \times \U \to \nu_{L_0} \times \U
\end{equation}
is well-defined. If $v(0, 0)$ is sufficiently small, then $L'_0$ lies in $T$, and hence so does $L'_{s'}$ for $s'$ sufficiently small. Clearly, $L'_{s'}$ can be essentially equivalently viewed as a smooth family of inclusions of $L$ into $M$. The map desired is given by the composition of these inclusions with \eqref{eq:generalisedexpinv}; as a composition with a smooth map, this is smooth. 
\end{proof}
\begin{rmk}
The same holds if $L_s$ and $L'_{s'}$ are asymptotically cylindrical submanifolds with decay rate uniformly bounded from below. The argument to construct a tubular neighbourhood on which \eqref{eq:generalisedexpinv} is smooth goes through just as before, noting that the metric converges to a limit, so \eqref{eq:generalisedexp} does, essentially by the same argument as in Proposition \ref{basictransfersmoothness}. We have to check that composition behaves in this case: but this works exactly as in the compact case.
\end{rmk}
We now explain how the maps $T_1$ and $T_2$ yield the identifications required at the beginning of this subsection. We begin with $T_1$. 
\begin{prop}
\label{basetransferfundamental}
Let $M$, $L$, $L'$, $v$ and $T_1$ be as in Definition \ref{defin:sectiontransfers}. Let $v_s$ be a smooth curve of normal vector fields on $L$ with $v_0 = v$.  Then $\exp_{v_s}(L)$ defines a smooth curve of submanifolds, and when $s=0$ it passes through $\exp_v(L) = L'$. Therefore, there exists a smooth curve $w_s$ of normal vector fields to $L'$ such that
\begin{equation}
\exp_{v_s}(L) = \exp_{w_s}(L').
\end{equation}
We have $w' = T_1 v'$ where $w'$ and $v'$ are the derivatives at $0$ of these curves of vector fields.  
\end{prop}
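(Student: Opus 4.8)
The plan is to reduce the claim to the parametrisation‑independence of the \emph{normal} part of a variation of a submanifold, and then to identify that normal part with $T_1$ via the Jacobi‑field description of $d\exp$. First I would dispose of the existence and smoothness of $w_s$: for $s$ in a small enough interval the submanifolds $\exp_{v_s}(L)$ all lie in a fixed tubular neighbourhood of $L'$ (here one uses that $v_0=v$ is $C^1$‑small, as in Definition \ref{defin:sectiontransfers}), so each is the graph over $L'$ of a unique small normal vector field $w_s$, and $s\mapsto w_s$ is smooth for exactly the reason used in Proposition \ref{transfersmoothness} --- compose the smooth family of inclusions $\phi_s\colon p\mapsto\exp_p(v_s(p))$ with the smooth projection of the tubular neighbourhood onto $L'$ and with the inverse of the tubular‑neighbourhood exponential map. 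This leaves only the identity $w'=T_1v'$.

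The key point is that if $\Phi_s,\Psi_s\colon L'\to M$ are two smooth families of embeddings, each with image the same submanifold $\Sigma_s$ for every small $s$, and with $\Phi_0=\Psi_0$ equal to the inclusion of $L'$, then $\frac{\partial}{\partial s}\Psi_s$ and $\frac{\partial}{\partial s}\Phi_s$ at $s=0$ have the same component normal to $L'$: writing $h_s:=\Phi_s^{-1}\circ\Psi_s$, a smooth family of diffeomorphisms of $L'$ with $h_0=\id$, and differentiating $\Psi_s=\Phi_s\circ h_s$ at $s=0$ introduces only the extra term $(d\Phi_0)(\frac{\partial}{\partial s}h_s|_{0})$, which is tangent to $L'$ since $\frac{\partial}{\partial s}h_s|_0$ is. I apply this to $\Sigma_s=\exp_{v_s}(L)=\exp_{w_s}(L')$ with two parametrisations by $L'$: first $\Psi_s(q)=\exp_q(w_s(q))$, for which $\Psi_0$ is the inclusion and $\frac{\partial}{\partial s}\Psi_s|_0(q)=w'(q)$, already normal; second $\Phi_s:=\phi_s\circ g$ where $g\colon L'\to L$ is the inverse of $\exp_v|_L\colon L\to L'$, so that $\Phi_0=\exp_v\circ g$ is again the inclusion of $L'$.

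It remains to compute $\frac{\partial}{\partial s}\Phi_s|_0$. At $q\in L'$ this is $\frac{d}{ds}\big|_{0}\exp_{g(q)}\!\big(v_s(g(q))\big)=(d\exp_{g(q)})_{v(g(q))}\big(v'(g(q))\big)$, and by the identification of the differential of $\exp$ with Jacobi fields recalled in the proof of Proposition \ref{basicgluingsmoothness} this is precisely the final value of the Jacobi field along the geodesic from $g(q)$ with initial value $0$ and initial covariant derivative $v'(g(q))$ --- in other words, the value at $q$ of the pre‑projection map underlying $T_1$ applied to $v'(g(q))$. Its component normal to $T_qL'=\ell'$ is, by Definition \ref{defin:pointwisetransfers} together with the observation after Definition \ref{defin:sectiontransfers} that the projection there is the orthogonal projection onto $\nu_{L'}$, exactly $(T_1v')(q)$. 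Equating normal components of $\frac{\partial}{\partial s}\Psi_s|_0$ and $\frac{\partial}{\partial s}\Phi_s|_0$ yields $w'(q)=(T_1v')(q)$ for all $q\in L'$, which is the assertion $w'=T_1v'$.

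The main obstacle is bookkeeping rather than analysis: making sure that all the vectors at $s=0$ are correctly placed over points of $L'=\exp_v(L)$, that the parametrisation‑independence of the normal variation is formulated locally enough to apply verbatim (so that the same proof works in the asymptotically cylindrical case too), and that it is the hypothesis $v_0=v$ --- not $v_0=0$ --- that makes the pre‑projection map coincide with the transfer map $T_1$ rather than with the identity. The smoothness of $h_s$ (and of $w_s$) is handled exactly as in Proposition \ref{transfersmoothness}, and I would refer to that argument rather than repeat it.
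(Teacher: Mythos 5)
Your proposal is correct and takes essentially the same route as the paper's proof: both identify the variation field of $\exp_{v_s}(L)$ at a point of $L'$ as the final value of the Jacobi field with initial data $X_0=0$, $(\nabla X)_0=v'$ via the Jacobi-field description of $d\exp$, and both then show that passing to the normal field $w_s$ amounts to taking the normal component, the reparametrisation contributing only a tangential term. The only difference is cosmetic: the paper carries out the reparametrisation step via the explicit composition of $\exp_{x_s}$ with the inverse of $\pi\circ\exp_{x_s}$, whereas you package it as the general statement that two parametrisations of the same family of submanifolds, agreeing at $s=0$ with the inclusion, have $s$-derivatives differing by a tangential field.
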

\begin{proof}
To see $w_s$ is a smooth curve, we apply Proposition \ref{transfersmoothness} with $L_s = L'$ for all $s$, and $L'_{s'} = \exp_{v_s}(L)$. Then $w_s$ is precisely the normal vector field to $L'$ given by Proposition \ref{transfersmoothness} and therefore depends smoothly on $s$. We now need to show that the derivative of this curve at zero is given by $T_1(v')$. 

We begin by noting that we can find sections $x_s$ of $TM|_{L'}$, such that for all $p \in L_0$ we have $\exp_{v_s}(p) = \exp_{x_s}\exp_{v_0}(p)$. We can evaluate $x'$, as follows. By construction, and the fact the derivative of $\exp$ at $0$ is the identity, $x'_{\exp_{v_0}(p)}$ is the derivative of the curve $\exp_p(v_s)$ at $s=0$. For each $s$, $\exp_p(v_s)$ is given by the final position of a geodesic; since $v_s(p)$ is smooth in $s$, we have a smooth variation through geodesics. $x'$ is then the final value of the corresponding Jacobi field $X$ along the geodesic $\gamma$ corresponding to $v_0(p)$. Since all the geodesics start at the same point $p$, we certainly have $X_0 = 0$. For $(\nabla_{\dot\gamma} X)_0$, we note that this as usual is equal to the derivative in $s$ of the initial velocities, hence $v'$. That is, $x'$ is given by the final value of the Jacobi field along $\gamma$ with initial conditions $X_0 = 0$, $(\nabla_{\dot\gamma} X)_0 = v'$. Note that this is just evaluating the derivative of the Riemannian exponential map: for an alternative proof, see \cite[Corollary 3.46]{ghl}.

We now have to move from $x'$ to $w'$. To determine $w_s$ from $x_s$, we compose $\exp_{x_s}$ with the projection $\pi$ of a tubular neighbourhood map around $L'$, invert the resulting diffeomorphism of $L'$, and then compose this inverted diffeomorphism with $\exp_{x_s}$. To evaluate the derivative of this is fairly straightforward. The derivative of the curve $\pi \circ \exp_{x_s}$ of diffeomorphisms of $L$ is just the tangential part of $x'$ (see \cite[Example 4.4.5]{hamiltonnashmoser}). On inversion around the identity $\pi \circ \exp_{x_0}$, we get the negative of this tangential part. Recomposing this with $\exp_{u_s}$ gives addition of the derivatives, and so 
\begin{equation}
w' = \npt x' = T_1 v'.\qedhere
\end{equation}
\end{proof}
$T_2$ is not quite the most obvious other way of constructing a curve of submanifolds through $L'$. We introduce the following notation.
\begin{defin}
\label{defin:dernpt}
Let $M$ be a Riemannian manifold and let $L$ be a submanifold. Let $u$ be a normal vector field on $L$, so that $\exp_{su}(L)$ forms a curve of submanifolds for $u$ sufficiently small. Let $v$ be a normal vector to $L$, with $\pi(v) = p \in L$. Let $\gamma$ be the geodesic with initial velocity $u_p$, so that $\gamma(s) \in \exp_{su}(L)$ for each $s$. Let $J$ be the Jacobi field along $\gamma$ with initial conditions $J_0 = v$ and $(\nabla_\dds J)_0 = 0$. Then $J_s \in TM|_{\exp_{su}(L)}$ for each $s$, so we can consider its normal part. This gives another curve of tangent vectors $N_s$ with $\pi(N_s) = \gamma(s)$. We may consider the derivative $N'$ of $N_s$ at $s=0$. We define
\begin{equation}
\npt'_u v = N'.
\end{equation}
\end{defin}
Concretely, if $\{e_i(s)\}$ is a smooth family of orthonormal bases for $T_{\gamma(S)}\exp_{su}(L)$, 
\begin{equation}
\npt'_u v = -\sum g'(e_i, v) e_i - g(e'_i, v) e_i.
\end{equation}
We will not use this form explicitly, but will rely on the fact that it is small if $v$ is. Now $\npt'_u v$ is not necessarily a normal vector on $L$, but we can extend the map $T_1$ of Definition \ref{defin:sectiontransfers} to sections of $TM|_L$. We can thus make
\begin{defin}
\label{defin:t3}
Let $M$ be a Riemannian manifold and let $L$ be a submanifold. Let $u$ and $v$ be normal vector fields on $L$, so that $\npt'_u v$ is a section of $TM|_L$. Then let $T_3u$ be $T_1(\npt'_u v)$ where we extend the map $T_1$ of Definition \ref{defin:sectiontransfers} constructing a normal vector field on $\exp_v(L)$. 
\end{defin}
Note that $T_1 u$ and $T_2 u$ are linear maps of $u$, but $T_3 u$ is not. Moreover, we can write $T_3 u$ also as a map induced from a map on a bundle as in Definition \ref{defin:pointwisetransfers}, but we would have to define it on $J^1(TM) \oplus J^1(TM) \oplus \Gr_n(TM)$, as the condition in Definition \ref{defin:dernpt} that we take the normal part of $J_s$ on $\exp_{su}(L)$ requires the first jet of $u$ to determine the tangent space $T_{\exp_{su}(p)}\exp_{su}(L)$. We will not give details of these arguments. 

We can now explain the relevance of $T_2$. 
\begin{prop}
\label{fieldtransferfundamental}
Let $M$, $L$, $L'$, $v$ and $T_2$ be as in Definition \ref{defin:sectiontransfers}. Let $u_s$ be a smooth curve of normal vector fields on $L$ with $u_0 = 0$ and derivative $u'$; let $T_3$ be as in Definition \ref{defin:t3}. Let $T_{2, s}$ be the transfer maps from $L$ to $\exp_{u_s}(L)$ given by taking the appropriate map $T_2$ from Definition \ref{defin:sectiontransfers}. Then $\exp_{T_{2, s}(v)}\exp_{u_s}(L)$ defines a smooth curve of submanifolds, and when $s=0$ it passes through $\exp_v(L) = L'$. Therefore, there exists a smooth curve $w_s$ of normal vector fields to $L'$ such that
\begin{equation}
\exp_{T_{2, s}(v)} \exp_{u_s} (L) = \exp_{w_s}(L').
\end{equation}
We have $w' = T_2 u' + T_3u'$. 
\end{prop}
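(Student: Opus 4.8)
The plan is to follow the template of Proposition \ref{basetransferfundamental}; the two new features here are that the base geodesics no longer all issue from the single point $p$, and that their initial velocities are the transferred vectors $T_{2,s}(v)$ rather than $v$ itself. First I would dispose of the elementary points. Since $u_s$ is a curve of normal vector fields with $u_0 = 0$, its derivative $u'$ is again a normal vector field; unwinding Definition \ref{defin:pointwisetransfers} with zero deformation jet gives $T_{2,0}(v) = v$, so at $s=0$ the curve $\exp_{T_{2,s}(v)}\exp_{u_s}(L)$ is $\exp_v(L) = L'$. That $s \mapsto \exp_{T_{2,s}(v)}\exp_{u_s}(L)$ is a smooth family of submanifolds follows because $(p,s) \mapsto \exp_{\exp_p(u_s)}\bigl(T_{2,s}(v)_{\exp_p(u_s)}\bigr)$ is a composite of $\exp$ with the family $T_{2,s}(v)$, which is smooth in $(p,s)$ by Proposition \ref{basictransfersmoothness} applied to the $1$-jets of the $u_s$. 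Applying Proposition \ref{transfersmoothness} with the constant family $L_s \equiv L'$ and $L'_{s'} = \exp_{T_{2,s'}(v)}\exp_{u_{s'}}(L)$ (with $L_0 = L'_0 = L'$, the required normal vector field being zero) then produces the smooth curve $w_s$, and $w_0 = 0$.

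Next I would reduce the computation of $w'$ to a Jacobi-field calculation, exactly as in Proposition \ref{basetransferfundamental}. Writing $q = \exp_p(v)$, introduce the sections $x_s$ of $TM|_{L'}$, vanishing at $s=0$, determined by $\exp_q(x_s) = c(s) := \exp_{a(s)}(\xi(s))$, where $a(s) = \exp_p(u_s)$ is the point of $\exp_{u_s}(L)$ over $p$ and $\xi(s) = T_{2,s}(v)_{a(s)}$. Composing $\exp_{x_s}$ with the tubular projection onto $L'$, inverting near the identity and recomposing gives $w' = \npt x'$, the normal part relative to $L'$; and since $(d\exp_q)_0 = \id$ and $x_0 = 0$, $x'_q = c'(0)$. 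The variation $\Gamma(s,t) = \exp_{a(s)}(t\xi(s))$ is through geodesics with $\Gamma(0,\cdot) = \gamma$, the geodesic from $p$ with velocity $v_p$ (using $a(0) = p$ and $\xi(0) = T_{2,0}(v)_p = v_p$), so $c'(0) = J(1)$ for the Jacobi field $J$ along $\gamma$ with $J(0) = a'(0) = u'_p$ and $(\nabla_\ddt J)(0) = \xi'(0)$.

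The heart of the argument is to identify $\xi'(0)$ with $(\npt'_{u'}v)_p$. Because $\xi(s)$ depends smoothly on the $1$-jet of $u_s$ (Proposition \ref{basictransfersmoothness}) and $u_0 = 0$, its $s$-derivative at $0$ depends only on $u'$, so I may compute with $u_s = su'$. Then the geodesic of Definition \ref{defin:pointwisetransfers} becomes $t \mapsto \exp_p(tsu'_p)$; the substitution $\tau = ts$ identifies the Jacobi field appearing there with the field $J$ of Definition \ref{defin:dernpt} (for the normal vector $v_p$ and the normal field $u'$), and identifies $a(s)$ with $\gamma(s) \in \exp_{su'}(L)$, so that $\xi(s)$ is precisely the curve $N_s$ of Definition \ref{defin:dernpt} and hence $\xi'(0) = (\npt'_{u'}v)_p$. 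Finally, splitting $J = J_A + J_B$ by linearity of the Jacobi equation, with initial data $(u'_p,0)$ and $(0,(\npt'_{u'}v)_p)$ respectively, Definitions \ref{defin:pointwisetransfers}, \ref{defin:sectiontransfers} and \ref{defin:t3} give $\npt J_A(1) = (T_2 u')_q$ and $\npt J_B(1) = T_1\bigl((\npt'_{u'}v)_p\bigr) = (T_3 u')_q$; since the normal projection is linear, $w' = \npt x' = \npt J_A(1) + \npt J_B(1) = T_2 u' + T_3 u'$.

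The hard part is this identification of $\xi'(0)$: it requires disentangling the two nested Jacobi-field constructions — the one built into $T_2$ and the one built into $\npt'$ — which I expect to manage via the reparametrisation above together with the observation that $\xi'(0)$ is first-order in $u_s$, so that one may take $u_s = su'$ without loss of generality. The remaining steps are routine adaptations of the proof of Proposition \ref{basetransferfundamental}.
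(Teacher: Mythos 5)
Your proposal is correct and follows essentially the same route as the paper's proof: reduce to the final value of a Jacobi field via the sections $x_s$, use smooth dependence on the $1$-jet of $u_s$ to replace $u_s$ by $su'$, identify the $s$-derivative of $T_{2,s}(v_p)$ with $\npt'_{u'}v$ via Definition \ref{defin:dernpt}, and split the Jacobi field by linearity to read off $T_2u' + T_3u'$. Your explicit reparametrisation $\tau = ts$ makes a step precise that the paper leaves implicit, but it is the same argument.
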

\begin{proof}
To see that $w_s$ is a smooth curve, we show that $\exp_{T_{2, s}(v)} \exp_{u_s} (L)$ is a smooth curve of submanifolds; then we may apply the argument at the beginning of Proposition \ref{basetransferfundamental}. We have to show that if $\iota$ is the inclusion of $L$ then $\exp_{T_{2, s}(v)} \exp_{u_s} \iota$ is a smooth curve of maps. But this is the image under the smooth map
\begin{equation}
\label{eq:fieldtransfersmoothness}
\begin{tikzcd}[row sep=0pt]
J^1(\nu_L) \ar{r} & M,\\
(u, \nabla u) \ar[mapsto]{r} & \exp(T_2(v_{\pi(u)}, u,\nabla u, T_{\pi(u)}L)),
\end{tikzcd}
\end{equation}
where $T_2$ is as in Definition \ref{defin:pointwisetransfers} of the smooth curve $j^1(u_s)$ of first jets, hence is indeed smooth. 

To find $w'$, as in Proposition \ref{basetransferfundamental}, we first construct for each $p \in L$ a family of tangent vectors $x_s$ to $\exp_p(v)$ such that
\begin{equation}
\label{eq:generalisedexptwo}
\exp_{T_{2, s}(v)}\exp_{u_s}(p) = \exp_{x_s} \exp_v(p).
\end{equation}

Again as in Proposition \ref{basetransferfundamental}, we identify $\exp_{T_{2, s}(v)} \exp_{u_s}(p)$ as the curve of final positions of a variation through geodesics, so that $x'(p)$ is the final value of the Jacobi field $X$ corresponding to this variation. These geodesics have initial positions $\exp_{u_s}(p)$ and initial velocities $T_{2, s}(v_p)$. Differentiating in $s$, we see that $X_0 = u'_p$. 

Now also $x_s$ at $\exp_p(v_p)$ is given by composing $\exp^{-1}_{\exp_p(v)}$ with the left hand side of \eqref{eq:generalisedexptwo}. Hence, it depends smoothly on the curve of $1$-jets  of $u_s$ at $p$. In particular, $x'$ at $\exp_p(v_p)$ depends smoothly on the $1$-jets of $u_0 = 0$ and of $u'$. These $1$-jets remain the same if we replace $(u_s)$ by $(su')$, and so to find $x'$ we may suppose that $u_s = su'$. 

With this curve, we are in the situation of Definition \ref{defin:dernpt}. We let $\gamma(s)$ be the geodesic with initial velocity $u'_p$, and let $J_s$ be the Jacobi field along $\gamma$ with initial conditions $J_0 = v_p$ and $(\nabla_\dds J)_0 = 0$. Then $T_{2, s}(v_p)$ is just the normal part of $J_s$. Hence, the derivative in $s$ of $T_{2, s}(v_p)$ is just $\npt'_{u'}v$. This is equivalently $(\nabla_\ddt X)_0$. 

That $w'$ is given by taking the normal part of $x'$ follows exactly as in Proposition \ref{basetransferfundamental}. Hence $w'$ is given by taking the normal part of the final value of Jacobi fields, and so is a linear combination of $T_1$ and $T_2$: examining the initial conditions precisely gives $w' = T_2 u' + T_3u'$. 
\end{proof}
Putting the ideas of Propositions \ref{basetransferfundamental} and \ref{fieldtransferfundamental} together, we obtain
\begin{prop}
\label{combinedtransfer}
Let $M$, $L$, $L'$ and $v$ be as in Definition \ref{defin:sectiontransfers}. Suppose that $u_s$ is a curve of normal vector fields to $L_0$ and $w_s$ is a curve of normal vector fields to $L'$. For each $s$, we can find a normal vector field $v_s$ to $\exp_{u_s}(L_0)$ so that 
\begin{equation}
\exp_{v_s}\exp_{u_s} (L_0) = \exp_{w_s}\exp_{v_0} (L_0).
\end{equation}
The transfer map $T_2$ of Definition \ref{defin:sectiontransfers} defines an isomorphism between the normal bundles to $L_0$ and $\exp_{u_s}(L_0)$. Hence, we may identify $v_s$ with a normal vector field on $L$, and so $(v_s)$ is a smooth curve of such normal vector fields. We have 
\begin{equation}
\label{eq:bigtransferimplicit}
w' =  T_1 v' + T_2 u' + T_3u',
\end{equation}
and so the derivative of the map $(u_s, w_s) \mapsto v_s$ is given by
\begin{equation}
\label{eq:bigtransferexplicit}
v' = T_1^{-1}(T_2 u' + T_3u' - w' ).
\end{equation}
\end{prop}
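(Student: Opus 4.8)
The plan is to exhibit the assignment $(u_s, w_s)\mapsto v_s$ as the solution of a single smooth equation between submanifolds near $L'$ and then to differentiate that equation, reading off the two partial derivatives from Propositions \ref{basetransferfundamental} and \ref{fieldtransferfundamental}. Throughout I would use the normalisation built into the hypotheses that $u_0 = 0$ and $w_0 = 0$ (equivalently, that $v_0$ is the field $v$ of Definition \ref{defin:sectiontransfers}), which is what allows both of those propositions to be invoked at the base configuration, so that the maps $T_1, T_2, T_3$ appearing there are the bundle maps of Definitions \ref{defin:sectiontransfers} and \ref{defin:t3}.

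First I would establish smoothness of $s\mapsto v_s$. Applying Proposition \ref{transfersmoothness} to the finite-dimensional families $L_s = \exp_{u_s}(L_0)$ and $L'_{s'} = \exp_{w_{s'}}(L')$ produces, for $(s,s')$ small, a normal vector field $v(s,s')$ on $L_0$, smooth in $(s,s')$, whose image under the transfer $T_2$ from $L_0$ to $\exp_{u_s}(L_0)$ has Riemannian exponential equal to $L'_{s'}$; by uniqueness of the normal vector field representing a nearby submanifold this $v(s,s')$ is exactly the object identified in the statement, and $v_s = v(s,s)$, so $s\mapsto v_s$ is smooth and $v' = \partial_s v(s,0)|_0 + \partial_{s'} v(0,s')|_0$.

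Then I would compute the two partials. Setting $s=0$ gives $\exp_{v(0,s')}(L_0) = \exp_{w_{s'}}(L')$ with $u_0=0$ and the transfer trivial, which is exactly the setting of Proposition \ref{basetransferfundamental}; it gives $w' = T_1\bigl(\partial_{s'} v(0,s')|_0\bigr)$, hence $\partial_{s'} v(0,s')|_0 = T_1^{-1} w'$ since $T_1$ is a bundle isomorphism (Proposition \ref{transferisomorphism}). Setting $s'=0$ gives $\exp_{T_{2,s} v(s,0)}\exp_{u_s}(L_0) = L'$ for all small $s$; writing the left-hand side via a tubular neighbourhood of $L'$ as $\exp_{\mathcal{W}(\eta,s)}(L')$ with $\eta$ a normal vector field on $L_0$, Proposition \ref{basetransferfundamental} identifies $\partial_\eta \mathcal{W}|_{(v_0,0)}$ with $T_1$ and Proposition \ref{fieldtransferfundamental} identifies $\partial_s \mathcal{W}|_{(v_0,0)}$ with $T_2 u' + T_3 u'$; differentiating $\mathcal{W}(v(s,0),s)\equiv 0$ at $s=0$ then gives $\partial_s v(s,0)|_0 = -T_1^{-1}(T_2 u' + T_3 u')$. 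Adding the two contributions yields $v' = T_1^{-1}(w' - T_2 u' - T_3 u')$, which rearranges to \eqref{eq:bigtransferimplicit}, whence \eqref{eq:bigtransferexplicit}.

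The main obstacle — and the reason Propositions \ref{transfersmoothness} and \ref{basictransfersmoothness} are set up as they are — is the joint smoothness of all these constructions in the submanifold parameter and in the field being transferred, uniformly enough that the chain-rule splitting $v' = \partial_s v(\cdot,0)|_0 + \partial_{s'} v(0,\cdot)|_0$ is legitimate and that the partial derivatives of the composite map coincide with the derivatives computed in Propositions \ref{basetransferfundamental} and \ref{fieldtransferfundamental}, which were each proved by a separate variation-through-geodesics argument. Some care is also needed to check that the tubular-neighbourhood identification of submanifolds near $L'$ with normal vector fields on $L'$ is the same one in both of those propositions and in the present argument, so that the transfer maps there are literally the base-configuration bundle maps rather than approximations to them.
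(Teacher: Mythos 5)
Your proposal is correct and is essentially the paper's intended argument: the statement is obtained precisely by combining Propositions \ref{basetransferfundamental} and \ref{fieldtransferfundamental}, with smoothness supplied by Proposition \ref{transfersmoothness}, and your two-parameter family $v(s,s')$ with the chain-rule splitting is just a careful organisation of that combination. One caveat: the formula you actually derive, $v' = T_1^{-1}(w' - T_2u' - T_3u')$, is the one consistent with \eqref{eq:bigtransferimplicit}, whereas the printed \eqref{eq:bigtransferexplicit} has the opposite sign (a slip in the paper), so your closing ``whence \eqref{eq:bigtransferexplicit}'' should not be asserted as written.
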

\subsection{Nearly special Lagrangian submanifolds}
\label{ssec:normalvectorfieldsspeclag}
We now pass to the case where our submanifolds are close to special Lagrangian. By Lemma \ref{Jbehaves}, on such submanifolds normal vector fields can be identified with one-forms. This means that we can write the maps $T_1$ and $T_2$ of Definition \ref{defin:pointwisetransfers} and hence of Definition \ref{defin:sectiontransfers}, and the map $T_3$ of Definition \ref{defin:t3}, in terms of one-forms: we carry this out carefully. 

To do this, we first need to introduce the requirement that the submanifold be ``nearly special Lagrangian" to the bundle $TM \oplus J^1(TM) \oplus \Gr_n(TM)$ used in Definition \ref{defin:pointwisetransfers} and to the corresponding ``target bundle" in which the image of the extended map \eqref{eq:Tiextended} lies. We also have to restrict to bundles containing normal vectors to deal with inverses, and make a corresponding definition for one-forms. We thus make the following
\begin{defin}
\label{defin:newbigsubbundles}
Let $M$ be a Calabi--Yau manifold. Let $\cO$ be the subbundle of $\Gr_{n}(TM)$ consisting of those subspaces $\ell$ for which $|\omega|_\ell|<1$. Let $\cO'$ be the subbundle of $J^1(TM) \oplus \cO$ such that also the subspace $\ell'$ constructed in Definition \ref{defin:pointwisetransfers} satisfies $|\omega|_{\ell'}|<1$. 

Let $N$ be the subbundle of $TM \oplus J^1(TM) \oplus \Gr_n(TM)$ (over $M$) consisting of those $(u, (v, \nabla v), \ell)$ such that $u$ is normal to the subspace $\ell$. Let $N'$ be the subspace
\begin{equation}
\begin{split}
\{(u, (v, \nabla v), \ell) &\in TM \times (J^1(TM) \oplus \Gr_n(TM)): \\& \pi_{TM}(u) = \exp_{\pi_{J^1(TM) \oplus \Gr_n(TM)}((v, \nabla v), \ell)} (v) \text{ and } u \text{ is normal to $\ell'$}\},
\end{split}
\end{equation}
where $\ell'$ is the subspace constructed in Definition \ref{defin:pointwisetransfers}. 

Let $\F$ be the subbundle of $T^*M \oplus J^1(TM) \oplus \Gr_n(TM)$ consisting of $(\alpha, (v, \nabla v), \ell)$ with $\alpha^\sharp$ in $\ell$. 
\end{defin}
$\cO$ and $\cO'$ are open subbundles. It is easy to see, similarly to Lemma \ref{gluingpointwisespacesubmanifold} that $N$, $N'$ and $\F$ are smooth manifolds and the projection maps define bundle structures. The intersection of $N$, $N'$ and $\F$ with $\cO'$ in their $\Gr_n(TM)$ component are then also open subbundles. Note that $N$ and $N'$ may be defined with merely a Riemannian metric, and this extension will be used in Proposition \ref{revisedtransferofatis} below. 

We may make
\begin{defin}
\label{defin:iandiprime}
Let $I: N \cap \cO' \to \F \cap \cO'$ be given by taking $(u, (v, \nabla v), \ell)$, contracting $u$ with $\omega$, and then taking the orthogonal projection to the tangential part. 

Let $I': N' \cap \cO' \to \F \cap \cO'$ be given similarly by contracting $u$ with $\omega$ at $\exp_v(p)$, pulling back the resulting covector using $(v, \nabla v)$ by applying the dual of the map \eqref{eq:pointwisepushforward}, and the taking the orthogonal projection to the tangential part. 
\end{defin}
We have, reasonably simply, and using the argument in Proposition \ref{basicgluingsmoothness} for pullback
\begin{prop}
\label{pullbackisomorphismssmooth}
$I$ and $I'$ are smooth maps, depending continuously on the metric as in Proposition \ref{basictransfersmoothness}, and locally around any $(u, 0, 0, \ell)$ are diffeomorphisms. 
\end{prop}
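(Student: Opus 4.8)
The statement to prove is Proposition \ref{pullbackisomorphismssmooth}: that $I$ and $I'$ are smooth, depend continuously on the metric, and are local diffeomorphisms near points of the form $(u, 0, 0, \ell)$.

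\medskip

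The plan is to treat smoothness and metric-dependence first, then the local diffeomorphism claim, handling $I$ as the easy warm-up and $I'$ as the real content. For $I$, the map is literally a composition of three elementary operations on the bundles involved: contraction of a tangent vector $u$ with the parallel form $\omega$ (a fibrewise bilinear, hence smooth, bundle map), followed by orthogonal projection onto the tangential part relative to the subspace $\ell$ (a smooth map on $\cO'$, since the projection $\pi^1_0$ depends smoothly on $\ell \in \Gr_n(TM)$ and on the metric). Composing smooth bundle maps gives a smooth bundle map, and since all the pieces depend only algebraically on $\omega$ and $g$, continuity in the metric in the sense of Proposition \ref{basictransfersmoothness} (indeed in the stronger sense of smoothness in a finite-dimensional family of metrics, cf.\ Proposition \ref{basicgluingsmoothness}) is immediate.

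\medskip

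For $I'$, the only extra ingredient beyond $I$ is the pullback step: contract $u$ with $\omega$ at the point $\exp_v(p)$, then pull the resulting covector back along the isomorphism \eqref{eq:pointwisepushforward} $T_pM \cong T_{\exp_p(v)}M$ determined by $(v, \nabla v)$. The key point is that this isomorphism \eqref{eq:pointwisepushforward} has already been analysed: in the proof of Proposition \ref{basicgluingsmoothness} it is identified with pushforward of tangent vectors, constructed via solving the Jacobi equation, and is shown there to be smooth as a function of $(v, \nabla v)$ (and the basepoint, and continuously on the metric). Its dual, used to pull back covectors, is therefore also smooth and metric-continuous by the same argument (``dualising by choosing a smooth local trivialisation'', exactly as in the final paragraph of that proof). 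So $I'$ is again a composition of smooth bundle maps --- contraction, the dual pushforward isomorphism, and tangential projection (now relative to the subspace $\ell'$, which is a smooth function of $(v, \nabla v, \ell)$ on $\cO'$ by its construction in Definition \ref{defin:pointwisetransfers} and the Jacobi-field smoothness) --- hence smooth and continuous in the metric.

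\medskip

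It remains to show $I$ and $I'$ are local diffeomorphisms near a point $(u, 0, 0, \ell)$. At such a point $v = 0$ and $\nabla v = 0$, so $\ell' = \ell$ (the Jacobi fields defining $\ell'$ have zero initial velocity perturbation and integrate trivially over a zero-length geodesic), the pushforward isomorphism \eqref{eq:pointwisepushforward} is the identity, and hence $I'$ agrees with $I$ at such points. So it suffices to treat $I$. Here one computes the derivative: on the $TM$ (i.e.\ $u$-) factor, the map is $u \mapsto \pi^1_0(\iota_u \omega|_\ell)$, which by Lemma \ref{Jbehaves} (applicable because $\ell \in \cO$, so $|\omega|_\ell| < 1$) is an isomorphism from the normal part of $T_pM$ onto the tangential covectors $\ell^*$; and $I$ is the identity on the $J^1(TM) \oplus \Gr_n(TM)$ factors by construction (both $I$ and its target retain these factors unchanged). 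Thus $dI$ at $(u,0,0,\ell)$ is block-triangular with invertible diagonal blocks --- the $u$-block invertible by Lemma \ref{Jbehaves}, the remaining block the identity --- hence invertible. The inverse function theorem then gives that $I$, and so also $I'$, is a local diffeomorphism near every such point. The only place where genuine care is needed is in pinning down that $\ell'$, the pushforward isomorphism, and the tangential projections all reduce to their trivial values at $v = \nabla v = 0$ so that the $I'$ computation collapses to the $I$ computation; everything else is routine chasing of smooth bundle maps and an appeal to Lemma \ref{Jbehaves}.
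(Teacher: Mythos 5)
Your proof is correct and follows exactly the route the paper indicates (the paper offers no written proof beyond the remark that the result follows ``reasonably simply'' using the pullback argument of Proposition \ref{basicgluingsmoothness}): smoothness and metric-continuity by composing smooth bundle maps, with the pushforward/Jacobi-field analysis supplying the only nontrivial piece for $I'$, and the local diffeomorphism via the inverse function theorem, with the first part of Lemma \ref{Jbehaves} giving invertibility of the fibrewise block $u \mapsto \pi^1_0(\iota_u\omega|_\ell)$. The only compressed step --- ``$I'$ agrees with $I$ at such points, so it suffices to treat $I$'' --- is harmless because, as your block-triangular computation implicitly uses, the $u$-derivative at $(u,0,0,\ell)$ is taken with $(v,\nabla v,\ell)$ frozen at $(0,0,\ell)$, where $I'$ and $I$ coincide as functions of $u$.
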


This enables us to make our definitions of what $T_1$ and $T_2$ look like in terms of one-forms. 
\begin{defin}
\label{defin:taui}
Let $M$ be a Calabi--Yau manifold. 

First consider $\F \cap \cO'$. Define maps $\tau_1$ and $\tau_2$ on this space by $\tau_i = I' \circ T_i \circ I^{-1}$, with $T_i$ the extended map from Proposition \ref{basictransfersmoothness}. These maps are also local diffeomorphisms around any $(\alpha, 0, 0, \ell)$. Similarly define a map $\tau_3$ on a suitable bundle by composition of the pointwise version of the map $T_3$ of Definition \ref{defin:t3} with suitably extended versions of $I'$ and $I$. 

Now let $L$ be a submanifold of $M$ such that $\|\omega|_L\|_{C^0} < 1$. Given a sufficiently small normal vector field $v$ on $L$, we can construct $L' = \exp_v(L)$ and again have $\|\omega|_{L'}\|_{C^0} < 1$. For $i=1, 2$, we can then define a map $\tau_i$ from one-forms on $L$ to one-forms on $L'$ by taking $\alpha$ to $\tau_i(\alpha, (v, \nabla v), T_pL)$ where $\nabla v$ is extended somehow as in Definition \ref{defin:sectiontransfers}. Similarly, we can define a map $\tau_3$ by taking some extension of $\alpha$; the map will not depend on the extension. 
\end{defin}
Note from the discussion after Definition \ref{defin:t3} that we need the first jet of $\alpha$ to define our pointwise $\tau_3$. This is thus more complicated, and throughout this discussion we will omit the details. 

It is then immediate from the definitions that these maps of one-forms correspond to the maps $T_i$ of normal vector fields, since $I$ and $I'$ are the pointwise versions of $v \mapsto \iota_v \omega|_L$ and $v \mapsto \iota_v \omega|_{L'}$. Specifically, we have 
\begin{prop}
\label{tauiareti}
Let $M$, $L$ and $L'$ be as in Definition \ref{defin:taui}, and $i \in \{1, 2\}$. Then the following diagram commutes:
\begin{equation}
\begin{tikzcd}[column sep=large]
\text{normal vector fields on $L$} \ar[leftrightarrow]{r}{v \mapsto \iota_v \omega|_L}  \ar[leftrightarrow]{d}{T_i \text{ (Def. \ref{defin:sectiontransfers})}} & \text{one-forms on $L$} \ar[leftrightarrow]{d}{\tau_i \text{ (Def. \ref{defin:taui})}} \\
\text{normal vector fields on $L'$} \ar[leftrightarrow]{r}{v \mapsto \iota_v \omega|_{L'}} & \text{one-forms on $L = L'$}.
\end{tikzcd}
\end{equation}
where the horizontal maps are determined by Lemma \ref{Jbehaves}. For example, given a one-form $\alpha$ on $L$, we have the one-form $\tau_i\alpha$ on $L$. We find another one-form on $L$ by taking a normal vector field $u$ on $L$ such that $\iota_u \omega|_L = \alpha$, applying $T_i$ to $u$ to get a normal vector field on $L'$, and then finding the corresponding one-form $\iota_{T_i u} \omega|_{L'}$, and these two one-forms are equal. Similarly, given a one-form on $L=L'$, the one-forms on $L$ constructed by $\tau_i^{-1}$ and $T_i^{-1}$ are the same. Finally, the corresponding diagram with $T_3$ and $\tau_3$ also commutes. 
\end{prop}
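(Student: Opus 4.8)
The plan is to observe that Proposition \ref{tauiareti} is, in substance, a matter of unwinding definitions: $\tau_i$ was \emph{defined} in Definition \ref{defin:taui} as the conjugate $I' \circ T_i \circ I^{-1}$ of the pointwise transfer map, and $I$, $I'$ are by construction the pointwise incarnations of the isomorphisms $v \mapsto \iota_v \omega|_L$ and $v \mapsto \iota_v \omega|_{L'}$ supplied by Lemma \ref{Jbehaves}. The only real work is to check that the section-level maps appearing in the two columns of the diagram are genuinely obtained by applying these pointwise maps fibrewise, and that all the identifications line up.

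First I would record the easy half. For a submanifold $L$ with $\|\omega|_L\|_{C^0}<1$ and $p \in L$, the map $I$ of Definition \ref{defin:iandiprime} restricted to the fibre over $p$ with $\Gr_n$-component $T_pL$ sends a normal vector $u$ to $(\iota_u \omega)|_{T_pL}$; comparing with Lemma \ref{Jbehaves} this is exactly the value at $p$ of the isomorphism $v \mapsto \iota_v \omega|_L$, so $I^{-1}$ applied fibrewise to a one-form $\alpha$ on $L$ returns the normal vector field $u$ with $\iota_u \omega|_L = \alpha$. Likewise, the pointwise map $T_i$ of Definition \ref{defin:pointwisetransfers}, applied fibrewise with $\ell = T_pL$ and $(v, \nabla v)$ any extension of the $1$-jet of $v$ at $p$, is by Definition \ref{defin:sectiontransfers} precisely the bundle map $T_i \colon \nu_L \to \nu_{L'}$, with independence of the extension already noted there.

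Next I would analyse $I'$. Its recipe is: contract with $\omega$ at $\exp_p(v)$, pull the resulting covector back along the isomorphism \eqref{eq:pointwisepushforward}, and project to the tangential part. The construction of $\ell'$ in Definition \ref{defin:pointwisetransfers} identifies $\ell' = T_{\exp_p(v)}L'$, and, as recorded just after \eqref{eq:pointwisepushforward}, the isomorphism \eqref{eq:pointwisepushforward} agrees with $(\exp_v)_* \colon T_pM \to T_{\exp_p(v)}M$, so its dual is $\exp_v^*$. Hence $I'$ applied fibrewise to a normal vector field $w = T_i u$ on $L'$ returns, at $p$, the covector $\exp_v^*\bigl((\iota_w \omega)|_{T_{\exp_p(v)}L'}\bigr)$, which is exactly the value at $p$ of $\exp_v^*(\iota_w \omega|_{L'})$, i.e.\ the one-form $\iota_w \omega|_{L'}$ regarded as a one-form on $L$ via the diffeomorphism $\exp_v \colon L \to L'$. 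Composing the three fibrewise identifications gives $\tau_i \alpha = \exp_v^*(\iota_{T_i u}\omega|_{L'})$ with $\iota_u \omega|_L = \alpha$, which is precisely the top-then-right path around the square; this proves commutativity, and applying the same computation to $\tau_i^{-1}$ and $T_i^{-1}$ gives the inverse statement. For $T_3$ and $\tau_3$ the argument is identical, except that (as flagged after Definition \ref{defin:t3} and in Definition \ref{defin:taui}) the pointwise $\tau_3$ is defined on a bundle carrying the $1$-jet of $\alpha$, because $\npt'_u v$ depends on the tangent spaces $T_{\exp_{su}(p)}\exp_{su}(L)$ and hence on the $1$-jet of $u$; once this is built in and $I$, $I'$ are extended to act trivially on the extra jet data as in Definition \ref{defin:taui}, $\tau_3 = I' \circ T_3 \circ I^{-1}$ is again the fibrewise conjugate and the same diagram chase applies.

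I expect the only genuine obstacle to be bookkeeping rather than mathematics: keeping straight the order in which the tangential projections in $I$ and $I'$, the pullback $\exp_v^*$ versus the pushforward \eqref{eq:pointwisepushforward}, and the identification of one-forms on $L'$ with one-forms on $L$ are composed, and handling the $1$-jet dependence cleanly in the $\tau_3$ case. There is no analytic content; everything reduces to the pointwise fact that $\tau_i$ was constructed as the conjugate of $T_i$ by the pointwise versions of $\iota_{\cdot}\,\omega$.
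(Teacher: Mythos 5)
Your proposal is correct and follows essentially the same route as the paper, which treats the proposition as immediate from the definitions precisely because $\tau_i$ was defined as $I' \circ T_i \circ I^{-1}$ with $I$, $I'$ the pointwise versions of $v \mapsto \iota_v\omega|_L$ and $v \mapsto \iota_v\omega|_{L'}$. Your fibrewise unwinding (including the identification of the dual of \eqref{eq:pointwisepushforward} with $\exp_v^*$ and the remark on the $1$-jet bookkeeping for $\tau_3$) just makes explicit what the paper leaves implicit.
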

Furthermore, we have the following regularity result for the $\tau_i$ of Definition \ref{defin:taui} corresponding to Proposition \ref{basictransfersmoothness}, simply following from that and Proposition \ref{pullbackisomorphismssmooth}. 
\begin{prop}
\label{tauismoothness}
For $i=1, 2, 3$, $\tau_i$ are smooth maps, and depend continuously on the Calabi--Yau structure in the same sense as $T_i$ depends continuously on the metric (described in Proposition \ref{basictransfersmoothness}); in particular, we have the estimate corresponding to \eqref{eq:oldcomplicatedsctyest}. 
\end{prop}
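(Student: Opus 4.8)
The plan is to read the statement off directly from the defining formula $\tau_i = I' \circ T_i \circ I^{-1}$ for $i=1,2$ (and the analogous composition of the pointwise $T_3$ with suitably extended versions of $I'$ and $I$ for $\tau_3$, on the jet bundle indicated after Definition \ref{defin:taui}), so that all three assertions reduce to the corresponding facts about $T_i$, $I$ and $I'$ together with the elementary observation that compositions and local inverses of smooth maps are smooth. By Proposition \ref{pullbackisomorphismssmooth}, $I$ and $I'$ are smooth and $I$ is a local diffeomorphism near every point of the form $(u, 0, 0, \ell)$, so on a neighbourhood of the zero-jet locus $I^{-1}$ is defined and smooth; by Proposition \ref{basictransfersmoothness} (via Proposition \ref{basicgluingsmoothness}) the extended maps $T_i$ of \eqref{eq:Tiextended} are smooth. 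Hence each $\tau_i$ is a composition of smooth maps on the relevant open subbundle of $\F \cap \cO'$ (respectively the corresponding jet bundle for $\tau_3$), and is therefore smooth; passing to a fixed family of submanifolds exactly as in Definition \ref{defin:taui} then gives smoothness of the induced maps of one-forms.

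For the dependence on the Calabi--Yau structure, note that a structure $(\Omega, \omega)$ determines its metric $g(\Omega,\omega)$ by a smooth bundle map, so a finite-dimensional family $(\Omega_s, \omega_s)$ induces a finite-dimensional family of metrics $g_s$. The maps $T_i$ depend continuously on $s$ through their dependence on $g_s$ in the sense of Proposition \ref{basictransfersmoothness}, while $I$ and $I'$ depend continuously on $(\Omega_s, \omega_s)$ by Proposition \ref{pullbackisomorphismssmooth}, being built from contraction with $\omega_s$ and orthogonal projections with respect to $g_s$. A composition of families depending continuously on a finite-dimensional parameter depends continuously on that parameter, so $\tau_i$ depends continuously on the Calabi--Yau structure in the required sense. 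It remains to produce the analogue of \eqref{eq:oldcomplicatedsctyest}. The key observation is that when the deformation jet vanishes, i.e.\ $(v,\nabla v)=0$, one has $\ell'=\ell$, the pointwise map \eqref{eq:pointwisepushforward} is the identity, $T_i$ restricts to the identity on the normal part (the input is already orthogonal to $\ell$), and $I'=I$; consequently $\tau_i(s, \alpha, 0, 0, \ell) = \alpha$ independently of $s$. Thus the difference $\tau_i(s, \alpha, (v,\nabla v), \ell) - \tau_i(s', \alpha, (v,\nabla v), \ell)$ vanishes on the zero-jet locus, and since $\tau_i$ is jointly smooth and (for $i=1,2$) linear in $\alpha$, a first-order Taylor expansion in $(v,\nabla v)$ about the zero jet — uniform over the compact Grassmannian fibres, exactly as in the remark after Proposition \ref{basictransfersmoothness} — yields, for every $\epsilon>0$, a $\delta>0$ with
\begin{equation}
|\tau_i(s, \alpha, (v,\nabla v), \ell) - \tau_i(s', \alpha, (v,\nabla v), \ell)| < \epsilon\, |(\alpha, (v,\nabla v))| \qquad \text{whenever } |s-s'|<\delta.
\end{equation}
For $\tau_3$ the same argument applies on the appropriate jet bundle: $\tau_3$ still vanishes when its jet-arguments vanish, since $\npt'_u v$ is small when $v$ is (cf.\ the formula after Definition \ref{defin:dernpt}), so one takes the Taylor expansion in the remaining jet variables, the only change being that the homogeneity in the one-form argument is no longer linear; we omit the bookkeeping, as with the other $\tau_3$ arguments.

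There is little genuine difficulty here: the content is essentially formal, being a transport of Proposition \ref{basictransfersmoothness} through the fibrewise isomorphisms $I$ and $I'$ of Proposition \ref{pullbackisomorphismssmooth}. The one point requiring care is the last step — verifying that $\tau_i$ degenerates to the identity on the zero-jet locus so that the structure-difference estimate picks up the correct factor of $|(\alpha,(v,\nabla v))|$ — together with keeping track of the open subbundles on which $I^{-1}$ is defined and on which the geodesics used in Definition \ref{defin:pointwisetransfers} exist.
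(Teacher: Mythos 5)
Your proposal is correct and takes the same route the paper intends: the paper gives no written proof beyond asserting that the result "simply follow[s] from" Propositions \ref{basictransfersmoothness} and \ref{pullbackisomorphismssmooth}, i.e.\ precisely the composition $\tau_i = I' \circ T_i \circ I^{-1}$ that you carry out. Your additional verification that $\tau_i$ reduces to the identity on the zero-jet locus (so that the difference over structures picks up the factor in the analogue of \eqref{eq:oldcomplicatedsctyest}) is a correct filling-in of the detail the paper leaves implicit.
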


\subsection{The Laplacian on normal vector fields}
\label{ssec:laplacianonnvfs}
Finally, $\SLing$ relies on the identification of the harmonic part of a given normal vector field (that is, the normal vector field corresponding to the harmonic part of the corresponding one-form). We thus need to check this is also smooth and find its derivative. In this subsection, we use Lemma \ref{Jbehaves} similarly to subsection \ref{ssec:normalvectorfieldsspeclag} to define a Laplacian on the normal vector fields on a submanifold that is close to special Lagrangian. This consequently defines a notion of the harmonic part of a normal vector field. We then show that, with an appropriate identification of normal vector fields taken from section \ref{sec:dsling}, this harmonic part depends smoothly on the submanifold concerned and in Proposition \ref{harmptderivative} we identify the derivative of the map $v \mapsto \hpt v$, in terms of the derivative of the Laplacian induced by the identifications. 

We make
\begin{defin}
\label{defin:laplacianonnvfs}
Suppose that $L \subset M$ is a submanifold and $(\Omega, \omega)$ is an $SU(n)$ structure around it in the sense of Definition \ref{defin:cystructurebundle}. This induces a Riemannian metric on $L$ and consequently a Laplace--Beltrami operator on the differential forms on $L$. Suppose that Lemma \ref{Jbehaves} holds for $(\Omega, \omega)$, that is 
\begin{equation}
\label{eq:slingcondiiso}
v \mapsto \iota_v \omega|_L
\end{equation}
is an isomorphism between normal vectors and one-forms. Then \eqref{eq:slingcondiiso} induces linear differential operators $\Delta$ and $d+d^*$ on normal vector fields. 
\end{defin}
Since by the estimates of \eqref{eq:Jbehavesbounds} we have an isomorphism between differential one-forms and normal vector fields of given regularity, all the properties of $d+d^*$ and $\Delta$ carry over. Hence we have \emph{harmonic} normal vector fields, whose corresponding one-forms are harmonic, and \emph{orthoharmonic} normal vector fields, whose corresponding one-forms are $L^2$-orthogonal to harmonic forms (or which lie in the image of $\Delta$). Note that as \eqref{eq:slingcondiiso} need not be an isometry if $L$ is not Lagrangian, harmonic normal vector fields and orthoharmonic normal vector fields need not be $L^2$-orthogonal. 

We now note that $\hpt$ and a left inverse $\Delta^{-1}$, considered as maps of forms, depend smoothly on the metric. This seems to be well-known, and can be shown by first fixing a cohomology class and showing its representative depends smoothly on the metric, and then simply applying inner products. As in, for instance, Proposition \ref{transfersmoothness}, we shall choose a finite-dimensional family of metrics parametrised by $\U$. 
\begin{prop}
\label{revised:hptsmoothinmetric}
Let $M$ be a compact manifold and $g_s$ be a finite-dimensional smooth family of smooth metrics on it parametrised by $s \in \U \subset \R^m$, with a base point $0 \in \U$. Then the map
\begin{equation}
\label{eq:hptmap}
\hpt: \U \times \Omega^p(M) \to \Omega^p(M)
\end{equation}
is smooth, perhaps after shrinking the neighbourhood $\U$ of $g_0$.
\end{prop}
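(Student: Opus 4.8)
The plan is to establish smoothness of $\hpt$ as a map of the pair $(s, \alpha)$ by the standard two-stage argument: first show that the harmonic projection is smooth \emph{in the metric parameter alone} when restricted to a fixed cohomology class, then assemble the full statement by linearity and inner products. Concretely, for a metric $g_s$ the harmonic representative $h_s(\beta)$ of a fixed de Rham class $[\beta]$ is characterised by $h_s(\beta) = \beta - d\xi_s - d^{*_s}\zeta_s$ with $d h_s(\beta) = 0 = d^{*_s} h_s(\beta)$; equivalently $h_s(\beta)$ is the unique solution in that class of $\Delta_{g_s} h = 0$. Since $\Delta_{g_s}$ depends smoothly (indeed polynomially, via $g_s^{-1}$ and $\sqrt{\det g_s}$ in $*_{g_s}$) on $s$, and since the kernel of $\Delta_{g_s}$ acting on $\Omega^p(M)/\mathrm{im}\, d$-complement has constant dimension $b^p(M)$ (independent of $s$, being topological), one obtains smooth dependence of $h_s$ on $s$ by an implicit-function-theorem / continuity-of-spectral-projection argument on the elliptic operator $\Delta_{g_s}$: the spectral projection onto the kernel varies smoothly with $s$ for $s$ near $0$ (perhaps after shrinking $\U$ so that no nonzero eigenvalue crosses $0$), because $0$ is an isolated eigenvalue of $\Delta_{g_0}$ with finite multiplicity and elliptic operators depending smoothly on parameters have spectral projections depending smoothly on parameters. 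This is where the shrinking of $\U$ in the statement is used.

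Next I would combine this with the metric-dependence of the $L^2$ inner product. Fixing a basis $\gamma_1, \dots, \gamma_{b^p}$ of $H^p(M; \R)$ represented by \emph{fixed} closed forms, the harmonic representative of $\gamma_j$ for the metric $g_s$ is $h_s(\gamma_j)$, smooth in $s$ by the previous step. For an arbitrary $p$-form $\alpha$, write $\alpha = \alpha_{\mathrm{exact}} + \sum_j c_j(\alpha)\,\gamma_j + (\text{a coexact-for-}g_0\text{ piece})$; cleaner still, just use that $\hpt_{g_s}(\alpha)$ is the $L^2(g_s)$-orthogonal projection of $\alpha$ onto the span of $\{h_s(\gamma_1), \dots, h_s(\gamma_{b^p})\}$. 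Explicitly,
\begin{equation}
\hpt_{g_s}(\alpha) = \sum_{j,k} \big(G_s^{-1}\big)_{jk}\, \langle \alpha, h_s(\gamma_k)\rangle_{L^2(g_s)}\, h_s(\gamma_j),
\end{equation}
where $G_s$ is the Gram matrix $\big(G_s\big)_{jk} = \langle h_s(\gamma_j), h_s(\gamma_k)\rangle_{L^2(g_s)}$, which is invertible since the $h_s(\gamma_j)$ are linearly independent (they span the space of $g_s$-harmonic forms). Each ingredient here is smooth: $s \mapsto h_s(\gamma_j)$ is smooth by step one; $(s, \alpha) \mapsto \langle \alpha, h_s(\gamma_k)\rangle_{L^2(g_s)}$ is smooth and \emph{linear} in $\alpha$, being an integral over the compact manifold $M$ of an expression polynomial in $g_s, g_s^{-1}, \sqrt{\det g_s}$ and bilinear in $(\alpha, h_s(\gamma_k))$; and $s \mapsto G_s^{-1}$ is smooth since matrix inversion is smooth on $GL(b^p, \R)$ and $G_s$ is smooth and invertible for $s$ near $0$. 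Composing, $\hpt: \U \times \Omega^p(M) \to \Omega^p(M)$ is smooth, and visibly linear in the second argument. (Smoothness here is in the sense appropriate to the topology on $\Omega^p(M)$ being used — e.g. as a map of the relevant $C^{k,\mu}$ Banach spaces for each $k$, $\mu$, which is all that is needed downstream — and the argument above respects that since all operations are continuous, in fact bounded-linear or smooth, in those norms; one may also phrase it à la Fréchet as in Hamilton \cite{hamiltonnashmoser}.)

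The main obstacle is step one: the assertion that the harmonic representative of a fixed class, equivalently the spectral projection onto $\ker \Delta_{g_s}$, depends smoothly on the parameter $s$. The potential subtlety is purely that one must know $\dim \ker \Delta_{g_s}$ does not jump — but this is forced by Hodge theory, since $\dim \ker \Delta_{g_s}^{(p)} = b^p(M)$ for every Riemannian metric, so the kernel dimension is literally constant, not merely upper semicontinuous; hence the Riesz spectral projection $\frac{1}{2\pi i}\oint_{|z|=\epsilon}(z - \Delta_{g_s})^{-1}\,dz$ (valid for $s$ in a small enough neighbourhood $\U$ of $0$ that $\epsilon/2$ stays below the first nonzero eigenvalue of $\Delta_{g_s}$, using continuity of that eigenvalue) is genuinely a smooth family of finite-rank projections, and $\hpt_{g_s} = $ this projection. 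With that in hand the rest is routine bookkeeping of smooth dependence under composition, integration over $M$, and matrix inversion, exactly as indicated; so I would spend essentially all the write-up on making step one precise (citing a standard reference for analytic/smooth perturbation of elliptic operators) and dispatch the assembly in a sentence or two.
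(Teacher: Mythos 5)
Your proposal is correct and follows essentially the same route the paper sketches for this statement: fix representatives of a basis of $H^p(M)$, show their harmonic representatives depend smoothly on the metric parameter, and then recover the full projection by applying $L^2(g_s)$ inner products (your Gram-matrix formula). The paper leaves the first step as "well-known"; your Riesz spectral projection argument, using constancy of $\dim\ker\Delta_{g_s} = b^p(M)$ and shrinking $\U$ so the contour stays below the first nonzero eigenvalue, is a valid way to supply it.
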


Equally, we can find a left inverse to the Laplacian smoothly in $s$. 

These metric smoothness results pass immediately to smoothness of the corresponding operators on normal vector fields on a compact nearly special Lagrangian submanifold $L$ of a Calabi--Yau manifold $M$. We begin by defining these operators. 
\begin{defin}
\label{defin:hptonsubmanifolds}
Let $M$ be a Calabi--Yau manifold and let $L_s$ be a finite-dimensional smooth family of smooth submanifolds parametrised by $s \in \U \subset \R^m$, with a base point $0 \in \U$. Suppose that for all $s \in \U$, $v \mapsto \iota_v \omega|_{L_s}$ is an isomorphism of bundles. Let $T_s$ be the family of transfer maps from $L_0$ to $L_s$, given by $T_2$ from Definition  \ref{defin:sectiontransfers}, so that $T_s = T_{2, s}$ in Proposition \ref{fieldtransferfundamental}. 

Define maps 
\begin{equation}
\label{eq:hptsmoothinsubmanifold}
\hpt: \U \times \{\text{normal vector fields on } L_0\} \to \{\text{normal vector fields on } L_0\}
\end{equation}
and
\begin{equation}
\label{eq:laplaceinvmapsubmfd}
\Delta^{-1}: \U \times \{\text{normal vector fields on } L_0\} \to \{\text{orthoharmonic normal vector fields on } L_0\},
\end{equation}
as follows.

$\hpt(s, v)$ is given by taking $T_s(v)$ a normal vector field on $L_s$, taking the harmonic part $u$ using the correspondence between $1$-forms and normal vector fields, and then setting $\hpt(s, v) = T_s^{-1}(u)$. 

$\Delta^{-1}(s, v)$ is given by taking the normal vector field $T_s(v)$ on $L_s$ and then finding an orthoharmonic normal vector field $u$ on $L_0$ such that $\Delta T_s u = v-\hpt(s, v)$. 
\end{defin}
\begin{prop}
\label{revised:hptsmoothinsubmanifold}
Let $M$, $\{L_s\}$, $\hpt$ and $\Delta^{-1}$ be as in Definition \ref{defin:hptonsubmanifolds}. Then the maps $\hpt$ and $\Delta^{-1}$ are well-defined and smooth, perhaps after reducing the neighbourhood $\U$ of $L_0$. 
\end{prop}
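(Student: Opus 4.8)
The plan is to conjugate the whole construction down to a family of operators on differential one-forms on the \emph{fixed} manifold $L_0$, and then appeal to Proposition~\ref{revised:hptsmoothinmetric} together with the smoothness of the transfer maps. Write $u_s$ for the normal vector field on $L_0$ with $\exp_{u_s}(L_0)=L_s$; identifying submanifolds near $L_0$ with small normal vector fields as at the start of subsection~\ref{ssec:speclagdeformation}, the hypothesis that $L_s$ is a smooth finite-dimensional family says precisely that $s\mapsto u_s$ is smooth, and after shrinking $\U$ we may make $\|u_s\|_{C^1}$ as small as we wish. Set $g_s:=\exp_{u_s}^{*}(g|_{L_s})$; this is a smooth finite-dimensional family of metrics on $L_0$, and $\exp_{u_s}\colon(L_0,g_s)\to(L_s,g|_{L_s})$ is by construction an isometry. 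In particular, pullback by $\exp_{u_s}$ conjugates the Hodge harmonic projection $\hpt_{g|_{L_s}}$ on $\Omega^{1}(L_s)$ into the harmonic projection $\hpt_{g_s}$ on $\Omega^{1}(L_0)$, and likewise conjugates a left inverse of the Hodge Laplacian on $L_s$ (with image the orthocomplement of the harmonic forms) into the corresponding left inverse $G_{g_s}$ on $(L_0,g_s)$.

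Next I would unwind Definition~\ref{defin:hptonsubmanifolds}. Let $\Phi\colon v\mapsto\iota_v\omega|_{L_0}$ be the ($s$-independent) isomorphism between normal vector fields and one-forms on $L_0$ supplied by Lemma~\ref{Jbehaves}; it is a bundle map, hence smooth on sections by Proposition~\ref{bundlemapsckbounded}. Let $\tau_{2,s}$ be the one-form analogue of $T_s=T_{2,s}$ from Definition~\ref{defin:taui}, so that $\tau_{2,s}\circ\Phi=\Phi_s\circ T_{2,s}$ by Proposition~\ref{tauiareti} ($\Phi_s$ the corresponding isomorphism on $L_s$), and put $\rho_s:=\exp_{u_s}^{*}\circ\tau_{2,s}\colon\Omega^{1}(L_0)\to\Omega^{1}(L_0)$. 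Since $\tau_{2,s}$ and $\exp_{u_s}^{*}$ are function-linear bundle maps (over $\exp_{u_s}$ and over its inverse respectively), $\rho_s$ is a bundle endomorphism of $T^{*}L_0$, i.e.\ a smoothly varying $(1,1)$-tensor field; it depends smoothly on $s$ by Proposition~\ref{tauismoothness} and smoothness of pullback, and $\rho_0=\id$, so $\rho_s$ is invertible after a further shrinking of $\U$. Unwinding the definitions (using that $T_{2,s}$ is a bundle isomorphism by Proposition~\ref{transferisomorphism}, which is where the hypothesis that $v\mapsto\iota_v\omega|_{L_s}$ is an isomorphism enters) then gives
\begin{equation}
\hpt(s,\cdot)=\Phi^{-1}\circ\rho_s^{-1}\circ\hpt_{g_s}\circ\rho_s\circ\Phi
\end{equation}
as maps of normal vector fields on $L_0$. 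This simultaneously shows $\hpt(s,v)$ is well-defined and, being a composite of maps each smooth in $(s,v)$ (the outer factors by the previous remarks and $\hpt_{g_s}$ by Proposition~\ref{revised:hptsmoothinmetric}), that $\hpt$ is smooth.

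The argument for $\Delta^{-1}$ is identical in shape: unwinding Definition~\ref{defin:hptonsubmanifolds} and using $G_{g_s}\hpt_{g_s}=0$ to absorb the $\hpt(s,v)$ term, one obtains
\begin{equation}
\Delta^{-1}(s,\cdot)=\Phi^{-1}\circ\rho_s^{-1}\circ G_{g_s}\circ\rho_s\circ\Phi,
\end{equation}
where the existence, orthoharmonic range and smooth dependence on $s$ of $G_{g_s}$ come from standard Hodge theory on $(L_0,g_s)$ together with the remark following Proposition~\ref{revised:hptsmoothinmetric}; this yields both well-definedness (existence and uniqueness of the orthoharmonic $u$) and smoothness. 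The only real work lies in the bookkeeping of these two unwindings — verifying the conjugation identities, and in particular checking that $\exp_{u_s}^{*}\circ\tau_{2,s}$ genuinely descends to a smoothly varying \emph{function-linear} endomorphism of $T^{*}L_0$ (so that operators between the infinite-dimensional spaces of forms may be conjugated freely) and that $\{g_s\}$ really is a smooth finite-dimensional family so that Proposition~\ref{revised:hptsmoothinmetric} applies. I expect this to be the main obstacle, though a routine one: there is no hard analysis beyond what the cited results already contain.
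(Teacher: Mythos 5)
Your proposal is correct and is essentially the paper's argument: the paper disposes of this proposition in one sentence ("Since the transfer maps $T_s$ and the identifications between normal vector fields and one-forms are smooth, this follows immediately from composition"), and your conjugation formulae $\Phi^{-1}\circ\rho_s^{-1}\circ\hpt_{g_s}\circ\rho_s\circ\Phi$ and its $G_{g_s}$ analogue are precisely that composition written out, resting on the same inputs (Propositions \ref{revised:hptsmoothinmetric}, \ref{transfersmoothness}/\ref{tauismoothness} and Lemma \ref{Jbehaves}). The only quibble is that the invertibility of $v\mapsto\iota_v\omega|_{L_s}$ is what makes $\Phi_s$ an isomorphism (Lemma \ref{Jbehaves}), while Proposition \ref{transferisomorphism} needs only $\|u_s\|_{C^1}$ small; this does not affect the argument.
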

Since the transfer maps $T_s$ and the identifications between normal vector fields and one-forms are smooth, this follows immediately from composition. Indeed, we can explicitly evaluate the derivative of $\hpt$:
\begin{prop}
\label{harmptderivative}
Suppose that $M$ is a Calabi--Yau manifold and $L_s$ is a finite-dimensional smooth family of smooth submanifolds as in Definition \ref{defin:hptonsubmanifolds}. Let $T_s$ and $\hpt$ be as in Definition \ref{defin:hptonsubmanifolds}. Let $v_s$ be a smooth curve of normal vector fields to $L_0$. Then $\hpt_s v_s$ is again a smooth curve of normal vector fields to $L_0$, and its derivative satisfies
\begin{align}
\label{eq:harmptderivativefirst}
\harmpt_0 \left(\left.\frac{d}{ds}\right|_{s=0} \hpt_s v_s\right) &= \hpt_0 (v' - \Delta' u), \\
\Delta_0 \left(\left.\frac{d}{ds}\right|_{s=0} \hpt_s v_s\right) &= - \Delta' \hpt_0 v_0.
\end{align}
where $u$ satisfies $\Delta_0 u = v - \hpt_0 v$, and $\Delta'$ is the derivative of the induced operator $\Delta_s$ in $s$ (where $\Delta_s$ is again induced by the transfer operator $T_s$). 
\end{prop}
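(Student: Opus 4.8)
The plan is to differentiate the defining relations of $\hpt_s v_s$ directly, using the chain rule together with the smoothness established in Proposition \ref{revised:hptsmoothinsubmanifold}. Recall that by Definition \ref{defin:hptonsubmanifolds}, $\hpt_s v_s = T_s^{-1}(h_s)$ where $h_s$ is the harmonic part (under the identification $v \mapsto \iota_v \omega|_{L_s}$) of $T_s v_s$; equivalently, writing $\Delta_s$ for the induced Laplacian on normal vector fields on $L_0$ transported from $L_s$, the element $p_s := \hpt_s v_s$ is characterised by the two conditions that $p_s$ is $\Delta_s$-harmonic and $v_s - p_s$ lies in the image of $\Delta_s$, i.e. there is an orthoharmonic $u_s$ with $\Delta_s u_s = v_s - p_s$. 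Smoothness of $s \mapsto p_s$ is Proposition \ref{revised:hptsmoothinsubmanifold}; smoothness of $s \mapsto u_s$ follows similarly from the smoothness of $\Delta^{-1}$ in that proposition, since $u_s = \Delta^{-1}(s, v_s)$ in the notation there. So all the curves in sight are differentiable and we may differentiate their defining relations at $s=0$.

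First I would differentiate the harmonicity condition. Since $p_s$ is $\Delta_s$-harmonic for all $s$, we have $\Delta_s p_s = 0$; differentiating at $s=0$ gives $\Delta_0 p' + \Delta' p_0 = 0$, that is $\Delta_0 p' = -\Delta' \hpt_0 v_0$, which is the second displayed equation. Next I would differentiate the relation $\Delta_s u_s = v_s - p_s$, obtaining $\Delta_0 u' + \Delta' u_0 = v' - p'$, so that $p' = v' - \Delta' u_0 - \Delta_0 u'$. Now apply $\hpt_0$ (the $\Delta_0$-harmonic projection on normal vector fields on $L_0$): the term $\Delta_0 u'$ lies in the image of $\Delta_0$ and so is killed by $\hpt_0$ (harmonic vector fields are $L^2$-orthogonal to the image of $\Delta_0$ for the relevant metric, and $\hpt_0$ is precisely the projection away from that image), giving $\hpt_0 p' = \hpt_0(v' - \Delta' u_0)$. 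Here $u_0 = u$ satisfies $\Delta_0 u = v_0 - \hpt_0 v_0 = v - \hpt_0 v$, which is exactly the $u$ appearing in the statement. This yields the first displayed equation, noting $\harmpt_0$ and $\hpt_0$ denote the same operator.

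The one genuinely delicate point, and the step I expect to need the most care, is justifying that $\hpt_0 \circ \Delta_0 = 0$ on normal vector fields and more generally that the splitting "harmonic $\oplus$ image of $\Delta$" used implicitly is the right one, given that (as noted after Definition \ref{defin:laplacianonnvfs}) the isomorphism $v \mapsto \iota_v \omega|_{L_0}$ is not an isometry when $L_0$ is not Lagrangian, so $\hpt_0$ is not the $L^2$-orthogonal projection on normal vector fields. The cleanest way around this is to do the entire differentiation on the side of one-forms, where $d + d^*$ and $\Delta$ and $\hpt$ have all their usual Hodge-theoretic properties: transport everything via $\iota_{(\cdot)}\omega|_{L_s}$ using Proposition \ref{tauiareti}-style commutativity, differentiate the classical identities $\Delta^{(s)} H_s \alpha_s = 0$ and $\Delta^{(s)} U_s = \alpha_s - H_s\alpha_s$ for the corresponding curve of one-forms $\alpha_s$, use $H_0 \Delta^{(0)} = 0$ which is standard Hodge theory, and then transport the resulting identities back to normal vector fields. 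Since $\Delta'$ and $H'$ in the statement are by definition the induced operators on normal vector fields, this transport is purely formal once the commuting squares are in place, and the proof reduces to the two one-line differentiations above.

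\begin{proof}
By Proposition \ref{revised:hptsmoothinsubmanifold}, $s \mapsto \hpt_s v_s$ and $s \mapsto \Delta^{-1}(s, v_s)$ are smooth, so all curves below are differentiable at $s=0$; write $p_s = \hpt_s v_s$, let $u_s = \Delta^{-1}(s, v_s)$ be the orthoharmonic normal vector field on $L_0$ with $\Delta_s u_s = v_s - p_s$, and set $u = u_0$, so $\Delta_0 u = v - \hpt_0 v$ as in the statement. Transporting via the isomorphisms $v \mapsto \iota_v \omega|_{L_s}$, all the Hodge-theoretic identities for $\Delta_s$ on one-forms carry over to the induced operators on normal vector fields; in particular $p_s$ is $\Delta_s$-harmonic, $\hpt_0 \circ \Delta_0 = 0$ on normal vector fields on $L_0$, and $\hpt_0$ restricts to the identity on $\Delta_0$-harmonic normal vector fields.

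Differentiating $\Delta_s p_s = 0$ at $s = 0$ gives $\Delta_0 p' + \Delta' p_0 = 0$, i.e.
\begin{equation}
\Delta_0\!\left(\left.\frac{d}{ds}\right|_{s=0} \hpt_s v_s\right) = -\Delta' \hpt_0 v_0,
\end{equation}
which is the second identity. Differentiating $\Delta_s u_s = v_s - p_s$ at $s = 0$ gives $\Delta_0 u' + \Delta' u = v' - p'$, hence
\begin{equation}
p' = v' - \Delta' u - \Delta_0 u'.
\end{equation}
Applying $\hpt_0$ and using $\hpt_0 \circ \Delta_0 = 0$ kills the last term, so $\hpt_0 p' = \hpt_0(v' - \Delta' u)$, that is
\begin{equation}
\harmpt_0\!\left(\left.\frac{d}{ds}\right|_{s=0} \hpt_s v_s\right) = \hpt_0(v' - \Delta' u),
\end{equation}
which is the first identity. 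This completes the proof.
\end{proof}
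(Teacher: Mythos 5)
Your proof is correct and follows essentially the same route as the paper: differentiate the two defining relations $\Delta_s \hpt_s v_s = 0$ and $v_s - \hpt_s v_s = \Delta_s u_s$ at $s=0$ and apply $\hpt_0$ to the second. Your extra care about $\hpt_0 \circ \Delta_0 = 0$ (working on the one-form side where the Hodge decomposition is standard) is a sensible elaboration of a point the paper leaves implicit, but it does not change the argument.
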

\begin{proof}
That $\harmpt_s(v_s)$ is a smooth curve follows immediately from Proposition \ref{revised:hptsmoothinsubmanifold}.

To identify the derivative, we observe that we must have 
\begin{equation}
\label{eq:hptderivativeone}
\Delta_s \hpt_s v_s \equiv 0 \qquad v_s - \hpt_s v_s = \Delta_s u_s,
\end{equation}
for some smooth curve of normal vector fields $u_s$. Note that this curve exists and is smooth again by Proposition \ref{revised:hptsmoothinsubmanifold} (and $u_s$ is chosen to be orthoharmonic on $L_0$). 

Differentiating \eqref{eq:hptderivativeone} in $s$, and applying $\hpt_0$ to the second equation, gives the stated result. 
\end{proof}

\subsection{The derivative of \texorpdfstring{$\SLing$}{SLing}}
We may now prove that, on restriction to a finite-dimensional manifold $\U$ of perturbations $L_s$ of a nearly special Lagrangian submanifold $L_0$, $\SLing$ is smooth and give its derivative, under a hypothesis that the transfer maps of subsection \ref{ssec:riemanniannormalvectorfields} behave well with respect to the harmonic normal vector fields, which is essentially a condition that the base point $L_0$ is close enough to $\SLing(L_0)$.  Recall that $\SLing(L_s)$ is defined as the special Lagrangian $L'$ given by $\exp_v(L_s)$ where $\hpt(v) = 0$ in the sense of the discussion after Definition \ref{defin:laplacianonnvfs}. 

Let $\mathcal U$ be an open set in a finite-dimensional space of perturbations of the submanifold $L_0$ and $\U'$ an open set of the special Lagrangian deformations of $L'_0 = \SLing(L_0)$. We write 
\begin{equation}
\label{eq:SLingimplicit}
F: \begin{tikzcd}[row sep=0pt]
\mathcal{U} \times \U' \ar{r} &\U \times H^1(L), \\[0pt]
(L_s, L'_{s'}) \ar[mapsto]{r} &(L, \alpha),
\end{tikzcd}
\end{equation}
where if $v$ is the normal vector field on $L_s$ giving $L'_{s'}$, $\alpha$ is the cohomology class of the harmonic part of $\iota_{v}\omega|_{L_s}$. $F$ is a well-defined map if $\U$ and $\U'$ are small enough.  We have
\begin{equation}
F(L_s, \SLing(L_s)) = (L_s, 0),
\end{equation}
for all $L_s \in \U$. We shall show that $F$ is smooth and establish its derivative at $(L_0, \SLing(L_0))$, $D_{(L_0, \SLing(L_0))} F$, give a condition under which $D_{(L_0, \SLing(L_0))} F$ is an isomorphism, and consequently invert it under this condition to find $D_{L_0}\SLing$. 
\begin{prop}
\label{slingimplicitderivative}
Let $M$ be a Calabi--Yau manifold, and let $L_0$ be a submanifold of $M$ in the domain of the SLing map of Definition \ref{defin:slingmap}. Let $\U$, $L'_0$ and $\U'$ be as described above. Let $v_0$ be the orthoharmonic normal vector field on $L_0$ such that $\exp_{v_0}(L_0) = \SLing(L_0)$. 
Then the map $F$ of \eqref{eq:SLingimplicit} is smooth. We have
\begin{equation}
\label{eq:dslingimplict}
D_{(L_0, L'_0)}F:
\begin{tikzcd}[row sep=0pt]
T_{L_0} \U \oplus T_{L'_0} \U' \ar{r} &T_{L_0} \U \oplus H^1(L),\\[0pt]
(v, u) \ar[mapsto]{r} &(v, [\harmpt \iota_{T_1^{-1} (T_2 v + T_3 v - u) - \Delta'_v x}\omega|_{L_0}]),
\end{tikzcd}
\end{equation}
where $T_1$ and $T_2$ are as in Definition \ref{defin:sectiontransfers} (for the transfer from $L_0$ to $L'_0$ via $v_0$), $T_3$ is similarly as in Definition \ref{defin:t3}, $\Delta'_v$ is the derivative defined in Proposition \ref{harmptderivative}, and $x$ is (any) normal vector field to $L_0$ with $\Delta x = v_0$. 
\end{prop}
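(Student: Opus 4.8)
The plan is to prove Proposition~\ref{slingimplicitderivative} by assembling the smoothness and derivative results already established in the preceding subsections, since every ingredient of $F$ has been arranged to be smooth and to have an explicitly computable derivative. First I would observe that $F$ factors into three pieces: the ``transfer'' step that reads off, for each $(L_s, L'_{s'})$, the normal vector field $v(s,s')$ on $L_s$ (identified with a normal vector field on $L_0$ via $T_2$) such that $\exp_{v(s,s')}(L_s) = L'_{s'}$; the step that converts $v(s,s')$ into the one-form $\iota_{v(s,s')}\omega|_{L_s}$ and transfers it back to $L_0$ via $\tau_2$; and the step that takes its harmonic part and then its cohomology class. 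Smoothness of the first step is exactly Proposition~\ref{transfersmoothness} (applied with the families $L_s$ and $L'_{s'}$), smoothness of the second is Proposition~\ref{tauismoothness} together with smoothness of the identification in Lemma~\ref{Jbehaves}, and smoothness of the third is Proposition~\ref{revised:hptsmoothinsubmanifold} (for the $\hpt$ part) composed with the evidently smooth linear map $\hpt\text{ normal vector fields}\to H^1(L_0)$. Hence $F$ is a composite of smooth maps and is smooth, provided $\U$ and $\U'$ are shrunk so that all intermediate objects are defined; I would spell this composite out as a commutative diagram or a short chain of maps and cite the relevant propositions at each arrow.

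Next I would compute the derivative at $(L_0, L'_0)$ by differentiating each piece along a curve. Take a curve $s\mapsto L_s$ with tangent $v \in T_{L_0}\U$ (identified with a normal vector field on $L_0$) and a curve $s\mapsto L'_s$ with tangent $u \in T_{L'_0}\U'$ (a harmonic normal vector field on $L'_0$). By Proposition~\ref{combinedtransfer}, the curve $s\mapsto v(s)$ of normal vector fields on $L_0$ obtained from the transfer step has derivative $v'(0) = T_1^{-1}(T_2 v + T_3 v - u)$, where $T_1, T_2$ are the transfer maps from $L_0$ to $L'_0$ via $v_0$ and $T_3$ is as in Definition~\ref{defin:t3}; this is precisely \eqref{eq:bigtransferexplicit} with the roles $u_s \leftrightarrow L_s$, $w_s \leftrightarrow L'_s$. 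Write $w := T_1^{-1}(T_2 v + T_3 v - u)$ for this derivative. The composition with the harmonic-part map is not simply ``take $\hpt$ of $w$'', because the metric on $L_s$ (hence the Laplacian, hence the identification with one-forms and the notion of harmonic part) is itself varying with $s$; this is exactly the situation handled by Proposition~\ref{harmptderivative}. Applying \eqref{eq:harmptderivativefirst} with the curve of vector fields being $s\mapsto \iota_{v(s)}\omega|_{L_s}$ transferred back (whose value at $s=0$ is $0$ since $L'_0 = \SLing(L_0)$ has harmonic part zero), the derivative of the harmonic part of $\iota_{v(s)}\omega|_{L_s}$ is $\hpt_0(w - \Delta'_v x)$ where $\Delta x = v_0$; since the second equation of Proposition~\ref{harmptderivative} gives the $\Delta_0$-component as $-\Delta'\hpt_0(0) = 0$, the full derivative lies in the harmonic part and equals $\hpt_0(\iota_{w-\Delta'_v x}\omega|_{L_0})$ after converting back to one-forms (here using that $\iota_\bullet\omega|_{L_0}$ commutes with taking harmonic parts by definition of $\hpt$ on normal vector fields). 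Taking cohomology classes then gives exactly the second component of \eqref{eq:dslingimplict}. The first component is immediate since $F$ has $L_s$ itself as its first output. Combining, $D_{(L_0, L'_0)}F(v,u) = (v, [\hpt\,\iota_{T_1^{-1}(T_2 v + T_3 v - u) - \Delta'_v x}\,\omega|_{L_0}])$, as claimed.

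The main obstacle, and the step requiring the most care, is the bookkeeping in the derivative of the harmonic-part composition: one must be scrupulous about \emph{which} submanifold each vector field lives on, \emph{which} metric defines the harmonic part at each stage, and how the transfer maps $T_2$ (used to pull normal vector fields on $L_s$ back to $L_0$) interact with $\tau_2$ (used for one-forms) and with $\hpt$. In particular one has to verify that the curve fed into Proposition~\ref{harmptderivative} really is the curve $s\mapsto T_{2,s}^{-1}(\text{harmonic-part-relevant field on }L_s)$ whose value at $s=0$ vanishes, so that the $-\Delta'\hpt_0 v_0$ term drops out and only the $\hpt_0(v' - \Delta' u)$ term survives; this is where the hypothesis that $L_0$ is close to $\SLing(L_0)$ (so that $v_0$ is orthoharmonic and small, the transfer maps are defined, and Lemma~\ref{Jbehaves} applies uniformly) is used. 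I would therefore present this computation slowly, first recording the abstract chain of smooth maps and its linearization via the chain rule, then substituting Proposition~\ref{combinedtransfer} for the transfer derivative and Proposition~\ref{harmptderivative} for the harmonic-part derivative, and finally simplifying using $\hpt_0 \iota_{v_0}\omega|_{L_0} = 0$. Everything else (smoothness of $F$, the trivial first component of the derivative, the passage to cohomology classes) is routine composition of results already proved.
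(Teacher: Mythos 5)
Your proposal is correct and follows essentially the same route as the paper's proof: the same factorisation of $F$ into a transfer step (Proposition \ref{transfersmoothness} for smoothness, Proposition \ref{combinedtransfer} for the derivative $T_1^{-1}(T_2v+T_3v-u)$) followed by a harmonic-part step handled by Proposition \ref{harmptderivative}, with the $-\Delta'\hpt$ term dropping out because $v_0$ is orthoharmonic. The only slip is the parenthetical claim that the transferred curve has value $0$ at $s=0$ --- its value is $v_0$, which is generally nonzero; what vanishes is its harmonic part, and that is what your argument actually uses.
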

\begin{proof}
We consider $F$ as a composition of two maps. Firstly we find a normal vector field $w(s, s')$ on $L_0$ such that $T_{2, s}(w(s, s'))$ is the normal vector field on $L_s$ giving $L'_{s'}$, where $T_{2, s}$ is taken between $L_0$ and $L_s$. Then we take the cohomology class of the harmonic part of the one-form $\iota_{T_{2, s}(w(s, s'))}\omega|_{L_s}$ with respect to the metric induced on $L_s$. 

By Proposition \ref{transfersmoothness}, $w(s, s')$ depends smoothly on $(s, s')$. Analogously to Proposition \ref{revised:hptsmoothinsubmanifold} (composing a further map $I'$ induced from $I'$ of Definition \ref{defin:iandiprime} to translate normal vector fields into one-forms) we know that $\harmpt$ is smooth in precisely the sense required. It follows that $F$ is smooth. 

We now find the derivative $D_{(L_0, L'_0)} F$. We first recall from Proposition \ref{combinedtransfer} that the derivative of the map $(s, s') \mapsto w(s, s')$ is given by 
\begin{equation}
(v, u) \mapsto T_1^{-1}(T_2 v + T_3 v - u),
\end{equation}
where $v$ is a normal vector field to $L_0$ and $u$ is a normal vector field to $L'_0$. Given curves $L_s$ and $L'_s$ with normal vector fields $v$ and $u$, let $w' = T_1^{-1}(T_2 v + T_3 v - u)$ be the derivative of the corresponding curve $w(s)$ of normal vector fields on $L_0$. 

We computed in Proposition \ref{harmptderivative} that the derivative of a curve $\harmpt_s T_2(w(s))$ has harmonic part $\harmpt(w' - \Delta' x)$, where $\Delta x$ gives the orthoharmonic part of $w(0)$, and Laplacian $-\Delta' \hpt w(0)$. In this case, $w(0) = v_0$ and consequently it is orthoharmonic. Consequently the derivative of the curve $\harmpt_s T_2(w(s))$ is harmonic, and so it is
\begin{equation}
\harmpt_0(w' - \Delta' x) = \harmpt_0(T_1^{-1}(T_2 v + T_3 v - u) - \Delta' x),
\end{equation}
where $\Delta x = v_0$. 
\end{proof}

From Proposition \ref{slingimplicitderivative} we obtain
\begin{prop}
\label{slingderivative}
Let $M$, $L_0$, $\U$, $L'_0$, $\U'$, and $v_0$ be as in Proposition \ref{slingimplicitderivative}. Suppose that for every nonzero harmonic normal vector field $u$ on $L'_0$, $\hpt T^{-1}_1 (u)$ is a nonzero harmonic normal vector field on $L_0$. Then $\SLing$, restricted to a sufficiently small open subset of $\U$, is a smooth map. Its derivative is given by mapping a normal vector field $v$ to the unique harmonic normal vector field $u$ to $L'_0$ such that
\begin{equation}
\label{eq:DSLing}
\hpt T_1^{-1} u = \hpt (T_1^{-1}(T_2 v + T_3 v) - \Delta'_v x),
\end{equation}
where $T_1$, $T_2$ and $T_3$ are as in Proposition \ref{slingimplicitderivative}, $\Delta'$ is as in Proposition \ref{harmptderivative} and $\Delta x = v_0$.
\end{prop}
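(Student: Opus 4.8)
The statement follows from applying the implicit function theorem to the map $F$ of \eqref{eq:SLingimplicit}, whose smoothness and derivative $D_{(L_0, L'_0)}F$ were just computed in Proposition \ref{slingimplicitderivative}. The point is that $F(L_s, \SLing(L_s)) = (L_s, 0)$, so if $D_{(L_0, L'_0)}F$ restricted to the second factor $T_{L'_0}\U'$ is an isomorphism onto $H^1(L)$, then the implicit function theorem writes $\SLing$ locally as a smooth map and differentiating the identity $F(L_s, \SLing(L_s)) = (L_s, 0)$ produces \eqref{eq:DSLing}. So first I would isolate the second-factor component of \eqref{eq:dslingimplict}: this is the linear map $u \mapsto [\harmpt \iota_{-T_1^{-1}u}\omega|_{L_0}]$ from $T_{L'_0}\U'$, which by Theorem \ref{acyldeformationtheorem} (or McLean in the compact case) is the space of harmonic normal vector fields on $L'_0$, to $H^1(L_0)$.

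\textbf{The isomorphism condition.} The hypothesis of the proposition, that $u \mapsto \hpt T_1^{-1}(u)$ takes nonzero harmonic normal vector fields on $L'_0$ to nonzero harmonic normal vector fields on $L_0$, is exactly what makes this second-factor component an isomorphism. Indeed, harmonic normal vector fields on $L'_0$ and on $L_0$ both have dimension $b^1(L)$ (the underlying smooth manifolds being diffeomorphic, indeed $L'_0 = \exp_{v_0}(L_0)$), so it suffices that the composite $u \mapsto \hpt T_1^{-1}(u)$ is injective on harmonic normal vector fields; the hypothesis gives precisely this. Composing with the isomorphism from harmonic normal vector fields on $L_0$ to $H^1(L_0)$ (one-form corresponding to a harmonic normal vector field is harmonic, hence determined by its class) shows the second-factor component of $D_{(L_0,L'_0)}F$ is an isomorphism $T_{L'_0}\U' \to H^1(L_0)$. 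By the implicit function theorem for (finite-dimensional) manifolds, there is a neighbourhood of $L_0$ in $\U$ on which $L_s \mapsto \SLing(L_s)$ is the smooth implicit function solving $F(L_s, \SLing(L_s)) = (L_s, 0)$; this is the asserted smoothness of $\SLing$. (One should check the two finite-dimensional families $\U$ and $\U'$ can be chosen so that $F$ is defined and smooth, and $\SLing(L_0) = L'_0 \in \U'$; this is immediate from the setup and Proposition \ref{slingimplicitderivative}.)

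\textbf{The derivative.} To obtain \eqref{eq:DSLing}, differentiate $F(L_s, \SLing(L_s)) = (L_s, 0)$ at $s = 0$. The first component is tautological. For the second, writing $D_{L_0}\SLing(v) = u$ and reading off the $H^1(L)$-component of \eqref{eq:dslingimplict} applied to $(v, u)$, we get
\begin{equation*}
[\harmpt \iota_{T_1^{-1}(T_2 v + T_3 v - u) - \Delta'_v x}\omega|_{L_0}] = 0,
\end{equation*}
where $\Delta x = v_0$. Since $T_1^{-1}(T_2 v + T_3 v - u) - \Delta'_v x$ has zero harmonic class, its harmonic part $\hpt$ is a harmonic normal vector field whose corresponding one-form is exact and harmonic, hence zero; equivalently, using linearity of $\hpt$ and $T_1^{-1}$,
\begin{equation*}
\hpt T_1^{-1}(u) = \hpt\big(T_1^{-1}(T_2 v + T_3 v) - \Delta'_v x\big),
\end{equation*}
which is \eqref{eq:DSLing}. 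Uniqueness of the harmonic $u$ solving this is exactly the injectivity already established.

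\textbf{Main obstacle.} The genuinely substantive input is not the implicit function theorem bookkeeping but verifying that the hypothesis ``$\hpt T_1^{-1}$ sends nonzero harmonic vector fields to nonzero harmonic vector fields'' is not vacuous, i.e. that it holds when $L_0$ is close enough to $\SLing(L_0)$ — in other words, that $T_1$ and $\hpt$ are close to the identity (in the appropriate sense) when $v_0$ is small, so the composite is a small perturbation of the identity on the finite-dimensional space of harmonic normal vector fields and hence injective. This closeness follows from Propositions \ref{basictransfersmoothness} (smoothness of $T_1$, with $T_1 = \mathrm{id}$ when $v_0 = 0$) and \ref{revised:hptsmoothinsubmanifold} (smoothness of $\hpt$), but one should remark that it is this perturbative estimate, rather than an abstract transversality statement, that will actually be invoked when the proposition is applied to the gluing situation with long necks in section \ref{sec:opennessthm}.
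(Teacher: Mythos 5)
Your proposal is correct and follows essentially the same route as the paper: the paper applies the inverse function theorem to $F$ on $\U \times \U'$, noting that injectivity of $D_{(L_0,L'_0)}F$ reduces (since the first component is the identity) to exactly the injectivity of $u \mapsto \hpt T_1^{-1}u$ on harmonic normal vector fields that you isolate, with the same finite-dimension count, and then obtains \eqref{eq:DSLing} by the same rearrangement of $DF_{(L_0,L'_0)}(v,u) = 0$. Your implicit-function-theorem phrasing and your closing remark on how the hypothesis is verified in the gluing case (cf.\ Corollary \ref{cor:hpttransferisomorphism}) are consistent with, and slightly more explicit than, the paper's argument.
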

\begin{rmk}
Note that if $L_0$ is special Lagrangian, so $L'_0 = L_0$ and $v_0 = 0$ (and we may take $x=0$), $T_1$ and $T_2$ are the identity and $T_3$ is zero, so \eqref{eq:DSLing} reduces to $u = \hpt v$. 
\end{rmk}
\begin{proof}
We apply the inverse function theorem to the map $F$ of \eqref{eq:SLingimplicit} at $(L_0, L'_0)$. We first show that $D_{(L_0, L'_0)}F$ is an isomorphism. We know that $D_{(L_0, L'_0)}F$ is a linear map between spaces of the same finite dimension so it suffices to prove that it is injective. That is, we suppose that $v \in T_{L_0} \U$ and $u \in T_{L'_0} \U'$ satisfy $D_{(L_0, L'_0)}F (v, u) = 0$. Applying \eqref{eq:dslingimplict}, $v=0$ automatically and
\begin{equation}
0 = \iota_{\hpt (T_1^{-1}(T_2 v + T_3 v - u) + \Delta'_v x)}\omega|_{L_0} = \iota_{-\hpt T_1^{-1} u}\omega|_{L_0}.
\end{equation}
The hypothesis then implies that $u$ must be zero, so that $(v, u) = 0$ and $D_{(L_0, L'_0)}F$ is injective. 

By the inverse function theorem, therefore, when we restrict to a small neighbourhood of $(L_0, L'_0)$ in $\U \times \U'$, $F$ becomes a diffeomorphism. $\SLing(L_s)$ is precisely given by the second component of $F^{-1}(L_s, 0)$. Consequently, the derivative $(D\SLing)(v)$ is given by the harmonic normal vector field $u$ to $L'_0$ so that $DF_{(L_0, L'_0)}(v, u) = 0$. Rearranging to obtain \eqref{eq:DSLing} is straightforward. 
\end{proof}

\section{The derivative of approximate gluing}
\label{sec:dpatching}
We now prove the results corresponding to the previous section for the approximate gluing map of Definition \ref{defin:newsubmanifoldapproxgluing}. As this map does not involve the Laplacian, the results corresponding to subsection \ref{ssec:laplacianonnvfs} are not required; however, the analysis corresponding to subsections \ref{ssec:riemanniannormalvectorfields} and \ref{ssec:normalvectorfieldsspeclag} is more involved, as we need to extend these results to asymptotically cylindrical submanifolds and asymptotically translation invariant normal vector fields. We do this extension in subsection \ref{ssec:dpatchingbasics}, and then pass to approximate gluing in subsection \ref{ssec:dpatchingproper}. 

\subsection{Asymptotically translation invariant normal vector fields}
\label{ssec:dpatchingbasics}
In this subsection we give a general definition of asymptotically translation invariant normal vector fields, show that it behaves well with respect to the maps $T_i$ from subsection \ref{ssec:riemanniannormalvectorfields}, and finally show that this definition is, for an asymptotically cylindrical close to special Lagrangian submanifold of an asymptotically cylindrical Calabi--Yau manifold, equivalent to the corresponding one-form being asymptotically translation invariant. 

We briefly discussed the definition of asymptotically translation invariant normal vector fields for the asymptotically cylindrical deformation theory of Theorem \ref{acyldeformationtheorem}. In that case, we explained briefly that for special Lagrangian $L$, we could take $v$ asymptotically translation invariant if and only if $\iota_v \omega|_L$ is. To define asymptotically translation invariant vector fields in general, we use that each asymptotically cylindrical submanifold has a corresponding cylindrical submanifold. 

\begin{defin}
\label{defin:newatinvf}
Let $L$ be an asymptotically cylindrical submanifold of the asymptotically cylindrical manifold $M$. Let $K \times (R, \infty)$ be the cylindrical end, so that the end of $L$ is $\exp_v(K \times (R, \infty)$ for $v$ decaying. 

Translation gives an action on $TM|_{K \times (R, \infty)}$, and consequently a notion of \emph{translation invariant vector field}. Since we always have a notion of exponentially decaying vector fields, we obtain a notion of \emph{asymptotically translation invariant vector fields} on $K \times (R, \infty)$. 

We may extend $v$ to obtain an asymptotically cylindrical diffeomorphism with limit the identity (as in Proposition \ref{everythinginducedisacyl}) of tubular neighbourhoods of $K \times (R, \infty)$ and the end of $L$. Pushforward by this diffeomorphism induces a map from vector fields along $K \times (R, \infty)$ to vector fields along the end of $L$; as $v$ and its first derivative are exponentially decaying, it follows as in Definition \ref{defin:pointwisetransfers} (that is, by using the Rauch comparison estimate Proposition \ref{rauchcomparison}) that this defines an isomorphism far enough along the end. 

We then say that a vector field along $L$ is \emph{asymptotically translation invariant} precisely if it is the image of an asymptotically translation invariant vector field under this pushforward. An asymptotically translation invariant normal vector field is simply asymptotically translation invariant and normal. We say an asymptotically translation invariant vector field's \emph{limit} is the translation invariant vector field along $K \times (R, \infty)$ given by the limit of the vector field on $K \times (R, \infty)$ of which it is a pushforward. 
\end{defin}
We can define an extended weighted norm on such asymptotically translation invariant vector fields by using the standard extended weighted norm corresponding to $\exp_v^* g$ for a suitable extension of $v$ on the normal vector field on the cylindrical submanifold with limit $K \times (R, \infty)$.

The purpose of using pushforward for this transfer and not restricting to normal vector fields in the definition is that it makes the following result trivial. 
\begin{lem}
\label{pushforwardativf}
Suppose that $L_1$ and $L_2$ are asymptotically cylindrical submanifolds with the same limit, so that there is an exponentially decaying normal vector field $w$ to $L_1$ such that $\exp_w(L_1) = L_2$. Extend $w$ to define an asymptotically cylindrical diffeomorphism of tubular neighbourhoods, with limit the identity. A vector field along $L_1$ is asymptotically translation invariant if and only if its image under the pushforward by this diffeomorphism is. Moreover, the vector field and its pushforward have the same limit. 
\end{lem}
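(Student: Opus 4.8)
The plan is to reduce the statement to a single assertion — that pushforward by an asymptotically cylindrical diffeomorphism with limit the identity preserves asymptotic translation invariance and limits — and then to prove that assertion by the usual ``smooth bundle maps preserve exponential decay'' argument. Since pushforward $\Phi_*$ between vector fields along $L_1$ and along $L_2$ is a linear isomorphism with inverse $(\Phi^{-1})_*$, and since both $\Phi$ and $\Phi^{-1}$ are asymptotically cylindrical diffeomorphisms of tubular neighbourhoods with limit the identity, it is enough to prove one implication: if $\xi$ is an asymptotically translation invariant vector field along $L_1$ with limit $\eta$, then $\Phi_*\xi$ is asymptotically translation invariant along $L_2$ with the same limit $\eta$.

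Write the common cylindrical end of $L_1$ and $L_2$ as $K\times(R,\infty)$, and for $i=1,2$ let $\Psi_i$ be the asymptotically cylindrical diffeomorphism with limit the identity, from a tubular neighbourhood of $K\times(R,\infty)$ to a tubular neighbourhood of the end of $L_i$, used in Definition \ref{defin:newatinvf} (obtained by extending the exponentially decaying normal vector field $v_i$ with $\exp_{v_i}(K\times(R,\infty))$ the end of $L_i$). By definition, $\xi$ is asymptotically translation invariant along $L_1$ with limit $\eta$ exactly when $(\Psi_1^{-1})_*\xi$ is asymptotically translation invariant on $K\times(R,\infty)$ with limit $\eta$; likewise $\Phi_*\xi$ is asymptotically translation invariant along $L_2$ with limit $\eta$ exactly when $(\Psi_2^{-1})_*\Phi_*\xi = \Theta_*\big((\Psi_1^{-1})_*\xi\big)$ is, where
\[
\Theta := \Psi_2^{-1}\circ\Phi\circ\Psi_1
\]
is, on a sufficiently far part of $K\times(R,\infty)$, an asymptotically cylindrical diffeomorphism onto a neighbourhood of itself. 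Its limit is the composite of the limits of the three factors, hence the identity, and its perturbation from the identity decays exponentially with all derivatives, since composing smooth maps whose perturbations from the identity have this property produces a map with the same property. Thus everything reduces to: if $\Theta$ is an asymptotically cylindrical self-diffeomorphism of the far end of $K\times(R,\infty)$ with limit the identity, and $\zeta$ is asymptotically translation invariant on $K\times(R,\infty)$ with limit $\eta$, then $\Theta_*\zeta$ is asymptotically translation invariant there with the same limit $\eta$.

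For this final assertion, write $\zeta=\eta+\rho$ on $K\times(R,\infty)$ with $\eta$ translation invariant and $\rho$ exponentially decaying with all derivatives. Pushforward is fibrewise linear, $(\Theta_*\sigma)_{\Theta(p)}=d\Theta_p(\sigma_p)$, so $\Theta_*\zeta=\Theta_*\eta+\Theta_*\rho$. Since $\Theta\to\id$ exponentially with all derivatives, $d\Theta$ is uniformly bounded with all derivatives, so $\Theta_*\rho$ is again exponentially decaying with all derivatives. It remains to see that $\Theta_*\eta-\eta$ is exponentially decaying with all derivatives: the value $(\Theta_*\eta)_q$ is obtained from $\eta_{\Theta^{-1}(q)}$ and the $1$-jet of $\Theta$ at $\Theta^{-1}(q)$ by a fixed smooth bundle map that returns $\eta_q$ when $\Theta=\id$; as $\eta$ is bounded with all derivatives while $\Theta^{-1}(q)-q$ and the $1$-jet of $\Theta$ converge exponentially with all derivatives to $q$ and the $1$-jet of $\id$, the Lipschitz continuity of smooth bundle maps on jet bundles — exactly the mechanism of Proposition \ref{bundlemapsckbounded}, and the one used in Definition \ref{defin:pointwisetransfers} via the Rauch comparison estimate of Proposition \ref{rauchcomparison} — forces $\Theta_*\eta-\eta$ to decay exponentially with all derivatives. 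Hence $\Theta_*\zeta=\eta+(\text{exponentially decaying})$, so it is asymptotically translation invariant with limit $\eta$, completing the reduction and the proof. The main point requiring care is the bookkeeping that makes the three diffeomorphisms compose to a self-map with limit the identity; the analytic core is then the standard fact, recorded again in Proposition \ref{revisedtransferofatis}, that smooth fibrewise operations preserve exponential decay.
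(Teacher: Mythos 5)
Your proposal is correct and follows essentially the same route as the paper: both reduce the statement to the composite $\Psi_2^{-1}\circ\Phi\circ\Psi_1$ being an asymptotically cylindrical self-diffeomorphism of the cylindrical end with limit the identity, and then observe that such a map preserves asymptotically translation invariant fields and their limits. The only difference is that you spell out this last step (splitting $\zeta=\eta+\rho$ and invoking the Lipschitz bundle-map mechanism), where the paper dismisses it as immediate via independence of the notion from the choice of asymptotically cylindrical metric.
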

\begin{proof}
We restrict to the ends of of $L_1$, $L_2$ and the corresponding cylindrical end $\tilde L$. As the compact parts are irrelevant for this discussion, we will just write these ends as $L_1$ and $L_2$. We have $L_1 = \exp_{v_1}(\tilde L)$, $L_2 = \exp_{v_2}(\tilde L)$; again, make some extensions $v_1$ and $v_2$ to define asymptotically cylindrical diffeomorphisms with limit the identity between tubular neighbourhoods. Suppose $u$ is an asymptotically translation invariant vector field on $L_1$; that is, it is the pushforward by $\exp_{v_1}$ of an asymptotically translation invariant vector field on $\tilde L$. We want to show that the pushforward by $\exp_w$ of $u$ is a translation invariant vector field on $L$ with the same limit; that is, we want to show that the pushforward by the composition $\exp_{w}\exp_{v_1}$ of an asymptotically translation invariant vector field is the pushforward by $\exp_{v_2}$ of an asymptotically translation invariant vector field with the same limit. Equivalently, it suffices to show that $\exp_{v_2}^{-1}\exp_{w}\exp_{v_1}$ (which is essentially a diffeomorphism of the end $\tilde L$) preserves asymptotically translation invariant vector fields and their limits. But this is an asymptotically cylindrical diffeomorphism, and so pullback induces a corresponding asymptotically cylindrical metric on the tubular neighbourhood of $\tilde L$, and the question just becomes the independence of asymptotic translation invariance on metric. This is immediate as usual. As the diffeomorphism has limit the identity, it preserves the limits of the vector fields. 

The reverse implication is equally obvious.
\end{proof}
This implies in particular that Definition \ref{defin:newatinvf} is independent of the extension of $v$ used. We will now explain some consequences of Definition \ref{defin:newatinvf}, particularly with respect to the transfer maps $T_1$, $T_2$ and $T_3$.
\begin{prop}
\label{revisedtransferofatis}
Let $L$ be an asymptotically cylindrical submanifold of $M$; let $L'$ be another asymptotically cylindrical submanifold with the same limit. Let $u$ be an asymptotically translation invariant normal vector field on $L$. Then
\begin{enumerate}[i)]
\item $\|u\|_{C^k}$ is finite for every $k$. 
\item $T_1 u$ and $T_2 u$ are asymptotically translation invariant normal vector fields on $L'$ with the same limit as $u$. Similarly, if $u'$ is an asymptotically translation invariant normal vector field to $L'$, $T^{-1}_1 u'$, $T^{-1}_2 u'$ are asymptotically translation invariant normal vector fields on $L$ with the same limits as $u'$. Finally, $T_3 u$ is an exponentially decaying normal vector field on $L'$. 
\item If the ambient metric is cylindrical as in subsection \ref{ssec:speclagdeformation}, asymptotically translation invariant normal vector fields correspond to nearby asymptotically cylindrical submanifolds, with the limits also corresponding. 
\end{enumerate}
\end{prop}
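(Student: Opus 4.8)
The plan is to reduce all three statements to the Jacobi field comparison estimates of Propositions \ref{rauchcomparison} and \ref{prop:paralleljacobiestimate}, together with Lemma \ref{pushforwardativf}, exploiting throughout that since $L$ and $L'$ have the same limit, the normal vector field $v$ on $L$ with $\exp_v(L) = L'$ — and all its derivatives — decays exponentially (this is what "same limit" means, cf. Definition \ref{defin:acylsubmfd}). For (i), I would first recall from Proposition \ref{acylacyl} that the restricted metric on $L$ is asymptotically cylindrical, so $C^k$ norms on $L$ make sense; then, by Definition \ref{defin:newatinvf}, on the end $u$ is the pushforward by an asymptotically cylindrical diffeomorphism with limit the identity of a vector field on $K \times (R, \infty)$ of the form "translation invariant $+$ exponentially decaying". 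Both summands have finite $C^k$ norm (the translation invariant part because $K$ is compact), and such a pushforward and all its derivatives are bounded, so $\|u\|_{C^k}$ is finite on the end, and trivially on the compact part.

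For (ii) I would work on the ends, comparing the transfer maps with the trivial identifications. Fix $p$ and the short geodesic $\gamma(s) = \exp_p(s v_p)$, $s\in[0,1]$, with $|v_p|$ and $|(\nabla v)_p|$ exponentially small in the cylinder parameter. The field $T_1 u$ is the normal part of the final value of the Jacobi field with $J_0 = 0$, $(\nabla_{\dot\gamma} J)_0 = u$; Proposition \ref{prop:paralleljacobiestimate} bounds its difference from parallel transport $P_1 u$ by $O(|u|\,|v_p|^2)$, and $P_1 u$ is exponentially close to $u$ under the natural normal-bundle identification, so $T_1 u$ and $u$ differ by an exponentially decaying normal field on $L'$. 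Similarly, the Jacobi field defining $T_2 u$ has the same $J_0 = u$ as the one computing the pushforward $(\exp_v)_* u$ (see the proof of Proposition \ref{basicgluingsmoothness}) but differs in $(\nabla_{\dot\gamma} J)_0$ by $\nabla_u v$, which is exponentially small by (i), so $T_2 u$ differs from $(\exp_v)_* u$ by an exponentially decaying field; by Lemma \ref{pushforwardativf} that pushforward is asymptotically translation invariant on $L'$ with the same limit as $u$, and an exponentially decaying correction does not change this. The $T_1^{-1}$, $T_2^{-1}$ statements follow by exchanging the roles of $L$ and $L'$. For $T_3$, since $T_3 u = T_1(\npt'_u v)$ by Definition \ref{defin:t3} and the explicit formula after Definition \ref{defin:dernpt} bounds $\npt'_u v$ in terms of $v$ and its derivatives together with $u$, the field $\npt'_u v$ is exponentially decaying, and $T_1$ preserves exponential decay by the estimate just described.

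For (iii), with the ambient metric cylindrical on the end, I would argue exactly as in the cylindrical discussion of subsection \ref{ssec:speclagdeformation}. Given $u$ asymptotically translation invariant with limit $\tilde u$, on the end $\exp_u(L)$ is a composite of exponential maps of (exponentially decaying) and (translation invariant) normal vector fields; since the metric is translation invariant there and combining these operations is smooth with a well-defined limit as the cylinder parameter $\to\infty$, $\exp_u(L)$ is asymptotically cylindrical with limit the deformation of $K\times(R,\infty)$ determined by $\tilde u$. Conversely, a nearby asymptotically cylindrical $L' = \exp_u(L)$ has limit $K'$ a small deformation of $K$, inducing a translation invariant $\tilde u$ between the cylinders, and pointwise $u$ depends smoothly on $\tilde u$ together with the exponentially decaying data and has $\tilde u$ as a limit, so it converges to $\tilde u$ exponentially with all derivatives; hence $u$ is asymptotically translation invariant, with limits corresponding.

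The hard part in all three is the requirement that \emph{every} derivative decays, not merely the $C^0$ part. For (ii) this is handled by differentiating the Jacobi estimates, i.e. by applying Propositions \ref{rauchcomparison}–\ref{prop:paralleljacobiestimate} at the level of the jet maps of Proposition \ref{basictransfersmoothness} (exactly as the smoothness of $T_i$ was obtained there). For (iii) it rests on the smoothness, with well-defined limits, of the operation of combining normal vector fields and applying the exponential map — the delicate point that was only sketched for the cylindrical case in subsection \ref{ssec:speclagdeformation}, and which is the genuine obstacle here.
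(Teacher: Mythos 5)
Your overall strategy is the paper's: reduce everything to the pointwise jet-level maps, show the $T_i$ differ from a trivial identification by an exponentially decaying error, deduce uniformity of the constants along the end from the continuity-in-the-metric statements of Proposition \ref{basictransfersmoothness}, invoke Lemma \ref{pushforwardativf}, and formalise (iii) exactly as in the cylindrical sketch. The only cosmetic difference is that the paper compares both $T_1$ and $T_2$ with pushforward by the soft argument that all these maps coincide when $(v,\nabla v)=0$ and are Lipschitz in $(v,\nabla v)$, rather than via the explicit Jacobi estimates of Proposition \ref{prop:paralleljacobiestimate}.

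Two steps in your write-up are, however, genuinely incomplete. First, ``an exponentially decaying correction does not change this'' is not automatic: by Definition \ref{defin:newatinvf} an asymptotically translation invariant field on $L'$ is \emph{by definition} a pushforward from the cylinder, so to conclude that ``pushforward of $u$ plus an exponentially decaying field'' is asymptotically translation invariant with the same limit you must check that an exponentially decaying normal field on $L'$ is itself the pushforward of an exponentially decaying field, i.e.\ that the inverse of pushforward preserves exponential decay. The paper isolates and proves exactly this as a separate step; without it the decomposition does not close. Second, ``exchanging the roles of $L$ and $L'$'' does not deliver the statement for $T_1^{-1}$ and $T_2^{-1}$: the transfer maps from $L'$ to $L$ built from geodesics starting on $L'$ are different maps from the inverses of the maps of Definition \ref{defin:sectiontransfers}, which are built from geodesics starting on $L$. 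One must either argue directly with the inverse maps on the bundle $N'$ of Definition \ref{defin:newbigsubbundles}, as the paper does, or write $T_i^{-1}-P^{-1}=-T_i^{-1}(T_i-P)P^{-1}$ and use the uniform boundedness of $T_i^{-1}$ and $P^{-1}$ (Proposition \ref{transferisomorphism} and the Rauch estimates) to see that the difference remains exponentially decaying.
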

\begin{proof}
We begin with (i). By definition, away from a compact part, $u$ is the image under pushforward of an asymptotically translation invariant vector field $w$ on $K \times (R, \infty)$. $w$ is clearly bounded in $C^k$ for every $k$, at least for $R$ large enough. We have to show that pushforward preserves this, and this follows easily from the smoothness and continuity result obtained as part of Proposition \ref{basicgluingsmoothness}. For instance, $|u_{\exp_v(p)}|$ is precisely given by applying the pointwise pushforward map with $w_p$ and $(v_p, (\nabla v)_p)$; this depends smoothly on $p$ and converges to a limit as we approach the end, using the continuity in the metric. This proves the $C^0$ bound; the $C^k$ bound is similar by using the derivatives. 

The major part of this proposition is (ii). We shall prove the result for $T_1$ and $T_2$ and their inverses; $T_3$ is similar. As usual we shall argue using the ideas of Proposition \ref{basictransfersmoothness}. It follows from Lemma \ref{pushforwardativf} that the result is true just for applying pushforward, so it suffices to check two things. Firstly, we shall show that these four maps are close to pushforward in the sense that their differences decay exponentially. Secondly, we shall show that exponentially decaying vector fields are asymptotically translation invariant with zero limit. By the linearity of Definition \ref{defin:newatinvf}, it follows that the image under these maps is asymptotically translation invariant, and that the limits are preserved. 

We begin by working with $T$ which is either $T_1$ or $T_2$. This is defined pointwise by a map on $TM \oplus J^1 (TM) \oplus \Gr_n(TM)$. By Proposition \ref{basictransfersmoothness}, this map is smooth and depends continuously on the metric in the sense of that proposition. Similarly, pushforward, which we shall denote $P$, is defined pointwise by a map on $TM \oplus J^1(TM)$, which by the appropriate part of Proposition \ref{basicgluingsmoothness} is smooth and depends continuously on the metric. We shall restrict to the submanifold $N$ of $TM \oplus J^1(TM) \oplus \Gr_n(TM)$ defined in Definition \ref{defin:newbigsubbundles}; that is, $(u, (v, \nabla v), \ell)$ with $u$ normal to $\ell$. For any such $u$ and $\ell$, we immediately have
\begin{equation}
|T(u, 0, \ell) - P(u, 0)| = 0.
\end{equation}
Since these maps are smooth, it follows that for each $p$ we have a constant $C$ with
\begin{equation}
|T(u, (v, \nabla v), \ell) - P(u, (v, \nabla v))| \leq C |(v, \nabla v)||u|,
\end{equation}
as a pointwise estimate. As this constant may be chosen smoothly, and we may differentiate and obtain the same results for jets, we have a local estimate
\begin{equation}
\|T(u, (v, \nabla v), \ell) - P(u, (v, \nabla v))\|_{C^k} \leq C_k \|v\|_{C^{k+1}}\|u\|_{C^k}.
\end{equation}
Just as in Proposition \ref{speclaggluingremainderestimate}, we now only have to show that $C_k$ may be chosen uniformly as $v$ is exponentially decaying and $u$ is bounded by (i). But, using that we have \eqref{eq:oldcomplicatedsctyest} and its analogue for pushforward, the derivatives are continuous in the metric. $L$ itself can be regarded as a finite-dimensional parameter space of metrics, and as $p$ heads to the end of $L$, the metric converges to a limit $\tilde g$. Hence, the whole map depends continuously on the point of $L$, and we may choose a uniform bound.

This shows that the image of a normal vector field by $T$ and by pushforward differ by an exponentially decaying vector field. 

As for the inverses, we know from the inverse part of Proposition \ref{basictransfersmoothness} and the analogous result, proved identically, for the inverse of pushforward that these satisfy the same properties on $N'$ from Definition \ref{defin:newbigsubbundles}: the result is then proved in exactly the same way. 

It now only remains to show that exponentially decaying vector fields are asymptotically translation invariant with zero limit, that is they are the pushforwards of exponentially decaying vector fields on $\tilde L$. To do this, we again apply the inverse of pushforward. Just as above, we know that we can take an appropriate smooth submanifold of $TM \times J^1(TM)$, and define a map that is smooth and continuous in the metric. Note that as here we need to ensure the vector field to $L$ is normal, this submanifold is not $N'$. It follows in the same way that the image of a pair of exponentially decaying objects is exponentially decaying, since the pushforward of zero is always zero and we converge to a constant metric. The converse result can be proved the same way: an asymptotically translation invariant vector field with zero limit is exponentially decaying. 

As for (iii), the relationship between asymptotically translation invariant normal vector fields and asymptotically cylindrical manifolds holds just as sketched in subsection \ref{ssec:speclagdeformation}; passing to pointwise operators on appropriate bundles as in this section enables us to formalise that argument. Note that the condition that the limit was zero for (ii) meant we did not have to assume cylindricality of the ambient metric, whereas that will be necessary in this case.  
\end{proof}

We now prove that asymptotically translation invariant one-forms correspond to asymptotically translation invariant normal vector fields. That is, Definition \ref{defin:newatinvf} is equivalent to the one-forms we used in subsection \ref{ssec:speclagdeformation}. 
\begin{prop}
\label{aticonsistency}
Let $M$ be an asymptotically cylindrical Calabi--Yau manifold, and let $L$ be an asymptotically cylindrical submanifold so that $\omega|_L$ is small with all derivatives, so that the estimate \eqref{eq:Jbehavesbounds} of Lemma \ref{Jbehaves} applies in every $C^k$ space. Then a normal vector field $v$ on $L$ is asymptotically translation invariant in the sense of Definition \ref{defin:newatinvf} if and only if the one-form $\iota_v \omega|_L$ is asymptotically translation invariant. 
\end{prop}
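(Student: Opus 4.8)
The plan is to reduce the statement to the asymptotic behaviour on a single end, where everything is governed by the limiting cylindrical data, and then to exploit the fact that the correspondence $v \mapsto \iota_v \omega|_L$ is a smooth bundle map that converges to the corresponding cylindrical bundle map. First I would observe that both notions of ``asymptotically translation invariant'' are local on the ends: a compactly supported perturbation is both an asymptotically translation invariant normal vector field (with zero limit) and has an asymptotically translation invariant associated one-form (with zero limit), so we may as well work on $\exp_v(K \times (R,\infty))$ for $R$ large, and by Proposition \ref{revisedtransferofatis}(ii) applied to the map $T_2$ (which preserves asymptotic translation invariance and limits in both directions, and only differs from pushforward by an exponentially decaying term) we may reduce to the case where $L$ is cylindrical, i.e. $L = K \times (R,\infty)$ with the coordinate $r = t$. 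Similarly, since restriction of forms and the metric are continuous in the ambient Calabi--Yau structure (cf. Proposition \ref{acylacyl} and the discussion after Theorem \ref{localrestrictionthm}), and the differences introduced by passing from the asymptotically cylindrical ambient structure to the cylindrical one decay exponentially, I would further reduce to the case where $M$ itself is cylindrical, $g = \tilde g$ and $(\Omega,\omega) = (\tilde\Omega, \tilde\omega)$ on the end.

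In that fully cylindrical situation, the map $v \mapsto \iota_v \omega|_L$ is genuinely translation invariant: it is a single bundle isomorphism $\nu_{\tilde L} \to T^*\tilde L$ pulled back from $\nu_K \oplus \mathbb{R}\frac{\partial}{\partial r}$ on $K$ (using that the cross-section $K$ of a special Lagrangian splits as $Y \times \{p\}$ by Proposition \ref{acylsl}, so $\nu_{\tilde L}$ decomposes into the pullback of $\nu_Y$ and the $\frac{\partial}{\partial\theta}$-direction). A normal vector field $v$ on $\tilde L$ is translation invariant precisely when it is $r$-parallel, and since the bundle map is $r$-parallel, $\iota_v\omega|_{\tilde L}$ is then $r$-parallel too, i.e. translation invariant; conversely a translation invariant one-form pulls back under the inverse (also $r$-parallel) bundle map to a translation invariant normal vector field. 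For the \emph{asymptotically} translation invariant case I would argue that applying the bundle map (and its inverse, which exists and is bounded in every $C^k$ by Lemma \ref{Jbehaves}) to an exponentially decaying section yields an exponentially decaying section — this is exactly the kind of statement proved for pushforward and for $T_i$ in Proposition \ref{revisedtransferofatis}, via Proposition \ref{bundlemapsckbounded} and the continuity of $G$-type bundle maps — so a section that is (translation invariant $+$ exponentially decaying) maps to a section of the same form, and the limits correspond.

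Assembling these reductions: given $v$ asymptotically translation invariant on $L$ in the sense of Definition \ref{defin:newatinvf}, Proposition \ref{revisedtransferofatis}(ii) lets us transport it via $T_2$ to the associated cylindrical submanifold $\tilde L$ as an asymptotically translation invariant vector field with the same limit, where the cylindrical bundle map shows the corresponding one-form is asymptotically translation invariant; then transporting back the one-form via $\tau_2$ (Definition \ref{defin:taui}, which is the one-form incarnation of $T_2$ by Proposition \ref{tauiareti}) shows $\iota_v\omega|_L$ is asymptotically translation invariant on $L$, using that $\tau_2$ and its inverse also preserve asymptotic translation invariance (this follows from Proposition \ref{revisedtransferofatis} transported through the isomorphisms $I, I'$ of Definition \ref{defin:iandiprime}, exactly as Lemma \ref{Jbehaves} converts $T_i$-statements into $\tau_i$-statements). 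The converse direction is identical, running the same chain backwards. The main obstacle, and the step requiring the most care, is bookkeeping the exponential-decay estimates through the two different transfer mechanisms (pushforward of vector fields versus pullback of one-forms) and checking they are compatible — that is, that the ``difference from pushforward is exponentially decaying'' estimate for $T_2$ really does translate, after pre- and post-composing with the bundle isomorphisms $v \mapsto \iota_v\omega$, into the analogous estimate for $\tau_2$ with a uniform constant; but this is precisely the pattern already established in Proposition \ref{revisedtransferofatis} and Proposition \ref{speclaggluingremainderestimate}, so no genuinely new idea is needed, only a careful repetition of that argument.
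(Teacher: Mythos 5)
Your proposal is correct and rests on the same two pillars as the paper's proof: reduce to the associated cylindrical submanifold $\tilde L$, and then observe that on $\tilde L$ the map $v \mapsto \iota_v\omega|_{\tilde L}$ is an asymptotically translation invariant section of $T^*\tilde L \otimes (\nu_{\tilde L})^*$ which, by Lemma \ref{Jbehaves}, consists of isomorphisms with asymptotically translation invariant inverse, so it preserves asymptotic translation invariance in both directions. Where you differ is in the mechanics of the reduction: you route everything through the transfer maps $T_2$ and $\tau_2$ and Proposition \ref{revisedtransferofatis}, which forces you to track ``exponentially close to pushforward'' estimates on both the vector-field and one-form sides (the step you yourself flag as requiring the most care). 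The paper instead uses the exact naturality identity $\iota_{(\exp_{v_0})_* w}\,\omega|_{\exp_{v_0}(\tilde L)} = \iota_w\, \exp_{v_0}^*\omega|_{\tilde L}$, together with the observation that $u$ normal for $g$ corresponds to $w$ normal for $\exp_{v_0}^*g$, to transfer the \emph{entire statement} verbatim to $\tilde L$ equipped with the pulled-back asymptotically cylindrical structure $(\exp_{v_0}^*\Omega, \exp_{v_0}^*\omega)$; since Definition \ref{defin:newatinvf} already characterises asymptotic translation invariance on $L$ via pushforward from $\tilde L$, no approximation is needed at this stage and the whole exponential-decay bookkeeping collapses to the single remark that $\exp_{v_0}^*\omega$ still restricts to a small ATI form. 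Your route works and buys nothing extra, so if you revise, the pullback identity is the cleaner choice. Two minor points: your appeal to Proposition \ref{acylsl} and the splitting $K = Y \times \{p\}$ is out of place, since the proposition concerns general nearly Lagrangian asymptotically cylindrical submanifolds and no splitting of $\nu_{\tilde L}$ is needed; and note that even after reducing to cylindrical $\tilde L$ the ambient $\omega$ is only \emph{asymptotically} translation invariant, so the bundle map is ATI rather than exactly $r$-parallel — your fully cylindrical sub-case is a convenient picture but the argument must be run for the ATI section, as in the paper.
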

\begin{proof}
As usual, we are only interested in the end of $L$, and this is asymptotic to a cylindrical submanifold $\tilde L$. There is an exponentially decaying normal vector field $v$ on $L$ whose image under the exponential map is $\tilde L$; this extends to a diffeomorphism $\exp_v$ between tubular neighbourhoods decaying to the identity, and Definition \ref{defin:newatinvf} says a vector field $u$ along $L$ is asymptotically translation invariant if and only if it is $(\exp_v)_* w$ for some asymptotically translation invariant vector field $w$ along $\tilde L$. 

It suffices to suppose that $L$ is cylindrical, as follows. With the notation as above, 
\begin{equation}
\iota_u \omega|_L = \iota_{(\exp_v)_* w} \omega|_{\exp_v(\tilde L)} = \iota_w \exp_v^* \omega|_{\tilde L}.
\end{equation}
Moreover, if $u$ is normal, $w$ is normal with respect to the metric $\exp_v^*g$ and vice versa. Hence, the result is true for $L$ if and only if it is true for $\tilde L$ with the metric $\exp_v^*g$ and the $2$-form $\exp_v^* \omega$; these correspond to the asymptotically cylindrical Calabi--Yau structure $(\exp_v^* \Omega, \exp_v^* \omega)$, and as $v$ is exponentially decaying and $\omega$ asymptotically translation invariant $\exp_v^* \omega$ also restricts to a small form (essentially as in Proposition \ref{revisedtransferofatis}), and so this reduces the problem to $\tilde L$. 

If $L$ is cylindrical, we just note that $\omega$ determines an asymptotically translation invariant section of $\bigwedge^2 T^*M|_L$. Composing with the projection map $T^*M|_L \to T^*L$, we obtain an asymptotically translation invariant section of $T^*L \otimes (T^*M|_L)$. Hence, if $v$ is an asymptotically translation invariant normal vector field, the corresponding one-form is asymptotically translation invariant. Moreover, we know that if we restrict this section to $\nu_L$, it defines an asymptotically translation invariant section of $T^*L \otimes (\nu_L)^*$. By Lemma \ref{Jbehaves}, this section consists of isomorphisms, so we can invert it; it is easy to see that the inverse section is again asymptotically translation invariant. This completes the cylindrical case, and hence the proof. 
\end{proof}

\subsection{The derivative of approximate gluing}
\label{ssec:dpatchingproper}
In this subsection, we will deal with the approximate gluing map of Definition \ref{defin:newsubmanifoldapproxgluing}. We will identify its derivative, and use this to define an approximate gluing map of normal vector fields. We will also show that this gluing map, when we pass to special Lagrangians and use the identifications between normal vector fields and one-forms, is close to the approximate gluing map of one-forms given in Definition \ref{defin:newformgluing}. In particular, this shows that for special Lagrangian submanifolds the approximate gluing map of submanifolds is smooth, and its derivative is close to the approximate gluing map of one-forms. 

We make the following preliminary observation. To discuss smoothness of maps of submanifolds, we treat submanifolds as equivalent to normal vector fields using the Riemannian exponential map. It is not hard to see, essentially by the argument of Hamilton \cite[Example 4.4.7]{hamiltonnashmoser}, that smoothness of maps is independent of the metric used, \ie that the identity map on submanifolds defines a smooth map of vector fields normal with respect to different metrics, and that the derivative of this map is just given by taking the normal part of the vector field with respect to the new metric.  Compare Proposition \ref{basetransferfundamental}. 

This observation is relevant to this analysis for various reasons, but most obviously because we have to change the metric in gluing the ambient spaces $M_1$ and $M_2$. Recall from Definition \ref{defin:newsubmanifoldapproxgluing} that to glue asymptotically cylindrical submanifolds $L_1$ and $L_2$, we cut them off to form $\hat L_1$ and $\hat L_2$ and then identify. The observation shows that, regardless of metric, the identification part of this is smooth. Its derivative is given by taking the normal parts with respect to the cutoff metrics and then identifying; equivalently, it is given by identification and taking the normal part with respect to the final metric on $M^T$. 

We thus need to understand the cutoff map $L_i \mapsto \hat L_i$, and may work with only one asymptotically cylindrical submanifold $L$, with cutoff $\hat L$, in a fixed asymptotically cylindrical $M$. Of course, since it depends on cutting off a normal vector field, this map depends on some fixed metric on $M$, but we may fix one once for all. Since we need to find a derivative, we in fact work with a family $L_s$ of asymptotically cylindrical submanifolds with cutoffs $\hat L_s$ and cross-sections $K_s$. We assume that this family decays at a fixed uniform rate, so that we can fix a decay rate for normal vector fields; note that if $L_s$ is a deformation family of special Lagrangians this is immediate by the argument at the end of subsection \ref{ssec:speclagdeformation}. 

We now introduce families of normal vector fields giving $L_s$ and $\hat L_s$, and prove that $\hat L_s$ is a smooth family. First of all, we have normal vector fields $u_s$ and $\hat u_s$ to $L_0$ and $\hat L_0$ so that $\exp_{u_s}(L_0) = L_s$ and $\exp_{\hat u_s}(\hat L_0) = \hat L_s$. These normal vector fields are not well-adapted to the cutoff, and so we define some further normal vector fields. There is a normal vector field $v_s$ to $K_s \times (R, \infty)$ giving the end of $L_s$; it depends smoothly on $s$ by the remark after Proposition \ref{transfersmoothness}. Moreover, there is a normal vector field $w_s$ to $K_0 \times (R, \infty)$ giving $K_s \times (R, \infty)$; this also depends smoothly on $s$. The cutoff function needed in Definition \ref{defin:newsubmanifoldapproxgluing} is then $\varphi_s = \varphi_{T}\circ\exp_{w_s}\circ\iota$ where $\iota$ is the inclusion of $K_0 \times (R, \infty)$.  Thus the resulting cutoff normal vector field depends smoothly on $s$, and so so does $\hat L_s$. This shows that the map we consider is smooth. 

By using the transfer map $T_{2, s}$ as in Proposition \ref{fieldtransferfundamental} to consider the family $v_s$ of normal vector fields in the previous paragraph on the same space, we can summarise this as
\begin{align}
L_s &= \exp_{T_{2, s}(v_s)}(K_s \times (R, \infty)) = \exp_{T_{2, s}(v_s)}\exp_{w_s}(K_0 \times (R, \infty)), \\ \hat L_s &= \exp_{ T_{2, s}(\varphi_s v_s)} \exp_{w_s}(K_0 \times (R, \infty)),
\end{align}
where we make a change to the definition of $v_s$.

Now this expression enables us to find the derivative. Let $T_1$ and $T_2$ be the transfer maps of Definition \ref{defin:sectiontransfers} with $L=K_i \times (R_i, \infty)$ and $L' = \exp_{v_i}((R_i, \infty) \times K_i)$; let $\hat T_1$ and $\hat T_2$ be the corresponding transfer maps with $L= K_i \times (R_i, \infty)$ and $L' = \exp_{\varphi v_i}((R_i, \infty) \times K_i)$. Define $T_3$ and $\hat T_3$ similarly  using Definition \ref{defin:t3}. 

It follows from Proposition \ref{combinedtransfer} that the normal vector field to $L_0$ giving the tangent to the curve $L_s$ is $u' = T_2 w' + T_1 v' + T_3w'$; similarly, the normal vector field to $\hat L_0$ giving the tangent to the curve $\hat L_s$ is $\hat T_2 w' + \hat T_1 (\varphi_s v_s)' + \hat T_3w' = \hat T_2 u' + \hat T_1( \varphi_0 v' + \nabla_{u'} \varphi_{T}) + \hat T_3w'$. Note that $\varphi_{T}$ is defined on $M$, so the normal derivative $\nabla_{u'} \varphi_T$ makes sense. 

The derivative is consequently the map
\begin{equation}
\label{eq:nvfgluingderiv}
u' = T_2 w' + T_1 v' + T_3w' \mapsto \hat T_2 w' + \hat T_1 (\varphi_0 v' + \nabla_{u'} \varphi v') + \hat T_3 w' = \hat u'. 
\end{equation}
In order to understand this derivative explicitly, we have to explain how to obtain $w'$ and $v'$ from $u'$.  Note that if $M$ were cylindrical, $w_s$ would be translation invariant for all $s$, and thus so would $w'$. Hence by our preliminary observation, $w'$ is the normal part of the translation invariant vector field corresponding to the behaviour of the limits, and is determined by its limit. On the other hand, by assumption $v'$ decays at a uniform rate and so $v'$ also decays. 

Since $v_0$ and $v'$ are exponentially decaying, it follows by (ii) of Proposition \ref{revisedtransferofatis} that $T_1 v'$ is too. Similarly it follows that $T_3w'$ is exponentially decaying. 

On the other hand, since $w'$ is determined by its limit, the proof of Proposition \ref{revisedtransferofatis} implies $T_2 w'$ is also determined by its limit: we may find the limit of $w'$ from the limit of $T_2 w'$, hence $w'$, and thence $T_2 w'$. The previous paragraph says the limit of $T_2 w'$ is the limit of $u'$, hence the map of \eqref{eq:nvfgluingderiv} is well-defined. That is, the limit of $u'$ gives us $w'$ as above; then $T_1 v' = u' - T_2 w' - T_3 w'$ enables us to find $v'$. 

Note that at a point with $r< T-1$, we have that $T_1 = \hat T_1$, $T_2 = \hat T_2$, $T_3 = \hat T_3$, $\varphi_{T} \equiv 1$, and $\nabla \varphi_{T} = 0$. Hence \eqref{eq:nvfgluingderiv} becomes the identity map at these points, and so it can be extended to a map from normal vector fields on $L_0$ to normal vector fields on $\hat L_0$. 

This identifies the derivative of the approximate gluing map of submanifolds from Definition \ref{defin:newsubmanifoldapproxgluing}. It also defines an approximate gluing map of matching asymptotically translation invariant normal vector fields (that is, those vector fields $v_i$ on $L_i$ whose limits $\tilde v_i$ satisfy $F_* \tilde v_1 = \tilde v_2$). To check this, we just have to show that this condition implies the constructed normal vector fields match in the identification region. Since the limits are the same, we know that the limit of $w'$ is the same; thus $\hat T_2 w'$ agrees in the identification region provided we choose the metrics to agree. 

There are simpler definitions of such a gluing map, but we will have to use the transfer maps because to define a normal vector field on $L^T$, we will need normal vector fields on $\hat L_1$ and $\hat L_2$. 

We now translate this to one-forms. If $L_1$ and $L_2$ are a matching pair of special Lagrangians, and Hypothesis \ref{hyp:ambientgluing} holds, then by Proposition \ref{ambientimplications} and Lemma \ref{Jbehaves} $v \mapsto \iota_v \omega|_{L^T}$ gives an isomorphism between normal vector fields and one-forms on $L^T$. As $L_1$ and $L_2$ are themselves special Lagrangian we also have such an isomorphism on $L_1$ and $L_2$. With respect to these isomorphisms, we have
\begin{prop}
\label{nvfandoneformgluing}
Let $M_1$, $M_2$, $L_1$, $L_2$ and $L^T$ be as in Proposition \ref{ambientimplications}. Suppose $T$ is sufficiently large that $v \mapsto \iota_v\omega|_{L^T}$ is an isomorphism. Then we can induce a gluing map of normal vector fields from the gluing map of one-forms. This map differs from the gluing map described above by a linear map decaying exponentially in $T$. This means that there exist constants $\epsilon$ and $C_k$ so that the two gluing maps $G_1$ and $G_2$ satisfy
\begin{equation}
\label{eq:opnormsmallinT}
\|G_1(v_1, v_2) - G_2(v_1, v_2)\|_{C^k} \leq C_k e^{-\epsilon T} (\|v_1\|_{C^k} + \|v_2\|_{C^k}).
\end{equation}
\end{prop}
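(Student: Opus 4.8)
The plan is to compare the two gluing maps of normal vector fields term by term, using that both reduce to essentially the same operation once we translate through the isomorphisms $v \mapsto \iota_v\omega|_{L}$ and that the transfer maps $\tau_i$ of Definition \ref{defin:taui} (equivalently the $T_i$ of Definition \ref{defin:sectiontransfers}) differ from pushforward by exponentially small corrections. First I would set up notation: $G_2$ is the gluing map of normal vector fields described earlier in this subsection (built from the transfer maps $\hat T_1, \hat T_2, \hat T_3$, the cutoff $\varphi_T$, and $\nabla\varphi_T$), while $G_1$ is defined by translating the one-form gluing map $\gamma_T$ of Definition \ref{defin:newformgluing} through the isomorphisms on $L_1$, $L_2$ and $L^T$; by Proposition \ref{ambientimplications} and Lemma \ref{Jbehaves} these isomorphisms exist for $T$ large, and by Corollary \ref{newgluedmetricsallthesame} the relevant norms (in particular the second fundamental form of $L^T$) are controlled uniformly in $T$, so the isomorphisms and their inverses are bounded uniformly in $T$ in every $C^k$.

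Next I would carry out the comparison away from the neck and on the neck separately, exactly as in the proof of Proposition \ref{harmonicgluinglowerbound} and Theorem \ref{slperturbthm}. Away from the identification region, say for $r < T-2$, all the cutoff factors $\varphi_T, \psi_T$ equal $1$, $\nabla\varphi_T = 0$, the transfer maps $\hat T_i$ equal $T_i$, and both $G_1$ and $G_2$ restrict to (the normal-vector-field version of) the identity on $L_i$; since on $L_i$ the structure $(\Omega_i,\omega_i)$ is honestly special Lagrangian and agrees with $\gamma_T(\Omega_1,\Omega_2)$, which by Hypothesis \ref{hyp:ambientgluing} is exponentially close to $(\Omega^T,\omega^T)$, the two maps agree up to an exponentially small ($C^k$) discrepancy coming only from the difference between the $L^T$-isomorphism and the $L_i$-isomorphism. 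On the neck $(-3,3)\times N$ I would use Proposition \ref{revisedtransferofatis}(ii): the transfer maps $T_1 v', T_3 w'$ are exponentially decaying and $T_2 w'$ is determined by its limit, so both $G_1$ and $G_2$ differ from pushforward of the limiting data by exponentially small terms, and the limiting data is the cylindrical Calabi--Yau structure $(\tilde\Omega,\tilde\omega)$ for which everything is special Lagrangian; the difference between $\gamma_T$ of forms and the cutoff-and-identify operation on vector fields is then governed by $d(\psi_T\eta_i)$ versus $\nabla\varphi_T$, both of which are bounded with all derivatives uniformly in $T$ and exponentially small in $T$ on the neck because $\eta_i$ and $v_i$ decay exponentially. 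Taking the maximum of the $A_i(T)$ over the finite cover $\{M_1^{\tr(T-2)}, M_2^{\tr(T-2)}, (-3,3)\times N\}$ of $M^T$ as in the proof of Theorem \ref{slperturbthm} gives a single exponentially decaying bound, and since both maps are function-linear (given by composition of bounded bundle maps) the estimate $\|G_1(v_1,v_2)-G_2(v_1,v_2)\|_{C^k}\le C_k e^{-\epsilon T}(\|v_1\|_{C^k}+\|v_2\|_{C^k})$ follows from the operator-norm bound on the difference of the defining tensors.

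The main obstacle, I expect, is bookkeeping the mismatch between the two \emph{a priori} different gluing recipes: $\gamma_T$ patches the one-forms after writing the decaying part as $d\eta_i$ and cutting off $\eta_i$, whereas $G_2$ cuts off the normal vector field $v_i$ itself and then identifies via the transfer maps. To reconcile these I would show that, under the isomorphism $v\mapsto\iota_v\omega|_L$ restricted to a cylindrical special Lagrangian, ``cut off the vector field'' and ``cut off a primitive of the form'' differ by an exact and exponentially-in-$T$-small form — this is precisely the kind of statement already asserted (without detailed proof) in Lemma \ref{gluingconsistency}, and I would invoke the same reasoning, upgraded from cohomology classes to $C^k$ norms using that all the relevant cutoff functions, their derivatives, and the decaying tensors $v_i, \eta_i$ are uniformly bounded and exponentially small on the neck. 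Once this local comparison is in place, assembling the global estimate is routine. I would keep the treatment of $\tau_3$ (equivalently $T_3$) brief, as elsewhere in the paper, noting only that it is exponentially decaying by Proposition \ref{revisedtransferofatis}(ii) and hence absorbed into the error term.
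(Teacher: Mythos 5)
Your proposal follows essentially the same route as the paper's proof: translate everything into one-forms via the uniformly bounded isomorphisms, observe that the two recipes agree exactly where the cutoffs are trivial, show on the cutoff region that both are exponentially close to the translation-invariant limit (the paper does this by proving $\tau_1,\tau_2,\hat\tau_1,\hat\tau_2$ are exponentially close to the identity and $\tau_3,\hat\tau_3$ exponentially small, via the continuity arguments of Propositions \ref{revisedtransferofatis} and \ref{tauismoothness}), and finally absorb the discrepancy between $\gamma_T(\omega_1,\omega_2)$ and $\omega^T$ using Hypothesis \ref{hyp:ambientgluing} and Corollary \ref{newgluedmetricsallthesame}. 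The $\eta_i$-versus-$v_i$ cutoff mismatch you flag is resolved in the paper exactly as you suggest, by noting both cut-off objects coincide with $\alpha_i$ below $t=T-\tfrac32$ and are exponentially close to the common limit $\tilde\alpha$ above it.
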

\begin{proof}
Let $\alpha_1$ and $\alpha_2$ be matching asymptotically translation invariant one-forms with common limit $\tilde \alpha$. We shall show that the difference of the one-form given by gluing these as above and $\gamma_T(\alpha_1, \alpha_2)$ is exponentially decaying: this proves the result as it follows from Proposition \ref{ambientimplications} and Lemma \ref{Jbehaves} that the isomorphism between normal vector fields and one-forms on $L^T$ is bounded uniformly in $T$. 

Let $u_i$ be the normal vector field corresponding to $\alpha_i$ using the Calabi--Yau structure on $M_i$, and let $u$ be the gluings of $u_1$ and $u_2$ as above. 

We prove this in two stages. Firstly, we prove that on each cutoff cylindrical submanifold $\hat L_i$, with cutoff normal vector field $\hat u_i$ as in \eqref{eq:nvfgluingderiv}, and cutoff one-form $\hat \alpha_i$, $\iota_{\hat u_i} \omega_i|_{\hat L_i} - \hat \alpha_i$ is exponentially small in $T$. We then note that $\iota_{\hat u_i} \omega_i|_{\hat L_i} - \iota_{\hat u_i} \hat\omega_i|_{\hat L_i}$ is exponentially small in $T$. This proves that $\iota_u \gamma_T(\omega_1, \omega_2)|_{L^T} - \gamma_T(\alpha_1, \alpha_2)$ is exponentially small in $T$. Secondly, we use Hypothesis \ref{hyp:ambientgluing} to show that $\iota_u (\gamma_T(\omega_1, \omega_2)-\omega^T)|_{L^T}$ is exponentially small in $T$.  

To do the first of these we simply replace $T_1$, $T_2$, $T_3$, $\hat T_1$, $\hat T_2$ and $\hat T_3$ with the corresponding maps $\tau_1, \tau_2, \tau_3, \hat \tau_1, \hat \tau_2, \hat \tau_3$ of one-forms as in Definition \ref{defin:taui}: Proposition \ref{tauiareti} says that these are just the one-form versions of the $T_i$ and $\hat T_i$. 

By construction, $\hat \alpha_i$ and $\iota_{\hat u_i}\hat\omega_i|_{\hat L_i}$ are both equal to $\alpha$, for $t< T-\frac32$, say. Hence, it suffices to consider the difference where $t> T-\frac32$, and so it suffices to prove that $\tau_1$, $\tau_2$, $\hat \tau_1$ and $\hat \tau_2$ are exponentially close in $T$ to the identity, and $\tau_3$ and $\hat \tau_3$ are exponentially small in $T$ in the sense of \eqref{eq:opnormsmallinT}. This follows by a similar argument to Proposition \ref{revisedtransferofatis} using Proposition \ref{tauismoothness}: we obtain local bounds on $\tau_i \alpha - \alpha$ in terms of $v$ and $\alpha$ using smoothness, and then continuity shows that these bounds may be chosen uniformly; since we may suppose $v$ is exponentially small in $T$, as we are only interested in this behaviour far enough along the end, the result follows. Note that this is essentially independent of which metric is used. 

It now only remains to show that $\iota_u (\gamma_T(\omega_1, \omega_2)-\omega^T)|_{L^T}$ is exponentially small in $T$. Hypothesis \ref{hyp:ambientgluing} says that $\gamma_T(\omega_1, \omega_2) - \omega^T$ is exponentially small in $T$; $u$ is uniformly bounded in $T$ since $\hat u_1$ and $\hat u_2$ are, so $\iota_u (\gamma_T(\omega_1, \omega_2) - \omega_T)$ is exponentially small in $T$, and Corollary \ref{newgluedmetricsallthesame} then implies that the restriction is. 
\end{proof}
Note that the norms $\|v_1\|_{C^k}$ may readily be bounded by the extended weighted norms, so we may suppose we have extended weighted norms on the right hand side of \eqref{eq:opnormsmallinT}. 

\section{Estimates in the gluing case and proof of Theorem B}
\label{sec:opennessthm}
We will now show that the gluing map of special Lagrangians given by combining the $\SLing$ map of Definition \ref{defin:slingmap} with the approximate gluing map of Definition \ref{defin:newsubmanifoldapproxgluing} is indeed a local diffeomorphism of moduli spaces for $T$ sufficiently large. This is Theorem B, Theorem \ref{speclaggluinglocaldiffeo} below. We will show that the derivative of this map, which we identified in the previous two sections under certain hypotheses, is an isomorphism. We will first show that the hypotheses required are satisfied. This will imply in particular that the derivative of $\SLing$ is close to taking the harmonic part of the one-form on the special Lagrangian submanifold given by gluing our asymptotically cylindrical pair (Proposition \ref{dslingisnearlyharmpt}). Finally, we show in Proposition \ref{manifoldofgluablesl} that the space of matching special Lagrangian submanifolds around any pair is a manifold, so that our derivative for the approximate gluing map in subsection \ref{ssec:dpatchingproper} applies, and prove using the linear harmonic theory in Proposition \ref{harmonicgluinglowerbound} that the composition of the derivatives is an isomorphism. 

We set up notation for our gluing analysis as follows: unfortunately, this notation is rather involved. 
\begin{convention}
\label{con:gluingslanalysis}
Let $(M_1,M_2)$ be a matching pair of asymptotically cylindrical Calabi--Yau manifolds and let $(L_1, L_2)$ be a matching pair of asymptotically cylindrical special Lagrangian submanifolds as in Definition \ref{defin:newsubmanifoldapproxgluing}. Suppose that Hypothesis \ref{hyp:ambientgluing} holds so that $(\Omega^T, \omega^T)$ is a Calabi--Yau structure on $M^T$. Let $L_0(T)$ be the family of glued submanifolds of $M^T$ given by approximately gluing. Suppose $T_0$ is sufficiently large that for $T> T_0$ sufficiently large Condition \ref{slingcondition} applies with $k$ and $\mu$, and let $L'_0(T)$ be the family of special Lagrangian submanifolds for $(\Omega^T, \omega^T)$ given by perturbing $L_0(T)$ as in Theorem A (Theorem \ref{slperturbthm}). Let $v_0(T)$ be the normal vector field to $L_0(T)$ giving $L'_0(T)$, and let $x^T$ be a normal vector field on $L_0(T)$ with $\Delta x^T = v_0(T)$. 
\end{convention}
We immediately have the following estimates
\begin{lem}
\label{opennessdecaylemma}
Suppose we are in the situation of Convention \ref{con:gluingslanalysis}. We may choose $x^T$ such that there exists fixed $\epsilon > 0$ and constants $C_{k, \mu}$ such that 
\begin{equation}
\|v_0(T)\|_{C^{k+1, \mu}} + \|x^T\|_{C^{k+3, \mu}} \leq C_{k, \mu}e^{-\epsilon T}
\end{equation}
for every $k$ and $\mu$. 
\end{lem}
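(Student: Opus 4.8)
The plan is to control the two quantities separately, exploiting the exponential decay statements already established. First I would bound $\|v_0(T)\|_{C^{k+1, \mu}}$. This is immediate from Theorem \ref{slperturbthm} (Theorem A): that theorem says precisely that $v_0(T)$ is smooth and there is $\epsilon>0$ and a sequence $C_k$ with $\|v_0(T)\|_{C^k} \leq C_k e^{-\epsilon T}$; passing from $C^k$ to $C^{k, \mu}$ bounds costs nothing (increase $k$ by one, or interpolate), so $\|v_0(T)\|_{C^{k+1, \mu}} \leq C_{k, \mu} e^{-\epsilon T}$ with a uniform rate. So the only real content is the bound on the chosen primitive $x^T$.

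For $x^T$, the definition requires $\Delta x^T = v_0(T)$, where $\Delta$ here is the Laplacian on normal vector fields induced via the isomorphism $u \mapsto \iota_u \omega^T|_{L_0(T)}$ of Lemma \ref{Jbehaves}. The natural choice is to take $x^T$ orthoharmonic, i.e. to invert $\Delta$ on the orthogonal complement of the harmonic normal vector fields; this is legitimate because $v_0(T)$ is itself orthoharmonic (it was chosen that way in Definition \ref{defin:slingmap}/Proposition \ref{carefulift}), so it lies in the image of $\Delta$. Then I would translate the problem to one-forms: writing $\beta^T = \iota_{v_0(T)}\omega^T|_{L_0(T)}$ and $\xi^T = \iota_{x^T}\omega^T|_{L_0(T)}$, Lemma \ref{Jbehaves} gives $\|\beta^T\|_{C^{k+1,\mu}} \leq C\|v_0(T)\|_{C^{k+1,\mu}}$ and, in the reverse direction, $\|x^T\|_{C^{k+3,\mu}} \leq C\|\xi^T\|_{C^{k+3,\mu}}$, with constants depending only on $\omega^T|_{L_0(T)}$, the induced almost complex structure and the second fundamental form of $L_0(T)$ in $M^T$ — all of which are bounded uniformly in $T$ by Corollary \ref{newgluedmetricsallthesame} and Proposition \ref{ambientimplications}. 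So it suffices to bound $\xi^T$, which (up to the difference between $\Delta$ on normal vector fields and the genuine Hodge Laplacian on forms, again exponentially small in $T$ by Proposition \ref{parallelgluingprop}-type reasoning as in the proof of Theorem \ref{slperturbthm}) is essentially the orthoharmonic solution of $\Delta_{\mathrm{Hodge}} \xi^T = \beta^T$ on the Riemannian manifold $L_0(T)$.

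Now I would invoke the lower bound on $d + d^*$ from Theorem \ref{laplacelowerbound}, applied to the manifold $L_0(T)$ with its induced metric, which is itself a gluing of the asymptotically cylindrical manifolds $L_1$ and $L_2$ (this is exactly the viewpoint used in Corollary \ref{newgluedmetricsallthesame}). For a form orthogonal to harmonics, \eqref{eq:laplacelowerbound} gives $\|d\xi^T\|_{C^{k+1,\mu}} + \|d^*\xi^T\|_{C^{k+1,\mu}} \geq C T^l \|\xi^T\|_{C^{k+2,\mu}}$; iterating once more (elliptic estimate for $\Delta$, again with a polynomial-in-$T$ constant) controls $\|\xi^T\|_{C^{k+3,\mu}}$ by $CT^{l'}\|\Delta\xi^T\|_{C^{k+1,\mu}} = CT^{l'}\|\beta^T\|_{C^{k+1,\mu}}$, up to the exponentially small error from replacing the normal-vector-field Laplacian by the Hodge Laplacian. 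Since $\|\beta^T\|_{C^{k+1,\mu}}$ decays like $e^{-\epsilon T}$ and the polynomial factor $T^{l'}$ is absorbed by shrinking $\epsilon$ slightly, we get $\|\xi^T\|_{C^{k+3,\mu}} \leq C_{k,\mu} e^{-\epsilon' T}$, hence $\|x^T\|_{C^{k+3,\mu}} \leq C_{k,\mu} e^{-\epsilon' T}$, and combining with the bound on $v_0(T)$ finishes the proof (replacing $\epsilon$ by this smaller $\epsilon'$, uniformly in $k$ exactly as at the end of the proof of Theorem \ref{slperturbthm}).

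The main obstacle is bookkeeping rather than anything deep: one must be careful that the operator whose inverse we are bounding is the Hodge Laplacian to which Theorem \ref{laplacelowerbound} literally applies, not the $\Delta$ induced through $\iota_\bullet\omega^T$, and that the discrepancy between them — together with the discrepancy between the metric from $(\Omega^T,\omega^T)$ and the directly-glued metric on $L_0(T)$ — is exponentially small in $T$ with all derivatives. This is precisely the kind of comparison carried out in the proof of Theorem \ref{slperturbthm} using Proposition \ref{parallelgluingprop} and Corollary \ref{newgluedmetricsallthesame}, so I would simply cite that argument; the only genuinely new ingredient here is going one derivative further (hence $C^{k+3,\mu}$ rather than $C^{k+1,\mu}$) via a standard interior elliptic estimate, which costs only another polynomial power of $T$.
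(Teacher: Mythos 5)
Your proposal is correct and follows essentially the same route as the paper: the bound on $v_0(T)$ is quoted from Theorem \ref{slperturbthm}, and the bound on $x^T$ comes from the (Laplacian version of the) lower bound in Theorem \ref{laplacelowerbound}, using Corollary \ref{newgluedmetricsallthesame} to see that $L_0(T)$ carries essentially the glued metric and Lemma \ref{Jbehaves} to pass between normal vector fields and one-forms, with the polynomial factor absorbed into the exponential. (One small simplification: the operator $\Delta$ on normal vector fields in Definition \ref{defin:laplacianonnvfs} is by definition the Hodge Laplacian conjugated by $u \mapsto \iota_u\omega|_L$, so $\Delta x^T = v_0(T)$ is exactly equivalent to $\Delta_{\mathrm{Hodge}}\xi^T = \beta^T$ and no extra error term arises there.)
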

\begin{proof}
The estimate on $\|v_0(T)\|_{C^{k+1, \mu}}$ follows immediately from Theorem \ref{slperturbthm}. As for $x^T$, this follows essentially from the Laplacian version of Theorem \ref{laplacelowerbound}, using Corollary \ref{newgluedmetricsallthesame} to note that $L_0(T)$ essentially has the glued metric, and Lemma \ref{Jbehaves} to note that bounding one-forms and normal vector fields is essentially equivalent. 
\end{proof}
As $L_0(T)$ and $L'_0(T)$ are close to special Lagrangian and special Lagrangian, respectively, we can identify normal vector fields on them with one-forms. We then obtain from Proposition \ref{slingderivative} that $D_{L_0(T)} \SLing$, if defined, is the map from a one-form $\alpha$ on $L_0(T)$ to the unique harmonic one-form $\beta$ on $L'_0(T)$ such that 
\begin{equation}
\label{eq:oneformdsling}
\harmpt \tau_1^{-1} \beta = \harmpt (\tau_1^{-1}(\tau_2 \alpha + \tau_3 \alpha) + \Delta'_\alpha x^T),
\end{equation}
where $\Delta'_\alpha x^T$ is given by finding the normal vector field $v$ on $L_0$ corresponding to $\alpha$, constructing the transfer operators $T_{2, s}$ from $L_0$ to $\exp_{sv}(L_0)$, constructing the curve of normal vector fields $T_{2, s}^{-1} \Delta T_{2, s} x^T$, and then taking the derivative of this curve in $s$ at zero, and $\tau_1$, $\tau_2$ and $\tau_3$ are as in Definition \ref{defin:taui}. 

We first show that for $T$ large enough, $\beta$ satisfying \eqref{eq:oneformdsling} is defined, by showing that the $\tau_i$ behave well as $T$ gets large. 
\begin{prop}
\label{transfermapsoneforms}
Let $M_1$, $M_2$, $L_1$, $L_2$, $L_0(T)$, $L'_0(T)$ be as in Convention \ref{con:gluingslanalysis}, and $\epsilon$ as in Lemma \ref{opennessdecaylemma}. Consider the maps $\tau_1$, $\tau_2$ and $\tau_3$ defined in Definition \ref{defin:taui} from one-forms on $L_0(T)$ to one-forms on $L'_0(T)$; these maps also depend on $T$. There exists a sequence of constants $C_k$ such that for every $k$
\begin{equation}
\|\alpha - \tau_1 \alpha\|_{C^k} + \|\alpha - \tau_2 \alpha\|_{C^k} + \|\tau_3 \alpha\|_{C^k} \leq C_k e^{-\epsilon T}\|\alpha\|_{C^k}.
\end{equation}
\end{prop}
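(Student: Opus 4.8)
The plan is to reduce the statement to the pointwise, finite-dimensional analysis that has already been set up in Proposition \ref{revisedtransferofatis} and Proposition \ref{tauismoothness}, together with the decay estimates available for the data defining $L_0(T)$ and $L'_0(T)$. Recall that $\tau_i$ is, pointwise, the composite $I' \circ T_i \circ I^{-1}$ of the smooth bundle maps of Definition \ref{defin:iandiprime} and Definition \ref{defin:pointwisetransfers}, evaluated on the triple $(\alpha, (v_0(T), \nabla v_0(T)), T_pL_0(T))$, where $v_0(T)$ is the normal vector field giving $L'_0(T) = \exp_{v_0(T)}(L_0(T))$. By Lemma \ref{opennessdecaylemma}, $\|v_0(T)\|_{C^{k+1, \mu}} \leq C_{k,\mu} e^{-\epsilon T}$ for every $k$ and $\mu$; this is the quantity controlling how far $\tau_i$ is from the identity (and how small $\tau_3$ is).

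First I would record the pointwise algebraic facts: when $v = 0$ and $\nabla v = 0$, the maps $T_1$ and $T_2$ are the identity on $TM|_L$ (the geodesic is constant, the Jacobi fields are $P_t(X_0 + t(\nabla_{\ddt}X)_0)$ reduced to $X_0$ or to $(\nabla_\ddt X)_0$ propagated trivially, and the orthogonal projection onto $\ell' = \ell$ is trivial on normal vectors), hence $I' = I$ and $\tau_1 = \tau_2 = \id$ there; similarly $T_3$, built from $\npt'_u v$, vanishes when $v = 0$ since the relevant Jacobi field is then $P_t(v)$ and its derivative in the deformation parameter is zero. Then, exactly as in the proof of Proposition \ref{speclaggluingremainderestimate} and the proof of part (ii) of Proposition \ref{revisedtransferofatis}, smoothness of these bundle maps (Proposition \ref{tauismoothness}, resting on Proposition \ref{basictransfersmoothness} and Proposition \ref{pullbackisomorphismssmooth}) gives, around each point $p$ of $L_0(T)$, a local Lipschitz-type estimate
\begin{equation}
|\tau_i(\alpha,(v,\nabla v),\ell)_p - \alpha_p| \leq C |(v,\nabla v)_p|\,|\alpha_p|, \qquad |\tau_3(\alpha,(v,\nabla v),\ell)_p| \leq C|(v,\nabla v)_p|\,|\alpha_p|,
\end{equation}
and after differentiating, using \eqref{eq:oldcomplicatedsctyest} and its analogue to get the derivative estimates, a $C^k$ version $\|\tau_i\alpha - \alpha\|_{C^k(U)} \leq C_k \|v_0(T)\|_{C^{k+1}}\|\alpha\|_{C^k(U)}$ on each coordinate patch $U$, and similarly for $\tau_3$. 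The only subtlety, handled precisely as in Proposition \ref{speclaggluingremainderestimate}, is that the constants $C_k$ must be chosen uniformly in $T$ even though the ambient geometry on the neck of $M^T$ degenerates as $T\to\infty$: one covers $L_0(T)$ by the pieces coming from $M_1^{\tr T'_0}$, $M_2^{\tr T'_0}$ and a fixed-length slice $(-1,1)\times N$ of the neck, on each of which the Calabi--Yau structures $(\Omega^T,\omega^T)$ converge (to $(\Omega_i,\omega_i)$ or to the cylindrical structure) by Hypothesis \ref{hyp:ambientgluing}, so the continuity-in-the-metric part of Proposition \ref{tauismoothness}/\ref{basicgluingsmoothness} yields uniform constants; one also uses Corollary \ref{newgluedmetricsallthesame} to bound the second fundamental form of $L_0(T)$ in $M^T$ uniformly in $T$, since the $C^k$ norms of covectors and the Lipschitz equivalences entering these estimates depend on it. Patching the local estimates together and absorbing $\|v_0(T)\|_{C^{k+1}} \leq C_{k}e^{-\epsilon T}$ from Lemma \ref{opennessdecaylemma} gives the claimed bound $\|\alpha - \tau_1\alpha\|_{C^k} + \|\alpha - \tau_2\alpha\|_{C^k} + \|\tau_3\alpha\|_{C^k} \leq C_k e^{-\epsilon T}\|\alpha\|_{C^k}$.

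The main obstacle is the uniformity in $T$: as with Proposition \ref{speclaggluingremainderestimate}, there is no single limiting geometry on $M^T$ as $T\to\infty$, so one cannot simply quote compactness, and the argument has to be localised to the three regions above and run separately on each, using the convergence of $(\Omega^T,\omega^T)$ to fixed structures there. Everything else — the pointwise vanishing at $v=0$, the smoothness giving local Lipschitz bounds, and the final absorption of the exponentially small factor $\|v_0(T)\|_{C^{k+1}}$ — is routine given the machinery already developed.
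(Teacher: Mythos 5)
Your proposal is correct and follows exactly the route the paper takes: the paper's own proof is a two-sentence reduction to the argument of Proposition \ref{speclaggluingremainderestimate} (local Lipschitz bounds in terms of $v_0(T)$ from smoothness of the pointwise bundle maps, uniformity in $T$ via the three-region decomposition of $M^T$ and convergence of the Calabi--Yau structures, then absorption of the exponential decay of $v_0(T)$ from Lemma \ref{opennessdecaylemma}). Your write-up simply supplies the details, including the pointwise identities $\tau_1=\tau_2=\id$, $\tau_3=0$ at $v=0$, which the paper records in the remark after Proposition \ref{slingderivative}.
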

Essentially, this follows by the same proof as in Proposition \ref{speclaggluingremainderestimate}. We can always find a local constant, giving a bound in terms of $v_0(T)$; by the argument in Proposition \ref{speclaggluingremainderestimate} this constant can be chosen uniform in $T$, and then the exponential decay follows from the exponential decay of $v_0(T)$. 

The other major ingredient in \eqref{eq:oneformdsling} (as well as the $\tau_i$) is the Laplacian. We have the following
\begin{prop}
\label{hptslgluinganalysis}
Let $M_1$, $M_2$, $L_1$, $L_2$, $M^T$, $L_0(T)$, $L'_0(T)$, $v_0(T)$ be as in Convention \ref{con:gluingslanalysis},  and $\epsilon$ as in Lemma \ref{opennessdecaylemma}. We have two metrics $g(\Omega^T, \omega^T)|_{L_0(T)}$ and $g(\Omega^T, \omega^T)|_{L'_0(T)}$ on the submanifold $L_0(T)$ of $M^T$. There exist constants $C_k$ such that for all $k$
\begin{equation}
\label{eq:gluingslmetricdecay}
\|g(\Omega^T, \omega^T)|_{L_0(T)} - g(\Omega^T, \omega^T)|_{L'_0(T)}\|_{C^{k}} \leq C_k e^{-\epsilon T}.
\end{equation}

Secondly, these two metrics induce two Laplacians $\Delta_{L_0(T)}$ and $\Delta_{L'_0(T)}$ on forms on these submanifolds. There exist constants $C'_{k, \mu}$ such that for any integer $k$ and $\mu \in (0, 1)$ and for any form $\alpha$
\begin{equation}
\label{eq:Deltadecexp}
\|\Delta_{L_0(T)} \alpha - \Delta_{L'_0(T)} \alpha\|_{C^{k, \mu}} \leq C'_{k, \mu} e^{-\epsilon T} \|\alpha\|_{C^{k+2, \mu}},
\end{equation}
and the same estimate for $d^*$. Thirdly, possibly increasing $C'_{k, \mu}$ there also exists $r$ such that if $\alpha$ is orthoharmonic with respect to the metric on $L_0(T)$ or $L'0(T)$, 
\begin{equation}
\label{eq:laplacelowerbounddsling}
\| d \alpha\|_{C^{k, \mu}} + \| d^* \alpha\|_{C^{k, \mu}} \geq C'_{k, \mu} T^{r} \|\alpha\|_{C^{k+1, \mu}},
\end{equation}
where the Laplacian is that on $L_0(T)$ or $L'_0(T)$ respectively. 
\end{prop}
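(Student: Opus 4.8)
The plan is to treat the three claims in order, reducing each to results already established. For \eqref{eq:gluingslmetricdecay}, I would first observe that $L'_0(T) = \exp_{v_0(T)}(L_0(T))$, and that $g(\Omega^T, \omega^T)|_{L'_0(T)}$, transported back to $L_0(T)$ via the diffeomorphism $\exp_{v_0(T)}$, is $\exp_{v_0(T)}^* \left( g(\Omega^T, \omega^T)\right)|_{L_0(T)}$. Since $v_0(T)$ decays exponentially in $T$ with all derivatives by Lemma \ref{opennessdecaylemma} (with the rate $\epsilon$), and $g(\Omega^T, \omega^T)$ is uniformly controlled with its derivatives along the relevant tubular neighbourhood by Corollary \ref{newgluedmetricsallthesame}, the pullback by $\exp_{v_0(T)}$ differs from the identity by an exponentially small amount — this is precisely the kind of estimate proved via the pointwise smoothness and continuity statement of Proposition \ref{basicgluingsmoothness} (arguing as in Proposition \ref{speclaggluingremainderestimate}, where a local constant is found and then shown uniform in $T$ because the metric converges either to $g_i$ on the compact parts or to the cylindrical metric $\tilde g$ on the neck). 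Strictly speaking one must also compare this pullback with $g(\Omega^T, \omega^T)|_{L_0(T)}$ itself, but this comparison is again controlled by $v_0(T)$ and the uniform second fundamental form bound.

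For \eqref{eq:Deltadecexp}, I would note that $\Delta_{L_0(T)}$ and $\Delta_{L'_0(T)}$ (and likewise $d^*$) are built algebraically and by first derivatives from the metric tensors and their derivatives: they are given by universal smooth expressions in the metric, its inverse, and the Christoffel symbols, which involve one more derivative. Hence the difference of the two operators, applied to a fixed form $\alpha$, is bounded in $C^{k, \mu}$ by a constant (depending on uniform bounds on both metrics, which we have by Corollary \ref{newgluedmetricsallthesame}) times $\|g(\Omega^T, \omega^T)|_{L_0(T)} - g(\Omega^T, \omega^T)|_{L'_0(T)}\|_{C^{k+1}}$ times $\|\alpha\|_{C^{k+2, \mu}}$. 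The first claim \eqref{eq:gluingslmetricdecay} (applied with regularity $k+1$ rather than $k$) then gives the exponential decay; the same argument handles $d^*$, which likewise depends on the metric and one derivative. One minor point: Proposition \ref{bundlemapsckbounded} and the discussion after Theorem \ref{localrestrictionthm} on Lipschitz equivalence of connections are the tools that make ``universal smooth expression in the metric'' into a genuine $C^{k,\mu}$ estimate with constants uniform in $T$.

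For the third claim \eqref{eq:laplacelowerbounddsling}, the starting point is Theorem \ref{laplacelowerbound}: the compact manifold $L_0(T)$ is obtained by approximately gluing the two asymptotically cylindrical manifolds $L_1$ and $L_2$ (as noted after Definition \ref{defin:newsubmanifoldapproxgluing}, with some gluing parameter comparable to $T$), and its metric — whether the directly glued one or the restriction of $g(\Omega^T, \omega^T)$ — is uniformly close to the model glued metric by Corollary \ref{newgluedmetricsallthesame}, so the ``uniformly close with all derivatives'' clause of Theorem \ref{laplacelowerbound} applies and yields \eqref{eq:laplacelowerbounddsling} on $L_0(T)$ with the metric of $(\Omega^T, \omega^T)$, for $\alpha$ orthoharmonic with respect to that metric, with some power $T^r$. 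To pass to $L'_0(T)$: its metric differs from that of $L_0(T)$ by an exponentially small amount (by \eqref{eq:gluingslmetricdecay}), so the lower bound persists with the same $T^r$ up to adjusting the constant, and the notion of ``orthoharmonic'' changes only by an exponentially small correction, which can be absorbed for $T$ large. I expect this third part to be the main obstacle, because it requires the ``uniformly close metric'' version of Theorem \ref{laplacelowerbound} and a careful check that the exponentially small difference between the $L_0(T)$ and $L'_0(T)$ metrics does not degrade the polynomial-in-$T$ lower bound — but since $e^{-\epsilon T}$ beats $T^{-r}$ for large $T$, this is ultimately just a matter of choosing $T_0$ large enough and renaming constants.
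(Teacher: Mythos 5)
Your proposal is correct and matches the paper's own argument essentially step for step: the metric estimate via the local-constant-made-uniform-in-$T$ argument of Proposition \ref{speclaggluingremainderestimate} driven by the exponential decay of $v_0(T)$, the operator estimate from the Laplacian and $d^*$ being smooth functions of the metric and its first derivatives, and the lower bound from the ``uniformly close metric'' clause of Theorem \ref{laplacelowerbound} together with Corollary \ref{newgluedmetricsallthesame} and the first claim. The only cosmetic difference is that for the third part the paper applies Theorem \ref{laplacelowerbound} directly to the $L'_0(T)$ metric (which is itself uniformly close to the glued metric) rather than transferring the bound and the orthoharmonicity condition from $L_0(T)$, which slightly shortens your final step.
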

\begin{proof}
\eqref{eq:gluingslmetricdecay} follows by an argument similar to that of Proposition \ref{transfermapsoneforms}; \eqref{eq:Deltadecexp} then follows from smoothness of the Laplacian and Hodge star written in local coordinates as functions of the metric. As for \eqref{eq:laplacelowerbounddsling}, as in the proof of Theorem \ref{slperturbthm} this is an immediate consequence of Theorem \ref{laplacelowerbound} if the metric is close to the glued metric. For $L_0(T)$ we know this by Corollary \ref{newgluedmetricsallthesame}; for $L'_0(T)$, we use \eqref{eq:gluingslmetricdecay}. 
\end{proof}

Proposition \ref{hptslgluinganalysis} has the following two corollaries. Firstly, it implies that the harmonic parts of a form taken on $L_0(T)$ and $L'_0(T)$ are similar. 
\begin{cor}
\label{cor:harmonicestimates}
Let $M_1$, $M_2$, $L_1$, $L_2$, $M^T$, $L_0(T)$, $L'_0(T)$, $v_0(T)$ be as in Convention \ref{con:gluingslanalysis}, and $\epsilon$ as in Lemma \ref{opennessdecaylemma}. Given a form $\alpha$ on $L_0(T) = L'_0(T)$, write $\hpt_{L_0(T)}$ and $\hpt_{L'_0(T)}$ for its harmonic part with respect to the two metrics of Proposition \ref{hptslgluinganalysis}. Then for every $k$ and $\mu$ there exist constants $C_{k, \mu}$ such that for every form $\alpha$, 
\begin{equation}
\|\hpt_{L_0(T)} \alpha - \hpt_{L'_0(T)} \alpha\|_{C^{k, \mu}} \leq C_{k, \mu} e^{-\epsilon T} \|\alpha\|_{C^{k, \mu}}.
\end{equation}
Furthermore, we have for some constants $r$ and $C_{k, \mu}$ 
\begin{equation}
\label{eq:dslinghptbound}
\|\hpt_{L'_0(T)} \alpha\|_{C^{k, \mu}} \leq C_{k, \mu} T^r \|\alpha\|_{C^{k, \mu}}.
\end{equation}
\end{cor}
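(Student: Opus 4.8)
The plan is to deduce both estimates from Proposition \ref{hptslgluinganalysis} together with two uniform geometric facts about the glued submanifolds. First, $L_0(T)$ and $L'_0(T)$ have \emph{uniformly bounded geometry} (curvature bounded and injectivity radius bounded below independently of $T$), since locally they look either like $L_1$, $L_2$ or like the cylinder --- for $L_0(T)$ this is the content of Corollary \ref{newgluedmetricsallthesame} together with the discussion around Theorem \ref{laplacelowerbound}, and for $L'_0(T)$ it then follows from \eqref{eq:gluingslmetricdecay}. Applying interior elliptic regularity to the equation $\Delta h = 0$ on balls of a fixed small radius therefore gives uniform estimates $\|h\|_{C^{j,\mu}} \le C_{j}\|h\|_{L^2}$ and $\|h\|_{C^{j+1,\mu}} \le C_{j}\|h\|_{C^{j,\mu}}$ for every harmonic form $h$ on either submanifold. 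Second, the neck of $L_0(T)$ (hence of $L'_0(T)$) has length $O(T)$, so $\mathrm{Vol} \le CT$ and thus $\|\cdot\|_{L^2} \le CT^{1/2}\|\cdot\|_{C^0}$.

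Given these, \eqref{eq:dslinghptbound} is immediate: since $\hpt_{L'_0(T)}\alpha$ is harmonic,
\[
\|\hpt_{L'_0(T)}\alpha\|_{C^{k,\mu}} \le C\|\hpt_{L'_0(T)}\alpha\|_{L^2} \le C\|\alpha\|_{L^2} \le CT^{1/2}\|\alpha\|_{C^0},
\]
using in turn the elliptic estimate, the fact that $\hpt_{L'_0(T)}$ is an $L^2$-orthogonal projection, and the volume bound; this is \eqref{eq:dslinghptbound} with $r = \tfrac12$ (and the same for $L_0(T)$).

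For the difference estimate I would first show that the harmonic spaces $\H_0 = \ker\Delta_{L_0(T)}$ and $\H_0' = \ker\Delta_{L'_0(T)}$ are exponentially close: for $h \in \H_0$,
\[
\|h - \hpt_{L'_0(T)}h\|_{C^{k,\mu}} \le C_{k,\mu}e^{-\epsilon T}\|h\|_{C^{k,\mu}},
\]
and symmetrically with the two submanifolds exchanged. Indeed $h - \hpt_{L'_0(T)}h$ is orthoharmonic for the metric on $L'_0(T)$ and is closed, while $d^*_{L'_0(T)}(h - \hpt_{L'_0(T)}h) = d^*_{L'_0(T)}h = (d^*_{L'_0(T)} - d^*_{L_0(T)})h$, which by the $d^*$-analogue of \eqref{eq:Deltadecexp} and the elliptic bound on $\|h\|_{C^{k+1,\mu}}$ is $O(e^{-\epsilon T}\|h\|_{C^{k,\mu}})$ in $C^{k,\mu}$; the lower bound \eqref{eq:laplacelowerbounddsling} on $L'_0(T)$ then gives $\|h - \hpt_{L'_0(T)}h\|_{C^{k+1,\mu}} \le (C'_{k,\mu}T^{r})^{-1}O(e^{-\epsilon T}\|h\|_{C^{k,\mu}})$, which is still $O(e^{-\epsilon T}\|h\|_{C^{k,\mu}})$ since $T^{-r}\le 1$ for $T$ large. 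In particular $\hpt_{L'_0(T)}|_{\H_0}$ is injective for large $T$, hence an isomorphism onto $\H_0'$. Then for general $\alpha$ I write
\[
\hpt_{L_0(T)}\alpha - \hpt_{L'_0(T)}\alpha = \bigl(\hpt_{L_0(T)}\alpha - \hpt_{L'_0(T)}\hpt_{L_0(T)}\alpha\bigr) - \hpt_{L'_0(T)}\bigl((1-\hpt_{L_0(T)})\alpha\bigr).
\]
The first bracket is handled by the previous paragraph applied to $h = \hpt_{L_0(T)}\alpha \in \H_0$, together with $\|\hpt_{L_0(T)}\alpha\|_{C^{k,\mu}} \le CT^{1/2}\|\alpha\|_{C^{k,\mu}}$ from \eqref{eq:dslinghptbound}, the surplus $T^{1/2}$ being absorbed into the exponential. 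For the second term, put $\beta = (1-\hpt_{L_0(T)})\alpha$, which is orthoharmonic for the metric on $L_0(T)$ and satisfies $\|\beta\|_{L^2} \le CT^{1/2}\|\alpha\|_{C^{k,\mu}}$; by the elliptic estimate, $\|\hpt_{L'_0(T)}\beta\|_{C^{k,\mu}} \le C\sup\{|\langle \beta,h\rangle_{g|_{L'_0(T)}}| : h\in\H_0',\ \|h\|_{L^2}=1\}$, and for each such $h$ one compares $\langle\beta,h\rangle_{g|_{L'_0(T)}}$ with $\langle\beta,\hpt_{L_0(T)}h\rangle_{g|_{L_0(T)}} = 0$: the difference splits into a term bounded by $\|h - \hpt_{L_0(T)}h\|_{L^2}$ (exponentially small by the symmetric form of the previous paragraph) and a term bounded by the pointwise difference of the two metrics (exponentially small by \eqref{eq:gluingslmetricdecay}), each multiplied by at most a power of $T$. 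Collecting everything and slightly decreasing $\epsilon$ gives the claimed bound.

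The main obstacle is making the third paragraph precise: extracting \emph{exponential}-in-$T$ closeness of the harmonic spaces from the merely \emph{exponential}-in-$T$ closeness of the operators while the lower bound \eqref{eq:laplacelowerbounddsling} \emph{degenerates polynomially}, and, throughout, bookkeeping that every polynomial-in-$T$ loss incurred in passing between $L^2$ and $C^{k,\mu}$ norms on these volume-growing manifolds is harmless because it always appears multiplied by a factor $e^{-\epsilon T}$.
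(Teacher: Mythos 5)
Your proof is correct, and the core of the first estimate is the same as the paper's: harmonic forms for one metric are exponentially close to their harmonic parts for the other because the operators differ exponentially (\eqref{eq:Deltadecexp} and its $d^*$ analogue) while the lower bound \eqref{eq:laplacelowerbounddsling} only degenerates polynomially. Where you genuinely diverge is in two places. First, for \eqref{eq:dslinghptbound} the paper argues via Theorem \ref{laplacelowerbound}: the orthoharmonic part $\alpha - \hpt\alpha$ is bounded polynomially in $T$ by $\|d\alpha\| + \|d^*\alpha\|$, hence by $\|\alpha\|$, and one subtracts; your route via $L^2$-orthogonality of the projection, the $O(T)$ volume bound, and uniform interior elliptic estimates on the bounded-geometry neck is different and more quantitative (it pins down $r=\tfrac12$ and gives matching regularity on both sides without invoking elliptic regularity to drop a derivative). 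Second, the paper's justification of the difference estimate stops at the statement about forms harmonic for $L_0(T)$; for a general $\alpha$ one must also control $\hpt_{L'_0(T)}\bigl((1-\hpt_{L_0(T)})\alpha\bigr)$, and your inner-product comparison against $\langle\beta,\hpt_{L_0(T)}h\rangle_{g|_{L_0(T)}}=0$, using \eqref{eq:gluingslmetricdecay} for the change of inner product and the symmetric form of the key step, is exactly the missing piece; the polynomial volume losses are harmless against $e^{-\epsilon T}$ as you say. So your argument both fills a gap the paper elides and replaces one ingredient with a more elementary one; the trade-off is that it needs the uniform injectivity-radius/bounded-geometry input, which the paper only uses implicitly.
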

The first follows straightforwardly as the Laplacians converge together exponentially and can be lower bounded polynomially, so that the orthoharmonic part on $L'_0(T)$ of a form harmonic on $L_0(T)$ must decay exponentially. As for the second part, Theorem \ref{laplacelowerbound} shows that we can bound the orthoharmonic part polynomially in terms of the Laplacian, and the Laplacian can obviously be uniformly bounded, provided that the metric is not too far from the glued metric: this follows from the metric comparison results of \eqref{eq:gluingslmetricdecay} and Corollary \ref{newgluedmetricsallthesame}. 

Secondly, combining Proposition \ref{hptslgluinganalysis} with Proposition \ref{transfermapsoneforms} implies that the transfer maps induce isomorphisms between harmonic normal vector fields. 
\begin{cor}
\label{cor:hpttransferisomorphism}
Let $M_1$, $M_2$, $L_1$, $L_2$, $M^T$, $L_0(T)$, $L'_0(T)$ and $v_0(T)$ be as in Convention \ref{con:gluingslanalysis}.  Let $T_i$ be either $T_1$ or $T_2$ from Definition \ref{defin:sectiontransfers}. For $T$ sufficiently large, $\hpt_{L'_0(T)} \circ T_i$ defines an isomorphism between harmonic normal vector fields. So does $\hpt_{L_0(T)} \circ T_i^{-1}$. 
\end{cor}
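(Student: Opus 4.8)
The plan is to combine the exponential-decay estimates of Proposition \ref{transfermapsoneforms} with the harmonic-part estimates of Corollary \ref{cor:harmonicestimates} to show that $\hpt_{L'_0(T)} \circ T_i$ is a small perturbation, uniformly in $T$ after rescaling, of the identity on harmonic normal vector fields, and then invoke finite-dimensionality together with openness of isomorphisms. Throughout I will freely pass between normal vector fields and one-forms using Lemma \ref{Jbehaves}, which by Proposition \ref{ambientimplications} applies to both $L_0(T)$ and $L'_0(T)$ with bounds uniform in $T$; so it suffices to prove the statement for the corresponding maps $\tau_i$ on one-forms, i.e.\ that $\hpt_{L'_0(T)} \circ \tau_i$ is an isomorphism of harmonic one-forms.

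The key steps are as follows. First, the spaces of harmonic one-forms on $L_0(T) = L'_0(T)$ with respect to the two metrics $g(\Omega^T,\omega^T)|_{L_0(T)}$ and $g(\Omega^T,\omega^T)|_{L'_0(T)}$ are both isomorphic to the finite-dimensional cohomology $H^1(L^T;\R)$, which has dimension independent of $T$; in particular the source and target of $\hpt_{L'_0(T)} \circ \tau_i$ are vector spaces of the same finite dimension. So it suffices to prove injectivity. Second, write $\hpt_{L'_0(T)} \tau_i \alpha = \hpt_{L'_0(T)}\alpha + \hpt_{L'_0(T)}(\tau_i\alpha - \alpha)$. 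By Proposition \ref{transfermapsoneforms}, $\|\tau_i\alpha - \alpha\|_{C^{k,\mu}} \leq C_k e^{-\epsilon T}\|\alpha\|_{C^{k,\mu}}$, and by the bound \eqref{eq:dslinghptbound} of Corollary \ref{cor:harmonicestimates} the operator $\hpt_{L'_0(T)}$ has operator norm at most $C_{k,\mu}T^r$, so the second term has norm at most $C_{k,\mu}T^r e^{-\epsilon T}\|\alpha\|_{C^{k,\mu}}$, which tends to zero as $T\to\infty$. Third, for $\alpha$ harmonic on $L_0(T)$, Corollary \ref{cor:harmonicestimates} gives $\|\hpt_{L'_0(T)}\alpha - \alpha\|_{C^{k,\mu}} \le C_{k,\mu}e^{-\epsilon T}\|\alpha\|_{C^{k,\mu}}$ (using $\hpt_{L_0(T)}\alpha = \alpha$), so that on harmonic one-forms $\hpt_{L'_0(T)}\circ\tau_i$ differs from the identity by an operator whose norm is $O(T^r e^{-\epsilon T})$. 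Fourth, for $T$ large enough this perturbation has operator norm less than $1$, so $\hpt_{L'_0(T)}\circ\tau_i$ is invertible as a map of the (fixed-dimensional) space of harmonic one-forms on $L_0(T)$ to itself, hence the corresponding map of harmonic normal vector fields is an isomorphism. The statement for $\hpt_{L_0(T)}\circ T_i^{-1}$ follows by the identical argument, using that $\tau_i^{-1}$ also satisfies the estimate of Proposition \ref{transfermapsoneforms} (by the inverse function theorem applied to the local diffeomorphisms $\tau_i$, together with the uniform-in-$T$ control) and that $\hpt_{L_0(T)}$ is polynomially bounded by Theorem \ref{laplacelowerbound} and Corollary \ref{newgluedmetricsallthesame}.

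The main obstacle is the competition between the polynomial growth $T^r$ of the operator norm of $\hpt_{L'_0(T)}$ — which comes from the lower bound $CT^{-l}$ on $d+d^*$ on the orthocomplement of the harmonic forms, i.e.\ from the neck becoming long — and the exponential decay $e^{-\epsilon T}$ of the transfer-map corrections and of the discrepancy between the two harmonic projections. Since $T^r e^{-\epsilon T} \to 0$, exponential decay wins for $T$ large, but it is essential that the constants $C_k, C_{k,\mu}, r$ in Propositions \ref{transfermapsoneforms} and \ref{hptslgluinganalysis} and Corollary \ref{cor:harmonicestimates} are genuinely independent of $T$; this is exactly what those results provide, and is ultimately inherited from the uniform second-fundamental-form bound (Corollary \ref{newgluedmetricsallthesame}) and the uniform-in-$T$ smoothness-and-continuity estimates for the pointwise transfer maps (Proposition \ref{basictransfersmoothness}, via Proposition \ref{speclaggluingremainderestimate}). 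One minor point to be careful about is that $\hpt_{L'_0(T)}\circ\tau_i$ maps harmonic one-forms on $L_0(T)$ into harmonic one-forms on $L'_0(T)$, two a priori different subspaces of $\Omega^1(L^T)$; but both are complements to the image of the respective Laplacian and both have dimension $b^1(L^T)$, and the estimate $\|\hpt_{L'_0(T)}\alpha - \alpha\|_{C^{k,\mu}} \le C_{k,\mu}e^{-\epsilon T}\|\alpha\|_{C^{k,\mu}}$ shows these subspaces are $O(e^{-\epsilon T})$-close, so identifying them via $\hpt_{L'_0(T)}$ (which is itself the relevant near-identity isomorphism for $\tau_i = \mathrm{id}$) causes no trouble.
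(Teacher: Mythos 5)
Your proposal is correct and follows essentially the same route as the paper: reduce to the one-form maps $\tau_i$, note both spaces of harmonic one-forms have dimension $b^1(L^T)$ so injectivity suffices, and then combine Proposition \ref{transfermapsoneforms}, the polynomial bound \eqref{eq:dslinghptbound}, and Corollary \ref{cor:harmonicestimates} to show $\hpt_{L'_0(T)}\tau_i\alpha$ is exponentially close to $\alpha$ and hence nonzero. Your extra care about the two harmonic subspaces being a priori different is a fair elaboration of a point the paper treats implicitly, but the argument is the same.
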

\begin{proof}
That $\hpt_{L'_0(T)}\circ T_i$ is an isomorphism is equivalent to $\harmpt_{L'_0(T)} \circ \tau_i$ being an isomorphism. We shall show that $\harmpt_{L'_0(T)}\circ\tau_i$ is injective on one-forms: since $L_0(T)$ and $L'_0(T)$ have the same first Betti number, it will follow that $\hpt_{L'_0(T)} \circ \tau_i$ is an isomorphism. 

Pick some $k$ and $\mu$, and suppose that $\alpha$ is a harmonic one-form on $L_0(T)$ with $\|\alpha\|_{C^{k, \mu}} = 1$. By Proposition \ref{transfermapsoneforms}, we see that $\|\tau_i \alpha - \alpha\|_{C^{k, \mu}}$ is bounded by a constant decaying exponentially in $T$.  By the bound \eqref{eq:dslinghptbound}, we obtain that $\|\harmpt_{L'_0(T)}\tau_i \alpha - \harmpt_{L'_0(T)}\alpha\|_{C^{k, \mu}}$ is similarly bounded. By Corollary \ref{cor:harmonicestimates}, we also have that $\|\harmpt_{L'_0(T)} \alpha -  \alpha\|_{C^{k, \mu}}$ can also be bounded by a constant decaying exponentially in $T$. Hence we obtain that $\|\harmpt_{L'_0(T)}\tau_i \alpha - \alpha\|_{C^{k, \mu}}$ decays exponentially in $T$, and it follows that $\harmpt_{L'_0(T)}\tau_i \alpha$ cannot be zero. 

That $\hpt_{L_0(T)}\circ T_i^{-1}$ is an isomorphism follows in exactly the same way.
\end{proof}
In particular, for $T$ sufficiently large, combining Corollary \ref{cor:hpttransferisomorphism} with Proposition \ref{slingderivative} shows that $\SLing$ is a smooth map on a small neighbourhood $\U$ of $L_0(T)$ with derivative at $L_0(T)$ given by Proposition \ref{slingderivative}. 
\begin{rmk}
Slightly restricted versions of Propositions \ref{transfermapsoneforms} and \ref{hptslgluinganalysis} also hold in a more general setting. If $M$ is a Calabi--Yau manifold and $L_0$ is a closed nearly special Lagrangian submanifold satisfying Condition \ref{slingcondition}, then $\SLing(L_0)$ exists. If $\SLing(L_0)$ is close enough to $L_0$ in $C^k$, then working with a suitably low regularity in the propositions shows again that $\hpt_{L_0} \tau_i^{-1}$ is an isomorphism. Then, as in the gluing case, Proposition \ref{slingderivative} applies and we have that $\SLing$ is again a smooth map with derivative given by  \eqref{eq:DSLing} on a small enough space $\U$ of perturbations of $L_0$. 
\end{rmk}

We now know that $D_{L_0(T)} \SLing$ exists and in terms of one-forms is given by \eqref{eq:oneformdsling}. We now show that this map is close to taking the harmonic part $\hpt_{L'_0(T)}$. Proposition \ref{transfermapsoneforms} controls the $\tau_i$; it remains to deal with the term $\Delta'_\alpha x^T$. We have
\begin{prop}
\label{deltaprimeestimate}
Let $M_1$, $M_2$, $L_1$, $L_2$, $M^T$, $L_0(T)$, $L'_0(T)$, $v_0(T)$ and $x^T$ be as in Convention \ref{con:gluingslanalysis}, and $\epsilon$ as in Lemma \ref{opennessdecaylemma}. Consider the map of one-forms 
\begin{equation}
\alpha \mapsto \Delta'_\alpha x^T,
\end{equation}
defined after \eqref{eq:oneformdsling}. This is a linear map of $\alpha$ and for every $k$ there are $C_k$ and $C'_k$, independent of $T$, such that
\begin{equation}
\label{eq:deltaprimeestimate}
\|\Delta'_\alpha x^T\|_{C^k} \leq C_k \|\alpha\|_{C^{k+1}} \|x^T\|_{C^{k+2}} \leq C'_k e^{-\epsilon T} \|\alpha\|_{C^{k+1}}.
\end{equation}
\end{prop}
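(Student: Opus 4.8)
The plan is to unwind the definition of $\Delta'_\alpha x^T$ as a composition of maps that we have already shown to be smooth and controlled, and then track how each factor scales in $T$. Recall from the discussion after \eqref{eq:oneformdsling} that $\Delta'_\alpha x^T$ is obtained as follows: given $\alpha$, pass to the corresponding normal vector field $v$ on $L_0(T)$ via $v \mapsto \iota_v \omega^T|_{L_0(T)}$ (inverting this costs a factor bounded uniformly in $T$ by Lemma \ref{Jbehaves} together with Proposition \ref{ambientimplications}); form the curve of submanifolds $\exp_{sv}(L_0(T))$ with transfer maps $T_{2,s}$; form the curve $s \mapsto T_{2,s}^{-1} \Delta_{\exp_{sv}(L_0(T))} T_{2,s} x^T$; and differentiate at $s=0$. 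Linearity in $\alpha$ is immediate since the whole construction is linear in $v$ and hence in $\alpha$; more precisely, the derivative at $s=0$ of $s \mapsto T_{2,s}^{-1}\Delta_{\exp_{sv}(L_0(T))}T_{2,s}$ is a first-order linear differential operator applied to $x^T$, whose coefficients are given by differentiating the smooth bundle maps underlying $T_{2}$ (Proposition \ref{basictransfersmoothness}) and the metric-dependence of $\Delta$ (smoothness of the Laplacian in local coordinates as a function of the metric, as used in Proposition \ref{hptslgluinganalysis}).

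First I would establish the pointwise/local estimate: because $T_{2,s}$ arises from a smooth bundle map on $TM^T \oplus J^1(TM^T) \oplus \Gr_n(TM^T)$ that equals the identity on the zero section of the $J^1$-factor, its $s$-derivative in the direction $v$ is controlled by $|v|$ together with $|\nabla v|$, and likewise the $s$-derivative of the induced Laplacian is controlled by the first jet of $v$. Differentiating the composite and using the Leibniz rule, $\Delta'_\alpha x^T$ is a sum of terms each of which is (a tensor built from finitely many derivatives of fixed smooth bundle maps, evaluated along $L_0(T)$) contracted with (at most two derivatives of $x^T$) and (at most one derivative of $v$). Since the underlying bundle maps and their derivatives depend continuously on the ambient metric, and since by Corollary \ref{newgluedmetricsallthesame} the metric $g(\Omega^T, \omega^T)$ on $M^T$ near $L_0(T)$ is within an exponentially small error of the glued metric — which in turn is, locally on the neck, exponentially close to the fixed cylindrical metric and, on the compact parts, equal to $g_1$ or $g_2$ — exactly the argument of Proposition \ref{speclaggluingremainderestimate} shows these tensors (and the second fundamental form bound of $L_0(T)$, again from Corollary \ref{newgluedmetricsallthesame}) can be bounded in $C^k$ uniformly in $T$. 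Passing from the local pointwise bounds to a global $C^k$ bound via Theorem \ref{localrestrictionthm} and the uniform second fundamental form bound, this yields the first inequality $\|\Delta'_\alpha x^T\|_{C^k} \leq C_k \|\alpha\|_{C^{k+1}}\|x^T\|_{C^{k+2}}$ with $C_k$ independent of $T$ (the shift from $\|v\|$ to $\|\alpha\|$ and from $C^k$ to $C^{k+1}$ absorbing the one derivative of $v$ and the uniform cost of Lemma \ref{Jbehaves}).

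The second inequality is then immediate: Lemma \ref{opennessdecaylemma} gives $\|x^T\|_{C^{k+2}} \le \|x^T\|_{C^{k+3,\mu}} \leq C_{k,\mu}e^{-\epsilon T}$ for the fixed rate $\epsilon$, so absorbing constants gives $\|\Delta'_\alpha x^T\|_{C^k} \leq C'_k e^{-\epsilon T}\|\alpha\|_{C^{k+1}}$.

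I expect the main obstacle to be the bookkeeping in the first step: writing $\Delta'_\alpha x^T$ explicitly enough to see that it is genuinely a first-order operator in $v$ with coefficients of bounded-in-$T$ $C^k$ norm, rather than anything involving higher jets of $v$ or the metric in an uncontrolled way. This is really the same difficulty already met — and resolved — in Proposition \ref{speclaggluingremainderestimate} and Proposition \ref{transfermapsoneforms}: one works locally on the three-piece cover $U_1 = M_1^{\tr(T-2)}$, $U_2 = M_2^{\tr(T-2)}$, $U_3 = (-3,3)\times N$ of $M^T$, on each of which the relevant structures converge (to those of $M_i$, or to the cylindrical ones) as $T \to \infty$, so uniform constants exist; I would simply invoke that argument rather than repeat it. Everything else is routine differentiation of smooth bundle maps plus the already-established exponential decay of $x^T$ and $v_0(T)$.
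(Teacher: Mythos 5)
Your proposal is correct and follows essentially the same route as the paper's proof: both reduce $\Delta'_\alpha x^T$ to a pointwise object determined by the first jet of $\alpha$ and the second jet of $x^T$ via smooth bundle maps depending continuously on the Calabi--Yau structure, obtain uniform-in-$T$ constants by the same comparison with the structures on $M_1$, $M_2$ and the cylinder as in Propositions \ref{speclaggluingremainderestimate} and \ref{transfermapsoneforms}, and then conclude with Lemma \ref{opennessdecaylemma}. The only loose phrase is that the construction is "linear in $v$" (it is not — $T_{2,s}$ depends nonlinearly on $v$); linearity of $\Delta'_\alpha x^T$ comes, as you note immediately afterwards and as the paper argues, from its being the derivative of a smooth map in the direction $v$.
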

\begin{proof}
$\Delta'_\alpha x^T$ is the derivative at zero of the map $\Delta_\alpha x^T$ given by finding the normal vector field $v$ corresponding to $\alpha$, the corresponding transfer operator $T_2$, and then evaluating $T_2^{-1} \Delta T_2 x^T$. Since this is a smooth map by the argument of Proposition \ref{revised:hptsmoothinsubmanifold}, it follows immediately that the derivative is well-defined and in particular linear. 

The proof of the bound is similar to Proposition \ref{transfermapsoneforms}, though is more involved. We argue that the map $\Delta_\alpha x^T$ can be determined locally, as for the $\tau_i$, by the first jet of $\alpha$, the second jet of $x^T$, and the tangent space to $L$ at the point, and is continuous in the Calabi--Yau structure in the same sense as Proposition \ref{tauismoothness}. Hence, the derivative $\Delta'_\alpha x^T$ can also be determined by the first jet of the derivative, the second jet of $x^T$, and the tangent space and is continuous in the Calabi--Yau structure in the sense of \eqref{eq:oldcomplicatedsctyest}. But then the bound follows by exactly the same argument as in Proposition \ref{transfermapsoneforms}: the argument of Proposition \ref{bundlemapsckbounded} gives it locally, and the uniform constant follows by comparing with $(\Omega_1, \omega_1)$, $(\Omega_2, \omega_2)$ and the cylindrical Calabi--Yau structure $(\tilde \Omega, \tilde \omega)$.

The second inequality follows immediately from Lemma \ref{opennessdecaylemma}. 
\end{proof}

We can now turn to our main result identifying $D_{L_0(T)}\SLing$ with $\harmpt_{L'_0(T)}$ on one-forms. 
\begin{prop}
\label{dslingisnearlyharmpt}
Let $M_1$, $M_2$, $L_1$, $L_2$, $M^T$, $L_0(T)$, $L'_0(T)$ be as in Convention \ref{con:gluingslanalysis}, and $\epsilon$ as in Lemma \ref{opennessdecaylemma}. Given a one-form $\alpha$ on $L_0(T)$ we can construct a harmonic one-form with respect to the metric on $L'_0(T)$ by applying either the harmonic projection $\hpt_{L'_0(T)}$ or $D_{L_0(T)}\SLing$. We have constants $C_k$ such that 
\begin{equation}
\|(\hpt_{L'_0(T)} - D_{L_0(T)}\SLing) \alpha\|_{C^k} \leq C_k e^{-\epsilon T} \|\alpha\|_{C^{k+1}}.
\end{equation}
\end{prop}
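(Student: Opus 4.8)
The plan is to start from the characterisation \eqref{eq:oneformdsling} of $\beta := D_{L_0(T)}\SLing(\alpha)$ as the unique harmonic one-form on $L'_0(T)$ with
$$\harmpt_{L_0(T)}\tau_1^{-1}\beta = \harmpt_{L_0(T)}\bigl(\tau_1^{-1}(\tau_2\alpha+\tau_3\alpha)+\Delta'_\alpha x^T\bigr),$$
and to show that, as $T\to\infty$, both sides degenerate in a controlled way: the right-hand side becomes $\harmpt_{L_0(T)}\alpha$ up to an exponentially small error, while the left-hand side is an isomorphism onto harmonic one-forms on $L_0(T)$ that is exponentially close to the identity, so $\beta$ is forced to be exponentially close to $\harmpt_{L_0(T)}\alpha$, which in turn is exponentially close to $\harmpt_{L'_0(T)}\alpha$ by Corollary \ref{cor:harmonicestimates}.

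First I would estimate the right-hand side. By Proposition \ref{transfermapsoneforms}, $\|\mathrm{Id}-\tau_1\|$ and $\|\mathrm{Id}-\tau_2\|$ as operators $C^k\to C^k$, and the size bound on $\tau_3$, are all $\le C_k e^{-\epsilon T}$; in particular for $T$ large $\tau_1$ is invertible with $\|\tau_1^{-1}\|\le 2$ and $\|\tau_1^{-1}-\mathrm{Id}\|\le C_k e^{-\epsilon T}$ by a Neumann series. Hence $\tau_1^{-1}(\tau_2\alpha+\tau_3\alpha)=\alpha + O(e^{-\epsilon T}\|\alpha\|_{C^k})$ in $C^k$, and Proposition \ref{deltaprimeestimate} gives $\Delta'_\alpha x^T = O(e^{-\epsilon T}\|\alpha\|_{C^{k+1}})$, so the argument of $\harmpt_{L_0(T)}$ on the right equals $\alpha + O(e^{-\epsilon T}\|\alpha\|_{C^{k+1}})$. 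The projection $\harmpt_{L_0(T)}$ obeys a polynomial bound $\|\harmpt_{L_0(T)}\gamma\|_{C^k}\le C_k T^r\|\gamma\|_{C^k}$ — the analogue on $L_0(T)$ of \eqref{eq:dslinghptbound}, valid since $L_0(T)$ carries a metric exponentially close to the glued metric (Corollary \ref{newgluedmetricsallthesame}), so Theorem \ref{laplacelowerbound} applies, and since $v\mapsto\iota_v\omega|_{L_0(T)}$ is uniformly bounded by Lemma \ref{Jbehaves}. Applying it, the right-hand side equals $\harmpt_{L_0(T)}\alpha + O(T^r e^{-\epsilon T}\|\alpha\|_{C^{k+1}})$; since $T^r e^{-\epsilon T}$ still decays exponentially this is $\harmpt_{L_0(T)}\alpha + O(e^{-\epsilon T}\|\alpha\|_{C^{k+1}})$ after an arbitrarily small decrease of $\epsilon$ (or simply by absorbing the polynomial factor).

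Next I would invert the left-hand side. Set $P:=\harmpt_{L_0(T)}\circ\tau_1^{-1}$, restricted to harmonic one-forms on $L'_0(T)$; Corollary \ref{cor:hpttransferisomorphism} says $P$ is an isomorphism onto harmonic one-forms on $L_0(T)$. Under the natural identification of the underlying manifolds $P$ is exponentially close to the identity: for $\beta$ harmonic on $L'_0(T)$, $\tau_1^{-1}\beta=\beta+O(e^{-\epsilon T}\|\beta\|_{C^k})$, then $\harmpt_{L_0(T)}$ of this is $\harmpt_{L_0(T)}\beta + O(T^r e^{-\epsilon T}\|\beta\|_{C^k})$, and $\harmpt_{L_0(T)}\beta=\beta+O(e^{-\epsilon T}\|\beta\|_{C^k})$ by Corollary \ref{cor:harmonicestimates}. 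Thus $P=\mathrm{Id}+O(e^{-\epsilon T})$ on the (finite-dimensional, $T$-uniformly close) space of harmonic one-forms, so for $T$ large $P^{-1}$ exists with $\|P^{-1}\|\le 2$ and $P^{-1}=\mathrm{Id}+O(e^{-\epsilon T})$. Solving the displayed equation, $\beta = P^{-1}\bigl(\harmpt_{L_0(T)}\alpha + O(e^{-\epsilon T}\|\alpha\|_{C^{k+1}})\bigr) = \harmpt_{L_0(T)}\alpha + O(e^{-\epsilon T}\|\alpha\|_{C^{k+1}})$ in $C^k$ (using $\|\harmpt_{L_0(T)}\alpha\|_{C^k}\le C_k T^r\|\alpha\|_{C^k}$ to control the $P^{-1}$-error and again absorbing the polynomial factor), and finally $\harmpt_{L_0(T)}\alpha=\harmpt_{L'_0(T)}\alpha + O(e^{-\epsilon T}\|\alpha\|_{C^k})$ by Corollary \ref{cor:harmonicestimates}, which gives the asserted bound $\|(\harmpt_{L'_0(T)}-D_{L_0(T)}\SLing)\alpha\|_{C^k}\le C_k e^{-\epsilon T}\|\alpha\|_{C^{k+1}}$.

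The only place requiring care — the main obstacle — is the bookkeeping between the two metrics on the common underlying manifold $L_0(T)\cong L'_0(T)$: one must repeatedly invoke Corollary \ref{cor:harmonicestimates} to pass between $\harmpt_{L_0(T)}$ and $\harmpt_{L'_0(T)}$, and Corollary \ref{cor:hpttransferisomorphism} to invert $P$, while checking that every operator in sight ($\tau_1^{-1}$, $\harmpt_{L_0(T)}$, $P^{-1}$) is bounded uniformly in $T$ up to at most polynomial factors, so the exponentially small errors survive composition. The loss of one derivative in the conclusion is unavoidable: it enters through $\Delta'_\alpha x^T$, which depends on a first jet of $\alpha$ (Proposition \ref{deltaprimeestimate}), and through $\tau_3$. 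The polynomial-in-$T$ factors from the inverse-Laplacian estimates are harmless because $T^r e^{-\epsilon T}$ remains exponentially decaying, and the stated constant $\epsilon$ (that of Lemma \ref{opennessdecaylemma}) can be used verbatim after decreasing it by an arbitrarily small amount at the outset.
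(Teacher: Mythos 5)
Your proposal is correct and follows essentially the same route as the paper: the same characterisation \eqref{eq:oneformdsling} of $D_{L_0(T)}\SLing(\alpha)$, the same use of Propositions \ref{transfermapsoneforms} and \ref{deltaprimeestimate} to reduce the right-hand side to $\harmpt_{L_0(T)}\alpha$ up to exponentially small errors, and the same appeal to Corollary \ref{cor:harmonicestimates} to pass between the two harmonic projections. The only (cosmetic) difference is at the last step, where you explicitly invert $P=\harmpt_{L_0(T)}\circ\tau_1^{-1}$ on the finite-dimensional space of harmonic one-forms, whereas the paper instead keeps $\|D_{L_0(T)}\SLing(\alpha)\|_{C^k}$ on the right-hand side and absorbs it by a triangle-inequality argument; both are valid and both share the same mild looseness about polynomial-in-$T$ factors, which you flag explicitly.
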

\begin{proof}
We recall from \eqref{eq:oneformdsling} that $D_{L_0(T)}\SLing (\alpha)$ is the unique $L'_0(T)$-harmonic $\beta$ such that 
\begin{equation}
\harmpt_{L_0(T)} \tau_1^{-1} \beta = \harmpt_{L_0(T)} (\tau_1^{-1}(\tau_2 \alpha + \tau_3 \alpha) + \Delta'_\alpha x^T).
\end{equation}
By applying Proposition \ref{transfermapsoneforms} repeatedly and Proposition \ref{deltaprimeestimate}, we find that there exist $C_k$ such that 
\begin{equation}
\|\tau_1^{-1}(\tau_2 \alpha + \tau_3 \alpha) + \Delta'_\alpha x^T - \alpha\|_{C^k} \leq C_k e^{-\epsilon T} \|\alpha\|_{C^{k+1}}.
\end{equation}
By applying $\harmpt_{L_0(T)}$, which is bounded at most polynomially in $T$ by Proposition \ref{hptslgluinganalysis}, and increasing $C_k$ if necessary, it follows that 
\begin{equation}
\label{eq:acomparisonestimate}
\|\harmpt_{L_0(T)}\tau_1^{-1} D_{L_0(T)} \SLing (\alpha) - \harmpt_{L_0(T)} \alpha\| \leq C_k e^{-\epsilon T} \|\alpha\|_{C^{k+1}}.
\end{equation}
Applying Proposition \ref{transfermapsoneforms} to the difference $\tau_1^{-1} D_{L_0(T)}\SLing(\alpha) - D_{L_0(T)} \SLing(\alpha)$, and combining the resulting estimate with \eqref{eq:acomparisonestimate} and the polynomial bound on $\hpt_{L_0(T)}$ again we find another $C_k$ such that 
\begin{equation}
\begin{split}
&\|\harmpt_{L_0(T)} D_{L_0(T)} \SLing(\alpha) - \harmpt_{L_0(T)} \alpha\| \\\leq& C_k e^{-\epsilon T} (\|\alpha\|_{C^{k+1}} + \|D_{L_0(T)} \SLing(\alpha)\|_{C^k}).
\end{split}
\end{equation}
But then by Corollary \ref{cor:harmonicestimates} and the fact that $D_{L_0(T)} \SLing(\alpha)$ is $L'_0(T)$-harmonic, we obtain for yet another $C_k$
\begin{equation}
\label{eq:anothercomparisonestimate}
\|D_{L_0(T)} \SLing(\alpha) - \harmpt_{L'_0(T)} \alpha\|_{C^k} \leq C_k e^{-\epsilon T} (\|\alpha\|_{C^{k+1}} + \|D_{L_0(T)} \SLing(\alpha)\|_{C^k}).
\end{equation}
We can then simply use the triangle inequality
\begin{equation}
\|D_{L_0(T)} \SLing(\alpha)\|_{C^k} \leq\|\harmpt_{L'_0(T)} \alpha \|_{C^k} + \|D_{L_0(T)} \SLing(\alpha) - \harmpt_{L'_0(T)} \alpha\|_{C^k}
\end{equation}
to bound the right hand side of \eqref{eq:anothercomparisonestimate}. The $\|D_{L_0(T)} \SLing(\alpha) - \harmpt_{L'_0(T)} \alpha\|_{C^k}$ term can be absorbed on the left hand side for $T$ large enough; the $\|\harmpt_{L'_0(T)} \alpha \|_{C^k}$ term is polynomially bounded by $\|\alpha\|_{C^{k+1}}$ using Corollary \ref{cor:harmonicestimates} again. Thus we get the required estimate. 
\end{proof}

We now turn to the approximate gluing map. We know from subsection \ref{ssec:dpatchingproper} that on any manifold of matching pairs this is smooth and its derivative is exponentially close in $T$ to the approximate gluing map of one-forms under the identification. 

From the asymptotically cylindrical deformation Theorem \ref{acyldeformationtheorem}, we infer that the set of matching special Lagrangians is a finite-dimensional manifold. 
\begin{prop}
\label{manifoldofgluablesl}
Let $M_1$ and $M_2$ be a matching pair of asymptotically cylindrical Calabi--Yau manifolds, and let $L_1$ and $L_2$ be a matching pair of asymptotically cylindrical special Lagrangian submanifolds as in Definition \ref{defin:newsubmanifoldapproxgluing}. We have a set of pairs $(L'_1, L'_2)$ where $L'_1$ is a special Lagrangian deformation of $L_1$ and $L'_2$ is a special Lagrangian deformation of $L_2$. The subset of $(L'_1, L'_2)$ such that $L'_1$ and $L'_2$ also match is a manifold. The tangent space is just the space of matching normal vector fields corresponding to matching bounded harmonic forms. 
\end{prop}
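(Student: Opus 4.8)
The plan is to realise the matching condition as the zero set of a submersion between the (already known) manifolds of special Lagrangian deformations, and then invoke the regular value theorem. Concretely, Theorem \ref{acyldeformationtheorem} tells us that the special Lagrangian deformations of $L_1$ form a manifold $\M_1$ with tangent space at $L_1$ the bounded harmonic normal vector fields on $L_1$ (equivalently, via $v \mapsto \iota_v\omega|_{L_1}$, the bounded harmonic one-forms), and similarly $\M_2$ for $L_2$; so $\M_1 \times \M_2$ is a manifold with tangent space at $(L_1, L_2)$ the pairs of bounded harmonic one-forms. A deformation $L'_i$ of $L_i$ has a well-defined cross-section $K'_i$, a special Lagrangian deformation of $K_i$ inside the cylinder $\R \times N_i$, and hence a limiting normal vector field; matching means that these limits agree under the identification $F$. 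So the plan is to define a map $\Lambda : \M_1 \times \M_2 \to V$, where $V$ is the finite-dimensional vector space of translation-invariant normal vector fields on the common limit cross-section, by sending $(L'_1, L'_2)$ to (the limit of the normal vector field of $L'_1$) minus ($F^*$ of the limit of the normal vector field of $L'_2$), and show $\Lambda$ is a smooth submersion; then the matching locus is $\Lambda^{-1}(0)$.

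First I would make precise the ``limit'' map $\M_i \to V_i$ (where $V_i$ is the space of translation-invariant normal vector fields on $K_i\times(R_i,\infty)$): by Proposition \ref{speclaglimitdeformationsubmanifold}, the limits of asymptotically cylindrical special Lagrangian deformations of $L_i$ form a submanifold $\K_i$ of the cylindrical deformations of $K_i$, with tangent space precisely the harmonic normal vector fields on $K_i$ that arise as limits of bounded harmonic normal vector fields on $L_i$; and the map $\M_i \to \K_i$ taking a deformation to its limit is smooth (this is implicit in the construction of $\M_i$ in the proof of Theorem \ref{acyldeformationtheorem}, where deformations are built as ``decaying vector field $\oplus$ acceptable limit'', and smoothness of the limit part follows from the analysis of asymptotically translation invariant normal vector fields in subsection \ref{ssec:dpatchingbasics}, in particular Proposition \ref{revisedtransferofatis} and Proposition \ref{aticonsistency}). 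Its derivative at $L_i$ is the map ``take the limit of a bounded harmonic one-form'', which by Proposition \ref{speclaglimitdeformationsubmanifold} and Nordstr\"om's $L^2$-orthogonality description lands surjectively on $T_{K_i}\K_i$. Then $\Lambda$ is the composite $\M_1 \times \M_2 \to \K_1 \times \K_2 \to V$, with the second map $(\kappa_1, \kappa_2) \mapsto \kappa_1 - F^*\kappa_2$ (after identifying $K_1 = K_2$ via $F$, so $\K_1$ and $F^*\K_2$ both sit in the cylindrical deformations of the common cross-section). Since $F$ is a diffeomorphism intertwining the two cylindrical Calabi--Yau structures, $F^*$ carries $\K_2$ isomorphically to the corresponding submanifold for $F^*L_2$, so $\Lambda$ is smooth.

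The main point — and the main obstacle — is to check that $\Lambda$ is a submersion onto a neighbourhood of $0$ in the appropriate finite-dimensional space, i.e. that its derivative at $(L_1,L_2)$ is surjective onto the right target. The target should be taken to be $T_{K}\K_1 + F^*(T_{K}\K_2)$ inside the cylindrical deformation space (this is the ``effective'' range; one restricts $\Lambda$'s codomain to this subspace, which is legitimate since both limit maps land in $\K_1$ and $F^*\K_2$ respectively). Surjectivity of $D\Lambda$ onto this sum is then essentially tautological from surjectivity of each $D(\M_i \to \K_i)$ onto $T_{K}\K_i$: given $\mu \in T_{K}\K_1 + F^*T_{K}\K_2$, write $\mu = \mu_1 + F^*\mu_2$ with $\mu_i \in T_{K}\K_i$, lift $\mu_1$ to a bounded harmonic one-form on $L_1$ and $-\mu_2$ to one on $L_2$, and the resulting pair maps to $\mu$. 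Hence $\Lambda^{-1}(0)$ is a submanifold, and its tangent space at $(L_1,L_2)$ is the kernel of $D\Lambda$: pairs $(\alpha_1,\alpha_2)$ of bounded harmonic one-forms on $L_1$, $L_2$ whose limits satisfy $\widetilde{\alpha_1} = F^*\widetilde{\alpha_2}$ — exactly the matching bounded harmonic forms, which is the asserted tangent space. One subtlety worth flagging is the need to confirm that the matching condition on \emph{submanifolds} (equality of cross-sections under $F$) is genuinely equivalent to equality of the limiting normal vector fields in $V$ up to the identifications — this is where one uses that the cross-sections $K'_i = \{(p^i)'\} \times (Y^i)'$ are determined by finitely many point-and-submanifold parameters (Proposition \ref{acylsl}, Proposition \ref{speclaglimitdeformationsubmanifold}) so that ``same limit normal field'' really does force ``same cross-section''; I expect this bookkeeping, rather than any hard analysis, to be the fiddly part.
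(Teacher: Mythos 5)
Your overall strategy --- realise the matching locus as the zero set of a ``difference of limits'' map and read off its tangent space from the surjectivity of the limit maps $\M_i \to \K_i$ --- is the same in spirit as the paper's, but there is a genuine gap at exactly the step you describe as ``essentially tautological''. You restrict the codomain of $\Lambda$ to the linear subspace $W = T_{K}\K_1 + F^*T_{K}\K_2$, justifying this by saying that the limit maps land in $\K_1$ and $F^*\K_2$. That does not give $\Lambda(\M_1\times\M_2)\subseteq W$: the sets $\K_1$ and $\K_2$ are submanifolds of the cylindrical deformation space cut out by the nonlinear conditions $\partial_i([\tilde\tau_i|_{K'}])=0$ of Proposition \ref{speclaglimitdeformationsubmanifold}, not affine subspaces, so the difference $\kappa_1 - F^*\kappa_2$ of a point of $\K_1$ and a point of $F^*\K_2$ need not lie in the sum of their tangent spaces at the base point. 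Without that containment (or a constant-rank statement for $D\Lambda$ near $(L_1,L_2)$, which you do not verify) the regular value theorem does not apply; and knowing only $T_{K}\K_1$ and $T_{K}\K_2$ is genuinely insufficient, since two submanifolds through a common point can intersect in a non-manifold, and even when the intersection is a manifold its tangent space need not equal the intersection of the tangent spaces. What your argument implicitly assumes is that $\K_1$ and $\K_2$ intersect cleanly, and that is the real content of the proposition.

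The paper closes this gap with a cohomological observation. Since $d\tilde\tau_1 = d\tilde\tau_2 = \tilde\omega$ and $\tilde\tau_i|_{K} = 0$, one has $[\tilde\tau_1|_{K'}] = [\tilde\tau_2|_{K'}]$ in $H^1(K)$ for \emph{every} deformation $K'$. Hence, after imposing the (linear) volume constraints, both $\K_1$ and $\K_2$ are preimages of linear subspaces, namely $\ker\partial_1$ and $\ker\partial_2$, under one and the same submersion $K' \mapsto [\tilde\tau|_{K'}] \in H^1(K)$. Therefore $\K_1\cap\K_2$ is the preimage of $\ker\partial_1\cap\ker\partial_2$, hence a submanifold with tangent space $T_{K}\K_1\cap T_{K}\K_2$, and the matching locus is the preimage of the diagonal of $\K_1\cap\K_2$ under the submersion $(L'_1,L'_2)\mapsto(K'_1,K'_2)$. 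Your computation of $\ker D\Lambda$ and the bookkeeping identifying ``same limit normal vector field'' with ``same cross-section'' are fine; it is this clean-intersection step that your proposal is missing.
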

\begin{proof}
The map from  $L'_i$ to its cross-section $K'_i$ is evidently smooth. By Proposition \ref{speclaglimitdeformationsubmanifold}, which follows \cite[Proposition 6.4]{salurtodd}, $K'_i$ lies in a submanifold $\mathcal{K}_i$ of possible limits. The derivative of $L'_i \mapsto K'_i$ is the map $v_i \mapsto \tilde v_i$ given by taking the limit of a harmonic normal vector field such that $\iota_v \omega|_{L_i}$ is a harmonic one-form. That is, the derivative is the limit map from harmonic normal vector fields to the limits of these, and hence is surjective. 

For notational simplicity, write $K_1 = K_2$: that is, use $F$ to identify the two cross-sections. 

Let $\partial_i: H^1(K) \to H^2_\rel(L_i)$ be the coboundary map of the exact sequence needed in Proposition \ref{speclaglimitdeformationsubmanifold} for the asymptotically cylindrical manifold $L_i$. Let $\tilde \tau_i$ be the limit of an asymptotically translation invariant form $\tau_i$ on a tubular neighbourhood of $L_i$ with $\tau_i|_{L_i} = 0$ and $d\tau_i = \omega$ on this tubular neighbourhood. Note that both $\tau_1$ and $\tau_2$ are examples of the form which we called $\tau_1$ in Proposition \ref{speclaglimitdeformationsubmanifold}, with $\tau_1$ on $L_1$ and $\tau_2$ on $L_2$. By Proposition \ref{speclaglimitdeformationsubmanifold}, if $L_i$ is connected, $\K_i$ is the kernel of the map
\begin{equation}
\label{eq:limitdeformationmap}
K'_i \mapsto \partial_i([\tilde\tau_i|_{K'_i}]).
\end{equation}
on special Lagrangian deformations $K'_i = \bigcup \{(p^j)'_i\} \times (Y^j)'_i$ of $K = \bigcup \{p^j\} \times Y^j$ satisfying 
\begin{equation}
\label{eq:volumeconstraint}
\sum ((p^j)'_i - p^j) \Vol(Y_j) = 0
\end{equation}
If $L_i$ is not connected, we simply get additional constraints of the form \eqref{eq:volumeconstraint} corresponding to each connected component. 

We may simply apply all of these linear constraints to get a linear subspace of the required normal vector fields, since we are working with the cylindrical limit. Then to prove the result, we just have to prove that the joint kernel of the two maps \eqref{eq:limitdeformationmap} is a manifold, restricted to this space. We first want to show that the two maps are as similar as possible. 

We note that $d\tilde \tau_1 -d\tilde \tau_2 = \tilde\omega - \tilde \omega =0$. Consequently, for any deformation $K'$ of $K$, we have that $[(\tilde \tau_1 - \tilde\tau_2)|_{K'}] = [(\tilde \tau_1 - \tilde \tau_2)|_{K}] = 0$. Since we shall only need these restricted cohomology classes, we shall write $\tilde \tau = \tilde \tau_i$. 

We know, as a consequence of the calculation of its linearisation in Proposition \ref{speclaglimitdeformationsubmanifold}, that the map from $K'$ to $[\tilde \tau|_{K'}]$ is a submersion into $H^1(K)$, even with the further restriction above. Consequently, as $\ker \partial_1 \cap \ker \partial_2$ is a vector subspace of $H^1(K)$, we have that $\K_1 \cap \K_2$ is a submanifold of the manifold of all special Lagrangian deformations of $K$.

Consequently, the diagonal space $\{(K', K'): K' \in \K_1 \cap \K_2\}$ is a submanifold of $\K_1 \times \K_2$. The desired submanifold of matching deformations of $(L_1, L_2)$ is precisely the inverse image of this submanifold under the submersion $(L'_1, L'_2) \mapsto (K'_1, K'_2)$. 

As for the tangent space, the last part shows that it is the inverse image of the tangent space of  $\{(K', K'): K' \in \K_1 \cap \K_2\}$ consisting of pairs of harmonic normal vector fields on $K$, under the projection map; that is, it is all pairs of matching harmonic normal vector fields whose limit lies in the tangent space of $\K_1 \cap \K_2$. This is the intersection of the tangent spaces of $\K_1$ and $\K_2$, that is the harmonic normal vector fields on $K$ which arise as limits of harmonic normal vector fields on both $L_1$ and $L_2$. Thus we have all matching pairs of harmonic normal vector fields, as required. 
\end{proof}

Consequently, we have
\begin{prop}
If we restrict the gluing map of submanifolds given by Definition \ref{defin:newsubmanifoldapproxgluing} to pairs of matching asymptotically cylindrical special Lagrangian submanifolds, for $T$ large enough, the approximate gluing map is a smooth immersion and so maps to a finite-dimensional submanifold of the deformations of $L^T$. 
\end{prop}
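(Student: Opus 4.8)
The plan is to assemble the pieces already in place: the approximate gluing map of submanifolds factors, by Definition \ref{defin:newsubmanifoldapproxgluing}, into first cutting off each $L_i$ to $\hat L_i$ and then identifying the cutoffs over the neck. The analysis of subsection \ref{ssec:dpatchingproper} shows the cutoff part is smooth with derivative given by \eqref{eq:nvfgluingderiv}, and the preliminary observation there (following Hamilton \cite[Example 4.4.7]{hamiltonnashmoser}) shows the identification part is smooth, so the composite is smooth; moreover restricting to matching asymptotically translation invariant normal vector fields, the derivative defines the approximate gluing map of normal vector fields, which by Proposition \ref{nvfandoneformgluing} differs exponentially little in $T$ from $\gamma_T$ of one-forms. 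So the content left to verify is (a) that the domain is a finite-dimensional manifold, (b) that the map is well-defined into the special Lagrangian deformations of $L^T$ (rather than merely into submanifolds), and (c) that the derivative is injective, i.e.\ the map is an immersion.

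For (a): Proposition \ref{manifoldofgluablesl} says precisely that the set of matching pairs $(L'_1, L'_2)$ of special Lagrangian deformations is a finite-dimensional manifold, with tangent space the matching pairs of bounded harmonic normal vector fields (equivalently, using Lemma \ref{Jbehaves} and Proposition \ref{aticonsistency}, matching asymptotically translation invariant harmonic one-forms). For (b): given a matching pair, $\gamma_T(L'_1, L'_2)$ is an approximate gluing of asymptotically cylindrical special Lagrangians in the sense of Definition \ref{defin:newsubmanifoldapproxgluing}, so by Theorem \ref{slperturbthm} (for $T$ large, uniformly, since Condition \ref{slingcondition} holds uniformly over the compact deformation family) it lies in the domain of $\SLing$ and $\SLing\gamma_T(L'_1, L'_2)$ is a special Lagrangian deformation of $L^T$. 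One should note here that $L^T$ itself is such a deformation (it is $\SLing$ of the approximate gluing of the base pair), so the image lies in the deformation space; that this space is itself a manifold around $L^T$ is Theorem \ref{acyldeformationtheorem} applied on the compact manifold $M^T$ — in the compact case the moduli space is simply the $b^1(L^T)$-dimensional McLean space. Thus the gluing map is a smooth map between the finite-dimensional manifolds $\{(L'_1,L'_2) \text{ matching}\}$ and $\{$special Lagrangian deformations of $L^T\}$.

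For (c), the immersion property: the derivative of the composite gluing map at the base pair is $D\SLing \circ D\gamma_T$, where $D\gamma_T$ is the derivative of the approximate gluing of submanifolds and $D\SLing$ is as in Proposition \ref{slingderivative}. Passing to one-forms, $D\gamma_T$ is exponentially close in $T$ (Proposition \ref{nvfandoneformgluing}) to the one-form gluing $\gamma_T$, and $D\SLing$ is exponentially close in $T$ (Proposition \ref{dslingisnearlyharmpt}) to $\hpt_{L'_0(T)}$. Hence $D(\SLing\circ\gamma_T)$ restricted to matching harmonic one-forms is exponentially close to $\Gamma_T = \hpt\circ\gamma_T$ on matching harmonic forms. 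By Proposition \ref{harmonicgluinglowerbound} (Nordstr\"om's gluing theorem with the quantified lower bound \eqref{eq:gluinglowerbound}), $\Gamma_T$ is an isomorphism for $T$ large with inverse bounded independently of $T$; since the exponentially small perturbation cannot destroy this for $T$ large, the derivative of the gluing map is an isomorphism, a fortiori injective. Therefore the gluing map is a smooth immersion, and its image is a finite-dimensional submanifold of the deformations of $L^T$.

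The main obstacle is bookkeeping the uniformity in $T$ across all the estimates simultaneously — in particular making sure the ``exponentially small in $T$'' bounds in Propositions \ref{nvfandoneformgluing} and \ref{dslingisnearlyharmpt} and the polynomial-in-$T$ bound on $\hpt$ in Corollary \ref{cor:harmonicestimates} combine against the $T$-independent lower bound for $\Gamma_T$ from Proposition \ref{harmonicgluinglowerbound} to give an honest isomorphism for $T$ large; and checking that the deformation family over which Condition \ref{slingcondition} and hence $\SLing$ is applied can be taken uniform, which follows since the matching-pair manifold of Proposition \ref{manifoldofgluablesl} is compact near the base pair and the relevant constants depend continuously on the data. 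Everything else is assembling cited results.
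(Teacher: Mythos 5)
Your proposal proves more than the statement asks, and in doing so partly misreads it. The proposition concerns only the approximate gluing map of Definition \ref{defin:newsubmanifoldapproxgluing} restricted to the matching-pairs manifold of Proposition \ref{manifoldofgluablesl}; its target is the space of \emph{submanifold} deformations of $L^T$ (the approximate gluing of special Lagrangians is only nearly special Lagrangian), so your step (b), which composes with $\SLing$ to land in the special Lagrangian deformations, is addressing Theorem \ref{speclaggluinglocaldiffeo} rather than this proposition. The paper's own proof is two lines: by subsection \ref{ssec:dpatchingproper} and Proposition \ref{nvfandoneformgluing} the derivative, read on one-forms, is exponentially close to $\gamma_T$; and the intermediate step in the proof of Proposition \ref{harmonicgluinglowerbound} shows that $\gamma_T$ itself (before harmonic projection) is injective with a lower bound independent of $T$ on matching harmonic forms, which is exactly the tangent space of the domain. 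No appeal to $\SLing$, $\Gamma_T$, or harmonic projection is needed.

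Your route to injectivity — showing $D(\SLing\circ\gamma_T)$ is an isomorphism and deducing injectivity of $D\gamma_T$ a fortiori — is logically sound, but it is heavier and sits awkwardly in the paper's ordering: the analysis of $D\SLing$ (Proposition \ref{slingderivative} and its corollaries) is carried out on finite-dimensional families of submanifolds, and the present proposition is what supplies the finite-dimensional family in the image of the approximate gluing. This is not fatally circular, since one can parametrise the family by the domain manifold of matching pairs instead, but you would need to say so explicitly. The direct argument via the lower bound on $\gamma_T$ avoids the issue entirely and is what you should use here; your composite argument then belongs in the proof of Theorem B.
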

\begin{proof}
Subsection \ref{ssec:dpatchingproper} shows that the derivative of this gluing map is close to the approximate gluing map of one-forms for $T$ large enough. By the argument of Proposition \ref{harmonicgluinglowerbound} the approximate gluing map of one-forms is injective and bounded below independently of $T$ when restricted to the matching harmonic forms, and so this derivative must also be injective. 
\end{proof}

We may now combine Proposition \ref{dslingisnearlyharmpt} with Proposition \ref{harmonicgluinglowerbound} (on the harmonic gluing map) to prove that the gluing map of special Lagrangians is a local diffeomorphism of moduli spaces. 
\begin{thm}[Theorem B]
\label{speclaggluinglocaldiffeo}
Let $M_1$ and $M_2$ be a matching pair of asymptotically cylindrical Calabi--Yau manifolds, and suppose that Hypothesis \ref{hyp:ambientgluing} holds so $M_1$ and $M_2$ can be glued to give a Calabi--Yau manifold $M^T$. Let $L_1$ and $L_2$ be a matching pair of asymptotically cylindrical special Lagrangians in $M_1$ and $M_2$. By Theorem A (Theorem \ref{slperturbthm}), there exists $T_0>0$ such that $L_1$ and $L_2$ can be glued to a form a special Lagrangian in $M^T$ for all $T>T_0$. Moreover, this applies for any sufficiently small deformation of $L_1$ and $L_2$ as a matching pair, and hence we obtain a gluing map from the deformation space of matching pairs of submanifolds in Proposition \ref{manifoldofgluablesl} to the space of deformations of the gluing of $L_1$ and $L_2$. This map is a local diffeomorphism for $T$ sufficiently large.
\end{thm}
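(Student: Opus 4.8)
The plan is to apply the inverse function theorem to the gluing map at the base point $(L_1, L_2)$, after identifying its derivative, up to an error decaying exponentially in $T$, with the linear harmonic gluing map $\Gamma_T$ of Proposition \ref{harmonicgluinglowerbound}. Recall that the gluing map is by definition the composition of the approximate gluing map of Definition \ref{defin:newsubmanifoldapproxgluing} with the $\SLing$ map of Definition \ref{defin:slingmap}. First I would record that this composition is a smooth map between finite-dimensional manifolds: its domain is the manifold of matching special Lagrangian pairs supplied by Proposition \ref{manifoldofgluablesl}, with tangent space at $(L_1, L_2)$ the matching pairs of bounded harmonic one-forms; the approximate gluing map is smooth with derivative computed in subsection \ref{ssec:dpatchingproper}; and $\SLing$ is smooth near $L_0(T)$ for $T$ large by Corollary \ref{cor:hpttransferisomorphism} together with Proposition \ref{slingderivative}. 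The target is the special Lagrangian deformation space of $L'_0(T) = \SLing(L_0(T))$ in the compact manifold $M^T$, whose tangent space is the harmonic one-forms on $L'_0(T)$ by McLean's theorem.

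Next I would compute the derivative at $(L_1, L_2)$ by the chain rule. By Proposition \ref{nvfandoneformgluing}, in terms of the one-form identification the derivative of the approximate gluing map sends a matching pair of bounded harmonic one-forms $(\alpha_1, \alpha_2)$ to a one-form on $L_0(T)$ differing from $\gamma_T(\alpha_1, \alpha_2)$ by a term bounded by $C_k e^{-\epsilon T}(\|\alpha_1\|_{C^k_\delta} + \|\alpha_2\|_{C^k_\delta})$; by Proposition \ref{dslingisnearlyharmpt} the derivative $D_{L_0(T)}\SLing$, again in one-form terms, differs from the harmonic projection $\harmpt_{L'_0(T)}$ by a term of the same exponentially small order, and is itself bounded at most polynomially in $T$ by Corollary \ref{cor:harmonicestimates}, so the exponentially small error in its argument propagates controllably. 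Composing, the derivative of the gluing map differs from $(\alpha_1, \alpha_2) \mapsto \harmpt_{L'_0(T)}(\gamma_T(\alpha_1, \alpha_2))$ --- that is, from the harmonic gluing map $\Gamma_T$ of Proposition \ref{harmonicgluinglowerbound} applied to $L_1, L_2, L^T$ --- by a linear map whose $C^k$ operator norm is $O(e^{-\epsilon T})$ relative to the extended weighted norms on the domain. Here I would use Corollary \ref{newgluedmetricsallthesame} and Proposition \ref{hptslgluinganalysis} to note that the metric on $L'_0(T)$ differs from the glued metric on $L^T$ by an exponentially small amount, so that Proposition \ref{harmonicgluinglowerbound}, with its clause allowing a uniformly close metric, does apply to produce $\Gamma_T$ in the present setting.

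Then I would invoke Proposition \ref{harmonicgluinglowerbound}: for $T$ large, $\Gamma_T$ is an isomorphism satisfying $\|\Gamma_T(\alpha_1, \alpha_2)\|_{C^k} \geq C(\|\alpha_1\|_{C^k_\delta} + \|\alpha_2\|_{C^k_\delta})$ with $C$ independent of $T$. Since the difference between the derivative of the gluing map and $\Gamma_T$ is bounded by $C_k e^{-\epsilon T}(\|\alpha_1\|_{C^k_\delta} + \|\alpha_2\|_{C^k_\delta})$, for $T$ large enough this error is strictly smaller than the lower bound, so the derivative is injective; and since $\Gamma_T$ being an isomorphism forces the dimension of matching harmonic pairs to equal $b^1(L^T)$, domain and target have equal dimension and the derivative is an isomorphism. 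The inverse function theorem then gives that the gluing map is a local diffeomorphism onto an open neighbourhood of $L'_0(T)$ in the special Lagrangian deformations of $M^T$, which is the assertion; combined with Theorem \ref{slperturbthm} this holds for all sufficiently large $T$.

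The main obstacle, and the point requiring genuine care, is the bookkeeping of the three metrics living on the common underlying manifold of $L_0(T)$, $L'_0(T)$ and $L^T$, and of the corresponding function spaces (extended weighted norms on the ends of $L_1, L_2$ versus ordinary $C^k$ norms on the compact $L^T$), so that the exponentially small error estimates of Propositions \ref{nvfandoneformgluing} and \ref{dslingisnearlyharmpt} --- which are a priori phrased with respect to slightly different metrics --- can be combined with the $T$-uniform lower bound of Proposition \ref{harmonicgluinglowerbound} into a single clean ``perturbation of an isomorphism'' argument. The polynomial-in-$T$ bound on $D_{L_0(T)}\SLing$ from Corollary \ref{cor:harmonicestimates} is exactly what makes the composition of the two separate exponential approximations harmless.
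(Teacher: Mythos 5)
Your proposal is correct and follows essentially the same route as the paper's proof: both identify the derivative of the composite gluing map, via Proposition \ref{nvfandoneformgluing} and Proposition \ref{dslingisnearlyharmpt} together with the polynomial bound of Corollary \ref{cor:harmonicestimates}, as an exponentially small perturbation of $\Gamma_T$, then use the $T$-uniform lower bound of Proposition \ref{harmonicgluinglowerbound} (applicable by the metric comparisons of Corollary \ref{newgluedmetricsallthesame} and Proposition \ref{hptslgluinganalysis}) and the inverse function theorem. The metric-bookkeeping point you flag as the main obstacle is exactly the issue the paper's proof also addresses.
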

\begin{proof}
This gluing map is the composition of the approximate gluing map of Definition \ref{defin:newsubmanifoldapproxgluing} with the SLing map of Definition \ref{defin:slingmap}. We shall show that the derivative of this gluing map, regarded as a map of one-forms, is exponentially close to the gluing map $\Gamma_T$ of harmonic one-forms described in Proposition \ref{harmonicgluinglowerbound}; this derivative is the composition of the derivatives of the approximate gluing and $\SLing$. For the proof we use the notation of Convention \ref{con:gluingslanalysis}. 

As in the previous proposition, by subsection \ref{ssec:dpatchingproper}, the derivative of the approximate gluing map is exponentially close to the approximate gluing map of one-forms of Definition \ref{defin:newformgluing}.

By Proposition \ref{dslingisnearlyharmpt},  $D_{L_0(T)}\SLing$ is exponentially close to $\harmpt_{\SLing(L_0(T))}$. In particular, it is bounded polynomially in $T$, by Corollary \ref{cor:harmonicestimates}. 

By composition (using that the harmonic part map is polynomially bounded, so that this exponential decay is preserved) we find that the difference between the derivative of the gluing map of special Lagrangians and the gluing map $\Gamma_T$ of harmonic forms with respect to the metric $g(\Omega^T, \omega^T)|_{L'_0(T)}$ is exponentially small in $T$. Combining Corollary \ref{newgluedmetricsallthesame} with Proposition \ref{hptslgluinganalysis}, this metric is exponentially close in $T$ to that given by Definition \ref{defin:newgluingmanifolds}, so, since by Proposition \ref{harmonicgluinglowerbound} $\Gamma_T$ is bounded below uniformly in $T$ and is an isomorphism, it follows that the derivative of the gluing map of special Lagrangians is an isomorphism for $T$ sufficiently large. By the inverse function theorem, the result follows. 
\end{proof}
\bibliographystyle{abbrv}
\bibliography{bibfile.bib}

\def\cprime{$'$}
\begin{thebibliography}{10}

\bibitem{arnoldodes}
V.~I. Arnol{\cprime}d.
\newblock {\em Ordinary differential equations}.
\newblock MIT Press, Cambridge, Mass.-London, 1978.
\newblock Translated from the Russian and edited by Richard A. Silverman.

\bibitem{aps1}
M.~F. Atiyah, V.~K. Patodi, and I.~M. Singer.
\newblock Spectral asymmetry and {R}iemannian geometry. {I}.
\newblock {\em Math. Proc. Cambridge Philos. Soc.}, 77:43--69, 1975.

\bibitem{baier}
P.~Baier.
\newblock {\em Special Lagrangian Geometry}.
\newblock D{P}hil thesis, University of Oxford, 2001.

\bibitem{butscherdeformations}
A.~Butscher.
\newblock Deformations of minimal {L}agrangian submanifolds with boundary.
\newblock {\em Proc. Amer. Math. Soc.}, 131(6):1953--1964, 2003.

\bibitem{butscherequivariantgluing}
A.~Butscher.
\newblock Equivariant gluing constructions of contact stationary {L}egendrian
  submanifolds in {$\mathbb S^{2n+1}$}.
\newblock {\em Calc. Var. Partial Differential Equations}, 35(1):57--102, 2009.

\bibitem{chanillotreves}
S.~Chanillo and F.~Treves.
\newblock On the lowest eigenvalue of the {H}odge {L}aplacian.
\newblock {\em J. Differential Geom.}, 45(2):273--287, 1997.

\bibitem{cheegerebin}
J.~Cheeger and D.~G. Ebin.
\newblock {\em Comparison theorems in {R}iemannian geometry}.
\newblock North-Holland Publishing Co., Amsterdam-Oxford; American Elsevier
  Publishing Co., Inc., New York, 1975.
\newblock North-Holland Mathematical Library, Vol. 9.

\bibitem{doiyotsutanispin7}
M.~Doi and N.~Yotsutani.
\newblock Doubling construction of {C}alabi-{Y}au fourfolds from toric {F}ano
  fourfolds.
\newblock {\em Commun. Math. Stat.}, 3(3):423--447, 2015.

\bibitem{ghl}
S.~Gallot, D.~Hulin, and J.~Lafontaine.
\newblock {\em Riemannian geometry}.
\newblock Universitext. Springer-Verlag, Berlin, third edition, 2004.

\bibitem{grosssyz}
M.~Gross.
\newblock Mirror symmetry and the {S}trominger-{Y}au-{Z}aslow conjecture.
\newblock In {\em Current developments in mathematics 2012}, pages 133--191.
  Int. Press, Somerville, MA, 2013.

\bibitem{hamiltonnashmoser}
R.~S. Hamilton.
\newblock The inverse function theorem of {N}ash and {M}oser.
\newblock {\em Bull. Amer. Math. Soc. (N.S.)}, 7(1):65--222, 1982.

\bibitem{harveylawson}
R.~Harvey and H.~B. Lawson, Jr.
\newblock Calibrated geometries.
\newblock {\em Acta Math.}, 148:47--157, 1982.

\bibitem{haskinsheinnordstrom}
M.~{Haskins}, H.-J. {Hein}, and J.~{Nordstr{\"o}m}.
\newblock Asymptotically cylindrical {C}alabi-{Y}au manifolds.
\newblock arXiv:1212.6929, version 2, July 2013.

\bibitem{hitchinmodulispace}
N.~J. Hitchin.
\newblock The moduli space of special {L}agrangian submanifolds.
\newblock {\em Ann. Scuola Norm. Sup. Pisa Cl. Sci. (4)}, 25(3-4):503--515
  (1998), 1997.

\bibitem{jafarpourlewis}
S.~Jafarpour and A.~D. Lewis.
\newblock {\em Time-varying vector fields and their flows}.
\newblock SpringerBriefs in Mathematics. Springer, Cham, 2014.

\bibitem{jostrgga}
J.~Jost.
\newblock {\em Riemannian geometry and geometric analysis}.
\newblock Universitext. Springer-Verlag, Berlin, fourth edition, 2005.

\bibitem{joycebook}
D.~Joyce.
\newblock {\em Compact manifolds with special holonomy}.
\newblock Oxford Mathematical Monographs. Oxford University Press, Oxford,
  2000.

\bibitem{joycecones5}
D.~Joyce.
\newblock Special {L}agrangian submanifolds with isolated conical
  singularities. {V}. {S}urvey and applications.
\newblock {\em J. Differential Geom.}, 63(2):279--347, 2003.

\bibitem{joycecones1}
D.~Joyce.
\newblock Special {L}agrangian submanifolds with isolated conical
  singularities. {I}. {R}egularity.
\newblock {\em Ann. Global Anal. Geom.}, 25(3):201--251, 2004.

\bibitem{joycecones2}
D.~Joyce.
\newblock Special {L}agrangian submanifolds with isolated conical
  singularities. {II}. {M}oduli spaces.
\newblock {\em Ann. Global Anal. Geom.}, 25(4):301--352, 2004.

\bibitem{joycecones3}
D.~Joyce.
\newblock Special {L}agrangian submanifolds with isolated conical
  singularities. {III}. {D}esingularization, the unobstructed case.
\newblock {\em Ann. Global Anal. Geom.}, 26(1):1--58, 2004.

\bibitem{joycecones4}
D.~Joyce.
\newblock Special {L}agrangian submanifolds with isolated conical
  singularities. {IV}. {D}esingularization, obstructions and families.
\newblock {\em Ann. Global Anal. Geom.}, 26(2):117--174, 2004.

\bibitem{joycesalur}
D.~Joyce and S.~Salur.
\newblock Deformations of asymptotically cylindrical coassociative submanifolds
  with fixed boundary.
\newblock {\em Geom. Topol.}, 9:1115--1146, 2005.

\bibitem{kovalevtwistedconnected}
A.~Kovalev.
\newblock Twisted connected sums and special {R}iemannian holonomy.
\newblock {\em J. Reine Angew. Math.}, 565:125--160, 2003.

\bibitem{kovalevdeformations}
A.~Kovalev.
\newblock Ricci-flat deformations of asymptotically cylindrical {C}alabi-{Y}au
  manifolds.
\newblock In {\em Proceedings of {G}\"okova {G}eometry-{T}opology {C}onference
  2005}, pages 140--156. G\"okova Geometry/Topology Conference (GGT), G\"okova,
  2006.

\bibitem{kovalevspin7}
A.~Kovalev.
\newblock Asymptotically cylindrical manifolds with holonomy {S}pin(7). {I}.
\newblock arXiv: 1309.5027, version 2, Dec. 2014.

\bibitem{lockhart}
R.~Lockhart.
\newblock Fredholm, {H}odge and {L}iouville theorems on noncompact manifolds.
\newblock {\em Trans. Amer. Math. Soc}, 301(1):1--35, 1987.

\bibitem{lotay}
J.~D. Lotay.
\newblock Desingularization of coassociative 4-folds with conical
  singularities.
\newblock {\em Geom. Funct. Anal.}, 18(6):2055--2100, 2009.

\bibitem{mclean}
R.~C. McLean.
\newblock Deformations of calibrated submanifolds.
\newblock {\em Comm. Anal. Geom.}, 6(4):705--747, 1998.

\bibitem{melrose}
R.~B. Melrose.
\newblock {\em The {A}tiyah-{P}atodi-{S}inger index theorem}, volume~4 of {\em
  Research Notes in Mathematics}.
\newblock A K Peters Ltd., 1993.

\bibitem{nordstromacyldeformations}
J.~Nordstr{\"o}m.
\newblock Deformations of asymptotically cylindrical {$G_2$}-manifolds.
\newblock {\em Math. Proc. Cambridge Philos. Soc.}, 145(2):311--348, 2008.

\bibitem{nordstromgluing}
J.~Nordstr{\"o}m.
\newblock Deformations of glued {$G_2$}-manifolds.
\newblock {\em Comm. Anal. Geom.}, 17(3):481--503, 2009.

\bibitem{pacini}
T.~Pacini.
\newblock Special {L}agrangian conifolds, {II}: gluing constructions in
  {$\mathbb C^m$}.
\newblock {\em Proc. Lond. Math. Soc. (3)}, 107(2):225--266, 2013.

\bibitem{palaisfoundations}
R.~S. Palais.
\newblock {\em Foundations of global non-linear analysis}.
\newblock W. A. Benjamin, Inc., New York-Amsterdam, 1968.

\bibitem{salurtodd}
S.~Salur and A.~J. Todd.
\newblock Deformations of asymptotically cylindrical special {L}agrangian
  submanifolds.
\newblock In {\em Proceedings of the {G}\"okova {G}eometry-{T}opology
  {C}onference 2009}, pages 99--123. Int. Press, Somerville, MA, 2010.

\bibitem{syzconjecture}
A.~Strominger, S.-T. Yau, and E.~Zaslow.
\newblock Mirror symmetry is {$T$}-duality.
\newblock {\em Nuclear Phys. B}, 479(1-2):243--259, 1996.

\bibitem{mythesis}
T.~Talbot.
\newblock {\em Asymptotically cylindrical Calabi--Yau and special Lagrangian
  geometry}.
\newblock PhD thesis, University of Cambridge, 2017.

\bibitem{su3g2story}
T.~Talbot.
\newblock Gluing of asymptotically cylindrical {C}alabi-{Y}au manifolds in
  complex dimension $3$.
\newblock arXiv:1703.09201, Mar. 2017.

\end{thebibliography}
\end{document}